\documentclass[11pt]{article}

\usepackage[margin=1in]{geometry} 
\usepackage{amsmath, amsthm, amssymb, scrextend, mathtools, tikz,xcolor,esint, fancyhdr, relsize, graphicx, subcaption, float}
\usepackage{hyperref}
\usepackage{chngcntr}
\numberwithin{equation}{section}
\numberwithin{figure}{section}
\numberwithin{table}{section}

\usepackage{color}

\definecolor{armygreen}{rgb}{0.29, 0.33, 0.13}

\tikzset{fontscale/.style = {font=\relsize{#1}}}
\allowdisplaybreaks
\theoremstyle{definition}

\newcommand{\Natural}{\mbox{$\mathrm{I\!N}$}}
\newcommand{\real}{\mbox{$\mathbb{R}$}}
\newcommand{\alpot}{\ensuremath{\frac{\alpha}{2}}}

\newcommand{\wt}{\widetilde}
\renewcommand{\epsilon}{\varepsilon}
\renewcommand{\phi}{\varphi}
\renewcommand{\tilde}{\widetilde}
\newcommand{\R}{\mathbb{R}}
\newcommand{\N}{\mathbb{N}}
\newtheorem{theorem}{Theorem}[section]
\newtheorem{corollary}{Corollary}[section]
\newtheorem{lemma}{Lemma}[section]
\newtheorem{example}{Example}[section]

\newtheorem{defn}{Definition}[section]
\newcommand{\grad}{\nabla}
\newcommand{\st}{\ |\ }
\newcommand{\pf}{\indent\begin{proof}}
\newcommand{\epf}{\end{proof}}
\newcommand{\eps}{\varepsilon}

\newcommand{\alt}{\ensuremath{\frac{\alpha}{2}}}
\newcommand{\dt}{\Delta t}
\newcommand{\ra}{\rightarrow}
\newcommand{\vertiii}[1]{{\left\vert\kern-0.25ex\left\vert\kern-0.25ex\left\vert #1 
    \right\vert\kern-0.25ex\right\vert\kern-0.25ex\right\vert}}
\newcommand{\U}[1]{\mathcal{U}^{(#1)}}
\renewcommand{\P}{\mathcal{P}}
\renewcommand{\bf}[1]{\mathbf{#1}}
\newcommand{\mc}[1]{\mathcal{#1}}
\newcommand{\mcV}{\ensuremath{\mathcal{V}}}
\newcommand{\mcF}{\ensuremath{\mathcal{F}}}

\def\qed{\hbox{\vrule width 6pt height 6pt depth 0pt}}

\begin{document}
\title{A time dependent fractional order diffusion equation with constant diffusivity matrix}
\author{	
	  T.~Catoe\thanks{School of Mathematical and Statistical Sciences,
	  Clemson University, Clemson, South Carolina 29634-0975, USA.
	  email: {\tt tcatoe@clemson.edu}. } 
	  \and
	V.J.~Ervin\thanks{School of Mathematical and Statistical Sciences,
	  Clemson University, Clemson, South Carolina 29634-0975, USA.
	  email: {\tt vjervin@clemson.edu}. } }
\date{\today}
\maketitle

\begin{abstract}
Of primary interest in this paper is the numerical approximation of a time dependent fractional, in space, diffusion equation
where the domain is assumed to be nonhomogeneous, having different axial diffusion coefficients. This work is motivated
from the consideration of composite material which can exhibit different material properties along, and perpendicular to,
internal planar structures. Careful attention is paid to accurately capture the boundary behavior of the solution.
A spectral approximation scheme is used for the spatial discretization and a backward Euler approximation used for the
temporal discretization. Following an error analysis for the approximation scheme, numerical experiments are given
to demonstrate the effects of the nonhomogeneous domain and to support the theoretical analysis.
\end{abstract}

\textbf{Key words}.  Fractional Laplacian, Riesz potential operator, spectral approximation, nonhomogeneous domain

\textbf{AMS Mathematics subject classifications}. 35R11, 42C10, 35B65, 65N35, 41A25

\section{Introduction}
\label{SPFD_intro}
In recent years, fractional differential equations have become increasingly popular in modeling physical phenomena which are not well captured by classical integer-order differential equations. A vast variety of application areas have already begun to explore the benefits that the fractional setting brings, including turbulence modeling \cite{del041, epp181, shl871}, biomedicine \cite{bue141, jav131, kla051}, geophysics \cite{bae102, bue141, zha121}, image processing \cite{bua101, gil081}, quantum mechanics \cite{las001, duo181}, and AI and machine learning \cite{ant202, ant223}. In traditional integer-order partial differential equation models (PDEs), diffusion of a quantity is typically captured by the Laplacian operator, $\Delta$. But, in many applications, the spatial decay of the quantity has been observed to be algebraic (``heavy tailed"), leading to these processed being categorized as exhibiting ``anomalous diffusion". The most commonly used approach to modeling such phenomena is a nonlocal version of the standard diffusion operator, namely the fractional Laplacian, $(-\Delta)^\alt.$

There are a number of definitions for the fractional Laplacian. In the case of  $\Omega \, = \, \real^n$ most of the definitions are equivalent \cite{kwa171}. In the case of a bounded domain, the non-locality of the fractional Laplacian causes certain definitions to no longer be equivalent. The three most common definitions used in the bounded domain setting are: (1) the \textit{spectral} fractional Laplacian, (2) the \textit{regional} fractional Laplacian, and (3) the \textit{integral} fractional Laplacian \cite{aco171}. The spectral fractional Laplacian is defined in terms of the eigenfunctions and eigenvalues of the homogeneous Laplace operator on the given domain. The second definition, the regional fractional Laplacian, has been shown to be the infinitesimal generator of the censored stable Lévy processes \cite{bog031}. The regional definition can also be viewed as a restriction of the integral fractional Laplacian. The third approach to defining this operator is via the \textit{integral} fractional Laplacian $(-\Delta)^\alt$. For a function $u$ with support in the domain and $1 < \alpha < 2$, 
\[
(-\Delta)^\alt u(x) \coloneqq \frac{1}{|\gamma_d(-\alpha)|}\lim_{\eps\ra 0}\int_{\R^d\setminus B(x,\eps)}
\frac{u(x) - u(y)}{| x - y |^{d+\alpha}}dy,
\]
where $\gamma_d(\alpha)$ is a normalization factor \cite{aco171}.

The authors in \cite{zhe251} showed that for $k\in \R$ and a sufficiently nice function $f$ on $\real^d$, the fractional Laplacian factors as:
\[
k(-\Delta)^\alt f = - \grad \cdot (- \Delta)^\frac{\alpha -2}{2} k\mathbf I \grad f,
\]
where $\mathbf I$ is the identity matrix in $\R^{d\times d}$ and $(- \Delta)^\frac{\alpha -2}{2}$ the Riesz potential operator. 
Motivated by the consideration of a  non-homogeneous domain (in particular, a composite material \cite{fai201}), 
the authors in \cite{zhe251} then considered the following model for the fractional Laplacian: \textit{For the bounded domain $\Omega$ 
denoting the unit disk in $\R^2$, given $f(x)$, $K(x) = \begin{bmatrix} k_1 & 0 \\ 0 & k_2\end{bmatrix},$ with $k_1, k_2 >0$, determine $\wt{u}(x)$ satisfying }
\begin{align}
\label{model1}\grad \cdot (-\Delta)^{\frac{\alpha-2}{2}}K\grad \wt{u}(x) & = f(x), \ \ x\in \Omega, \\
\notag \wt{u}(x) & = 0, \ \ x\in \R^2\setminus \Omega. 
\end{align}

Using a pseudo-eigenfunction property of the integral fractional Laplacian on the unit disk 
\cite{dyd171, hao211}, the authors in \cite{zhe251} utilized a power series argument to establish well posedness of the model \eqref{model1}. 
(Note that for $K(x)$ a constant, symmetric, positive definite matrix, a rotation of the coordinate axes such that the
axes align with the eigenvectors of $K(x)$ transforms $K(x)$ to a positive diagonal matrix.)

In \cite{erv251} the work in \cite{zhe251} was extended to the case of a positive definite matrix $K(x)$, subject to the restriction that the smallest and largest eigenvalues of $K$ satisfy $\lambda_m \mathbf v^T \mathbf v \leq \mathbf v^T K(x) \mathbf v \leq \lambda_M\mathbf v^T\mathbf v$ for all $\mathbf v\in \R^2$, $x\in \Omega$, with $\lambda_M < \frac{\sqrt{\alpha(2+\alpha)}}{2-\alpha}\lambda_m$. Whether this condition is also necessary remains an open question. The analysis in \cite{erv251} used a weak formulation of the problem and a different function space setting than in \cite{zhe251}. 

The focus of this paper is the extension of the fractional Laplacian model with constant $K$ to the evolution problem:
\begin{equation} \label{evModel}
\begin{cases}
\frac{\partial \wt{u}}{\partial t}(x,t) - \grad \cdot (-\Delta)^{\frac{\alpha - 2}{2}}K\grad \wt{u}(x,t) &= f(x,t),\ \ x \in \Omega,\ t>0,\\
\hfill \wt{u}(x,t)  &= 0, x\in \R^2\setminus \Omega,\\
\hfill \wt{u}(x,0) &= g(x), x\in \Omega. 
\end{cases}
\end{equation}

In order to accurately account for the singular behavior of $\wt{u}$ at the boundary of $\Omega$, we assume
$\wt{u}(x, t) \, = \, \omega^{\alt} u(x, t)$, where $\omega \, = \, (1 - |x|^{2})$, $x \in \Omega$.
The existence and uniqueness of $u$ is studied via a weak formulation in Sections \ref{steady_Ex_Uq} and \ref{evExEq}. 

The numerical approximation of problems involving the fractional Laplacian on a bounded domain has received considerable
attention over the past fifteen years. The three most commonly used approaches have been the 
finite difference \cite{duo182, duo191, hao212, hua141, min201, wan111},
finite element \cite{aco171, ain171, bon191, del131, tia161},
and spectral \cite{hao211, xu181, zen141, zho241} approximation schemes. 
Approximation schemes using radial basis functions have also been investigated \cite{bur211, leh161, hao251}.

As a prelude to discussing the numerical approximation of the solution to \eqref{evModel}, in Sections 
\ref{steady_Ex_Uq} and \ref{steady_approximation}
an analysis of the regularity of the solution and
an error analysis of the spectral approximation to the steady-state solution of \eqref{model1} is given. 
This analysis differs from that in \cite{zhe251} with regard to the function space setting, and is used in the error analysis for the fully discrete approximation to \eqref{evModel}. 
Additionally, the analysis highlights the block diagonal structure of the coefficient matrix that arises in the spectral approximation.

Using a backward Euler time discretization and a spectral spatial approximation, an error analysis for the approximation of \eqref{evModel} is given in Section \ref{time_approx}. Of particular note is the relationship between the temporal discretization parameter, $\Delta t$, and the spectral approximation parameter $R$ (describing the highest polynomial degree with respect to the radial variable). For the optimal convergence rate for the approximation for $u(x,t) \in C^2(0,T; H^s_\alt(\Omega))$, $\Delta t$ and $R$ should be chosen to satisfy
\[
 \Delta t \, \sim \, R^{- (s + \alpha/2) / 4} \, .
\]

In Section \ref{numerical_results}, two numerical examples are presented for the approximation of the (steady-state) fractional Laplacian with the coefficient matrix $K$ given by $K = \begin{bmatrix} 3& 0\\ 0 & 9 \end{bmatrix}.$ Example \ref{ex1} highlights the influence of $K$ on modeling a material with different axial conductivities, and the influence of different values for the diffusivity parameter $\alpha$. Example \ref{ex2} then computes an experimental convergence rate for the approximation of the solution and compares it to the theoretically predicted value. Additionally, two numerical examples are presented for the approximation of the evolution problem \eqref{evModel}, again using $K  = \begin{bmatrix} 3& 0\\ 0 & 9 \end{bmatrix}.$ For the first example, Example \ref{ex3}, the RHS is chosen to demonstrate the evolution of the solution as it evolves to the solution of the steady-state problem in Example \ref{ex1}. The second example, Example \ref{ex4}, computes an experimental convergence rate for the approximation of the solution and compares it to the theoretically predicted value. A brief summary of the results in this paper and concluding remarks are given in Section \ref{conc}.

\section{Preliminaries}
\label{preliminaries}
In this section we present definitions and notation used in this article. Much of the notation is adopted from that 
used in \cite{zhe251} and  \cite{erv251}. Throughout, we assume $1 < \alpha < 2$.

\textbf{The Fractional Laplacian}\\
In $\R^d$, the integral fractional Laplacian of a function $u(x)$, $(-\Delta)^{\alpha/2}u(x)$, is defined as
\begin{equation}\label{IFL}
(-\Delta)^\alt u(x) \coloneqq \frac{1}{|\gamma_d(-\alpha)|}\lim_{\eps\ra 0}\int_{\R^d\setminus B(x,\eps)}
\frac{u(x) - u(y)}{| x - y |^{d+\alpha}}dy,\  x \in \Omega,
\end{equation}
where $\gamma_d(\alpha) = \frac{2^\alpha \pi^{d/2} \Gamma(\alt)}{\Gamma\big(\frac{d-\alpha}{2}\big)}$ 
and $B(x,\eps)\subseteq \R^d$ is the ball centered at $x$ of radius $\eps$. \\

\textbf{Riesz Potential Operator}\\
For $0 < \alpha < d$, the Riesz potential operator of $u(x)$, denoted $(-\Delta)^{-\alpha/2}u(x)$, is defined as \cite{dyd171}  
\[
(-\Delta)^{-\alpha/2}u(x) \coloneqq \frac{1}{\gamma_d(\alpha)}\int_{\R^d} \frac{u(x - y)}{| y|^{d-\alpha}} \, dy .
\]
For suitably nice functions (\cite{dyd171}, Proposition 1), the integral fractional Laplacian and the Riesz potential operator are inverses. 
In \cite{erv251}, it is shown that for a bounded domain $\Omega\subset \R^d$, $(-\Delta)^{-\alpha/2}(\cdot)$ is self-adjoint with respect to the $L^2(\Omega)$ inner product.\\

\textbf{Jacobi Polynomials}\\
The Jacobi polynomials are defined in terms of the regularized hypergeometric function $_2\textbf{F}_1(\cdot)$ (see \cite{dyd171}) and can be written as
\begin{align*}
P_n^{(a,b)}(t) &= \frac{(-1)^n\Gamma(b+1+n)}{n!}{_2}{\bf F}_1\Big(\genfrac{}{}{0pt}{}{-n, 1+a+b+n}{b+1}\Big|\frac{1+t}{2}\Big)\\
&= (-1)^n\frac{\Gamma(n+1+b)}{n!\Gamma(n+1+a+b)}\sum_{j=0}^n (-1)^j 2^{-j}\binom{n}{j}\frac{\Gamma(n+j+1+a+b)}{\Gamma(j+1+b)}(1+t)^j.
\end{align*}
Importantly, when $a,b > -1$, the Jacobi polynomials satisfy the following weighted orthogonality property:
\[
\int_{-1}^1 (1-t)^a(1+t)^b P_j^{(a,b)}(t)P_k^{(a,b)}(t)dt = 
\begin{cases} 0, & k\neq j,\\ 
\vertiii{ P_j^{(a,b)}}^2, & k = j \end{cases}, 
\]
\[\text{where  } \vertiii{P_j^{(a,b)}}= \Big(\frac{2^{(a+b+1)}}{2j+a+b+1}\frac{\Gamma(j+a+1)\Gamma(j+b+1)}{\Gamma(j+1)\Gamma(j+a+b+1)}\Big)^{1/2}.
\]

\centerline{}\textbf{Solid Harmonic Polynomials}\\
The solid harmonic polynomials are defined to be solutions to Laplace's equation. In $\mathbb{R}^2$ the first few linearly independent solid harmonic polynomials in increasing degree are (up to linearly dependence):
\[1,\ x_1,\ x_2,\ x_1x_2,\ x_1^2 - x_2^2,\ x_1^3 - 3x_1x_2^2,\ 3x_1^2x_2-x_2^3,\ \hdots\]
Written in polar coordinates, the solid harmonics can be expressed as $r^l\cos(l\phi)$ or $r^l\sin(l\phi)$, $l\in \mathbb{N}$. \\

\textbf{The Function Space $L^2_\gamma(\Omega)$}\\
Throughout this paper, the domain $\Omega$ will denote the unit disk in $\R^2$, and we use the weight function
\[
\omega(x) = (1-|x|^2)_+ \coloneqq \begin{cases} (1 - |x|^2), & |x| \leq 1;\\ 0, & |x| >1,\end{cases}
\]
The analysis presented resides in weighted Lebesgue and Sobolev spaces. With $\gamma \in \real$, we use the weight $\omega(x)^\gamma$ to define the weighted $L^2(\Omega)$ space to be $L^2_\gamma(\Omega) \coloneqq \{f\colon \Omega\ra \R\st f \text{ is measurable, } \|f\|_{L^2_\gamma(\Omega)} <\infty\}$, with inner product and norm given by 
\[
(f,g)_\gamma \coloneqq \int_\Omega\omega(x)^\gamma f(x)g(x)d\Omega,\text{  and  } \|f\|_{L^2_\gamma(\Omega)} \coloneqq (f,f)_\gamma^{1/2}.
\]

\textbf{Note}: We denote the $L^{2}(\Omega)$ inner product of functions $f$ and $g$ over $\Omega$ simply by  $(f \, , \, g)$.

\centerline{}\textbf{Basis for $L^2_\gamma(\Omega)$}\\
In \cite{li141} the authors showed that products of solid harmonic polynomials and Jacobi polynomials form an orthogonal basis for $L^2_\gamma(\Omega)$, and further, these basis functions are pseudo-eigenfunctions for the  fractional Laplacian operator on $\Omega$ \cite{dyd171}. 

Using the Cartesian coordinate system, we denote a point $x \in \R^2$ as $x = (x_{1} , x_{2})$, and in polar coordinates, we denote $x = (r,\phi)$. 

Define $\mc V_{0,1}(x) \coloneqq \frac{1}{2},\ \mc V_{l,1}(x)\coloneqq r^l\cos(l\phi),$ and $\mc V_{l,-1}(x)\coloneqq r^l\sin(l\phi)$ for $l = 1, 2, \hdots$. 

We use the following notation
\[
    \mcV_{l , \mu^{*}}(x) \ = \ \left\{ \begin{array}{rl}
    \mcV_{l , -1}(x) & \mbox{  if } \mu = 1 \, ,   \\
    \mcV_{l , 1}(x) & \mbox{  if } \mu = -1 \, .  \end{array} \right.
\]

Additionally, for a linear operator $\mcF( \cdot )$,  we use $\mcF(\mcV_{l , \mu}(x)) \ = \ (\pm) \mcV_{l , \sigma}(x)$ to denote
\[
\mcF(\mcV_{l , \mu}(x)) = \ (\pm) \mcV_{l , \sigma}(x) \ =  \ \left\{ \begin{array}{rl}
   + \mcV_{l , \sigma}(x) & \mbox{  if } \mu = 1 \, , \\
   - \mcV_{l , \sigma}(x) & \mbox{  if } \mu = -1 \, .  \end{array} \right.
\]
For example,
\[
 \frac{\partial}{\partial \varphi} \mcV_{l , \mu}(x) \ = \ (\mp) \, l \, \mcV_{l , \mu^{*}}(x) \, .
\]

For $\mu = \pm 1$, $n\geq 0$, let 
$\P_{l,n,\mu}^{(\gamma)}(x) = \mc V_{l,\mu}(x)P_n^{(\gamma, l)}(2r^2 -1)$. An orthogonal basis for $L^2_\gamma(\Omega)$ is then given by \cite{li141}
\begin{equation}
\Big(\bigcup_{l=0}^\infty\bigcup_{n=0}^\infty \big\{\P_{l,n,1}^{(\gamma)}(x)\big\}\Big) \cup \Big(\bigcup_{l=1}^\infty\bigcup_{n=0}^\infty \big\{\P_{l,n,-1}^{(\gamma)}(x)\big\}\Big).
\label{basisL2w}
\end{equation}
For brevity, we write the basis as 
\[
\bigcup_{\mu = \pm1}\bigcup_{l = 0}^\infty \bigcup_{n=0}^\infty \big\{\P_{l,n,\mu}^{(\gamma)}(x)\big\},
\]
where we implicitly assume that the terms $\P_{0,n,-1}^{(\gamma)}(x)$, with $n \geq 0$, are excluded. We trivially extend the definition of $\P_{l,n,\mu}^{(\gamma)}(x)$ to negative integer values of $n$ and $l$ by defining $\P_{l,n,\mu}^{(\gamma)}(x) \coloneqq 0$ when $l < 0$ or $n < 0$. \\

\textbf{The Function Space $H^s_\gamma(\Omega)$}\\
Weighted Sobolev spaces are commonly employed to succinctly characterize the regularity of solutions which are known to have singular behaviors (such as solutions to the fractional diffusion equation, specifically at the boundary of $\Omega$ \cite{aco171}). Following Babuška and Guo \cite{bab011}, define the weighted Sobolev space $\mathcal{H}^s_\gamma(\Omega)$, where $s\in \N$ as 
\begin{equation}
\mathcal{H}^s_\gamma(\Omega)\coloneqq \big\{f\in L^2_\gamma(\Omega) \st |f|_{\mathcal{H}^s_\gamma} \coloneqq \Big(\sum_{j=0}^s {s\choose j}\Big\|\frac{\partial ^s f(x)}{\partial x_{2}^j \, \partial x_{1}^{s-j}}\Big\|^2_{L^2_{\gamma+s}(\Omega)}\Big)^{1/2} < \infty\big\},
\label{defHsp}
\end{equation}
with norm given by 
\[
\|f\|_{\mathcal{H}^s_\gamma(\Omega)} \coloneqq \big(\|f\|^2_{L^2_\gamma(\Omega)} + |f|^2_{\mathcal{H}^s_\gamma(\Omega)}\big)^{1/2}.
\]
For $s > 0$, $s \not\in \Natural$,  $\mathcal{H}_{\gamma}^{s}(\Omega)$ is defined using the K-method of interpolation. For
$s < 0$ the spaces are defined by  $L^{2}_{\gamma}(\Omega)$ duality.

In \cite{dyd171}, Dyda, Kuznetsov, and Kwa\'{s}nicki showed that the basis functions for $L^{2}_{\alpot}(\Omega)$
are pseudo-eigenfunctions for the fractional Laplacian of order $\alpha$ on the unit disk.
In view of this property, it is natural to seek a solution of the fractional diffusion equation 
 on the unit disk
expressed as an infinite series of these basis functions. The 
appropriate space to study such functions is defined by the decay rate of their coefficients.

For $s \in \real$, the space $H_{\gamma}^{s}(\Omega)$ is defined as
\begin{equation}
H_{\gamma}^{s}(\Omega) \ := \ \Big\{ u \in L^{2}_{\gamma}(\Omega) \, : \, 
\sum_{l, n, \mu} (n + 1)^{s} (n + l + 1)^{s} \, a_{l, n, \mu}^{2} \, 
\| \P_{l,n,\mu}^{(\gamma)}(x) \|_{L^{2}_{\gamma}}^{2}  < \infty \Big\} \, ,
\label{defXs}
\end{equation}
where
\[
u(x) \ = \ \sum_{l, n, \mu} a_{l, n, \mu} \, \P_{l,n,\mu}^{(\gamma)}(x) \, , 
\]
with 
\[
\| u \|_{H_{\gamma}^{s}}  := \ \Big( \sum_{l, n, \mu} (n + 1)^{s} (n + l + 1)^{s} \, a_{l, n, \mu}^{2} \, 
\|\P_{l,n,\mu}^{(\gamma)}(x) \|_{L^{2}_{\gamma}}^{2} \Big)^{1/2}.
\]

In \cite{erv241}, it was shown that the spaces $\mathcal{H}_{\gamma}^{s}(\Omega)$ and $H_{\gamma}^{s}(\Omega)$
are equivalent, and that their dual spaces, with respect to the $L^{2}_{\gamma}(\Omega)$ pivot space, are characterized by
$H_{\gamma}^{-s}(\Omega)$.

When convenient, we instead us the notation $L^2_\gamma \coloneqq L^2_\gamma(\Omega)$ and $H^s_\gamma \coloneqq H^s_\gamma(\Omega)$ (since the domain $\Omega$ is fixed throughout), and $Q := L^{2}_{\alt}(\Omega)$, 
$V := H^{\alt}_{\alt}(\Omega)$, and  $V' := H^{- \alt}_{\alt}(\Omega)$.

\textbf{Bochner Function Spaces}
When considering the parabolic problem, function spaces involving time are central to our analysis. For a Banach space $(X, \|\cdot\|_X)$ and final time $T>0$, the Bochner spaces $L^2(0, T; X)$ are defined as the set of all measurable functions $v\colon (0,T) \ra X$ such that the corresponding norm is finite:
\[
\|v\|_{L^2(0,T;X)} \coloneqq \Big (\int_0^T \|v(t)\|_X^2\, dt \Big)^{1/2}.
\]
As $\{ H^s_\gamma(\Omega) \}_{s \geq 0}$ form a family of interpolation spaces, the authors in \cite[Proposition 5.1]{lin021} showed that the spaces $\big\{L^2(0,T; H^s_\gamma(\Omega))\big\}_{s \ge 0}$ also 
form a family of interpolation spaces.


\textbf{Useful Derivative, Norm, and Asymptotic Properties} \\
Following are several results used in the existence and uniqueness analysis.
\begin{lemma} \label{Basis Riesz} \cite{zhe251} For $f(x) = \omega^\alt \P_{l, n, \mu}^{(\alpot )}(x)$, \begin{align*}
(-\Delta)^\frac{\alpha -2}{2} \frac{\partial f(x)}{\partial x_1} & = C_2 (n+\alt + l) \P_{l+1, n, \mu}^{(\alpot-1 )}(x) + C_2(n+l+1)\P_{l-1, n+1, \mu}^{(\alpot-1 )}(x), \text{ and}\\
(-\Delta)^\frac{\alpha -2}{2}\frac{\partial f(x)}{\partial x_2} & = C_2(\pm)(n+\alt+l)\P_{l+1, n, \mu^\star}^{(\alpot-1 )}(x) + C_2(\mp)(n+l+1)\P_{l-1, n+1, \mu^\star}^{(\alpot-1 )}(x),
\end{align*}
where $C_2 = -2^{\alpha -2}\frac{\Gamma(n+1+\alt)\Gamma(n+\alt +l)}{\Gamma(n+1)\Gamma(n+2+l)}$. \end{lemma}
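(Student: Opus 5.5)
We prove the first identity by differentiating first and then applying the pseudo-eigenfunction property, so that $(-\Delta)^{\frac{\alpha-2}{2}}$ acts only on functions of the form $\omega^{\alpot-1}\P^{(\alpot-1)}$. Writing $\omega^{\alpot-1}$ for $(1-|x|^2)_+^{\alpot-1}$, the key steps are as follows.

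First we compute $\partial_{x_1}\big(\omega^{\alt}\P_{l,n,\mu}^{(\alpot)}\big)$ explicitly. The product rule gives
\[
\partial_{x_1}\big(\omega^{\alt}\P\big)=\omega^{\alt-1}\Big(-\alpha x_1 \P + \omega\,\partial_{x_1}\P\Big).
\]
The factor in parentheses is a polynomial of degree $2n+l+1$ with the angular structure of $\mcV_{l\pm1,\mu}$. This follows from
\[
x_1\mcV_{l,\mu}=\tfrac{r}{2}\big(r^{-(l+1)}\mcV_{l+1,\mu}\,r^{l}+\dots\big),
\]
that is, $\cos\phi\cos l\phi=\tfrac12(\cos(l+1)\phi+\cos(l-1)\phi)$, together with
\[
\partial_{x_1}\mcV_{l,\mu}=l\,\mcV_{l-1,\mu}.
\]
Using the Jacobi identities
\[
\frac{d}{dt}P_n^{(a,b)}=\tfrac12(n+a+b+1)P_{n-1}^{(a+1,b+1)}
\]
and the standard contiguous relations that lower $a$ by one (to pass from $(\alt,l)$ to $(\alt-1,l\pm1)$), we rewrite this polynomial as a linear combination of $\P^{(\alpot-1)}_{l+1,n,\mu}$ and $\P^{(\alpot-1)}_{l-1,n+1,\mu}$. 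Equivalently, and more cleanly, we use the known formula
\[
\partial_{x_1}\big(\omega^{\beta}\P^{(\beta)}_{l,n,\mu}\big)=c_1\,\omega^{\beta-1}\P^{(\beta-1)}_{l+1,n,\mu}+c_2\,\omega^{\beta-1}\P^{(\beta-1)}_{l-1,n+1,\mu}
\]
with $\beta=\alt$. We verify it by matching degrees, angular modes, and orthogonality: the left side, tested against any polynomial of lower degree in the $L^2_{\beta-1}$ sense, vanishes after integration by parts. It therefore lies in the span of the two top-degree orthogonal elements in the modes $l\pm1$. The constants $c_1=-2(n+\beta+l)$ and $c_2=-2(n+l+1)$, up to normalization, are fixed by comparing leading coefficients.

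Next we apply the Dyda--Kuznetsov--Kwa\'snicki pseudo-eigenfunction formula \cite{dyd171} with the exponent $\alpha-2\in(-1,0)$:
\[
(-\Delta)^{\frac{\alpha-2}{2}}\Big(\omega^{\alt-1}\P^{(\alpot-1)}_{l',n',\mu}\Big)=\lambda_{l',n'}\,\P^{(\alpot-1)}_{l',n',\mu},
\]
where
\[
\lambda_{l',n'}=2^{\alpha-2}\frac{\Gamma(n'+\alt)\,\Gamma(n'+l'+\alt)}{\Gamma(n'+1)\,\Gamma(n'+l'+1)}.
\]
This is the Riesz potential case of their formula for order $\alpha-2$, valid since $\alpha-2>-2$; we interpret it via the inverse relation between the Riesz potential and the fractional Laplacian. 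Substituting $(l',n')=(l+1,n)$ and $(l-1,n+1)$ gives
\[
\lambda_{l+1,n}=2^{\alpha-2}\frac{\Gamma(n+\alt)\Gamma(n+l+1+\alt)}{\Gamma(n+1)\Gamma(n+l+2)},\qquad
\lambda_{l-1,n+1}=2^{\alpha-2}\frac{\Gamma(n+1+\alt)\Gamma(n+l+\alt)}{\Gamma(n+2)\Gamma(n+l+1)}.
\]
Multiplying by $c_1,c_2$ and using $\Gamma(z+1)=z\Gamma(z)$, both terms collapse to the common prefactor $C_2$ times $(n+\alt+l)$ and $(n+l+1)$ respectively. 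This is the bookkeeping that fixes $C_2$.

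The $x_2$ derivative follows by the same computation with $\sin\phi\cos l\phi$ and $\sin\phi\sin l\phi$ replacing the cosine products. This produces $\mcV_{l\pm1,\mu^\star}$ and exactly the sign pattern $(\pm)$, $(\mp)$ from the angular identities. Alternatively, we obtain it by rotating coordinates by $\pi/2$, using the rotation invariance of $(-\Delta)^{\frac{\alpha-2}{2}}$.

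The main obstacle is the first step: getting the exact constants in the derivative formula, including the edge cases $l=0$ (the $\tfrac12$ normalization of $\mcV_{0,1}$) and $l=1$. I would handle it by comparing leading coefficients in $r$ of each angular mode after establishing the two-term structure by orthogonality. I would also double-check the $l=0$ case directly, since there the $l-1$ term must vanish or be reinterpreted via the convention $\P_{-1,\cdot,\cdot}=0$.
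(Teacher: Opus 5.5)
The paper takes this lemma from \cite{zhe251} without proof. Your route is the natural one: expand $\partial_{x_1}(\omega^{\alpha/2}\mathcal{P}^{(\alpha/2)}_{l,n,\mu})$ in $\omega^{\alpha/2-1}\mathcal{P}^{(\alpha/2-1)}$ (this is exactly Lemma~\ref{Basis Partials}), then apply the Dyda--Kuznetsov--Kwa\'{s}nicki identity at order $\alpha-2$. Your eigenvalue $\lambda_{l',n'}$ and your orthogonality argument for the two-term structure are both correct.

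\textbf{Gap 1: the constants.} The constants are the whole content of the lemma, and yours are wrong. The values $c_1=-2(n+\frac{\alpha}{2}+l)$ and $c_2=-2(n+l+1)$ leave no normalization freedom, since $P^{(a,b)}_n$ and $\mathcal{V}_{l,\mu}$ are fixed. With them you get $c_1\lambda_{l+1,n}=C_2(n+\frac{\alpha}{2}+l)\cdot\frac{2(n+l+\alpha/2)}{n+\alpha/2}$ and $c_2\lambda_{l-1,n+1}=C_2(n+l+1)\cdot\frac{2(n+l+1)}{n+1}$, so the claimed collapse to $C_2(\cdot)$ does not happen.

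The correct values are $c_1=-(n+\frac{\alpha}{2})$ and $c_2=-(n+1)$. These turn $\Gamma(n+\frac{\alpha}{2})$ into $\Gamma(n+1+\frac{\alpha}{2})$ and reduce $\Gamma(n+2)$ to $\Gamma(n+1)$. The factors $(n+\frac{\alpha}{2}+l)$ and $(n+l+1)$ in the statement do not come from the derivative. They come from $\Gamma(n+l+1+\frac{\alpha}{2})=(n+l+\frac{\alpha}{2})\Gamma(n+l+\frac{\alpha}{2})$ in $\lambda_{l+1,n}$ and from $\Gamma(n+l+2)=(n+l+1)\Gamma(n+l+1)$ in $C_2$.

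Your leading-coefficient comparison seems to skip dividing by the leading coefficients of the target polynomials. In $s=r^2$, let $q_n$ be the leading coefficient of $P^{(\alpha/2,l)}_n(2s-1)$. The $(l+1)$-mode has leading coefficient $-(n+\frac{\alpha}{2})q_n$, against $q_n$ for $P_n^{(\alpha/2-1,l+1)}(2s-1)$. The $(l-1)$-mode has $-(n+l+\frac{\alpha}{2})q_n$, against $\frac{n+l+\alpha/2}{n+1}q_n$ for $P_{n+1}^{(\alpha/2-1,l-1)}(2s-1)$.

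A clean derivation uses $z=x_1+ix_2$ and $F(s)=(1-s)^{\alpha/2}P_n^{(\alpha/2,l)}(2s-1)$, with $\partial_{x_1}\big(z^lF(|z|^2)\big)=z^{l+1}F'(s)+z^{l-1}\big(lF(s)+sF'(s)\big)$. Combine this with the standard identities $\frac{d}{dt}\big[(1-t)^aP_n^{(a,b)}\big]=-(n+a)(1-t)^{a-1}P_n^{(a-1,b+1)}$ and $\frac{d}{dt}\big[(1-t)^a(1+t)^bP_n^{(a,b)}\big]=-2(n+1)(1-t)^{a-1}(1+t)^{b-1}P_{n+1}^{(a-1,b-1)}$, and take real and imaginary parts.

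\textbf{Gap 2: the edge cases you flag.} At $l=0$ the stated formula holds exactly because $\mathcal{V}_{0,1}=\frac12$: the two complex terms combine to $2x_1F'$, and the $\frac12$ absorbs the $2$.

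At $l=1$, however, the second term lands on $\mathcal{V}_{0,1}$, while $\mathrm{Re}(z^0)=1=2\mathcal{V}_{0,1}$. For the same reason your identity $\partial_{x_1}\mathcal{V}_{l,\mu}=l\,\mathcal{V}_{l-1,\mu}$ already fails at $(l,\mu)=(1,1)$. So that coefficient doubles. For instance, take $n=0$ and use $P_1^{(\alpha/2-1,0)}(2r^2-1)=(1+\frac{\alpha}{2})r^2-1$:
\[
\partial_{x_1}\big(\omega^{\alpha/2}x_1\big)=\omega^{\alpha/2-1}\Big(-\tfrac{\alpha}{2}(x_1^2-x_2^2)+1-\big(1+\tfrac{\alpha}{2}\big)r^2\Big)=\omega^{\alpha/2-1}\Big(-\tfrac{\alpha}{2}\,\mathcal{P}^{(\alpha/2-1)}_{2,0,1}-2\,\mathcal{P}^{(\alpha/2-1)}_{0,1,1}\Big).
\]
The direct check you plan will therefore not confirm the uniform formula. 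With this normalization, the second coefficient is $2C_2(n+l+1)$ for $(l,\mu)=(1,1)$ in the $x_1$ identity and for $(l,\mu)=(1,-1)$ in the $x_2$ identity. A complete proof must state that case separately rather than absorb it into the general formula.
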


\begin{lemma} \label{Basis Partials} \cite{zhe251} For $f(x) = \omega^\alt \P_{l, n, \mu}^{(\alpot )}(x)$, 
\begin{align*}
\frac{\partial f(x)}{\partial x_1} & = -(n+\alt)\omega^{\alt-1}\P_{l+1, n, \mu}^{(\alpot-1 )}(x) - (n+1)\omega^{\alt-1}\P_{l-1, n+1, \mu}^{(\alpot-1 )}(x), \text{ and}\\
\frac{\partial f(x)}{\partial x_2} & = -(\pm)(n+\alt)\omega^{\alt-1}\P_{l+1, n, \mu^\star}^{(\alpot-1 )}(x) - (\mp)(n+1)\omega^{\alt-1}\P_{l-1, n+1, \mu^\star}^{(\alpot-1 )}(x).\end{align*}
\end{lemma}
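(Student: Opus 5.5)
We prove Lemma \ref{Basis Partials} by direct differentiation in polar coordinates. We write $f = \omega^{\alt}\,\mcV_{l,\mu}(x)\,P_n^{(\alt,l)}(2r^2-1)$ and apply the product rule:
\[
\frac{\partial f}{\partial x_1} = \omega^{\alt-1}\Big[ -\alpha x_1\,\mcV_{l,\mu}P_n^{(\alt,l)} + \omega\,\frac{\partial \mcV_{l,\mu}}{\partial x_1}P_n^{(\alt,l)} + 4\omega\, x_1\,\mcV_{l,\mu}\,{P_n^{(\alt,l)}}'(2r^2-1)\Big],
\]
using $\partial_{x_1}\omega = -2x_1$ and $\partial_{x_1}(2r^2-1)=4x_1$. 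The solid harmonic identities
\[
\partial_{x_1}\mcV_{l,\mu} = l\,\mcV_{l-1,\mu}, \qquad x_1\mcV_{l,\mu} = \tfrac12\big(\mcV_{l+1,\mu} + r^2\mcV_{l-1,\mu}\big)
\]
hold for both $\mu = \pm1$, with the usual normalization care at $l=0,1$ coming from $\mcV_{0,1}=\tfrac12$. They express everything in terms of $\mcV_{l+1,\mu}$ times a polynomial in $t=2r^2-1$, plus $\mcV_{l-1,\mu}$ times a polynomial in $t$. With $r^2=(1+t)/2$ and $\omega=(1-t)/2$, the bracket becomes
\[
\mcV_{l+1,\mu}\,A(t) + \mcV_{l-1,\mu}\,B(t),
\]
where $A$ and $B$ are explicit polynomials of degrees $n$ and $n+1$.

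The key step is to identify these polynomials as Jacobi polynomials with shifted parameters. We expect
\[
A(t) = -(n+\alt)P_n^{(\alt-1,l+1)}(t), \qquad B(t) = -(n+1)P_{n+1}^{(\alt-1,l-1)}(t).
\]
To prove this we use standard Jacobi identities: the derivative formula $\frac{d}{dt}P_n^{(a,b)} = \frac{n+a+b+1}{2}P_{n-1}^{(a+1,b+1)}$, the relation $(1-t)\frac{d}{dt}P_n^{(a,b)} - aP_n^{(a,b)} = -(n+a)P_n^{(a-1,b+1)}$-type forms, and
\[
(1+t)P_n^{(a,b+1)} \text{ combinations},\qquad (n+a+b+1)P_n^{(a,b)} - (n+b)P_{n-1}^{(a,b)} \text{ style contiguous relations},
\]
all taken from Szeg\H{o} or DLMF 18.9.

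The cleanest route is probably to avoid tracking these identities separately. Instead we verify the two claimed identities via the hypergeometric representation stated in the Preliminaries, comparing coefficients of $(1+t)^j$ using Gamma-function recursions. An alternative is to check orthogonality: $A$ must be orthogonal in $L^2$ with weight $(1-t)^{\alt-1}(1+t)^{l+1}$ to lower-degree polynomials, which can be shown by integration by parts against $\omega^{\alt}$ and the original orthogonality of $P_n^{(\alt,l)}$. Leading coefficients are then matched.

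The $x_2$ derivative follows the same way, using
\[
\partial_{x_2}\mcV_{l,\mu} = (\mp)\,l\,\mcV_{l-1,\mu^\star}, \qquad x_2\mcV_{l,\mu} = \tfrac12(\pm)\big(\mcV_{l+1,\mu^\star} - r^2\mcV_{l-1,\mu^\star}\big).
\]
These can be verified with trigonometric product formulas and produce the sign pattern $(\pm)$, $(\mp)$ in the statement. Alternatively, we apply the $x_1$ result after a rotation by $\pi/2$, which maps $\mcV_{l,\mu}$ to $\pm\mcV_{l,\mu^\star}$.

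We expect the main obstacle to be the Jacobi bookkeeping that establishes $A$ and $B$. The constants $-(n+\alt)$ and $-(n+1)$ must come out exactly, and the boundary cases $l=0$ (where $\mcV_{0,1}=\tfrac12$ changes normalization) and $l=1$ (where the $\mcV_{l-1}$ term involves $\mcV_{0,1}$) need separate checks against the convention $\P_{l,n,\mu}=0$ for negative indices.
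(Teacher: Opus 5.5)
The paper does not prove this lemma (it is quoted from \cite{zhe251}). Your route (product rule, the harmonic identities, then Jacobi identities) is the natural one, and for $l\neq 1$ it closes.

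The identity you list for $(1-t)\frac{d}{dt}P_n^{(a,b)}-aP_n^{(a,b)}$ gives $A$ directly. For $B$ you need a different formula: with $a=\alt$, $b=l$,
\[
B(t)=\frac12(1-t)^{1-a}(1+t)^{1-b}\frac{d}{dt}\Big[(1-t)^{a}(1+t)^{b}P_n^{(a,b)}(t)\Big],
\]
and the weighted-derivative formula
\[
\frac{d}{dt}\Big[(1-t)^{a}(1+t)^{b}P_n^{(a,b)}\Big]=-2(n+1)(1-t)^{a-1}(1+t)^{b-1}P_{n+1}^{(a-1,b-1)}
\]
gives $B=-(n+1)P_{n+1}^{(\alt-1,l-1)}$ at once. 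This formula also follows from your orthogonality-plus-leading-coefficient argument; the weight is admissible because $l\geq 1$ whenever the $\mcV_{l-1}$ term is present.

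The real problem is the boundary case you deferred: at $l=1$ the check fails, and the statement as written is false there. With the paper's normalization $\mcV_{0,1}=\frac12$, both of your harmonic identities break at $l=1$. In fact $\partial_{x_1}\mcV_{1,1}=1=2\mcV_{0,1}$ and $x_1\mcV_{1,1}=\frac12\mcV_{2,1}+r^2\mcV_{0,1}$. So the $\mcV_{0,1}$-component of your bracket is $2B(t)$, and
\[
\frac{\partial}{\partial x_1}\Big(\omega^{\alt}\mathcal{P}^{(\alt)}_{1,n,1}\Big)=-(n+\alt)\,\omega^{\alt-1}\mathcal{P}^{(\alt-1)}_{2,n,1}-2(n+1)\,\omega^{\alt-1}\mathcal{P}^{(\alt-1)}_{0,n+1,1}.
\]
Likewise, the $\mathcal{P}^{(\alt-1)}_{0,n+1,1}$ coefficient of $\partial_{x_2}(\omega^{\alt}\mathcal{P}^{(\alt)}_{1,n,-1})$ is $-2(n+1)$, not $-(n+1)$.

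Concretely, take $n=0$ and $f=\omega^{\alt}x_1$. Then $\partial_{x_1}f=\omega^{\alt-1}\big(1-(1+\alpha)x_1^2-x_2^2\big)$, while the stated right-hand side equals $\omega^{\alt-1}\big(\frac12-(\frac{3\alpha}{4}+\frac12)x_1^2+(\frac{\alpha}{4}-\frac12)x_2^2\big)$. True minus stated is $-\omega^{\alt-1}\mathcal{P}^{(\alt-1)}_{0,1,1}$, i.e. one more copy of the last term.

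Renormalizing $\mcV_{0,1}$ cannot rescue the statement: the $l=0$ case forces $\mcV_{0,1}=\frac12$, while $l=1$ would force $\mcV_{0,1}=1$. What your argument actually proves is the lemma for $l\neq 1$ together with the corrected $l=1$ formula above. The paper's own rows R2 and R5, with diagonal $2k_1+(k_1+k_2)$, are consistent with the doubled factor; the $\frac{k_1}{2}$ in the coercivity proof reflects the undoubled one.

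A minor point: a rotation by $\pi/2$ maps $\mcV_{l,\mu}$ to $\pm\mcV_{l,\mu^\star}$ only for odd $l$; for even $l$ it gives $\pm\mcV_{l,\mu}$. So keep your direct $x_2$ identities, which are correct for $l\neq 1$.
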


\begin{lemma} \label{NormWeightShift} \cite{erv241} For $l,n,k\geq 0$ and $(l,\mu) \neq (0,-1)$,
\begin{equation*}
\|\P_{l, n, \mu}^{(\gamma + k )} \|^2_{L^2_{\gamma+k}} = \frac{2n+\gamma + l+1}{2n + \gamma + l+1+k}\frac{\prod_{s=1}^k (n+\gamma + s)}{\prod_{s=1}^k(n+\gamma+l+s)}\|\P_{l, n, \mu}^{(\gamma)} \|^2_{L^2_\gamma}.
\end{equation*}
\end{lemma}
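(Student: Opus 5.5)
We prove Lemma \ref{NormWeightShift} by induction on $k$. The statement for general $k$ is a product of $k$ single-step ratios, so it suffices to prove the case $k=1$ with a general weight $\gamma$. Applying that case with $\gamma$ replaced successively by $\gamma+1,\dots,\gamma+k-1$ and multiplying gives the result. The factors $\frac{2n+\gamma+l+s}{2n+\gamma+l+s+1}$ telescope to $\frac{2n+\gamma+l+1}{2n+\gamma+l+1+k}$, and the remaining factors form the stated products $\prod_{s=1}^k (n+\gamma+s)/\prod_{s=1}^k(n+\gamma+l+s)$.

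For a single step with general weight $\beta$ (later $\beta=\gamma+j$), I would compute $\|\P^{(\beta)}_{l,n,\mu}\|^2_{L^2_\beta}$ explicitly in polar coordinates. Write $\mc V_{l,\mu}=r^l\cos(l\phi)$ or $r^l\sin(l\phi)$. Then
\[
\|\P^{(\beta)}_{l,n,\mu}\|^2_{L^2_\beta}=c_{l}\int_0^1 (1-r^2)^\beta r^{2l}\big(P_n^{(\beta,l)}(2r^2-1)\big)^2 r\,dr ,
\]
where $c_l=\pi$ for $l\ge1$ and $c_0=\pi/2$ (from $\mc V_{0,1}=1/2$). The constant $c_l$ does not depend on $\beta$, so it cancels in the ratio; this is why the case $(l,\mu)=(0,-1)$ is excluded. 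Next substitute $t=2r^2-1$, so that $r\,dr=dt/4$, $1-r^2=(1-t)/2$ and $r^2=(1+t)/2$. The integral becomes $2^{-\beta-l-2}\int_{-1}^1(1-t)^\beta(1+t)^l (P_n^{(\beta,l)}(t))^2dt$, which equals $2^{-\beta-l-2}\vertiii{P_n^{(\beta,l)}}^2$ by the Jacobi orthogonality/norm formula in the preliminaries. This yields the closed form
\[
\|\P^{(\beta)}_{l,n,\mu}\|^2_{L^2_\beta}= c_l\,\frac{1}{4(2n+\beta+l+1)}\frac{\Gamma(n+\beta+1)\Gamma(n+l+1)}{\Gamma(n+1)\Gamma(n+\beta+l+1)} .
\]

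Finally, take the ratio of this closed form at $\beta=\gamma+k$ and at $\beta=\gamma$; this can even be done directly for general $k$, bypassing the induction. The $\Gamma(n+l+1)$, $\Gamma(n+1)$ and $c_l$ factors cancel. The ratio $\Gamma(n+\gamma+k+1)/\Gamma(n+\gamma+1)=\prod_{s=1}^k(n+\gamma+s)$, and $\Gamma(n+\gamma+l+1)/\Gamma(n+\gamma+l+k+1)=1/\prod_{s=1}^k(n+\gamma+l+s)$, using $\Gamma(z+1)=z\Gamma(z)$ with $k$ a nonnegative integer. The remaining factor is $(2n+\gamma+l+1)/(2n+\gamma+l+k+1)$, which is exactly the claimed identity.

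The only delicate point is the bookkeeping in the change of variables and the normalization of $\mc V_{0,1}$. One must also check that $\gamma>-1$, so the Jacobi norm formula applies; this holds for the weights used in the paper, $\gamma=\alpha/2-1>-1$. There is no real obstacle beyond this.
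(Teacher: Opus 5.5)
Your proof is correct and follows what is presumably the cited argument: the paper gives no proof here and simply cites \cite{erv241}, but the closed form it records later in \eqref{h2def1} (with $\gamma=\alpha/2$), $C_{l,\mu}\,\frac{1}{2(2n+\gamma+l+1)}\frac{\Gamma(n+\gamma+1)\Gamma(n+l+1)}{\Gamma(n+1)\Gamma(n+\gamma+l+1)}$, is exactly what your polar-coordinate computation produces, and the lemma is the ratio of two instances of it. Two small slips, neither affecting the result. First, the prefactor is $\frac{1}{2(2n+\beta+l+1)}$ rather than $\frac{1}{4(2n+\beta+l+1)}$, since $2^{-\beta-l-2}\cdot 2^{\beta+l+1}=\tfrac12$; this cancels in the ratio anyway. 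Second, the case $(l,\mu)=(0,-1)$ is excluded because $\mc V_{0,-1}$ is not a basis element (it would vanish identically), not because $c_l$ cancels.
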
 

\begin{lemma} \label{NormShiftAdd} \cite{erv241} For $l,n,j,m\geq 0$ and $(l,\mu)\neq (0,-1)$, 
\begin{align*}
\|\P_{l+j, n+m, \mu}^{(\gamma)} \|_{L^2_\gamma}^2 = \frac{C_{l+j,\mu}}{C_{l,\mu}}\frac{2n + \gamma + l + 1}{2n + 2m + \gamma + l + j + 1} \frac{\prod_{s=1}^m (n+\gamma+s)}{\prod_{s=1}^m (n+s)}&\frac{\prod_{s=1}^{m+j}(n+l+s)}{\prod_{s=1}^{m+j}(n+\gamma+l+s)}\\
&\cdot\|\P_{l, n, \mu}^{(\gamma)} \|^2_{L^2_\gamma}.
\end{align*}
where $C_{l,\mu} = \begin{cases} \pi/2, & \text{if } l \leq 0;\\ \pi, &\text{if } l \geq 1\end{cases}$.
\end{lemma}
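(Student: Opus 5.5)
We prove Lemma \ref{NormShiftAdd} by reducing it to the explicit one-dimensional Jacobi norm. We compute $\|\P_{l,n,\mu}^{(\gamma)}\|_{L^2_\gamma}^2$ in closed form and then take the quotient of the two closed forms. We work in polar coordinates $x=(r,\phi)$, so that $\omega^\gamma = (1-r^2)^\gamma$ and $\P_{l,n,\mu}^{(\gamma)} = \mcV_{l,\mu}(x) P_n^{(\gamma,l)}(2r^2-1)$. Since $\mcV_{l,\mu}$ factors as $r^l$ times $\cos(l\phi)$ or $\sin(l\phi)$ (or the constant $\tfrac12$ when $l=0$), the norm factors:
\[
\|\P_{l,n,\mu}^{(\gamma)}\|_{L^2_\gamma}^2 = \Big(\int_0^{2\pi} \Theta_{l,\mu}(\phi)^2\, d\phi\Big) \int_0^1 (1-r^2)^\gamma r^{2l} \big(P_n^{(\gamma,l)}(2r^2-1)\big)^2\, r\, dr .
\]
The angular factor equals $\pi$ for $l\ge1$, and $2\pi\cdot\tfrac14=\pi/2$ for $l=0$, $\mu=1$. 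This is exactly the constant $C_{l,\mu}$ of the statement, with the convention that $l\le 0$ means the $l=0$ case.

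For the radial integral we substitute $t=2r^2-1$, so that $r\,dr = dt/4$, $1-r^2=(1-t)/2$ and $r^2=(1+t)/2$. The integral becomes
\[
2^{-\gamma-l-2}\int_{-1}^1 (1-t)^\gamma(1+t)^l \big(P_n^{(\gamma,l)}(t)\big)^2\,dt .
\]
By the orthogonality relation in the Preliminaries, the integral here is $\vertiii{P_n^{(\gamma,l)}}^2$. We therefore get
\[
\|\P_{l,n,\mu}^{(\gamma)}\|_{L^2_\gamma}^2 = C_{l,\mu}\,\frac{2^{-1}}{2n+\gamma+l+1}\,\frac{\Gamma(n+\gamma+1)\Gamma(n+l+1)}{\Gamma(n+1)\Gamma(n+\gamma+l+1)} ,
\]
where the powers of two have cancelled: $2^{\gamma+l+1}\cdot 2^{-\gamma-l-2} = \tfrac12$. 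This formula is the key intermediate result.

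Next we apply the formula with $(l,n)$ replaced by $(l+j,n+m)$ and divide by the formula for $(l,n)$. The factor $\tfrac12$ cancels. The constants contribute $C_{l+j,\mu}/C_{l,\mu}$, and the linear denominators contribute
\[
\frac{2n+\gamma+l+1}{2n+2m+\gamma+l+j+1}.
\]
The Gamma ratios telescope into the stated products:
\[
\frac{\Gamma(n+m+\gamma+1)}{\Gamma(n+\gamma+1)}=\prod_{s=1}^m(n+\gamma+s),\qquad \frac{\Gamma(n+1)}{\Gamma(n+m+1)}=\Big(\prod_{s=1}^m(n+s)\Big)^{-1},
\]
\[
\frac{\Gamma(n+m+l+j+1)}{\Gamma(n+l+1)}=\prod_{s=1}^{m+j}(n+l+s),\qquad \frac{\Gamma(n+\gamma+l+1)}{\Gamma(n+m+\gamma+l+j+1)}=\Big(\prod_{s=1}^{m+j}(n+\gamma+l+s)\Big)^{-1}.
\]
Multiplying these pieces gives the identity in Lemma \ref{NormShiftAdd}.

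There is no genuine obstacle; the work is bookkeeping. The steps needing care are tracking the powers of $2$ and the Jacobian $r\,dr=dt/4$ in the substitution, and handling the $l=0$ angular normalization. The latter comes from the definition $\mcV_{0,1}=\tfrac12$, and it is why $C_{l,\mu}$ takes the value $\pi/2$ rather than $\pi$. The hypothesis $(l,\mu)\neq(0,-1)$ only excludes the basis function that does not exist, so the formula is never applied to it. We also need $\gamma>-1$ so that the orthogonality relation and its norm formula apply. The same computation, with $\gamma$ replaced by $\gamma+k$, also yields Lemma \ref{NormWeightShift}, which is a useful consistency check on the closed form.
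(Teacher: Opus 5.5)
Your proof is correct and follows the natural route. The paper does not prove this lemma itself but cites \cite{erv241}. Your closed form for $\|\P_{l,n,\mu}^{(\gamma)}\|^2_{L^2_\gamma}$ is exactly the expression $h^2_{l,n}$ the paper records in \eqref{h2def1} (there with $\gamma=\alpha/2$), so dividing two instances of it and telescoping the Gamma ratios is the intended argument. The implicit requirement $\gamma>-1$ that you flag holds in every application in the paper, which uses $\gamma=\alpha/2$ and $\gamma=\alpha/2-1$. Also, once $(l,\mu)\neq(0,-1)$, the shifted index $(l+j,\mu)$ is automatically admissible, so nothing further is needed.
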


\begin{lemma}\label{NormShiftSub}\cite{erv241} For $l,n\geq 0$ and $m\geq j\geq 0$ with $(l-j,\mu)\neq (0,-1)$, 
\begin{align*}
\|\P_{l-j, n+m, \mu}^{(\gamma)} \|^2_{L^2_\gamma} = \frac{C_{l-j,\mu}}{C_{l,\mu}}\frac{2n + \gamma + l+1}{2n + 2m + \gamma + l - j + 1}\frac{\prod_{s=1}^m (n+\gamma+s)}{\prod_{s=1}^m (n+s)}&\frac{\prod_{s=1}^{m-j} (n+l+s)}{\prod_{s=1}^{m-j}(n+\gamma+l+s)}\\
&\cdot\|\P_{l, n, \mu}^{(\gamma)} \|^2_{L^2_\gamma},
\end{align*}
with $C_{l,\mu}$ defined as in Lemma \ref{NormShiftAdd}. 
\end{lemma}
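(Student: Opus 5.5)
The plan is to compute both norms in closed form and take their ratio; no structural argument is needed. The starting point is a closed formula for $\|\P_{l,n,\mu}^{(\gamma)}\|^2_{L^2_\gamma}$, valid for any admissible $(l,\mu)$ with $l\ge 0$, $(l,\mu)\neq(0,-1)$. Applying it to the index pairs $(l,n)$ and $(l-j,n+m)$ and dividing gives the stated identity. The hypotheses $m\ge j$ and $(l-j,\mu)\neq(0,-1)$ are exactly what make the shifted index pair admissible and the products well defined.

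\textbf{Step 1 (closed form).} Write the integral in polar coordinates. Since $\mcV_{l,\mu}(x)^2 = r^{2l}\cos^2(l\phi)$ or $r^{2l}\sin^2(l\phi)$ for $l\ge1$, the angular integral equals $\pi$. For $l=0$, $\mu=1$ we have $\mcV_{0,1}^2=1/4$, giving $2\pi/4=\pi/2$. In both cases the angular factor is $C_{l,\mu}$. The radial part is
\[
\int_0^1 (1-r^2)^\gamma r^{2l} \big(P_n^{(\gamma,l)}(2r^2-1)\big)^2 \, r\,dr .
\]
Substitute $t=2r^2-1$, so that $r\,dr = dt/4$, $1-r^2=(1-t)/2$ and $r^2=(1+t)/2$. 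The radial integral becomes $2^{-(\gamma+l+2)}\vertiii{P_n^{(\gamma,l)}}^2$. Inserting the Jacobi norm from the Preliminaries then yields
\[
\|\P_{l,n,\mu}^{(\gamma)}\|^2_{L^2_\gamma} = \frac{C_{l,\mu}}{2(2n+\gamma+l+1)}\,
\frac{\Gamma(n+\gamma+1)\Gamma(n+l+1)}{\Gamma(n+1)\Gamma(n+\gamma+l+1)} .
\]

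\textbf{Step 2 (ratio).} Apply this formula with $(l,n)$ replaced by $(l-j,n+m)$ and divide by the formula for $(l,n)$.
\begin{itemize}
\item The prefactors give $\frac{C_{l-j,\mu}}{C_{l,\mu}}\cdot\frac{2n+\gamma+l+1}{2n+2m+\gamma+l-j+1}$.
\item The Gamma-function ratios telescope via $\Gamma(z+k)/\Gamma(z)=\prod_{s=1}^k(z+s-1)$, written in the shifted form:
\begin{itemize}
\item $\Gamma(n+m+\gamma+1)/\Gamma(n+\gamma+1)=\prod_{s=1}^m(n+\gamma+s)$;
\item $\Gamma(n+1)/\Gamma(n+m+1)=1/\prod_{s=1}^m(n+s)$;
\item $\Gamma(n+m+l-j+1)/\Gamma(n+l+1)=\prod_{s=1}^{m-j}(n+l+s)$;
\item $\Gamma(n+\gamma+l+1)/\Gamma(n+m+\gamma+l-j+1)=1/\prod_{s=1}^{m-j}(n+\gamma+l+s)$.
\end{itemize}
\end{itemize}
The last two identities use $m-j\ge0$, which is where the hypothesis $m\ge j$ enters. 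Collecting the factors gives exactly the stated formula.

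\textbf{Main obstacle.} The only delicate point is bookkeeping in Step 1. One has to track the powers of $2$ from the substitution so that they cancel exactly against the $2^{\gamma+l+1}$ in the Jacobi norm. One also has to handle the $l=0$ (or $l-j=0$) case correctly, where the constant $\mcV_{0,1}=\tfrac12$ changes the angular factor to $\pi/2$; this is precisely why $C_{l,\mu}$ appears. The same computation with $j$ replaced by $-j$ and $m+j$ in place of $m-j$ gives Lemma \ref{NormShiftAdd}, which serves as a consistency check.
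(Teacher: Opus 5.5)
Your argument is correct. The paper gives no proof of this lemma beyond the citation to \cite{erv241}, but the $\gamma=\alpha/2$ case of your Step~1 closed form is exactly the expression \eqref{h2def1} that the paper itself uses for $h^2_{l,n}$ in the appendix, and taking the ratio of the two closed forms with telescoping Gamma quotients is the natural argument.

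One small correction to your remark on the hypotheses: $m\ge j$ and $(l-j,\mu)\neq(0,-1)$ do not by themselves make $(l-j,n+m)$ admissible. You also need $l\ge j$, which the statement leaves implicit. For $j>l$ the paper's convention gives $\P^{(\gamma)}_{l-j,n+m,\mu}=0$, while the right-hand side is strictly positive.
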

From Stirling's formula we have \cite{qi121},  
\[
\lim_{n\rightarrow\infty}\frac{\Gamma(n+\gamma)}{\Gamma(n)n^\gamma} = 1.
\]
In particular, for $n \geq 1$ and $0 < \alt < 1$,
\begin{equation}
   \frac{1}{2} \ <  \ \Big( \frac{ n }{n + \alt} \Big)^{1 - \alt}  \ \leq \  \frac{\Gamma(n + \alt)}{\Gamma(n) \, n^\alt} \ \leq \ 1  \, .
\label{stirling}
\end{equation}   

Note that for real numbers $a,b$, the notation $a \lesssim b$ is used to indicate that there exists a constant $c>0$ such that $a \, \leq \, c \,  b$, and $a \approx b$ denotes that there exists constants $c_{1}, \, c_{2} > 0$ such that
$c_{1} \, b \, \leq a \leq \, c_{2} \, b$.
Additionally, we use $\lfloor a \rfloor$ to denote the greatest integer less than or equal to $a$.

\section{Existence and Uniqueness to the Steady-State Problem}
\label{steady_Ex_Uq}
In this section we investigate the existence and uniqueness of the solution to the steady-state problem: 
\begin{equation}
\begin{cases}\label{problem}
- \grad\cdot(-\Delta)^\frac{\alpha-2}{2}K \grad \wt{u}(x) &= \ f(x),\ \  x \in \Omega , \\
 \hfill \wt{u}(x) &= 0, \ \  x \in \real^{2}  ,
\end{cases}
\end{equation}
where we assume $K(x) = \begin{bmatrix} k_1 & 0 \\ 0 & k_2\end{bmatrix},$ with $k_1, k_2 >0$.
Importantly, the analysis reveals a block diagonal structure of the coefficient matrix
that determines the solution of \eqref{prob2}.

To derive the weak formulation of (\ref{problem}), multiply (\ref{problem}) through by 
$\omega^{\alt} v(x)$, for $v \in V$ and integrate over $\Omega$ to obtain 
\begin{equation}
\Big((-\Delta)^{\frac{\alpha -2}{2}}K\grad (\omega^\alt{u}),\ \grad(\omega^\alt v)\Big) \ = \
\big( f \, ,\, \omega^{\alt} v \big)       \, .
\label{prob2}
\end{equation}
From Corollaries 3.3 and 3.4 in \cite{erv251},
\[
(-\Delta)^{\frac{\alpha -2}{2}}K\grad \, : \, \omega^\alt \otimes H^{\alt}_{\alt}(\Omega)
 \longrightarrow \Big( H^{1 - \alt}_{\alt - 1}(\Omega) \Big)^{2} \, ,  \mbox{ and } \ 
 \grad \, : \, \omega^\alt \otimes H^{\alt}_{\alt}(\Omega)
 \longrightarrow  \, \omega^{\alt - 1} \otimes \Big( H^{-(1 - \alt)}_{\alt - 1}(\Omega) \Big)^{2} \, . 
\]
Hence,
\[
(-\Delta)^{\frac{\alpha -2}{2}}K\grad (\omega^\alt{u}) \, := \, \mc{U} \, ,  \mbox{ and } 
\grad(\omega^\alt v) \, := \, \omega^{\alt - 1} \mc{V}  \, , \mbox{ with } \mc{U} \in \Big( H^{1 - \alt}_{\alt - 1}(\Omega) \Big)^{2} ,
\mbox{ and } \mc{V}  \in \Big( H^{-(1 - \alt)}_{\alt - 1}(\Omega) \Big)^{2} .
\]
Then,
\begin{equation} \label{mapx2}
\Big((-\Delta)^{\frac{\alpha -2}{2}}K\grad (\omega^\alt{u}),\ \grad(\omega^\alt v)\Big) 
\ = \ \Big( \omega^{\alt - 1} \, \mc{U} \ , \ \mc{V}  \Big) \ = \ \big\langle U \, , \, \mc{V}  \big\rangle_{H^{(1 - \alt)}_{\alt - 1} , H^{-(1 - \alt)}_{\alt - 1}} .
\end{equation}
By the notation $\langle \cdot , \cdot \rangle_{H^{\beta}_\gamma ,  H^{- \beta}_\gamma}$ we denote a duality pairing 
with respect to the $L^{2}_{\gamma}(\Omega)$ pivot space.

Motivated by \eqref{prob2} we introduce the definition for the solution to \eqref{problem}. 
 \begin{defn}
 For $f \in V'$,
we say that $\wt{u}(x) \ = \ \omega^{\alt} u(x)$, with $u  \in V$ 
is a weak solution to (\ref{problem}) if for all $v \in V$
\begin{equation}
B(u , v)  \ = \
\langle f \, ,\, v \rangle_{V' , V} \, ,
\label{prob3}
\end{equation}
where, for notation simplicity, $B(\cdot , \cdot) \, : \, V \times V$ is defined by
\begin{equation}
B( u, v) \, = \, \big\langle (-\Delta)^{\frac{\alpha -2}{2}}K\grad (\omega^\alt{u}) \ , \
 \grad(\omega^\alt v) \big\rangle_{H^{(1 - \alt)}_{\alt - 1} , H^{-(1 - \alt)}_{\alt - 1}}^{*} \ 
\coloneqq \ \big\langle \mc{U} \, , \, \mc{V} \big\rangle_{H^{(1 - \alt)}_{\alt - 1} , H^{-(1 - \alt)}_{\alt - 1}} \, .
\label{defB}
\end{equation}
\end{defn}
Since the test and trial spaces for \eqref{prob3} are the same, existence and uniqueness can be established via the 
Lax-Milgram theorem provided that $B(\cdot , \cdot)$ is bounded and coercive.

\subsection{Boundedness of $B$}
\begin{theorem}
\label{boundedness} Let $C_{cts} \, := \, \max\{k_1,k_2\} \, 2^{\alpha} \, 5$. Then,
for  ${u},{v}\in V$, we have 
\begin{equation}
B(u,v) \, \leq \, C_{cts} \, \|u\|_{V} \|v\|_{V} \, . 
\label{Bcts}
\end{equation}
\end{theorem}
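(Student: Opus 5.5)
We work entirely with the coefficient expansions $u=\sum a_{l,n,\mu}\P^{(\alt)}_{l,n,\mu}$ and $v=\sum b_{l,n,\mu}\P^{(\alt)}_{l,n,\mu}$ and compute $B(u,v)$ exactly in the basis $\{\P^{(\alt-1)}_{l,n,\mu}\}$ of $L^2_{\alt-1}$. Since $K$ is diagonal,
\[
B(u,v)=k_1\Big((-\Delta)^{\frac{\alpha-2}{2}}\partial_{x_1}(\omega^\alt u),\partial_{x_1}(\omega^\alt v)\Big)+k_2\Big((-\Delta)^{\frac{\alpha-2}{2}}\partial_{x_2}(\omega^\alt u),\partial_{x_2}(\omega^\alt v)\Big),
\]
understood via the duality pairing \eqref{defB}. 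By Lemma \ref{Basis Riesz}, $\mc U_1$ has expansion coefficients in $\P^{(\alt-1)}$ built from $C_2(n+\alt+l)a_{l,n,\mu}$ and $C_2(n+l+1)a_{l,n,\mu}$. By Lemma \ref{Basis Partials}, $\omega^{1-\alt}\partial_{x_1}(\omega^\alt v)$ has coefficients $-(n+\alt)b_{l,n,\mu}$ and $-(n+1)b_{l,n,\mu}$. The same holds for $x_2$, with the sign pattern $(\pm)$ and $\mu^\star$.

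Collect the coefficient of each target basis function $\P^{(\alt-1)}_{p,q,\nu}$. It receives contributions only from $(l,n)=(p-1,q)$ via the first term and from $(l,n)=(p+1,q-1)$ via the second. By $L^2_{\alt-1}$ orthogonality the pairing then reduces to
\[
\sum_{p,q,\nu} A_{p,q,\nu}\,\mathcal B_{p,q,\nu}\,\|\P^{(\alt-1)}_{p,q,\nu}\|^2_{L^2_{\alt-1}},
\]
where each of $A$ and $\mathcal B$ is a sum of at most two terms. Using $(x+y)(z+w)\le 2(x^2+y^2)^{1/2}\cdot 2(z^2+w^2)^{1/2}/2$ and Cauchy--Schwarz, it suffices to bound, for each fixed $(l,n,\mu)$, the four diagonal quantities. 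A typical one is
\[
|C_2|^2(n+\alt+l)^2\,\|\P^{(\alt-1)}_{l+1,n,\mu}\|^2_{L^2_{\alt-1}} \quad\text{and}\quad (n+\alt)^2\,\|\P^{(\alt-1)}_{l+1,n,\mu}\|^2_{L^2_{\alt-1}},
\]
and their product should be at most a constant times $\big((n+1)(n+l+1)\big)^{\alt}\|\P^{(\alt)}_{l,n,\mu}\|^2_{L^2_\alt}$ for $u$ and for $v$ respectively. More precisely, we aim to show $|\text{cross term}|\lesssim (n+1)^{\alt}(n+l+1)^{\alt}|a||b|\,\|\P^{(\alt)}_{l,n,\mu}\|^2_{L^2_\alt}$, up to an index shift between the $u$-index and the $v$-index.

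The key computation converts $\|\P^{(\alt-1)}_{l\pm1,\cdot,\mu}\|^2_{L^2_{\alt-1}}$ into $\|\P^{(\alt)}_{l,n,\mu}\|^2_{L^2_\alt}$. We first apply Lemma \ref{NormWeightShift} with $\gamma=\alt-1$, $k=1$, which gives the factor $\frac{n+l+\alt}{n+\alt}$ up to a ratio near 1. We then apply Lemmas \ref{NormShiftAdd} and \ref{NormShiftSub} (with $j=1$, $m=0$ or $m=1$) to move $l\to l\pm1$ and $n\to n+1$. Combined with $|C_2|\approx 2^{\alpha-2}(n+1)^{\alt-1}(n+l+1)^{\alt-1}$, obtained from \eqref{stirling}, the powers should balance exactly. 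The product of the $u$-side factor $|C_2|(n+l)$ and the $v$-side factor $(n+\alt)$, times the norm ratio, should come out $\lesssim (n+1)^{\alt}(n+l+1)^{\alt}$.

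The main obstacle is the cross terms, where the $u$-coefficient at index $(l,n)$ pairs with the $v$-coefficient at the neighboring index $(l+2,n-1)$ (or the reverse). Here we need the weights at neighboring indices to be comparable, i.e.\ $(n+1)(n+l+1)$ changes by a bounded factor, and $\|\P^{(\alt)}\|$ at the two indices is related by Lemmas \ref{NormShiftAdd} and \ref{NormShiftSub} with bounded ratios. Tracking explicit constants through \eqref{stirling} (each Gamma ratio lies in $(1/2,1]$) should yield the factor $2^\alpha$. Counting at most two terms per side for each of the two components gives the factor $5$ after Cauchy--Schwarz over $p,q,\nu$, and $\max\{k_1,k_2\}$ bounds the $k_i$. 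This yields \eqref{Bcts}.
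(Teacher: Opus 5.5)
\textbf{There is a genuine gap at the Cauchy--Schwarz step.} Your skeleton is the paper's: expand in $\P^{(\alt)}_{l,n,\mu}$, collect on the targets $\P^{(\alt-1)}_{p,q,\nu}$, use orthogonality and Cauchy--Schwarz, then apply Lemmas \ref{NormWeightShift}--\ref{NormShiftSub} and \eqref{stirling}. But the Cauchy--Schwarz step, which is the heart of the proof, is set up in a way that fails.

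Your two ``typical'' quantities put the squared Riesz coefficients on the $u$ side and the squared gradient coefficients on the $v$ side. The $v$ side is then at least $\int_\Omega\omega^{1-\alt}|\partial_{x_1}(\omega^{\alt}v)|^2\,d\Omega$, which is an $H^1_{\alt}$-type quantity. For $v=\P^{(\alt)}_{l,n,\mu}$ it exceeds $\|v\|_V^2$ by a factor $\approx((n+1)(n+l+1))^{1-\alt}$. So this split only yields something like $\|u\|_{H^{\alpha-1}_{\alt}}\|v\|_{H^1_{\alt}}$.

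Knowing that the geometric mean of the two diagonal quantities has the right size does not rescue a single Cauchy--Schwarz over the target index. You must rebalance by a factor depending only on the target. This works because of a structural fact the paper exploits: at each target $(l,n)$, both Riesz coefficients equal the corresponding gradient coefficients times the same factor
\[
\rho_{l,n}=2^{\alpha-2}\Gamma(n+\alt)\Gamma(n+l+\alt)/(\Gamma(n+1)\Gamma(n+l+1)).
\]
Hence
\[
I_1=\sum_{l,n,\mu}\rho_{l,n}\big((n+\alt)u_{l-1,n,\mu}+n\,u_{l+1,n-1,\mu}\big)\big((n+\alt)v_{l-1,n,\mu}+n\,v_{l+1,n-1,\mu}\big)\|\P^{(\alt-1)}_{l,n,\mu}\|^2_{L^2_{\alt-1}}
\]
is a weighted inner product of two sequences of identical form. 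This is exactly what ``multiplying and dividing by $n^{1/2}$'' accomplishes in the paper. Cauchy--Schwarz then gives sums of the same form for $u$ and $v$ ($\mathcal S_u$, $\mathcal S_v$), and after $(a+b)^2\le 2(a^2+b^2)$ only diagonal terms remain. The cross terms you call the main obstacle never need to be estimated, and no comparison of weights at $(l,n)$ and $(l+2,n-1)$ is required.

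\textbf{Your fallback is only asserted, and its one computation is off.} Estimating each of the four pairings termwise and comparing neighbouring $V$-weights can be made to give $B(u,v)\lesssim\|u\|_V\|v\|_V$. However, $|C_2|\approx 2^{\alpha-2}(n+1)^{\alt}(n+l+1)^{\alt-2}$, not $2^{\alpha-2}(n+1)^{\alt-1}(n+l+1)^{\alt-1}$. With your value, $|C_2|(n+l)(n+\alt)\frac{n+l+1}{n+\alt}\approx(n+1)^{\alt-1}(n+l+1)^{\alt+1}$, which is not controlled by the $V$-weight when $l\gg n$.

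\textbf{The explicit constant is not derived.} In the paper $5=2+3$:
\begin{itemize}
\item $\mathcal S_u\le 4\|u\|_V^2$ gives $I_1\le 2^{\alpha+1}\|u\|_V\|v\|_V$.
\item In the $x_2$ component, the coefficients with $(l,\mu)=(1,-1)$ are sent by $\mu\mapsto\mu^\star$ onto $\P^{(\alt-1)}_{0,n+1,1}$. This is the only $l=0$ family (since $\P_{0,n,-1}\equiv 0$), and it carries $C_{0,1}/C_{1,1}=1/2$.
\item That family must be handled separately, which raises the bound to $\hat{\mathcal S}_u\le 6\|u\|_V^2$, i.e.\ $I_2\le 3\cdot 2^{\alpha}\|u\|_V\|v\|_V$.
\end{itemize}
Your proposal never addresses this $l=0$/$\mu^\star$ bookkeeping, and ``two terms per side for each of two components'' does not produce $5$. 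A neighbour-comparison argument would also introduce further untracked constants, such as the lower bounds in \eqref{stirling} and ratios like $(n+1)/n$.
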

\textbf{Proof}: 
Let $u, v \in V$ and write
\[
u(x) = \sum_{l,n,\mu}u_{l,n,\mu}\P_{l,n,\mu}^{(\alt)}(x),\ \ \text{ and } \ \
v(x) = \sum_{l,n,\mu}v_{l,n,\mu}\P_{l,n,\mu}^{(\alt)}(x) \, .
\]
 where $x = (r, \phi)$. Applying Lemma \ref{Basis Partials}, the gradient of $\omega^\alt v$ is given by 
\begin{align*}
\notag\frac{\partial (\omega^\alt v)(x)}{\partial x_1}&=-\omega^{\frac{\alpha}{2}-1} \sum_{l,n,\mu}\Big((n+\frac{\alpha}{2})\P_{l+1,n,\mu}^{(\alt-1)}(x)+ (n+1)\P_{l-1,n+1,\mu}^{(\alt-1)}(x)\Big)v_{l,n,\mu}\\
\notag\frac{\partial (\omega^\alt v)(x)}{\partial x_2}&= -\omega^{\frac{\alpha}{2}-1}\sum_{l,n,\mu}\Big((\pm)(n+\frac{\alpha}{2})\P_{l+1,n,\mu^\star}^{(\alt-1)}(x)+(\mp)(n+1)\P_{l-1,n+1,\mu^\star}^{(\alt-1)}(x)\Big)v_{l,n,\mu}.
\end{align*}

Reindexing these sums and introducing $V_{1}$ and $V_{2}$ we have
 \begin{equation}\label{gradv}
 \nabla(\omega^\alt v)(x) = -\omega^{\alt -1} \begin{pmatrix} V_1(x)\\V_2(x) \end{pmatrix} \coloneqq -\omega^{\alt-1}\begin{pmatrix} \sum_{l,n,\mu}(v_{l-1,n,\mu}(n+\alt)+v_{l+1,n-1,\mu}n)\P_{l,n,\mu}^{(\alt-1)}(x)\\ 
 \sum_{l,n,\mu}((\pm)v_{l-1,n,\mu}(n+\alt)+(\mp)v_{l+1,n-1,\mu}n)\P_{l,n,\mu^\star}^{(\alt-1)}(x)\end{pmatrix}.
 \end{equation}
Similarly, applying Lemma \ref{Basis Riesz} and introducing $U_{1}$ and $U_{2}$ we have
\begin{align*}
k_1U_1(x) \coloneqq k_1(-\Delta)^{\frac{\alpha-2}{2}}\frac{\partial (\omega^\alt u)(x)}{\partial x_1}& = -k_12^{\alpha -2}\sum_{l,n,\mu}\Big(\frac{\Gamma(n+\alt+1)\Gamma(n+\alt+l+1)}{\Gamma(n+1)\Gamma(n+2+l)}\P_{l+1,n,\mu}^{(\alt-1)}(x)\\
&\ \ \ \ \ \ + \frac{\Gamma(n+\alt+1)\Gamma(n+\alt+l)}{\Gamma(n+1)\Gamma(n+1+l)}\P_{l-1,n+1,\mu}^{(\alt-1)}(x)\Big)u_{l,n,\mu}\\
k_2U_2(x)\coloneqq k_2(-\Delta)^{\frac{\alpha-2}{2}}\frac{\partial (\omega^\alt u)(x)}{\partial x_2}&=-k_22^{\alpha-2}\sum_{l,n,\mu}\Big((\pm)\frac{\Gamma(n+\alt+1)\Gamma(n+\alt+l+1)}{\Gamma(n+1)\Gamma(n+2+l)}\P_{l+1,n,\mu^\star}^{(\alt-1)}(x)\\
&\ \ \ \ \ \ \  + (\mp)\frac{\Gamma(n+\alt+1)\Gamma(n+\alt+l)}{\Gamma(n+1)\Gamma(n+1+l)}\P_{l-1,n+1,\mu^\star}^{(\alt-1)}(x)\Big)u_{l,n,\mu}.
\end{align*}
After reindexing, 
\begin{align}
\notag k_1U_1(x) &= -k_12^{\alpha -2}\sum_{l,n,\mu}\Big(\frac{\Gamma(n+\alt+1)\Gamma(n+\alt+l)}{\Gamma(n+1)\Gamma(n+1+l)}u_{l-1,n,\mu} \\
&\label{rieszU1}\ \ \ \ \ \ \ \ \ \ \ \ \ \ \ \ \ \ \ \ \ + \frac{\Gamma(n+\alt)\Gamma(n+\alt+l)}{\Gamma(n)\Gamma(n+1+l)}u_{l+1,n-1,\mu}\Big)\P_{l,n,\mu}^{(\alt-1)}(x)\\
\notag k_2U_2(x) &=-k_22^{\alpha-2}\sum_{l,n,\mu}\Big((\pm)\frac{\Gamma(n+\alt+1)\Gamma(n+\alt+l)}{\Gamma(n+1)\Gamma(n+1+l)}u_{l-1,n,\mu}\\
&\label{rieszU2}\ \ \ \ \ \ \ \ \ \ \ \ \ \ \ \ \ \ \ \ \ + (\mp)\frac{\Gamma(n+\alt)\Gamma(n+\alt+l)}{\Gamma(n)\Gamma(n+1+l)}u_{l+1,n-1,\mu}\Big)\P_{l,n,\mu^\star}^{(\alt-1)}(x).
\end{align}

\noindent Substituting (\ref{gradv}), (\ref{rieszU1}), and (\ref{rieszU2}) into $B(\cdot , \cdot)$, 
\begin{align}
\notag B(u, v) &= 
\Biggl(\begin{pmatrix}  k_1U_1(x)\\k_2U_2(x) \end{pmatrix},\ -\omega^{\alt-1}\begin{pmatrix}V_1(x)\\V_2(x)\end{pmatrix} \Biggr)\\
\notag& = k_1\int_\Omega -\omega^{\alt-1} U_1(x) \, V_1(x) \, d\Omega + k_2\int_\Omega-\omega^{\alt-1} U_2(x) \, V_2(x) \, d\Omega\\
\label{Bbound}& \coloneqq k_1I_1 + k_2I_2.\end{align}
For $I_1$, expand out the integrand and using the orthogonality property of the basis, 
\begin{align*}
I_1 &= 2^{\alpha-2}  \sum_{l,n,\mu} \Big(u_{l-1,n,\mu} \Big(\frac{n+\alt}{n} \Big)+u_{l+1,n-1,\mu} \Big)
\Big(v_{l-1,n,\mu}(n+\alt)+v_{l+1,n-1,\mu}n \Big)  \\
&\hspace{5cm}\cdot\frac{\Gamma(n+\alt)\Gamma(n+l+\alt)}{\Gamma(n)\Gamma(n+l+1)}
 \| \P_{l,n,\mu}^{(\alt-1)} \|_{L^{2}_{\alt - 1}}^2 .
\end{align*}
Multiplying and dividing by $n^\frac{1}{2}$, and using Cauchy-Schwarz, 
this reduces to 
 \begin{align*}
 I_1 &\leq 2^{\alpha -2}\Big(\sum_{l,n,\mu}\Big(u_{l-1,n,\mu} \Big(\frac{n+\alt}{n} \Big) + u_{l+1,n-1,\mu} \Big)^2n \, 
 \frac{\Gamma(n+\alt)\Gamma(n+l+\alt)}{\Gamma(n)\Gamma(n+l+1)} \, 
 \|\P_{l,n,\mu}^{(\alt-1)} \|^2_{L^2_{\alt-1}}\Big)^\frac{1}{2}\\
 & \hspace{1cm}\cdot\Big(\sum_{l,n,\mu}\Big(v_{l-1,n,\mu}(n+\alt)+v_{l+1,n-1,\mu}(n) \Big)^2
 \frac{1}{n} \, \frac{\Gamma(n+\alt)\Gamma(n+l+\alt)}{\Gamma(n)\Gamma(n+l+1)}
 \|\P_{l,n,\mu}^{(\alt-1)} \|^2_{L^2_{\alt-1}}\Big)^\frac{1}{2}\\
  & \leq 2^{\alpha-1} \Big(\sum_{l,n,\mu}\Big(u^2_{l-1,n,\mu}\frac{(n+\alt)^2}{n} + nu^2_{l+1,n-1,\mu} \Big)
 \frac{\Gamma(n+\alt)\Gamma(n+l+\alt)}{\Gamma(n)\Gamma(n+l+1)}
 \|\P_{l,n,\mu}^{(\alt-1)} \|^2_{L^2_{\alt-1}}\Big)^\frac{1}{2}\\
 & \hspace{1cm}\cdot\Big(\sum_{l,n,\mu}\Big(v^2_{l-1,n,\mu}\frac{(n+\alt)^2}{n}+nv^2_{l+1,n-1,\mu}\Big)
 \frac{\Gamma(n+\alt)\Gamma(n+l+\alt)}{\Gamma(n)\Gamma(n+l+1)}
 \|\P_{l,n,\mu}^{(\alt-1)} \|^2_{L^2_{\alt-1}}\Big)^\frac{1}{2}.
\end{align*}
 Note that these two sums are exactly the same, except one corresponds to $u$ and the other to $v$. 
 Denoting these sums by $\mathcal S_u$ and $\mathcal S_v$, the above can be written as
 \[
 I_1\leq 2^{\alpha-1} (\mathcal{S}_u)^\frac{1}{2}(\mathcal{S}_v)^\frac{1}{2}.
 \]
 \par Splitting the $\mathcal{S}_u$ sum into two pieces, 
 separating each of the coefficients into their own sum, and reindexing yields
\begin{align*}
\mathcal{S}_u &= \sum_{l,n,\mu}u^2_{l,n,\mu}\frac{(n+\alt)^2}{n} \frac{\Gamma(n+\alt)\Gamma(n+l+\alt+1)}{\Gamma(n)\Gamma(n+l+2)}\|\P_{l+1,n,\mu}^{(\alt-1)} \|^2_{L^2_{\alt-1}}\\
&  \hspace{.56cm}+\sum_{l,n,\mu}u^2_{l,n,\mu}(n+1)\frac{\Gamma(n+\alt+1)\Gamma(n+l+\alt)}{\Gamma(n+1)\Gamma(n+l+1)}\|\P_{l-1,n+1,\mu}^{(\alt-1)} \|^2_{L^2_{\alt-1}}
\end{align*}
Using Lemmas \ref{NormShiftAdd} and \ref{NormShiftSub}, when $l\geq 0$, the basis functions can be shifted in norm by:
\begin{align}
\label{normshift1}\|\P_{l+1,n,\mu}^{(\alt-1)} \|^2_{L^2_{\alt-1}} &= \frac{C_{l+1,\mu}}{C_{l,\mu}}\frac{2n + \alt + l}{2n + \alt + l + 1} \frac{n+l+1}{n+\alt + l} \|\P_{l,n,\mu}^{(\alt-1)} \|^2_{L^2_{\alt-1}} \text{ and}\\
\label{normshift2}\|\P_{l-1,n+1,\mu}^{(\alt-1)} \|^2_{L^2_{\alt-1}} & = \frac{C_{l-1,\mu}}{C_{l,\mu}} \frac{2n+\alt + l}{2n +  \alt  + l +1}\frac{n+\alt}{n+1}\|\P_{l,n,\mu}^{(\alt-1)} \|^2_{L^2_{\alt-1}},
\end{align}
where $C_{l,\mu} = \begin{cases} \pi/2, & l \leq 0\\ \pi, & l \geq 1\end{cases}$. 
 Note here that $\frac{C_{l+1,\mu}}{C_{l,\mu}}, \frac{C_{l-1,\mu}}{C_{l,\mu}} \leq 2$. 
 Additionally, Lemma \ref{NormWeightShift} gives the relation between the $L^2_{\alt-1}$ norm and the $L^2_{\alt}$ norm:
 \begin{equation}
 \label{normshift3}
 \|\P_{l,n,\mu}^{(\alt-1)} \|^2_{L^2_{\alt-1}} = 
 \frac{2n+\alt+l+1}{2n+\alt+l}\frac{n+\alt+l}{n+\alt}\|\P_{l,n,\mu}^{(\alt)} \|^2_{L^2_{\alt}}.
 \end{equation}
 Substituting (\ref{normshift1}), (\ref{normshift2}), and (\ref{normshift3}) into $\mathcal{S}_u$ and rearranging, 
 (recall $Q = L^{2}_{\alt}(\Omega)$, $V = H^{\alt}_{\alt}(\Omega)$)
\begin{align*}
\mathcal{S}_u &\leq \sum_{l,n,\mu}2u^2_{l,n,\mu} \frac{\Gamma(n+\alt+1)\Gamma(n+l+\alt+1)}{\Gamma(n+1)\Gamma(n+l+1)}  
\|\P_{l,n,\mu}^{(\alt)} \|^2_{Q}\\
&  \hspace{.5cm}+\sum_{l,n,\mu}2u^2_{l,n,\mu}\frac{\Gamma(n+\alt+1)\Gamma(n+l+\alt+1)}{\Gamma(n+1)\Gamma(n+l+1)}
\|\P_{l,n,\mu}^{(\alt)} \|^2_{Q}\\
& \leq 4\sum_{l,n,\mu}u_{l,n,\mu}^2\frac{\Gamma(n+\alt+1)\Gamma(n+l+\alt+1)}{\Gamma(n+1)\Gamma(n+l+1)}
\|\P_{l,n,\mu}^{(\alt)} \|^2_{Q}\\
& \leq 4 \,  \sum_{l,n,\mu}u^2_{l,n,\mu}(n+1)^\alt(n+l+1)^\alt\|\P_{l,n,\mu}^{(\alt)} \|^2_{Q}\\
& = 4 \,  \|u\|^2_{V},
\end{align*}
where the last inequality comes from applying Stirling's formula \eqref{stirling} to $\frac{\Gamma(n+\alt+1)}{\Gamma(n+1)}$ and 
$\frac{\Gamma(n+l+\alt+1)}{\Gamma(n+l+1)}$. 
Applying the same steps to $\mathcal{S}_v$, would similarly yield 
$\mathcal{S}_v \leq 4 \,  \|v\|^2_{V}$. Hence
\begin{equation}\label{I1Bound}
\notag I_1 \leq 2^{\alpha -1}(\mathcal{S}_u)^\frac{1}{2}(\mathcal{S}_v)^\frac{1}{2}
 \leq  2^{\alpha+1} \,  \|{u}\|_{V} \|{v}\|_{V} \, .
\end{equation}

\par Next, expanding the integrand in $I_2$, using the orthogonality of the basis, and applying Cauchy-Schwarz we get
\begin{align*}
I_2 & = -\int_\Omega \omega^{\alt -1} \, U_2(x) \, V_2(x) \, d\Omega\\
& = 2^{\alpha -2}  \sum_{l,n,\mu}\Big( (\pm) u_{l-1,n,\mu} \Big(\frac{n+\alt}{n} \Big) + (\mp)u_{l+1,n-1,\mu} \Big)
\Big( (\pm)v_{l-1,n,\mu}(n+\alt) + (\mp)nv_{l+1,n-1,\mu} \Big) \\
&\hspace{3cm}  \cdot
\frac{\Gamma(n+\alt)\Gamma(n+l+\alt)}{\Gamma(n)\Gamma(n+l+1)}
\| \P_{l,n,\mu^\star}^{(\alt-1)} \|_{L^{2}_{\alt -1}}^2 \\
&\leq 2^{\alpha -2}\Bigg(\sum_{l,n,\mu}\big((\pm)u_{l-1,n,\mu}\big(\frac{n+\alt}{n}\big)+(\mp)u_{l+1,n-1,\mu}\big)^2n\frac{\Gamma(n+\alt)\Gamma(n+l+\alt)}{\Gamma(n)\Gamma(n+l+1)}\\
& \ \ \ \ \ \ \ \ \ \ \ \ \ \ \ \ \ \ \ \ \ \ \ \ \ \ \ \ \ \ \ \ \ \ \ \ \ \ \ \ \ \ \ \ \ \ \ \ \ \ \ \ \ \ \ \ \ \ \ \ \ \ \ \ \ \ \ \ \ \ \ \ \ \ \ \ \ \ \ \ \ \ \ \ \ 
\cdot \|\P_{l,n,\mu^\star}^{(\alt-1)} \|^2_{L^2_{\alt-1}}\Bigg)^\frac{1}{2}\\
&\hspace{1.05cm}\cdot\Bigg(\sum_{l,n,\mu}\big((\pm)v_{l-1,n,\mu}(n+\alt) + (\mp)nv_{l+1,n-1,\mu}\big)^2\frac{1}{n}\frac{\Gamma(n+\alt)\Gamma(n+l+\alt)}{\Gamma(n)\Gamma(n+l+1)}\\
& \ \ \ \ \ \ \ \ \ \ \ \ \ \ \ \ \ \ \ \ \ \ \ \ \ \ \ \ \ \ \ \ \ \ \ \ \ \ \ \ \ \ \ \ \ \ \ \ \ \ \ \ \ \ \ \ \ \ \ \ \ \ \ \ \ \ \ \ \ \ \ \ \ \ \ \ \ \ \ \ \ \ \ \ \ 
\cdot\|\P_{l,n,\mu^\star}^{(\alt-1)} \|^2_{L^2_{\alt-1}}\Bigg)^\frac{1}{2}\\
& \leq 2^{\alpha-1} \Bigg(\sum_{l,n,\mu}\big(u^2_{l-1,n,\mu}\frac{(n+\alt)^2}{n}+nu^2_{l+1,n-1,\mu}\big)\frac{\Gamma(n+\alt)\Gamma(n+l+\alt)}{\Gamma(n)\Gamma(n+l+1)}\|\P_{l,n,\mu^\star}^{(\alt-1)} \|^2_{L^2_{\alt-1}}\Bigg)^\frac{1}{2}\\
&\hspace{1.05cm}\cdot\Bigg(\sum_{l,n,\mu} \big(v^2_{l-1,n,\mu}\frac{(n+\alt)^2}{n} + nv^2_{l+1,n-1,\mu}\big)\frac{\Gamma(n+\alt)\Gamma(n+l+\alt)}{\Gamma(n)\Gamma(n+l+1)}\|\P_{l,n,\mu^\star}^{(\alt-1)} \|^2_{L^2_{\alt-1}}\Bigg)^\frac{1}{2}.
\end{align*}
\par Similar to before, notice the above sums have the same form, 
the only difference being that one sums over coefficients from ${u}$, and the other over the corresponding coefficients from ${v}$. 
To simplify the analysis, define these sums as $\hat{\mathcal S}_u$ and $\hat{\mathcal S}_v$ respectively and consider $\hat{\mathcal S}_u$. 
Splitting $\hat{\mathcal S}_u$ into two sums and reindexing just as before, we then use the fact that 
$\|\P_{l,n,\mu^\star}^{(\alt-1)} \|^2_{L^2_{\alt-1}} = \|\P_{l,n,\mu}^{(\alt-1)} \|^2_{L^2_{\alt-1}}$ when $l \geq 1$:
\begin{align*}
\hat{S}_u 
& = \sum_{l\geq 0,n,\mu} u^2_{l,n,\mu}\frac{(n+\alt)^2}{n}\frac{\Gamma(n+\alt)\Gamma(n+l+\alt+1)}{\Gamma(n)\Gamma(n+l+2)}\|\P_{l+1,n,\mu^\star}^{(\alt-1)} \|^2_{L^2_{\alt-1}}\\
&\hspace{0.5cm} + \sum_{l\geq 1, n,\mu} u^2_{l,n,\mu}(n+1)\frac{\Gamma(n+\alt+1)\Gamma(n+l+\alt)}{\Gamma(n+1)\Gamma(n+l+1)}
\|\P_{l-1,n+1,\mu^\star}^{(\alt-1)} \|^2_{L^2_{\alt-1}}\\
& = \sum_{l\geq 0,n,\mu} u^2_{l,n,\mu}\frac{(n+\alt)^2}{n}\frac{\Gamma(n+\alt)\Gamma(n+l+\alt+1)}{\Gamma(n)\Gamma(n+l+2)}
\|\P_{l+1,n,\mu}^{(\alt-1)} \|^2_{L^2_{\alt-1}}\\
&\hspace{0.5cm} + \sum_{l\geq 2,n,\mu} u^2_{l,n,\mu}(n+1)\frac{\Gamma(n+\alt+1)\Gamma(n+l+\alt)}{\Gamma(n+1)\Gamma(n+l+1)}
\|\P_{l-1,n+1,\mu}^{(\alt-1)} \|^2_{L^2_{\alt-1}}\\
(\text{for }l = 1) &\hspace{0.5cm} + \sum_{n} u^2_{1,n,-1}(n+1)\frac{\Gamma(n+\alt+1)\Gamma(n+1+\alt)}{\Gamma(n+1)\Gamma(n+2)}
\|\P_{0,n+1,1}^{(\alt-1)} \|^2_{L^2_{\alt-1}}.
\end{align*}
Using \eqref{normshift1} and \eqref{normshift2} we obtain
\begin{align}
\notag\hat{\mathcal S}_u &\leq \sum_{l,n,\mu} 2u^2_{l,n,\mu}\frac{\Gamma(n+\alt+1)\Gamma(n+l+\alt+1)}{\Gamma(n+1)\Gamma(n+l+1)}
 \|\P_{l,n,\mu}^{(\alt)} \|^2_{L^2_{\alt}}\\
\notag&\hspace{0.5cm} + \sum_{l\geq 2, n, \mu} 2u^2_{l,n,\mu}\frac{\Gamma(n+\alt+1)\Gamma(n+l+\alt+1)}{\Gamma(n+1)\Gamma(n+l+1)}\|\P_{l,n,\mu}^{(\alt)} \|^2_{L^2_{\alt}}\\
 \notag&\hspace{0.5cm} + \sum_{n} u^2_{1,n,-1}(n+1)\frac{\Gamma(n+\alt+1)\Gamma(n+1+\alt)}{\Gamma(n+1)\Gamma(n+2)}
 \|\P_{0,n+1,1}^{(\alt-1)}\|^2_{L^2_{\alt-1}}\\
\notag& \leq \sum_{l,n,\mu} 4u^2_{l,n,\mu}\frac{\Gamma(n+\alt+1)\Gamma(n+l+\alt+1)}{\Gamma(n+1)\Gamma(n+l+1)} 
\|\P_{l,n,\mu}^{(\alt)} \|^2_{L^2_{\alt}}\\
\label{su.1}&\hspace{0.5cm} + \sum_{n} u^2_{1,n,-1}(n+1)\frac{\Gamma(n+\alt+1)\Gamma(n+1+\alt)}{\Gamma(n+1)\Gamma(n+2)}
\|\P_{0,n+1,1}^{(\alt-1)} \|^2_{L^2_{\alt-1}} \, .
\end{align}
Next, using Lemma \ref{NormShiftAdd}, 
\begin{align}
\label{norm17.1}\|\P_{0,n+1,1}^{(\alt-1)} \|^2_{L^2_{\alt-1}} &= \big(\frac{2n + \alt }{2n+\alt + 2}\big)
\|\P_{0,n,1}^{(\alt-1)} \|^2_{L^2_{\alt-1}}, \text{ and}\\
\notag\|\P_{0,n,1}^{(\alt-1)} \|^2_{L^2_{\alt-1}} &= \frac{C_{0,1}}{C_{1,1}}\frac{2n+\alt + 1}{2n + \alt}\frac{n+\alt}{n+1}
\|\P_{1,n,1}^{(\alt-1)} \|^2_{L^2_{\alt-1}}\\
\label{norm17.2}& = \frac{C_{0,1}}{C_{1,1}}\frac{2n+\alt + 1}{2n + \alt}\frac{n+\alt}{n+1}
\|\P_{1,n,-1}^{(\alt-1)} \|^2_{L^2_{\alt-1}}.
\end{align}
Substituting (\ref{norm17.1}) and (\ref{norm17.2}) into (\ref{su.1}) (and using $Q = L^{2}_{\alt}(\Omega)$),
\begin{align}
\notag\hat{\mathcal S}_u & \leq \sum_{l,n,\mu} 4u^2_{l,n,\mu}\frac{\Gamma(n+\alt+1)\Gamma(n+l+\alt+1)}{\Gamma(n+1)\Gamma(n+l+1)} 
\|\P_{l,n,\mu}^{(\alt)} \|^2_{Q}\\
\notag&\hspace{0.5cm} + \sum_{n} u^2_{1,n,-1}\frac{\Gamma(n+\alt+1)\Gamma(n+1+\alt)}{\Gamma(n+1)\Gamma(n+1)} 
\big(\frac{2n + \alt +1}{2n+\alt + 2}\big)\frac{C_{0,1}}{C_{1,1}}\frac{n+\alt}{n+1}
\|\P_{1,n,-1}^{(\alt-1)} \|^2_{L^2_{\alt-1}}\\
\notag&\leq \sum_{l,n,\mu} 4u^2_{l,n,\mu}\frac{\Gamma(n+\alt+1)\Gamma(n+l+\alt+1)}{\Gamma(n+1)\Gamma(n+l+1)} 
\|\P_{l,n,\mu}^{(\alt)} \|^2_{Q}\\
\notag&\hspace{0.5cm} + \sum_{l = 1, n\geq 0} 2u^2_{l,n,-1}\frac{\Gamma(n+\alt+1)\Gamma(n+l+\alt+1)}{\Gamma(n+1)\Gamma(n+l +1)}
\|\P_{l,n,-1}^{(\alt)} \|^2_{Q}\\
\label{su18.1}& \leq \sum_{l,n,\mu} 6u_{l,n,\mu}^2 \frac{\Gamma(n+\alt+1)\Gamma(n+l+\alt+1)}{\Gamma(n+1)\Gamma(n+l+1)} 
\|\P_{l,n,\mu}^{(\alt)} \|^2_{Q}.
\end{align}
Again, using Stirling's formula \eqref{stirling}, we obtain
\[
\hat{\mathcal S}_u \leq 6 \, \sum_{l,n,\mu} u^2_{l,n,\mu}(n+1)^\alt(n+l+1)^\alt \|\P_{l,n,\mu}^{(\alt)}(x)\|^2_{Q} 
=  6 \, \|u\|^2_{V}.
\]
An analogous analysis would show $\hat{\mathcal S}_v \leq 
 6 \, \|v\|^2_{V}$.

Returning to $I_2$, we have 
\begin{align*}
I_2 & \leq 2^{\alpha-1} \, (\hat{\mathcal S}_u)^\frac{1}{2} \, (\hat{\mathcal S}_v)^\frac{1}{2}
 \leq 2^{\alpha} \, 3 \,  \|u\|_{V} \, \|v\|_{V}.
\end{align*}
Finally, substitute $I_1$ and $I_2$ into  (\ref{Bbound}):
\begin{align*}
B(u, v) & = k_1I_1 + k_2I_2\\
& \leq k_1 \,  2^{\alpha+1}  \, \|u\|_{V} \, \|v\|_{V} \ + \
k_2 \, 2^{\alpha} \, 3 \,  \|u\|_{V} \, \|v\|_{V}  \\
& \leq \max\{k_1,k_2\} \, 2^{\alpha} \, 5   \, \|u\|_{V} \, \|v\|_{V} \, .
\end{align*}
\mbox{ } \hfill \qed
%
%
\subsection{Coercivity of $B$}
Next, we give a proof for the coercivity of $B$.
\begin{theorem}\label{coe}
 Let $C_{coe} \, := \, 2^{\alpha-4} \min\{k_1,k_2\} $. Then,
for $u \in  V$, we have 
\begin{equation}
B(u, u) \, \geq \, C_{coe} \, \|u\|^2_{V} \, . 
 \label{coercivity}
\end{equation}
\end{theorem}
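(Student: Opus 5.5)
We use the same expansions as in the proof of Theorem \ref{boundedness}, now with $v=u$. By \eqref{Bbound}, $B(u,u)=k_1I_1+k_2I_2\ge \min\{k_1,k_2\}\,(I_1+I_2)$, provided each $I_i\ge 0$. From the orthogonality computation already carried out,
\[
I_1 = 2^{\alpha-2}\sum_{l,n,\mu}\frac{1}{n}\Big(u_{l-1,n,\mu}(n+\alt)+u_{l+1,n-1,\mu}\,n\Big)^2
\frac{\Gamma(n+\alt)\Gamma(n+l+\alt)}{\Gamma(n)\Gamma(n+l+1)}\|\P_{l,n,\mu}^{(\alt-1)}\|^2_{L^2_{\alt-1}},
\]
and $I_2$ has the same form with the signed combination $(\pm)u_{l-1,n,\mu}(n+\alt)+(\mp)u_{l+1,n-1,\mu}n$. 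Both are sums of squares with positive weights, hence nonnegative. The work is to bound $I_1+I_2$ from below by a multiple of $\|u\|_V^2$.

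The key step is to add the $I_1$ and $I_2$ contributions that sit on the same pair of coefficients $\{u_{l-1,n,\mu},u_{l+1,n-1,\mu}\}$. For $l\ge1$ the $I_1$ term involves the basis function $\P_{l,n,\mu}$ and the $I_2$ term involves $\P_{l,n,\mu^\star}$, and these have equal norms. In $I_2$ the two coefficients enter with opposite relative sign compared with $I_1$. The sign bookkeeping ($\mu=\pm1$ with the $(\pm),(\mp)$ conventions) must be checked, but after reindexing $\mu\leftrightarrow\mu^\star$ the pair in $I_2$ should appear as $(a-b)^2$ against $(a+b)^2$ in $I_1$, where
\[
a=u_{l-1,n,\mu}(n+\alt), \qquad b=u_{l+1,n-1,\mu}\,n .
\]
Then the parallelogram identity $(a+b)^2+(a-b)^2=2a^2+2b^2$ removes all cross terms, and
\[
I_1+I_2 \;\ge\; 2^{\alpha-2}\sum_{l,n,\mu}\Big(u_{l-1,n,\mu}^2\frac{(n+\alt)^2}{n}+n\,u^2_{l+1,n-1,\mu}\Big)\frac{\Gamma(n+\alt)\Gamma(n+l+\alt)}{\Gamma(n)\Gamma(n+l+1)}\|\P^{(\alt-1)}_{l,n,\mu}\|^2_{L^2_{\alt-1}} ,
\]
up to boundary cases near $l=0,1$, where one of the two functions $\mcV_{0,-1}$ is absent. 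Those cases must be handled separately. There one only keeps a single term, or uses $(a+b)^2\ge 0$ together with the other piece; the worst case costs a factor of $2$ or $4$, which is what the modest constant $2^{\alpha-4}$ suggests.

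Next, drop the $b^2$ terms, or keep both, and reindex exactly as in the computation of $\mathcal S_u$. Apply \eqref{normshift1}, \eqref{normshift2} and \eqref{normshift3}, now as lower bounds: use $C_{l\pm1,\mu}/C_{l,\mu}\ge 1/2$ and $\frac{2n+\alt+l}{2n+\alt+l+1}\ge\frac12$. This gives
\[
u^2_{l,n,\mu}\,\frac{\Gamma(n+\alt+1)\Gamma(n+l+\alt+1)}{\Gamma(n+1)\Gamma(n+l+1)}\|\P^{(\alt)}_{l,n,\mu}\|^2_Q
\]
times a constant. The lower half of \eqref{stirling} converts the Gamma ratios into $(n+1)^\alt(n+l+1)^\alt$, losing at most a factor $1/2$ each. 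Collecting the constants $2^{\alpha-2}\cdot\frac12\cdot\frac12\cdots$ gives $2^{\alpha-4}\min\{k_1,k_2\}\|u\|_V^2$.

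The main obstacle is the precise sign and index matching between $I_1$ and $I_2$. One must verify that after the $\mu\leftrightarrow\mu^\star$ relabeling the cross terms truly cancel rather than add, and treat the exceptional indices $l=0,1$ (the missing $\mcV_{0,-1}$ and $C_{0,\mu}=\pi/2$) without destroying the bound. If exact cancellation fails, the fallback is Young's inequality $2ab\ge -\theta a^2-\theta^{-1}b^2$ with a suitable $\theta$ depending on $n$. This loses more in the constant, but it still yields coercivity.
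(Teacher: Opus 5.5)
Your argument is the paper's argument, organized slightly differently, and the sign check you left open does go through. For fixed $\mu$, Lemma \ref{Basis Partials} gives the coefficient of $\omega^{\alt-1}\P^{(\alt-1)}_{l,n,\mu^\star}$ in $\partial_{x_2}(\omega^{\alt}u)$ as $-(\pm)\bigl(u_{l-1,n,\mu}(n+\alt)-u_{l+1,n-1,\mu}\,n\bigr)$. This is the same coefficient pair that multiplies $\P^{(\alt-1)}_{l,n,\mu}$ in $\partial_{x_1}(\omega^{\alt}u)$, with opposite relative sign, and Lemma \ref{Basis Riesz} has the same sign pattern. So no $\mu\leftrightarrow\mu^\star$ relabeling is needed. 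Since these basis functions have equal norms for $l\ge1$, you get $k_1(a+b)^2+k_2(a-b)^2\ge 2\min\{k_1,k_2\}(a^2+b^2)$ termwise. The paper reaches the identical bound by expanding to $(k_1+k_2)(a^2+b^2)+2(k_1-k_2)ab$ and absorbing the cross term with $|k_1-k_2|$. At $l=0$ there is nothing to pair, since $u_{-1,n,\mu}=0$. There $I_1$ and $I_2$ contribute only nonnegative multiples of $u_{1,n-1,1}^2$ and $u_{1,n-1,-1}^2$, so no factor is lost.

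The real problem is your constant count. The inequality $\frac{2n+\alt+l}{2n+\alt+l+1}\ge\frac12$ is false at $n=l=0$, where the value is $\frac{\alt}{\alt+1}<\frac12$. Also, taking the losses you list at face value gives $2^{\alpha-2}\cdot\frac12\cdot\frac12\cdot\frac14=2^{\alpha-6}$, not the stated $2^{\alpha-4}$.

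No lossy bound is needed. Combining \eqref{normshift1} with \eqref{normshift3} gives exactly $\|\P^{(\alt-1)}_{l+1,n,\mu}\|^2_{L^2_{\alt-1}}=\frac{C_{l+1,\mu}}{C_{l,\mu}}\frac{n+l+1}{n+\alt}\|\P^{(\alt)}_{l,n,\mu}\|^2_Q$. So the $a^2$-terms alone reindex to $\sum_{l\ge0,n,\mu}u_{l,n,\mu}^2\frac{C_{l+1,\mu}}{C_{l,\mu}}\frac{\Gamma(n+\alt+1)\Gamma(n+l+\alt+1)}{\Gamma(n+1)\Gamma(n+l+1)}\|\P^{(\alt)}_{l,n,\mu}\|^2_Q$, with $\frac{C_{l+1,\mu}}{C_{l,\mu}}\ge1$. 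This sum already contains every coefficient, so the $b^2$-terms and the $l=0$ terms can simply be discarded. The only remaining loss is the factor $\frac14$ from \eqref{stirling}. From your prefactor $2^{\alpha-2}$ this gives exactly $2^{\alpha-4}\min\{k_1,k_2\}$, or $2^{\alpha-3}\min\{k_1,k_2\}$ if you keep the factor $2$ produced by the parallelogram identity.
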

\textbf{Proof}:
Writing $u$ as $u(x) = \sum_{l,n,\mu}u_{l,n,\mu}\P_{l,n,\mu}^{(\alt)}(x),$ we follow the same procedure as in the beginning of the proof for Theorem \ref{boundedness} to get the expressions
\begin{align}
\frac{\partial (\omega^\alt u)(x)}{\partial x_1}&=-\omega^{\alt-1} \sum_{l,n,\mu}\Big((n+\alt)u_{l-1,n,\mu}+ nu_{l+1,n-1,\mu}\Big)\P_{l,n,\mu}^{(\alt-1)}(x),\\
\frac{\partial (\omega^\alt u)(x)}{\partial x_2}&= -\omega^{\alt-1}\sum_{l,n,\mu}\Big((\pm)(n+\alt)u_{l-1,n,\mu}+(\mp)nu_{l+1,n-1,\mu}\Big)\P_{l,n,\mu^\star}^{(\alt-1)}(x),\\
\notag k_1(-\Delta)^{\frac{\alpha-2}{2}}\frac{\partial (\omega^\alt u)(x)}{\partial x_1}&= -k_12^{\alpha -2}\sum_{l,n,\mu}\Big(\frac{\Gamma(n+\alt+1)\Gamma(n+\alt+l)}{\Gamma(n+1)\Gamma(n+1+l)}u_{l-1,n,\mu} \\
&\ \ \ \ \ \ \ \ \ \ \ \ \ \ \ \ \ \ \ \ \ + \frac{\Gamma(n+\alt)\Gamma(n+\alt+l)}{\Gamma(n)\Gamma(n+1+l)}u_{l+1,n-1,\mu}\Big)\P_{l,n,\mu}^{(\alt-1)}(x),\\
\notag k_2(-\Delta)^{\frac{\alpha-2}{2}}\frac{\partial (\omega^\alt u)(x)}{\partial x_2}&=-k_22^{\alpha-2}\sum_{l,n,\mu}\Big((\pm)\frac{\Gamma(n+\alt+1)\Gamma(n+\alt+l)}{\Gamma(n+1)\Gamma(n+1+l)}u_{l-1,n,\mu}\\
&\ \ \ \ \ \ \ \ \ \ \ \ \ \ \ \ \ \ \ \ \ + (\mp)\frac{\Gamma(n+\alt)\Gamma(n+\alt+l)}{\Gamma(n)\Gamma(n+1+l)}u_{l+1,n-1,\mu}\Big)\P_{l,n,\mu^\star}^{(\alt-1)}(x).
\end{align}

Using the orthogonality of the basis, we obtain the following expressions for the inner products of $(-\Delta)^{\frac{\alpha-2}{2}}\frac{\partial (\omega^\alt u)}{\partial x_1}$ with $\frac{\partial(\omega^\alt u)}{\partial x_1}$ and of $(-\Delta)^{\frac{\alpha-2}{2}}\frac{\partial (\omega^\alt u)}{\partial x_2}$ with $\frac{\partial(\omega^\alt u)}{\partial x_2}$:
\begin{align}
\notag\Big((-\Delta&)^{\frac{\alpha-2}{2}}\frac{\partial(\omega^\alt u)(x)}{\partial x_1},\frac{\partial(\omega^\alt u)(x)}{\partial x_1}\Big) \\
\notag&= 2^{\alpha -2}\sum_{l,n,\mu}\Big(\frac{\Gamma(n+\alt+1)\Gamma(n+\alt+l)}{\Gamma(n+1)\Gamma(n+1+l)}u_{l-1,n,\mu}+ \frac{\Gamma(n+\alt)\Gamma(n+\alt+l)}{\Gamma(n)\Gamma(n+1+l)}u_{l+1,n-1,\mu}\Big)\\
\notag&\ \ \ \ \ \ \ \ \ \ \ \ \ \ \ \ \ \ \ \cdot\Big((n+\alt)u_{l-1,n,\mu}+ nu_{l+1,n-1,\mu}\Big)\|\P_{l,n,\mu}^{(\alt-1)} \|^2_{L^2_{\alt-1}}\\
\notag&=2^{\alpha -2}\sum_{l,n,\mu}\Big(\frac{\Gamma(n+\alt)\Gamma(n+\alt+l)}{\Gamma(n+1)\Gamma(n+1+l)}(n+\alt)^2u^2_{l-1,n,\mu}+2\frac{\Gamma(n+\alt+1)\Gamma(n+\alt+l)}{\Gamma(n)\Gamma(n+1+l)} \\
\notag&\ \ \ \ \ \ \ \ \ \ \ \ \ \ \ \ \ \ \ \cdot u_{l-1,n,\mu}u_{l+1,n-1,\mu}+ \frac{\Gamma(n+\alt)\Gamma(n+\alt+l)}{\Gamma(n)\Gamma(n+1+l)}nu_{l+1,n-1,\mu}^2\Big)
\|\P_{l,n,\mu}^{(\alt-1)} \|^2_{L^2_{\alt-1}}\\
\notag&=2^{\alpha -2}\sum_{l,n,\mu}\Big(\frac{n+\alt}{n}u^2_{l-1,n,\mu}+2u_{l-1,n,\mu}u_{l+1,n-1,\mu}+ \frac{n}{n+\alt}u_{l+1,n-1,\mu}^2\Big)\\
&\ \ \ \ \ \ \ \ \ \ \ \ \ \ \ \ \ \ \ \cdot\frac{\Gamma(n+\alt+1)\Gamma(n+\alt+l)}{\Gamma(n)\Gamma(n+1+l)}\|\P_{l,n,\mu}^{(\alt-1)} \|^2_{L^2_{\alt-1}}\\
\notag\Big((-\Delta&)^\frac{\alpha-2}{2}\frac{\partial(\omega^\alt u)(x)}{\partial x_2},\frac{\partial(\omega^\alt u)(x)}{\partial x_2}\Big) \\
\notag&= 2^{\alpha -2}\sum_{l,n,\mu}\Big((\pm)\frac{\Gamma(n+\alt+1)\Gamma(n+\alt+l)}{\Gamma(n+1)\Gamma(n+1+l)}u_{l-1,n,\mu}+ (\mp)\frac{\Gamma(n+\alt)\Gamma(n+\alt+l)}{\Gamma(n)\Gamma(n+1+l)}u_{l+1,n-1,\mu}\Big)\\
\notag&\ \ \ \ \ \ \ \ \ \ \ \ \ \ \ \ \ \ \ \cdot\Big((\pm)(n+\alt)u_{l-1,n,\mu}+(\mp)nu_{l+1,n-1,\mu}\Big)\|\P_{l,n,\mu^\star}^{(\alt-1)} \|^2_{L^2_{\alt-1}}\\
\notag&=2^{\alpha -2}\sum_{l,n,\mu}\Big(\frac{\Gamma(n+\alt)\Gamma(n+\alt+l)}{\Gamma(n+1)\Gamma(n+1+l)}(n+\alt)^2u^2_{l-1,n,\mu}- 2\frac{\Gamma(n+\alt+1)\Gamma(n+\alt+l)}{\Gamma(n)\Gamma(n+1+l)}\\
\notag&\ \ \ \ \ \ \ \ \ \ \ \ \ \ \ \ \ \ \ \cdot u_{l+1,n-1,\mu}u_{l-1,n,\mu}+\frac{\Gamma(n+\alt)\Gamma(n+\alt+l)}{\Gamma(n)\Gamma(n+1+l)}nu^2_{l+1,n-1,\mu}\Big)
\|\P_{l,n,\mu^\star}^{(\alt-1)} \|^2_{L^2_{\alt-1}}\\
\notag& = 2^{\alpha -2}\sum_{l,n,\mu}\Big(\frac{(n+\alt)}{n}u^2_{l-1,n,\mu} - 2u_{l+1,n-1,\mu}u_{l-1,n,\mu}+\frac{n}{n+\alt}u^2_{l+1,n-1,\mu}\Big)\\
&\ \ \ \ \ \ \ \ \ \ \ \ \ \ \ \ \ \ \ 
\cdot\frac{\Gamma(n+\alt+1)\Gamma(n+\alt+l)}{\Gamma(n)\Gamma(n+1+l)}\|\P_{l,n,\mu^\star}^{(\alt-1)} \|^2_{L^2_{\alt-1}}
\end{align}

Use the linearity of the inner product, and that $\|\P_{l,n,\mu}^{(\alt-1)} \|_{L^2_{\alt-1}} =\|\P_{l,n,\mu^\star}^{(\alt-1)} \|_{L^2_{\alt-1}}$ (except when $l = 0$) to see that 
\begin{align}
B(u,u) & =  \notag\Big(K(-\Delta)^\frac{\alpha-2}{2}\grad(\omega^\alt u)(x) \, , \, \grad(\omega^\alt u)(x) \Big) \\ 
\notag& = 2^{\alpha-2}\sum_{l\geq 1,n,\mu}\Big((k_1+k_2)\frac{n+\alt}{n}u^2_{l-1,n,\mu} + 2(k_1-k_2)u_{l-1,n,\mu}u_{l+1,n-1,\mu}+(k_1+k_2)\frac{n}{n+\alt}\\
\notag&\ \ \ \ \ \ \ \ \ \ \ \ \ \ \ \ \ \ \ \ \ \cdot u^2_{l+1,n-1,\mu}\Big)\frac{\Gamma(n+\alt+1)\Gamma(n+\alt+l)}{\Gamma(n)\Gamma(n+1+l)}\|\P_{l,n,\mu}^{(\alt-1)} \|^2_{L^2_{\alt-1}}\\
\notag&\ \ \ \ \ \ \ \ \ \ \ \ \ \ \ \ \ \ \ \ \ + 2^{\alpha -2}\sum_{n\geq 1} k_1\frac{n}{n+\alt}u_{1,n-1,1}^2\frac{\Gamma(n+\alt+1)\Gamma(n+\alt)}{\Gamma(n)\Gamma(n+1)}\|\P_{0,n,1}^{(\alt-1)} \|^2_{L^2_{\alt-1}} \\
\notag&\ \ \ \ \ \ \ \ \ \ \ \ \ \ \ \ \ \ \ \ \ + 2^{\alpha -2}\sum_{n\geq 1} k_2\frac{n}{n+\alt}u_{1,n-1,-1}^2\frac{\Gamma(n+\alt+1)\Gamma(n+\alt)}{\Gamma(n)\Gamma(n+1)}\|\P_{0,n,1}^{(\alt-1)} \|^2_{L^2_{\alt-1}}\\
\notag& = 2^{\alpha-2}\sum_{l\geq 1,n,\mu}\Big((k_1+k_2)\frac{n+\alt}{n}u^2_{l-1,n,\mu} + 2(k_1-k_2)u_{l-1,n,\mu}u_{l+1,n-1,\mu}+(k_1+k_2)\frac{n}{n+\alt}\\
\notag&\ \ \ \ \ \ \ \ \ \ \ \ \ \ \ \ \ \ \ \ \ \cdot u^2_{l+1,n-1,\mu}\Big)\frac{\Gamma(n+\alt+1)\Gamma(n+\alt+l)}{\Gamma(n)\Gamma(n+1+l)}\|\P_{l,n,\mu}^{(\alt-1)} \|^2_{L^2_{\alt-1}}\\
\label{B1.20}&\ \ \ \ \ \ \ \ \ \ \ \ \ \ \ \ \ \ \ \ \ + 2^{\alpha -2}\sum_{n\geq 1}(k_1u_{1,n-1,1}^2+k_2u_{1,n-1,-1}^2)
\frac{\Gamma(n+\alt)^2}{\Gamma(n)^2} \|\P_{0,n,1}^{(\alt-1)} \|^2_{L^2_{\alt-1}}.\end{align}
Isolating the first sum, call it $\mathcal{S}_1$, notice that it can be written as
\begin{align}
\notag\mathcal{S}_1  &= 2^{\alpha-2}\sum_{l\geq 1,n,\mu}\Big((k_1+k_2)\frac{n+\alt}{n}u^2_{l-1,n,\mu} + (k_1-k_2)2\sqrt{\frac{n+\alt}{n}}u_{l-1,n,\mu}\sqrt{\frac{n}{n+\alt}}u_{l+1,n-1,\mu}\\
\notag&\ \ \ \ \ \ \ \ \ \ \ \ \ \ \ \ \ \ \ \ \ + (k_1+k_2)\frac{n}{n+\alt}u_{l+1,n-1,\mu}^2\Big)\\
\notag&\ \ \ \ \ \ \ \ \ \ \ \ \ \ \ \ \ \ \ \ \ \cdot\frac{\Gamma(n+\alt+1)\Gamma(n+\alt+l)}{\Gamma(n)\Gamma(n+1+l)}\|\P_{l,n,\mu}^{(\alt-1)} \|^2_{L^2_{\alt-1}}\\
\notag&\geq 2^{\alpha-2}\sum_{l\geq1,n,\mu}\Big((k_1+k_2)\frac{n+\alt}{n}u_{l-1,n,\mu}^2-
 |k_1-k_2| \Big(\frac{n+\alt}{n}u_{l-1,n,\mu}^2 + \frac{n}{n+\alt}u_{l+1,n-1,\mu}^2 \Big)\\
\notag&\ \ \ \ \ \ \ \ \ \ \ \ \ \ \ \ \ \ \ \ \ +(k_1+k_2)\frac{n}{n+\alt}u^2_{l+1,n-1,\mu}\Big)\\
\notag&\ \ \ \ \ \ \ \ \ \ \ \ \ \ \ \ \ \ \ \ \ \cdot\frac{\Gamma(n+\alt+1)\Gamma(n+\alt+l)}{\Gamma(n)\Gamma(n+1+l)}\|\P_{l,n,\mu}^{(\alt-1)} \|^2_{L^2_{\alt-1}}\\
\notag&=2^{\alpha -2}\sum_{l\geq 1,n,\mu}2\min\{k_1,k_2\}\Big(\frac{n+\alt}{n}u^2_{l-1,n,\mu} + \frac{n}{n+\alt}u^2_{l+1,n-1,\mu}\Big)\frac{\Gamma(n+\alt+1)\Gamma(n+\alt+l)}{\Gamma(n)\Gamma(n+1+l)}\\
& \ \ \ \ \ \ \ \ \ \ \ \ \ \ \ \ \ \ \ \ \ \cdot\|\P_{l,n,\mu}^{(\alt-1)} \|^2_{L^2_{\alt-1}}.\label{S1.21}
\end{align}
From Lemma \ref{NormWeightShift}, we know that 
\begin{equation} \label{normweight 21}
\|\P_{l,n,\mu}^{(\alt-1)} \|^2_{L^2_{\alt-1}} = \Big(\frac{2n+\alt + l + 1}{2n+ \alt + l}\Big)\Big(\frac{n+\alt+l}{n+\alt}\Big)\|\P_{l,n,\mu}^{(\alt)} \|^2_{L^2_{\alt}}.\end{equation}
Substituting (\ref{S1.21}) and (\ref{normweight 21}) into each of the sums in (\ref{B1.20}), we obtain (using $Q = L^{2}_{\alt}(\Omega)$):
\begin{align}
\notag B(u, u) &\geq 2^{\alpha-1}\min\{k_1,k_2\}\sum_{l\geq 1,n,\mu}\Big(\frac{n+\alt}{n}u^2_{l-1,n,\mu} + \frac{n}{n+\alt}u^2_{l+1,n-1,\mu}\Big)\frac{\Gamma(n+\alt)\Gamma(n+\alt+l+1)}{\Gamma(n)\Gamma(n+1+l)}\\
\notag&\ \ \ \ \ \ \ \ \ \ \ \ \ \ \ \ \ \ \cdot\Big(\frac{2n+\alt + l + 1}{2n+ \alt + l}\Big)\|\P_{l,n,\mu}^{(\alt)} \|^2_{Q}\\
\label{B2.21}&\ \ \ \ \ \ \ \ \ \ \ \ \ \ \ \ \ \ +2^{\alpha-2}\sum_{n\geq 1}(k_1u_{1,n-1,1}^2+k_2u_{1,n-1,-1}^2)\frac{2n+\alt+1}{2n+\alt}
\frac{\Gamma(n+\alt)^2}{\Gamma(n)^2}  \|\P_{0,n,1}^{(\alt)} \|^2_{Q}
\end{align}
Reindexing, and using Lemma \ref{NormShiftSub}, the second sum in (\ref{B2.21}) becomes: 
\begin{align}
\notag&\sum_{n\geq 0} (k_1u_{1,n,1}^2 + k_2u_{1,n,-1}^2)\frac{2n+\alt+3}{2n+\alt+2}
\frac{\Gamma(n+\alt + 1)^2}{\Gamma(n + 1)^2}  \|\P_{0,n+1,1}^{(\alt)} \|^2_{Q}\\
\label{secondSum}&=\sum_{n\geq 0}\frac{1}{2}(k_1u_{1,n,1}^2+k_2u_{1,n,-1}^2)\frac{\Gamma(n+\alt+1)\Gamma(n+\alt+2)}{\Gamma(n+1)\Gamma(n+2)}\|\P_{1,n,1}^{(\alt)} \|^2_{Q}.
\end{align}

Turning our attention then to the first sum in (\ref{B2.21}), in order to get only one set of coefficients here, we break the 
sum into two pieces, consider the pieces individually, reindex, and finally combine the pieces back together to get the desired result. 

The first half of the first sum in (\ref{B2.21}) is given by
\begin{align}
\notag\sum_{l\geq 1,n,\mu}&u^2_{l-1,n,\mu}\Big(\frac{n+\alt}{n}\Big)\frac{\Gamma(n+\alt)\Gamma(n+\alt+l+1)}{\Gamma(n)\Gamma(n+1+l)}\Big(\frac{2n+\alt + l + 1}{2n+ \alt + l}\Big)\|\P_{l,n,\mu}^{(\alt)} \|^2_{Q}\\
\notag&=\sum_{l\geq 0,n,\mu}u^2_{l,n,\mu}\frac{\Gamma(n+\alt+1)\Gamma(n+\alt+l+2)}{\Gamma(n+1)\Gamma(n+2+l)}\Big(\frac{2n+\alt + l + 2}{2n+ \alt + l+1}\Big)\|\P_{l+1,n,\mu}^{(\alt)} \|^2_{Q}\\
\notag&=\sum_{l\geq 0,n,\mu}u^2_{l,n,\mu}\frac{\Gamma(n+\alt+1)\Gamma(n+\alt+l+2)}{\Gamma(n+1)\Gamma(n+2+l)}\Big(\frac{2n+\alt + l + 2}{2n+ \alt + l+1}\Big)\\
\notag&\ \ \ \ \ \ \ \ \ \ \ \ \ \ \ \ \ \ \ \ \ \cdot\frac{C_{l+1,\mu}}{C_{l,\mu}}\Big(\frac{2n+\alt+l+1}{2n+\alt+l+2}\Big)\Big(\frac{n+l+1}{n+\alt+l+1}\Big)\|\P_{l,n,\mu}^{(\alt)} \|^2_{Q}  \ \mbox{ (using \eqref{NormShiftAdd})}  \\
\label{firstHalfFirstSum}&=\sum_{l\geq 0,n,\mu}u^2_{l,n,\mu}\frac{\Gamma(n+\alt+1)\Gamma(n+\alt+l+1)}{\Gamma(n+1)\Gamma(n+1+l)}\Big(\frac{C_{l+1,\mu}}{C_{l,\mu}}\Big)\|\P_{l,n,\mu}^{(\alt)} \|^2_{Q}.
\end{align}

In the same manner, using \eqref{normshift2}),
the second half of the first sum in (\ref{B2.21}) becomes
\begin{align}
\notag\sum_{l\geq 1,n,\mu}&u_{l+1,n-1,\mu}^2\Big(\frac{n}{n+\alt}\Big)\Big(\frac{\Gamma(n+\alt)\Gamma(n+\alt+l+1)}{\Gamma(n)\Gamma(n+1+l)}\Big)\Big(\frac{2n+\alt + l + 1}{2n+ \alt + l}\Big)\|\P_{l,n,\mu}^{(\alt)} \|^2_{Q}\\
\notag&=\sum_{l\geq2,n,\mu}u_{l,n,\mu}^2\Big(\frac{n+1}{n+\alt+1}\Big)\Big(\frac{\Gamma(n+\alt +1)\Gamma(n+\alt+l+1)}{\Gamma(n+1)\Gamma(n+1+l)}\Big)\Big(\frac{2n+\alt + l + 2}{2n+ \alt + l +1}\Big)\\
\notag&\ \ \ \ \ \ \ \ \ \ \ \ \ \ \ \ \ \ \ \ \ \cdot\|\P_{l-1,n+1,\mu}^{(\alt)} \|^2_{Q}\\
\notag&= \sum_{l\geq2,n,\mu}u_{l,n,\mu}^2\Big(\frac{n+1}{n+\alt+1}\Big)\Big(\frac{\Gamma(n+\alt +1)\Gamma(n+\alt+l+1)}{\Gamma(n+1)\Gamma(n+1+l)}\Big)\Big(\frac{2n+\alt + l + 2}{2n+ \alt + l +1}\Big)\\
\notag&\ \ \ \ \ \ \ \ \ \ \ \ \ \ \ \ \ \ \ \ \ \cdot\Big(\frac{C_{l-1,\mu}}{C_{l,\mu}}\Big)\Big(\frac{2n+\alt+l+1}{2n+\alt+l+2}\Big)\Big(\frac{n+\alt+1}{n+1}\Big)\|\P_{l,n,\mu}^{(\alt)} \|^2_{Q}\\
\label{secondHalfFirstSum}&= \sum_{l\geq2,n,\mu}u_{l,n,\mu}^2\Big(\frac{\Gamma(n+\alt +1)\Gamma(n+\alt+l+1)}{\Gamma(n+1)\Gamma(n+1+l)}\Big)\|\P_{l,n,\mu}^{(\alt)} \|^2_{Q}.
\end{align}
Combining (\ref{secondSum}), (\ref{firstHalfFirstSum}), and (\ref{secondHalfFirstSum}), we then see that (\ref{B2.21}) becomes
\begin{align}
\notag B(u,u) &= \Big(K (-\Delta)^\frac{\alpha-2}{2}\nabla(\omega^\alt u),\nabla(\omega^\alt u)\Big)\\
\notag(\text{from }(\ref{firstHalfFirstSum}), l = 0)\ \ & \geq 2^{\alpha-1} \min\{k_1,k_2\}\sum_{n\geq 0} u_{0,n,1}^2\frac{\Gamma(n+\alt+1)\Gamma(n+\alt+1)}{\Gamma(n+1)\Gamma(n+1)}\|\P_{0,n,1}^{(\alt)} \|^2_{Q}\\
\notag(\text{from }(\ref{firstHalfFirstSum}),(\ref{secondSum}), l = 1)\ \ & \ \ \ \ \ \ + 2^{\alpha-2}\sum_{n\geq 0}
\Big( \Big(\frac{k_1}{2} + 2\min\{k_1,k_2\} \Big)u_{1,n,1}^2 + \Big(\frac{k_2}{2}+2\min\{k_1,k_2\} \Big)\\
\notag&\ \ \ \ \ \ \ \ \ \ \ \ \ \ \ \ \ \ \ \ \ \cdot u_{1,n,-1}^2\Big)\frac{\Gamma(n+\alt+1)\Gamma(n+\alt+2)}{\Gamma(n+1)\Gamma(n+2)}\|\P_{1,n,1}^{(\alt)} \|^2_{Q}\\
(\text{from }(\ref{firstHalfFirstSum}),(\ref{secondHalfFirstSum}), l \geq 2)& \ \ \ \ \ \ +2^{\alpha-1} \min\{k_1,k_2\}\sum_{l\geq 2,n,\mu}u_{l,n,\mu}^2\frac{\Gamma(n+\alt+1)\Gamma(n+\alt+l+1)}{\Gamma(n+1)\Gamma(n+1+l)}\|\P_{l,n,\mu}^{(\alt)} \|^2_{Q}.
\end{align}
Bounding the term for $l = 1$, we combine everything into one sum again:
\begin{align}
\label{qwew} B(u,u) &\geq 2^{\alpha-2}\min\{k_1,k_2\}\sum_{l,n,\mu}u_{l,n,\mu}^2\frac{\Gamma(n+\frac{\alpha}{2}+1)\Gamma(n+\frac{\alpha}{2}+l+1)}{\Gamma(n+1)\Gamma(n+1+l)}\|\P_{l,n,\mu}^{(\alt)} \|^2_{Q}.
\end{align}
Using Stirling's formula \eqref{stirling}, 
\begin{align}
\notag B(u,u) &\geq 2^{\alpha-4}  \min\{k_1,k_2\}\sum_{l,n,\mu}u_{l,n,\mu}^2(n+1)^\alt(n+1+l)^\alt\|\P_{l,n,\mu}^{(\alt)} \|^2_{Q}\\
\notag& =2^{\alpha-4} \min\{k_1,k_2\}\|u\|^2_{V}.
\end{align}
\mbox{ } \hfill \qed

With the boundedness and coercivity of $B$ established, we can now state the existence and uniqueness of the solution
to \eqref{prob3}.
\begin{theorem}
 \label{LMthm} 
 For $k_1,\, k_2\in \mathbb{R}^+$, 
 $K = \begin{pmatrix} k_1 & 0 \\ 0 & k_2 \end{pmatrix}$, and
$f\in V'$
there exists a unique $u  \in  V$ satisfying \eqref{prob3}.
Additionally, 
\begin{equation}
\|u\|_{V} \ \leq \ \frac{1}{C_{coe}} \,  \| f \|_{V'} \, . 
\label{ubd1}
\end{equation}
\end{theorem}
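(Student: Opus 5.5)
The proof will be a direct application of the Lax--Milgram theorem on the Hilbert space $V = H^{\alt}_{\alt}(\Omega)$. This space carries the inner product induced by the weighted coefficient norm \eqref{defXs}. It is complete, since it is isometric to a weighted $\ell^2$ space of coefficients $\{a_{l,n,\mu}\}$ via the orthogonal basis \eqref{basisL2w}.

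The form $B(\cdot,\cdot)$ defined in \eqref{defB} is bilinear on $V\times V$. This holds because the maps $u\mapsto \mc U$ and $v\mapsto \mc V$ are linear (they are compositions of multiplication by $\omega^{\alt}$, the gradient, the Riesz potential and multiplication by $K$), and the duality pairing is bilinear. Equivalently, bilinearity can be read off the explicit coefficient representation of $B$ obtained in the proof of Theorem \ref{boundedness}. Boundedness is then exactly \eqref{Bcts}, namely $|B(u,v)|\le C_{cts}\|u\|_V\|v\|_V$. Applying the argument of Theorem \ref{boundedness} to $-v$ gives the bound on the absolute value, and the Cauchy--Schwarz steps there in fact bound $|I_1|,|I_2|$ directly. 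Coercivity is exactly \eqref{coercivity} of Theorem \ref{coe}, with constant $C_{coe}=2^{\alpha-4}\min\{k_1,k_2\}>0$; this constant is positive because $k_1,k_2>0$.

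The right-hand side $v\mapsto \langle f,v\rangle_{V',V}$ is a bounded linear functional on $V$ for $f\in V'$, with norm at most $\|f\|_{V'}$. This uses that $V'=H^{-\alt}_{\alt}(\Omega)$ is the dual of $V$ with respect to the pivot space $Q$, as recalled from \cite{erv241}. Lax--Milgram then yields a unique $u\in V$ satisfying \eqref{prob3}.

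For the stability estimate \eqref{ubd1}, take $v=u$ in \eqref{prob3} and apply coercivity:
\[
C_{coe}\|u\|_V^2 \le B(u,u) = \langle f,u\rangle_{V',V} \le \|f\|_{V'}\|u\|_V .
\]
If $u\neq 0$, divide by $\|u\|_V$; if $u=0$, the estimate is trivial.

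The only point requiring care is confirming that $B$ is well defined and bilinear on all of $V$, not just on finite expansions. I would handle this by noting that the coefficient series defining $B$ converge absolutely, by the Cauchy--Schwarz bounds of Theorem \ref{boundedness}. Hence $B$ is the continuous bilinear extension of its restriction to finite sums, and both the boundedness and coercivity inequalities pass to the limit.
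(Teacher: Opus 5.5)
Your proposal is correct and follows the paper's route: the paper's proof invokes the boundedness and coercivity of $B$ from Theorems \ref{boundedness} and \ref{coe} and applies the Lax--Milgram theorem, and \eqref{ubd1} is the standard Lax--Milgram stability estimate, which you derive by testing with $v=u$. Your extra remarks are details the paper leaves implicit: the completeness of $V$, the absolute-value form of continuity, and the well-definedness of $B$ on infinite expansions.
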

\textbf{Proof}:
 Theorems 3.1 and 3.2 established that the bilinear form $B\colon  V \times V \rightarrow \mathbb{R}$ is bounded and coercive. Applying the Lax-Milgram theorem, the conclusion immediately follows.  \\
\mbox{ } \hfill \qed

\subsection{Regularity Analysis}

In \cite{zhe251}, the authors showed that the action of 
$-\grad\cdot(-\Delta)^\frac{\alpha-2}{2}K\grad\omega^\alt\sum_{l,n,\mu}u_{l,n,\mu}\P_{l,n,\mu}^{(\alt)}(x)$ 
is equal to
 \[
 \sum_{(l,n,\mu)}\Big(a_{-1}u_{l-2,n+1,\mu}  + a_0u_{l,n,\mu} + a_1u_{l+2,n-1,\mu}\Big)\P_{l,n,\mu}^{(\alt)}(x),
 \] 
for specific values of $a_{-1}, a_0,$ and $a_1$ depending on $l,n,\mu$. With the RHS function $f(x)$ similarly expanded in the orthogonal basis $\{\P_{l,n,\mu}^{(\alt)}(x)\}$, the authors equated coefficients which resulted in an infinite system of coupled equations, with (at most) three unknowns per equation. 

Specifically, the equations have the form: for $l,n\geq 0, \mu \in \{-1,1\}$, 
\begin{equation} 
\label{compCoeffs} 
a_{-1}u_{l-2,n+1,\mu} + a_0 u_{l,n,\mu} + a_1u_{l+2,n-1,\mu} = f_{l,n,\mu},
\end{equation}
with  $u_{l,n,\mu} = 0$ for $l,n<0$ and $(l,\mu) = (0, -1),$ 
and where $f_{l,n,\mu}$ are the coefficients resulting from the expansion of $f$ in the basis for $L^2_\alt(\Omega)$. 

Up to the constant $\|\P_{l,n,\mu}^{(\alt)} \|^2_{L^2_{\alt}}$, choosing 
$v(x) = \P_{l,n,\mu}^{(\alt)}(x)$ and substituting into \eqref{prob3} results in the
same equation \eqref{compCoeffs}.

The equations \eqref{compCoeffs} provide a convenient framework to investigate the regularity of the solution $u$ of \eqref{prob3} in terms of the regularity of the RHS function $f$.

Making the substitutions 
\begin{equation} 
\label{DlnFtscaling} 
d_{l,n} = \frac{\Gamma(n+1+\alt)}{\Gamma(n+1)}u_{l,n,1},\ \ \ \text{and}\ \ \ \wt{f}_{l,n} = 2^{-(\alpha-2)}\frac{\Gamma(n+1+l)}{\Gamma(n+1+\alt + l)}f_{l,n,1},
\end{equation}
the system of equations for $\mu = 1$ reduces to (see \cite{zhe251}, Section 4)
\begin{alignat}{2}
\notag R1&\ (n=0,l = 0):\ \ &&(k_1+k_2)d_{0,0} = \wt{f}_{0,0}\\
\notag R2&\ (n=0, l = 1): &&(2k_1+(k_1+k_2))d_{1,0} = \wt{f}_{1,0}\\
\notag R3&\ (n=0,l)_{l\geq 2}: &&2(k_1+k_2)d_{l,0} + (k_1-k_2)d_{l-2,1} = \wt{f}_{l,0}\\
\notag R4&\ (n,l=0)_{n\geq1}: &&(k_1-k_2)d_{2,n-1} + (k_1+k_2)d_{0,n} = \wt{f}_{0,n}\\
\notag R5&\ (n, l = 1)_{n\geq 1}: &&(k_1-k_2)d_{3,n-1}+(2k_1+(k_1+k_2))d_{1,n} = \wt{f}_{1,n}\\
\label{SystemD}R6&\ (n,l)_{n\geq1,\, l\geq 2}: && (k_1-k_2)d_{l+2,n-1} + 2(k_1+k_2)d_{l,n} + (k_1-k_2)d_{l-2,n+1} = \wt{f}_{l,n}.
\end{alignat} 
The equations for $\mu = -1$ are analogous, with the exclusion of the case corresponding to $l = 0$. 
Note that the coefficients $d_{l,n}$ can be visualized as an array of unknowns (see Figure \ref{unknownArrayD}),
with the cases denoted $R1$, $R2$, $\ldots$, $R6$, representing a partition of the array (see Figure \ref{part1}).

\begin{figure}[!ht]
\begin{minipage}{.46\linewidth}
\begin{center}
 \includegraphics[height=2.25in]{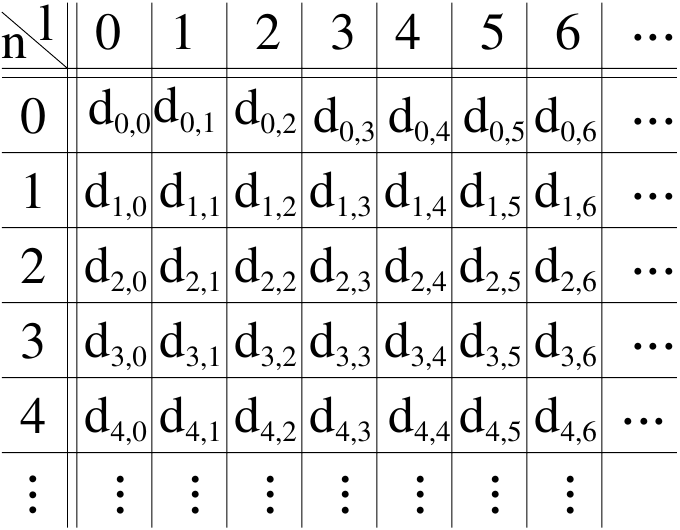}
 \caption{Array for the unknowns $d_{l,n}$.}
\label{unknownArrayD}
\end{center}

\end{minipage} \hfill
\begin{minipage}{.46\linewidth}
 
\begin{center}
 \includegraphics[height=2.25in]{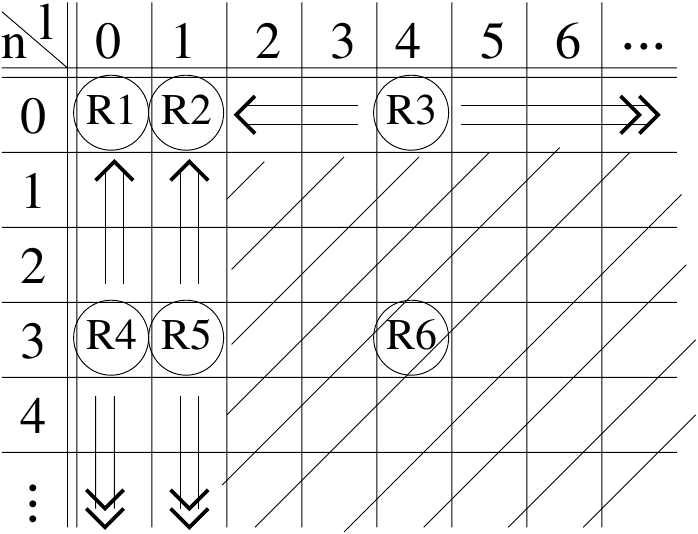}
 \caption{Partition of the index set $\{(n , l)\}_{n \geq 0 , l \geq 0}$ for $\mu = 1$ into $R1, R2, \ldots, R6$.}
\label{part1}
\end{center}
\end{minipage} 
\end{figure}

The system in (\ref{SystemD}) and Figure \ref{unknownArrayD} then give rise to a ``coupling stencil" for the unknowns, as illustrated in Figure \ref{stencil}.

\begin{figure}[!ht]
\begin{center}
 \includegraphics[height=1.6in]{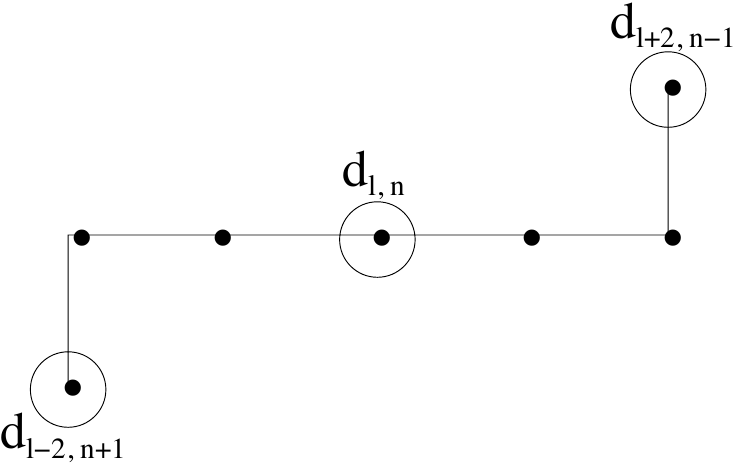}
   \caption{Stencil illustrating the coupling of the unknowns $d_{l , n}$.}
   \label{stencil}
\end{center}
\end{figure}

\par Applying the stencil, beginning at the upper left corner, and successively applying it at the lower left node from the previous node, induces a renumbering of the unknowns $d_{l,n}\rightarrow e_j$ (and of $\wt{f}_{l,n} \rightarrow b_j$) as shown in Figure \ref{relabeling}. 

\begin{figure}[!ht]
\begin{center}
 \includegraphics[height=2.0in]{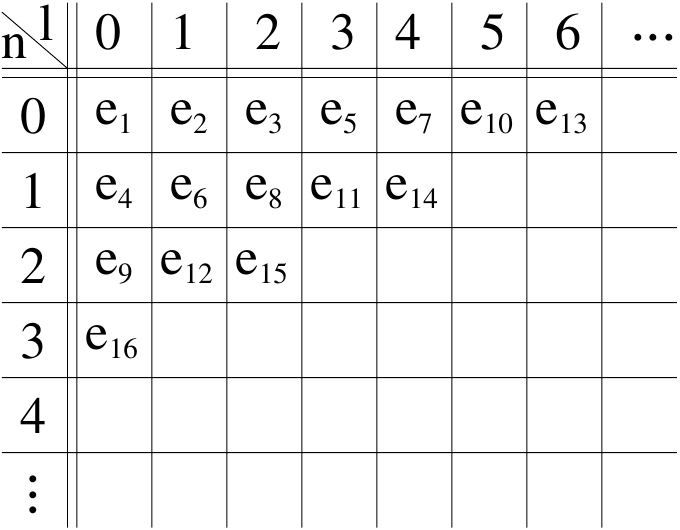}
   \caption{Corresponding array of the relabeled unknowns $e_j$.}
   \label{relabeling}
\end{center}
\end{figure}
\par Further, this results in the equations (\ref{SystemD}) decoupling into a sequence of block diagonal, tridiagonal matrices of dimensions $1,1,2,2, \hdots, m,m, \hdots$. 
For $\mathbf{e} \, = \, \{ e_{j} \}_{j \ge 1}$, $A \mathbf{e} \, = \, \mathbf{b}$, has the following block diagonal structure.
\begin{equation}
A \, = \, \left[ \begin{array}{cccccccc}
k_{1} + k_{2} &  &   &  &  &  &   &    \\
   &  \hspace{-0.5in}   2 k_{1}  +  (k_{1} + k_{2}) &  &   &  &  &  &     \\
   &       A_{3, 4, 2} &  &   &  &  &   &   \\
   &       & \hspace{-0.3in} A_{3, 5, 2} &  &   &  &    &     \\
   &       &     & \hspace{-0.3in}   \ddots &  &   &         &   \\
   &       &     &     & \hspace{-0.3in}  A_{3, 4, m}  &  &       &         \\
   &       &     &     &     &  \hspace{-0.3in}  A_{3, 5,m}  &     &     \\
   &       &     &     &    &     &     \hspace{-0.3in}   \ddots   &   \\
   &       &     &     &    &     &     &       \ddots   
 \end{array}
\right]
\label{Astr}
\end{equation}

The matrices $A_{3, 4, m}$ and $A_{3, 5, m}$ are $m\times m$, symmetric, tridiagonal matrices given by
\begin{equation}
A_{3, 4, m} \, = \, \left[ \begin{array}{ccccccc}
2(k_{1} + k_{2}) &  (k_{1} - k_{2})&   &  &  &  &    \\
 (k_{1} - k_{2})  &     2(k_{1} + k_{2}) &  (k_{1} - k_{2}) &   &  &  &      \\
   &      & \hspace{-1.0in} \ddots &  &     &  &        \\
   &       &  &\hspace{-1.0in} \ddots &     &   &           \\
   &       &     & \hspace{-1.5in} (k_{1} - k_{2})  &  \hspace{-0.5in}   2(k_{1} + k_{2}) &    (k_{1} - k_{2})   &              \\
   &       &     &   &    \hspace{-0.5in}   (k_{1} - k_{2})  &         (k_{1} + k_{2})             \end{array}
\right]
\label{Asub1}
\end{equation}
and
\begin{equation}
A_{3, 5, m} \, = \, \left[ \begin{array}{ccccccc}
2(k_{1} + k_{2}) &  (k_{1} - k_{2})&   &  &  &  &    \\
 (k_{1} - k_{2})  &     2(k_{1} + k_{2}) &  (k_{1} - k_{2}) &   &  &  &      \\
   &      & \hspace{-1.0in} \ddots &  &     &  &        \\
   &       &  &\hspace{-1.0in} \ddots &     &   &           \\
   &       &     & \hspace{-1.5in} (k_{1} - k_{2})  &  \hspace{-0.5in}   2(k_{1} + k_{2}) &    (k_{1} - k_{2})   &              \\
   &       &     &   &    \hspace{-0.5in}   (k_{1} - k_{2})  &         2k_{1} + (k_{1} + k_{2})             \end{array}
\right]
\label{Asub2}
\end{equation}

The matrix $A_{3, 4, m}$ represents the system of equations corresponding to unknowns 
$\mathbf{e}_{3,4,m} = \{ e_{j} \} $, $j \, = \, m(m-1) + 1, \ldots, m^{2}$, and
$A_{3, 5, m}$ represents the system of equations corresponding to unknowns $\mathbf{e}_{3,5,m} = \{ e_{j} \} $, $j \, = \, m^{2} + 1, \ldots, m(m+1)$.

Using the Gershgorin circle theorem it is straight forward to show that the eigenvalues for $A_{3, k, m}$, $k \in \{ 4 , \, 5 \}$,
satisfy
\[
 c_{\lambda \, min} \, := \, 2 \min\{ k_{1} \, , \, k_{2} \} \ \leq \lambda \leq \ 4 \max\{ k_{1} \, , \, k_{2} \} \, := \, c_{\lambda \, max} \, .
\]

With $\bf{b}_{3, k, m}$, corresponding to the components of $\bf{b}$ in $A_{3, k, m} \, \bf{e}_{3, k, m} \, = \, \bf{b}_{3, k, m}$, then
\begin{equation}
\bf{e}_{3, k, m} \, = \, A_{3, k, m}^{-1} \, \bf{b}_{3, k, m} \ \ \Longrightarrow \ 
\| \bf{e}_{3, k, m} \|_{2} \ \leq \ \| A_{3, k, m}^{-1} \|_{2} \, \| \bf{b}_{3, k, m} \|_{2} \ 
 \leq \ c_{\lambda \, min}^{-1} \,  \| \bf{b}_{3, k, m} \|_{2}  \, .
\label{ghrt1}
\end{equation} 
Hence,
\begin{equation} 
\label{changeofbasis} 
\sum_{l=1}^\infty \sum_{n=0}^\infty d_{l,n}^2 
\leq  c_{\lambda \, min}^{-2} \sum_{l = 1}^\infty \sum_{n=1}^\infty \wt{f}^2_{l,n} 
= c_{\lambda \, min}^{-2} 
\sum_{l=1}^\infty \sum_{n=0}^\infty \bigg(2^{-(\alpha-2)}\frac{\Gamma(n+1+l)}{\Gamma(n+1+\alt+l)}f_{l,n,1}\bigg)^2 \, .
\end{equation}

\textbf{Note}: The blocks in $A$ represent a decoupling of the basis functions with respect to the degree of the
radial variable $r$. Specifically, the block $A_{3, 4, 2}$ represents the basis functions in $r$ of degree $2$, 
$A_{3, 4, 3}$ the basis functions in $r$ of degree $3$, $\ldots$, $A_{3, 4, m}$ the basis in $r$ of degree $2 ( m - 1)$,
and $A_{3, 5, m}$ the basis in $r$ of degree $2 ( m - 1) \, + \, 1$.

 
\begin{theorem}
 \label{regthm1} 
 For $k_1,\, k_2\in \mathbb{R}^+$, 
 $K = \begin{pmatrix} k_1 & 0 \\ 0 & k_2 \end{pmatrix}$, and $f \in H^r_\alt(\Omega)$, $r \geq \, - \alt$,
 there exists a unique solution $u \in H^{r+\alpha}_\alt(\Omega)$ satisfying \eqref{prob3}.
\end{theorem}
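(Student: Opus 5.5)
Since $H^r_\alt(\Omega)\subset H^{-\alt}_\alt(\Omega)=V'$ for $r\ge -\alt$, Theorem \ref{LMthm} already supplies a unique $u\in V$ satisfying \eqref{prob3}. What remains is to show $u\in H^{r+\alpha}_\alt(\Omega)$. The plan is to work entirely with coefficients, using the decoupled system \eqref{SystemD} and its block diagonal form \eqref{Astr}. The block estimate \eqref{ghrt1} will be upgraded to a weighted estimate, with weights that are essentially constant on each block.

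\textbf{Step 1 (the weight is constant on a block, up to factors of two).} Along the stencil in Figure \ref{stencil}, the unknowns coupled in a single equation are $d_{l-2,n+1}$, $d_{l,n}$ and $d_{l+2,n-1}$. Each block $A_{3,k,m}$ therefore contains indices with $l+2n$ fixed; equivalently, by the Note, the radial degree $2(m-1)$ or $2(m-1)+1$ is fixed. On such a block, $n+l+1$ lies between roughly $N/2$ and $N$, where $N:=l+2n+1\approx 2m$. Also $n+1\le N$. Hence
\[
(n+1)^s(n+l+1)^s \;\approx\; (n+1)^s N^s ,
\]
with constants depending only on $s$. The factor $(n+1)^s$ is \emph{not} constant on a block, because $n$ runs from $0$ up to about $m$. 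This is the main obstacle.

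\textbf{Step 2 (handling the $n$-dependent weight).} Let $D=\mathrm{diag}((n_j+1)^{\beta})$ on a block. First try the conjugation identity
\[
D A_{3,k,m} D^{-1} = A_{3,k,m} + E .
\]
Neighbouring $n$ values differ by one, so the off-diagonal entries of $E$ are of size $|k_1-k_2|\,\bigl|((n+1)/(n+2))^{\pm\beta}-1\bigr|\lesssim |k_1-k_2|/(n+1)$. This is small only when $n$ is large, so a perturbation argument does not close for small $n$. The second option is to use the tridiagonal Toeplitz structure. The entries $2(k_1+k_2)$ and $k_1-k_2$ satisfy $|k_1-k_2|/(2(k_1+k_2))<1/2$, so $A^{-1}$ is diagonally dominant with exponentially decaying entries,
\[
|(A^{-1})_{ij}| \lesssim \rho^{|i-j|}, \qquad \rho<1,
\]
by Demko--Moss--Smith type bounds, uniformly in $m$. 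Since polynomial weights satisfy $(n_i+1)^{\beta}/(n_j+1)^{\beta}\le (1+|i-j|)^{|\beta|}$, a Schur test then shows that $D A^{-1}D^{-1}$ is bounded in $\ell^2$ uniformly in $m$. This second option is the one I would commit to.

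\textbf{Step 3 (translating back to Sobolev norms).} Multiply each block estimate by the block-constant factor $N^s$, then sum over blocks. This gives
\[
\sum (n+1)^{\beta}N^{\beta'} d_{l,n}^2 \;\lesssim\; \sum (n+1)^{\beta}N^{\beta'}\wt f_{l,n}^2
\]
for all real $\beta,\beta'$. Undo the scaling \eqref{DlnFtscaling} using Stirling \eqref{stirling}:
\[
d_{l,n}^2\approx (n+1)^{\alpha}u_{l,n,1}^2, \qquad \wt f_{l,n}^2\approx (n+l+1)^{-\alpha}f_{l,n,1}^2 .
\]
Lemma \ref{NormWeightShift} is not needed here, since both sides use the same norm $\|\P^{(\alt)}_{l,n,\mu}\|_{Q}^2$. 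Choosing exponents so that the right-hand side is $\|f\|^2_{H^r_\alt}$ makes the left-hand side $(n+1)^{r+\alpha}(n+l+1)^{r+\alpha}u_{l,n,1}^2\|\P^{(\alt)}_{l,n,1}\|_Q^2$, possibly up to additional equivalences on the block from Step 1. Summing gives $\|u\|_{H^{r+\alpha}_\alt}\lesssim\|f\|_{H^r_\alt}$.

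\textbf{Step 4 (remaining cases).} The cases $\mu=-1$ and the $1\times1$ blocks R1, R2 are handled the same way.
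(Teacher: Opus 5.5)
Your proof is correct, and at its key step it takes a different route from the paper. Both arguments get existence from Theorem \ref{LMthm} and then work blockwise in the decoupled system \eqref{Astr}, whose blocks are the indices with fixed radial degree $l+2n$.

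To carry the weight $(n+1)^r(n+l+1)^{r+\alpha}h^2_{l,n}$ through $A_{3,k,m}^{-1}$, the paper conjugates by an arbitrary positive diagonal $W$. It asserts that $WA_{3,k,m}W^{-1}$ is symmetric positive definite with the spectrum of $A_{3,k,m}$, and concludes $\|W\mathbf{e}_{3,k,m}\|\lesssim c_{\lambda\,\min}^{-1}\|W\mathbf{b}_{3,k,m}\|$ with a constant independent of $W$. Since $(WAW^{-1})_{ij}=w_iA_{ij}/w_j$, symmetry holds only if $w_i=w_j$ whenever $A_{ij}\neq 0$. That means weights constant on each block, or $k_1=k_2$. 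Otherwise the conjugated matrix is non-normal, and its eigenvalues do not control the $2$-norm of its inverse.

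That is exactly the obstacle you isolate in Steps 1--2. Your replacement is sound: off-diagonal decay of $A_{3,k,m}^{-1}$ uniform in $m$, weight ratios bounded by $(1+|i-j|)^{|\beta|}$, and a Schur test. It is what actually justifies the weighted block estimate.

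What you give up is the explicit constant. Yours depends on $r$ and on $\theta:=|k_1-k_2|/(k_1+k_2)$. The same change would carry over to the constant of Theorem \ref{errorthm1}, whose proof reuses this step.

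You also do not need Demko--Moss--Smith. It is $A_{3,k,m}$, not its inverse, that is strictly diagonally dominant. Every row, including the boundary rows with diagonal $k_1+k_2$ or $3k_1+k_2$, has off-diagonal-to-diagonal ratio at most $\theta<1$. Writing $A=D_A(I-N)$ with $N$ tridiagonal and $\|N\|_\infty\le\theta$, the Neumann series gives $|(A^{-1})_{ij}|\le\theta^{|i-j|}/\bigl((1-\theta)(k_1+k_2)\bigr)$ directly.

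One detail needs to be made explicit rather than dismissed. $h^2_{l,n}=\|\P^{(\alt)}_{l,n,\mu}\|^2_{Q}$ is \emph{not} constant on a block. So the fact that both norms carry the same $h^2_{l,n}$ does not remove it; the diagonal weight you push through $A^{-1}$ must contain $h_{l,n}$. From the Jacobi norm formula (see \eqref{h2def1}) and \eqref{stirling}, $h^2_{l,n}\approx(n+1)^{\alt}(n+l+1)^{-\alt}(2n+l+1)^{-1}$. Hence on a block with $N=2n+l+1$, the full weight $(n+1)^r(n+l+1)^{r+\alpha}h^2_{l,n}$ is comparable to $(n+1)^{r+\alt}N^{r+\alt-1}$. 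That is of the form your Step 3 handles, and with this inserted the argument closes.
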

\textbf{Proof}: The existence and uniqueness of $u \in H^{\alt}_{\alt}(\Omega)$ was established in Theorem \ref{LMthm}. Now,
write $f(x) = f_{-1}(x) + f_1(x) \in H^r_\alt(\Omega)$, where 
$f_\mu(x)  = \sum_{l,n}f_{l,n,\mu}\P_{l,n,\mu}^{(\alt)}(x)$, and $u(x) = u_{-1}(x) + u_1(x)$ where
$u_\mu(x) = \sum_{l,n}u_{l,n,\mu}\P_{l,n,\mu}^{(\alt)}(x)$ for $\mu = \pm 1$. We focus our attention on the analysis for
$f_{1}$ and $u_{1}$, with the results extending in an analogous manner to $f_{2}$ and $u_{2}$.

Let $h^2_{l,n} \, = \, \|\P_{l,n,\mu}^{(\alt)}(x)\|^2_{L^2_{\alt}}$. Then,
\begin{align}
\hspace{-0.95cm}\|u_1\|^2_{H^{r+\alpha}_\alt} &= \sum_{l,n} (n+1)^{r+\alpha}(n+l+1)^{r+\alpha}u_{l,n,1}^2h^2_{l,n} \nonumber \\
\text{(from (\ref{DlnFtscaling}))}\ & = \sum_{l,n}(n+1)^{r+\alpha}(n+l+1)^{r+\alpha}\Big(\frac{\Gamma(n+1)}{\Gamma(n+1+\alt)} d_{l,n,1}\Big)^2h_{l,n}^2  \nonumber  \\
& \leq 4 \sum_{l,n} (n+1)^{r}(n+l+1)^{r+\alpha}d_{l,n,1}^2h^2_{l,n}  \ \mbox{ (using \eqref{stirling})} \, .  \label{poil2}
\end{align}
From \eqref{ghrt1} and \eqref{changeofbasis} it does not follow that
$d_{l,n,1}^2 \ \leq \ c_{\lambda min}^{-2} \wt{f}^2_{l,n}$ for all $l, \, n$. However, 
for W a diagonal matrix, with $w_{i i} > 0$, note that
\[
W \, A_{3, k, m} W^{-1} \,  W \, \bf{e}_{3, k, m} \ = \ W \, \bf{b}_{3, k, m} , \ k \in \{4 , \, 5\} .
\]
With $\mathcal{A} \ = \ W \, A_{3, k, m} W^{-1}$, note that $\mathcal{A}$ is symmetric, positive definite, 
and has the same eigenvalues as $A_{3, k, m}$.
Thus, $c_{\lambda max}^{-1} \, \leq \, \| \mathcal{A}^{-1} \|_{2} \, \leq \, c_{\lambda min}^{-1}$, and subsequently,
$\| W \, \bf{e}_{3, k, m} \|^{2} \, \leq \, c_{\lambda min}^{-1} \, \| W \, \bf{b}_{3, k, m} \|^{2}$. 
This enables us to handle the weights $(n+1)^{r}(n+l+1)^{r+\alpha} \, h^2_{l,n}$ in \eqref{poil2}. Thus,
\begin{equation}
\|u_1\|^2_{H^{r+\alpha}_\alt} \leq 4 \, \sum_{l,n}(n+1)^r(n+l+1)^{r+\alpha}
\Big(\frac{2^{-(\alpha -2)}\Gamma(n+l+1)}{\Gamma(n+1+\alt + l)} f_{l,n,1}\Big)^2 \, h^2_{l,n}. 
\label{jhgp1}
\end{equation}
Again using \eqref{stirling} and simplifying,
\[
\|u_1\|^2_{H^{r+\alpha}_\alt}  \leq 16 \,  2^{-2(\alpha-2)} \, \sum_{l,n} (n+1)^r(n+l+1)^r \, f_{l,n,1}^2 \, h^2_{l,n}
 =  256 \, 2^{-2 \alpha} \|f_1\|^2_{H^r_\alt} < \infty. 
\]
Hence, $u_1 \in H^{r+\alpha}_\alt(\Omega)$. Similarly,  $u_2 \in H^{r+\alpha}_\alt(\Omega)$ and, consequently, 
$u(x) \in H^{r+\alpha}_\alt(\Omega)$.  \\
\mbox{  }  \hfill \qed

\section{Approximation to the Steady-State Problem}
\label{steady_approximation}
In this section we investigate finite dimensional approximations to $u$ satisfying \eqref{prob3}. We begin by
establishing existence and uniqueness of such approximations.

\begin{corollary} 
\label{finite_dimensional_steady}
Let $X_R \subset V$ be a finite-dimensional subspace 
and suppose $K$ and $f$ satisfy the conditions of Theorem \ref{LMthm}. 
Then, there exists a unique $u_R \in X_R$ satisfying for all $v \in X_{R}$
\begin{equation}
B(u_R, v) \ = \ \langle f \, , \, v \rangle_{V' ,  V} \, .
\label{ss_apx}
\end{equation} 
\end{corollary}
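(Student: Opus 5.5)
The plan is to apply the Lax--Milgram theorem directly on the finite-dimensional space $X_R$, equipped with the norm $\|\cdot\|_V$ inherited from $V$. Since $X_R$ is finite dimensional, it is a closed subspace of the Hilbert space $V$, and hence is itself a Hilbert space with the restricted inner product. The bilinear form $B$ restricted to $X_R \times X_R$ keeps the properties already established on $V$. Theorem \ref{boundedness} gives $B(u,v) \le C_{cts}\|u\|_V\|v\|_V$ for all $u,v \in X_R \subset V$. Theorem \ref{coe} gives $B(u,u) \ge C_{coe}\|u\|_V^2$ for all $u \in X_R$. Both constants are independent of $X_R$.

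Next, the right-hand side $v \mapsto \langle f, v\rangle_{V',V}$ restricted to $X_R$ is a bounded linear functional. Its norm is at most $\|f\|_{V'}$, since $f \in V'$ as assumed in Theorem \ref{LMthm}. Lax--Milgram on $X_R$ then yields a unique $u_R \in X_R$ satisfying \eqref{ss_apx}, together with the stability bound $\|u_R\|_V \le C_{coe}^{-1}\|f\|_{V'}$, exactly as in \eqref{ubd1}.

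A purely algebraic argument is also available, and it may be worth noting in case one prefers to avoid functional-analytic machinery. Choose a basis $\{\phi_i\}_{i=1}^N$ of $X_R$. Then \eqref{ss_apx} is equivalent to the linear system $M\mathbf{c} = \mathbf{F}$, where $M_{ij} = B(\phi_j,\phi_i)$ and $F_i = \langle f,\phi_i\rangle$. Coercivity implies that $\mathbf{c}^T M \mathbf{c} = B(\sum c_j\phi_j,\sum c_j\phi_j) \ge C_{coe}\|\sum c_j\phi_j\|_V^2 > 0$ for $\mathbf{c}\neq 0$. Thus $M$ has trivial kernel, is invertible, and existence and uniqueness follow; the square system needs no symmetry of $B$.

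There is no substantive obstacle. The only point to check is that the restriction of $B$ to $X_R$ is well defined through the duality pairing in \eqref{defB}, and this holds automatically because $X_R \subset V$.
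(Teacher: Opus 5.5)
Your proposal is correct and matches the paper's proof: continuity and coercivity of $B$ on $X_R \times X_R$ are inherited from $V$ via Theorems \ref{boundedness} and \ref{coe} because $X_R \subset V$, and Lax--Milgram on $X_R$ then gives existence and uniqueness of $u_R$. Your supplementary matrix-invertibility argument and the stability bound $\|u_R\|_V \le C_{coe}^{-1}\|f\|_{V'}$ are also valid, though the paper does not state them.
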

\textbf{Proof}:
The continuity and coercivity of $B(\cdot, \cdot)$ on $X_R \times X_R$ follow directly from $X_R\subset  V$. 
Applying the Lax-Milgram theorem establishes the existence and uniqueness of $u_R$. \\
\mbox{ } \hfill \qed

\textbf{Approximation Space $X_R$}\\
For $R\in\N$, let the approximation space $X_R \subset V$ be defined as
the span of basis polynomials whose radial degree is at most $R$. 
Formally, if $R$ is odd, 
\begin{equation}
X_R =  \text{span}\Big\{ \bigcup_{l = 0}^{R} \bigcup_{n = 0}^{\lfloor \frac{l}{2}\rfloor} \{\P^{(\alt)}_{l - 2n, n, 1}(x) \} \cup \bigcup_{l=1}^{R+1}\bigcup_{n=0}^{\lfloor \frac{l-1}{2}\rfloor} \{\P^{(\alt)}_{l-2n,n,-1}(x) \} \Big\},
\label{defR}
\end{equation}
and if $R$ is even,
\begin{equation}\notag
X_R =  \text{span}\Big\{ \bigcup_{l = 0}^{R+1} \bigcup_{n = 0}^{\lfloor \frac{l}{2}\rfloor} \{\P^{(\alt)}_{l - 2n, n, 1}(x) \} \cup \bigcup_{l=1}^{R+2}\bigcup_{n=0}^{\lfloor \frac{l-1}{2}\rfloor} \{\P^{(\alt)}_{l-2n,n,-1}(x) \} \Big\}.
\end{equation}

\begin{theorem}
\label{errorthm1} 
Suppose $f \in H^r_{\alt}(\Omega)$ for some $r\geq -\alt$. Let $u \in  V$ be the unique solution 
to \eqref{prob3} and $u_R \in X_R$ be the solution to \eqref{ss_apx}. 
Then, for any $s\in \R$ such that $s \leq \alpha + r $, we have 
\[
\|u - u_R \|_{H^s_{\alt}} \leq 2^{(8 - 2 \alpha)} \, \, \Big (\frac{1}{R + 2}\Big)^{(\alpha + r  - s)/2} \, \|f \|_{H^r_{\alt}}.
\]
\end{theorem}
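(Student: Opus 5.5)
The plan is to exploit the block diagonal structure \eqref{Astr} of the coefficient system and show that the Galerkin approximation $u_R$ is exactly the truncation of the series for $u$ to the basis functions spanning $X_R$. The error then becomes a tail sum that can be bounded directly by the regularity estimate of Theorem \ref{regthm1}.

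\textbf{Step 1: $u_R$ is the truncation of $u$.} Write $u=\sum u_{l,n,\mu}\P^{(\alt)}_{l,n,\mu}$ and let $\Pi_R u$ be its truncation to the indices appearing in \eqref{defR}. For such an index the radial degree of $\P^{(\alt)}_{l,n,\mu}$ is $l+2n$, so $X_R$ is the span of all basis functions of radial degree at most $R$ (or $R+1$ when $R$ is even).
\begin{itemize}
\item Taking $v=\P^{(\alt)}_{l,n,\mu}\in X_R$ in \eqref{ss_apx} reproduces, up to the factor $\|\P^{(\alt)}_{l,n,\mu}\|^2_{L^2_{\alt}}$, equation \eqref{compCoeffs} for that index.
\item Each equation \eqref{compCoeffs} couples only $(l,n)$, $(l-2,n+1)$ and $(l+2,n-1)$, all of which have the same radial degree $l+2n$. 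This is exactly the decoupling into the blocks $A_{3,k,m}$.
\item Hence the equations indexed by $X_R$ involve only unknowns indexed by $X_R$.
\item The coefficients of $u$ satisfy all equations \eqref{compCoeffs}, so $\Pi_R u$ satisfies \eqref{ss_apx}. By the uniqueness in Corollary \ref{finite_dimensional_steady}, $u_R=\Pi_R u$.
\end{itemize}
I must check this invariance for the boundary rows $R1$, $R2$, $R4$, $R5$ and for the $\mu=-1$ system. I expect this bookkeeping to be the main (though not deep) obstacle.

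\textbf{Step 2: tail estimate.} The error $u-u_R$ consists of the terms with $l+2n\ge R+1$. For these indices I claim
\[
(n+1)(n+l+1)\ \ge\ R+2 .
\]
Indeed, $(n+1)(n+l+1)\ge (n+1)(R+2-n)$ whenever $l\ge R+1-2n$, which is attained at $n=0$. For large $n$ the left side is at least $(n+1)^2$ with $2n\ge R+1-l$; checking the minimum over $0\le n$ gives the claim, since the minimum of $(n+1)(R+2-n)$ over $0\le n\le (R+1)/2$ is at $n=0$.

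Since $s\le \alpha+r$, the exponent $\alpha+r-s$ is nonnegative, and therefore
\[
\|u-u_R\|^2_{H^s_{\alt}}=\sum_{l+2n\ge R+1}(n+1)^{s}(n+l+1)^{s}u_{l,n,\mu}^2h^2_{l,n}
\le (R+2)^{-(\alpha+r-s)}\|u\|^2_{H^{\alpha+r}_{\alt}} .
\]

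\textbf{Step 3: regularity bound.} Apply the estimate from the proof of Theorem \ref{regthm1}, namely $\|u\|^2_{H^{r+\alpha}_{\alt}}\le 256\,2^{-2\alpha}\|f\|^2_{H^r_{\alt}}$, with the $\mu=\pm1$ parts combined. This gives
\[
\|u-u_R\|_{H^s_{\alt}}\le 2^{4-\alpha}\Big(\frac1{R+2}\Big)^{(\alpha+r-s)/2}\|f\|_{H^r_{\alt}},
\]
which is bounded by the stated constant $2^{8-2\alpha}$ because $2^{4-\alpha}\le 2^{8-2\alpha}$ for $\alpha<2$. Any extra slack lost in combining the $\mu=\pm1$ parts is also absorbed by this larger constant.
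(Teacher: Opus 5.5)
Your Steps 1 and 2 are correct. Step 1 also makes explicit something the paper's proof only assumes when it writes $u-u_R$ as a tail sum: the stencil \eqref{compCoeffs} preserves the radial degree $l+2n$, so $X_R$ is a union of complete blocks of \eqref{Astr}, $B(u-\Pi_R u,v)=0$ for $v\in X_R$, and hence $u_R=\Pi_R u$. Step 2 needs no case analysis, since $(n+1)(n+l+1)=n^2+nl+2n+l+1\ge l+2n+1\ge R+2$.

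The gap is in Step 3. The bound $\|u\|^2_{H^{r+\alpha}_{\alt}}\le 256\,2^{-2\alpha}\|f\|^2_{H^r_{\alt}}$, which you take from the proof of Theorem \ref{regthm1}, does not depend on $K$, and no such bound can hold. $B$ is linear in $K$, so replacing $K$ by $tK$ replaces $u$ and $u_R$ by $u/t$ and $u_R/t$, while $f$ and the right-hand side of the claimed estimate stay fixed. Here is a concrete case: take $R$ odd, $k_1=k_2=k$ and $f=\P^{(\alt)}_{R+1,0,1}$.
\begin{itemize}
\item The system \eqref{SystemD} is diagonal, and its only nontrivial row is $R3$ with $l=R+1$, namely $4k\,d_{R+1,0}=\wt{f}_{R+1,0}$.
\item Hence $u_R=0$ and $u=\P^{(\alt)}_{R+1,0,1}/(k\lambda)$, with $\lambda=2^{\alpha}\Gamma(1+\alt)\Gamma(R+2+\alt)/\Gamma(R+2)\le 2^{\alpha}(R+2)^{\alt}$.
\item Therefore $\|u-u_R\|_{H^s_{\alt}}\ge 2^{-\alpha}k^{-1}(R+2)^{-(\alpha+r-s)/2}\|f\|_{H^r_{\alt}}$, which violates the stated inequality as soon as $k<2^{\alpha-8}$.
\end{itemize}
The source of the error is that \eqref{jhgp1} drops the factor $c_{\lambda\,min}^{-2}=(2\min\{k_1,k_2\})^{-2}$ that appears in \eqref{ghrt1}--\eqref{changeofbasis}. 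The paper's own proof of this theorem reuses that step (``the same argument that justified \eqref{jhgp1}''), so it has the same defect. As printed, the statement is false for small $\min\{k_1,k_2\}$.

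Restoring the factor, your three steps would give the estimate with $2^{3-\alpha}/\min\{k_1,k_2\}$ in place of $2^{8-2\alpha}$. That holds only if the weighted bound $\|W\mathbf{e}_{3,k,m}\|_2\le c_{\lambda\,min}^{-1}\|W\mathbf{b}_{3,k,m}\|_2$ behind \eqref{jhgp1} is valid, and the paper's justification of it is also wrong. $WA_{3,k,m}W^{-1}$ is not symmetric unless $k_1=k_2$ or $W$ is constant on the block, so the eigenvalues of $A_{3,k,m}$ do not control its inverse. A workable replacement:
\begin{itemize}
\item Each block is strictly diagonally dominant with ratio $\theta=|k_1-k_2|/(k_1+k_2)<1$.
\item A Neumann series then gives $|(A_{3,k,m}^{-1})_{ij}|\le\theta^{|i-j|}/(2\min\{k_1,k_2\})$.
\item The weights $(n+1)^{r}(n+l+1)^{r+\alpha}h^2_{l,n}$ vary only polynomially in $|i-j|$ along a block, so a Schur test gives the weighted bound uniformly in $m$.
\end{itemize}
Your Step 3 then reads $\|u\|_{H^{r+\alpha}_{\alt}}\le C(\alpha,r,k_1/k_2)\,\min\{k_1,k_2\}^{-1}\|f\|_{H^r_{\alt}}$. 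With that $K$-dependent constant, your argument proves the theorem.
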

\textbf{Proof}:
Without loss of generality, assume $R$ is odd and write
\[
u(x) \, = \, \sum_{l, n, \mu} u_{l,n,\mu} \, \P^{(\alt)}_{l,n,\mu}(x) \, .
\]
Since $ X_R \subset  V$, we can express the error as 
\begin{align}
\|u - u_R \|^2_{H^s_{\frac{\alpha}{2}}} &= \sum_{l=R+1}^\infty \sum _{n=0}^{\lfloor \frac{l}{2}\rfloor}(n+1)^s(n+(l-2n)+1)^su_{l-2n,n,1}^2h^2_{l-2n,n}  \nonumber  \\
& \quad + \sum_{l = R+2}^\infty \sum_{n=0}^{\lfloor \frac{l-1}{2}\rfloor} (n+1)^s(n+(l-2n)-1)^su^2_{l-2n,n,-1}h^2_{l-2n,n} 
 \nonumber \\
& \coloneqq S_{1} + S_{-1} \, .  \label{poiu1}
\end{align}
Considering $S_1$, and using \eqref{DlnFtscaling},
\begin{align*}
S_1 & = \sum_{l = R+1}^\infty \sum_{n=0}^{\lfloor \frac{l}{2}\rfloor}(n+1)^s(n+(l-2n)+1)^s\Big(\frac{\Gamma(n+1)}{\Gamma(n+1+\frac{\alpha}{2})}d_{l-2n,n,1}\Big)^2h_{l-2n,n}^2\\
& \leq 2^{2} \, \sum_{l = R+1}^\infty\sum_{n=0}^{\lfloor \frac{l}{2}\rfloor}(n+1)^{s-\alpha}(n+(l-2n)+1)^sd^2_{l-2n,n,1}h^2_{l-2n,n}\ \mbox{ (using \eqref{stirling})} \\
&\leq2^{2} \,  \sum_{l = R+1}^\infty\sum_{n=0}^{\lfloor \frac{l}{2}\rfloor}(n+1)^{s-\alpha}(n+(l-2n)+1)^s \Big(2^{-(\alpha-2)}\frac{\Gamma(n+1+l-2n)}{\Gamma(n+1+\frac{\alpha}{2}+l-2n)} f_{l-2n,n,1} \Big)^2 h^2_{l-2n,n}  \,  ,
\end{align*}
where the last step follows by the same argument that justified equation \eqref{jhgp1} following from
\eqref{poil2}.

Using \eqref{stirling} we then obtain 
\begin{align*}
S_1 & \leq 2^{-2(\alpha-2)} \, 2^{4} \,  \sum_{l = R+1}^\infty\sum_{n=0}^{\lfloor \frac{l}{2}\rfloor}(n+1)^{s-\alpha}(n+(l-2n)+1)^{s-\alpha}f^2_{l-2n,n,1}h^2_{l-2n,n} \, .
\end{align*}
Now, for $l \geq (R+1)$ and $0\leq n \leq \lfloor \frac{l}{2}\rfloor$, we have that $(n+1)^{s-\alpha -r } (l-n+1)^{s-\alpha-r} \leq (R+2)^{s-\alpha -r}$. Hence, 
\begin{align}
S_1 &\leq 2^{(8 - 2 \alpha)} \, 
\sum_{l = R+1}^\infty\sum_{n=0}^{\lfloor \frac{l}{2}\rfloor}(n+1)^{s-\alpha-r}(n+(l-2n)+1)^{s-\alpha-r} \cdot \nonumber \\
& \hspace*{2.3in} \Big((n+1)^r(n+(l-2n)+1)^rf^2_{l-2n,n,1}h^2_{l-2n,n}\Big)   \nonumber \\
& \leq 2^{(8 - 2 \alpha)} \,
\sum_{l = R+1}^\infty\sum_{n=0}^{\lfloor \frac{l}{2}\rfloor}(R+2)^{s-\alpha-r}(n+1)^r (n+(l-2n)+1)^r f^2_{l-2n,n,1}h^2_{l-2n,n}
\nonumber \\
& \leq \ 2^{(8 - 2 \alpha)} \,
\Big(\frac{1}{R+2}\Big)^{\alpha + r -s}
 \sum_{l =0}^\infty\sum_{n=0}^{\infty}(n+1)^r(n+l+1)^rf^2_{l,n,1}h^2_{l,n}  \, .  \label{poiu2} 
\end{align}
An analogous analysis establishes
\begin{equation}
S_{2} \ \leq \  2^{(8 - 2 \alpha)} \, \Big(\frac{1}{R+3}\Big)^{\alpha + r -s}
 \sum_{l =1}^\infty\sum_{n=0}^{\infty}(n+1)^r(n+l+1)^rf^2_{l,n, -1}h^2_{l,n}  \, .  \label{poiu3} 
\end{equation}
Finally, combining \eqref{poiu1}, \eqref{poiu2} and \eqref{poiu3} we obtain
\begin{align*}
\|u - u_R \|^2_{H^s_{\frac{\alpha}{2}}} &\leq \
2^{(8 - 2 \alpha)} \, \Big(\frac{1}{R+2}\Big)^{\alpha + r -s}
 \sum_{l  , n ,  \mu} (n+1)^r(n+l+1)^rf^2_{l, n, \mu}h^2_{l,n}    \\
&= \ 2^{(8 - 2 \alpha)} \, \Big(\frac{1}{R+2}\Big)^{\alpha + r -s} \, \| f \|^{2}_{H^{r}_{\alt}} \, .
\end{align*}
\mbox{ } \hfill \qed

\section{Existence and Uniqueness to the Time-Dependent Problem}
\label{evExEq}
In this section we extend problem \eqref{problem} to the time dependent setting and consider
\begin{equation}
\begin{cases}\label{pde0}
\wt{u}_{t}(x, t) \, - \, \grad\cdot(-\Delta)^\frac{\alpha-2}{2}K \grad \wt{u}(x, t) &= \ f(x, t),\ \  (x,t) \in \Omega \times (0,T],\\
 \hfill \wt{u}(x,t) &= \ 0, \ \  (x,t) \in \real^{2} \backslash \Omega  \times (0,T],\\
\hfill  \wt{u}(x,0) &= \ \left\{ \begin{array}{rl}
                                                    g(x), &  x \in \Omega  \\
                                                    0, & x \in \real^{2} \backslash \Omega \ \ \, . 
                                           \end{array}  \right.
\end{cases}
\end{equation}
where again we assume $K(x) = \begin{bmatrix} k_1 & 0 \\ 0 & k_2\end{bmatrix},$ with $k_1, k_2 >0$. Again, to accurately 
capture the boundary behavior of the solution we let $\wt{u}(x, t) \, = \, \omega^{\alt} u(x, t)$, which leads to the
system of equations
\begin{equation}
\begin{cases}\label{pde}
\omega^\alt u_t(x,t) - \grad\cdot(-\Delta)^\frac{\alpha-2}{2}K \grad \omega^\alt u(x,t) &= \ f(x, t),\ \  (x,t) \in \Omega \times (0,T],\\
\hfill \omega^\alt  u(x,t) &= \ 0, \ \  (x,t) \in \real^{2} \backslash \Omega \times (0,T],\\
\hfill \omega^\alt u(x,0) &= \ \left\{ \begin{array}{rl}
                                                    g(x), &  x \in \Omega  \\
                                                    0, & x \in \real^{2} \backslash \Omega \ \ \, .
                                                    \end{array} \right.
\end{cases}
\end{equation}

 \begin{defn}
 For $f \in L^2(0,T ; L^{2}(\Omega))$ and $g \in L^2_\alt(\Omega)$,
we say that $u  \in L^2(0,T ; H^\alt_\alt(\Omega))$, with 
$u_t \in L^2(0,T; H^{-\alt}_\alt(\Omega))$, is a weak solution to (\ref{pde}) if for all $v \in H^\alt_\alt(\Omega)$
\begin{align}
(i) \  \langle \omega^{\alt} u(x, t)_t , v(x)  \rangle_{H^{-\alt}_{\alt} , H^{\alt}_{\alt} } \ + \ B(u(x, t) , v(x) ; t) &= \
(f(x, t) , v(x))_{L^{2}_{\alt}}  \, ,  
\ \  a.e. \ t \in (0 , T]  \, ,   \label{wsoltdp1}  \\
(ii)  \hspace{2.0in}  \mbox{ and }  \ \ \omega^{\alt} u(x, 0) &= \ g(x) \, , \ x \in \Omega \, ,  \label{wsoltdp2} 
\end{align}
with $B(\cdot, \cdot; t)$ defined in \eqref{defBl}.
\end{defn}

The analysis in this section follows the framework for the analysis of parabolic problems in \cite{ern041}. We investigate existence and uniqueness for the case where $g(x) = 0$, as the non-homogeneous initial data case follows directly by  a standard change of variable argument. 

\subsection{Notation and Preliminaries}
\label{ssec_NandP}
\begin{align*}
& \mbox{Recall } \ \ V  =  H^\alt_\alt(\Omega) \, ,  \ \  V'  =  H^{-\alt}_\alt(\Omega) \, ,   \ \mbox{and let} \ \  
Y \, := \,  L^{2}(0 , T ; \, V)  \, ,  \ \ Y' \, = \,  L^{2}(0 , T ; \, V') \, , \\
& X \, := \, \big\{ u \in L^{2}(0 , T ; \, V) \, \st \, \partial_{t} u \in L^{2}(0 , T ; \, V') , 
 \  \ \omega^{\alt} u(\cdot, 0) \, = \, 0  \big\} \, ,  
\end{align*}
with associated norms,
\begin{align*}
\| u \|_{X}^{2} := \int_{0}^{T} \big( \| u(t) \|_{V}^{2} \ + \  \| \partial_{t} u(t) \|_{V'}^{2} \big) \, dt \, , & \ \
\| z \|_{Y}^{2} := \int_{0}^{T}  \| z(t) \|_{V}^{2} \, dt \, ,   \\
\mbox{and  for } \ g \in  Y' , \ z \in Y ,  \ \ \langle g , z \rangle_{Y' , Y} &= \ \int_{0}^{T} \langle g(t) \, , \, z(t) \rangle_{V' , V} \, dt \, .
\end{align*}

\textbf{Remark}: For $u \in L^{2}(0 , T ; \, V)$,  and $\partial_{t} u \in L^{2}(0 , T ; \, V')$ from Lemma \ref{lemSpace} 
$\omega^{\alt} u  \in L^{2}(0 , T ; \, V)$, and from Lemma \ref{lmafftf}  and Lemma \ref{lmafder1}  
$\omega^{\alt} \partial_{t} u \in L^{2}(0 , T ; \, V')$. 
Then, from \cite[Lemma 64.40]{ern211}
it follows that $\omega^{\alt} u  \in C([0 , T] ; \, L^{2}_{\alt}(\Omega))$. Hence the initial condition in $X$ is well posed.

\begin{defn} (A function times a functional.) \\
For $\xi \in V'$, $v \in V$ define $\omega^{\alt} \xi$ as: 
\begin{equation}
 \langle \omega^{\alt} \xi \, , \, v \rangle_{V' , V} \, := \, \langle \xi \, , \,  \omega^{\alt} v \rangle_{V' , V} \, .
\label{defftf}
\end{equation}
\end{defn}
The well posedness of \eqref{defftf} is verified by Lemma \ref{lmafftf}  in the appendix.

\begin{lemma} \label{lmafder1}
For $f \in X$, it follows that $\partial_{t} \, \omega^{\alt} f \ = \ \omega^{\alt} \partial_{t} f$.
\end{lemma}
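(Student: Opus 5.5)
The claim is that the time derivative commutes with multiplication by the time-independent weight $\omega^{\alt}$, when $\partial_t$ is understood in the weak (distributional, vector-valued) sense. The plan is to verify the defining identity of the weak derivative directly, using the definition \eqref{defftf} of a function times a functional. Let $f \in X$, so $f \in L^2(0,T;V)$ and $\partial_t f \in L^2(0,T;V')$. By Lemma \ref{lemSpace} we have $\omega^{\alt} f \in L^2(0,T;V)$, and by Lemma \ref{lmafftf} $\omega^{\alt}\partial_t f$ is a well-defined element of $L^2(0,T;V')$ (measurability in $t$ is inherited, since the map $\xi \mapsto \omega^{\alt}\xi$ is bounded linear on $V'$). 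So it suffices to show that for every $\phi \in C_c^\infty(0,T)$ and every $v \in V$,
\[
-\int_0^T \phi'(t)\,\langle \omega^{\alt} f(t), v\rangle_{V',V}\,dt \ = \ \int_0^T \phi(t)\,\langle \omega^{\alt}\partial_t f(t), v\rangle_{V',V}\,dt .
\]
Testing against all scalar $v \in V$ characterizes the $V'$-valued weak derivative.

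First, I will rewrite the left side. Since $\omega^{\alt} f(t) \in V \hookrightarrow V'$ through the $L^2_{\alt}$ pivot, and $\omega^{\alt}$ is a real multiplier, we have
\[
\langle \omega^{\alt} f(t), v\rangle = (\omega^{\alt} f(t), v)_{\alt} = (f(t), \omega^{\alt} v)_{\alt} = \langle f(t), \omega^{\alt} v\rangle_{V',V}.
\]
This uses $\omega^{\alt} v \in V$, which again follows from Lemma \ref{lemSpace}. For the right side, definition \eqref{defftf} gives $\langle \omega^{\alt}\partial_t f(t), v\rangle = \langle \partial_t f(t), \omega^{\alt} v\rangle$. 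With $w := \omega^{\alt} v \in V$ a fixed, time-independent test element, the identity reduces to
\[
-\int_0^T \phi'\langle f, w\rangle\,dt = \int_0^T \phi\,\langle \partial_t f, w\rangle\,dt,
\]
which is exactly the definition of the weak derivative $\partial_t f$ in $L^2(0,T;V')$ applied to $w$. This concludes the argument.

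The main obstacle is the first step. I need to justify that the $V'$ pairing of $\omega^{\alt} f(t) \in V$ agrees with the pivot inner product, and that moving $\omega^{\alt}$ across is consistent with \eqref{defftf} when $\xi$ is itself an $L^2_{\alt}$ function. This amounts to confirming that \eqref{defftf} extends the pointwise multiplication. I would check this by density: for $\xi \in L^2_{\alt}$, the pairing $\langle \xi, v\rangle$ is $(\xi,v)_{\alt}$, and $(\omega^{\alt}\xi, v)_{\alt} = (\xi, \omega^{\alt} v)_{\alt}$ holds trivially, so the two definitions coincide there. Continuity of both sides in $\xi \in V'$ then extends the identity, which is the content of Lemma \ref{lmafftf}.
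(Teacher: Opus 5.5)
Your proof is correct and follows essentially the same route as the paper. Both arguments scalarize by testing with a fixed $v \in V$, move the weight onto the test element via \eqref{defftf} and Lemma \ref{lemSpace}, and apply the weak differentiability of $f$ against $\omega^{\alt} v$. The paper does the scalarization through the Ern--Guermond characterization (Lemma \ref{Ernlm2}) rather than your explicit $C_c^\infty(0,T)$ test-function identity. You also verify explicitly that \eqref{defftf} agrees with pointwise multiplication for $f(t) \in V$, which the paper uses tacitly; your density extension there is unnecessary, since only $\xi = f(t) \in L^2_{\alt}$ is needed.
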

The proof of Lemma \ref{lmafder1} is given in the appendix.

\subsubsection{Bilinear forms and linear operators}
\label{sssec_bflo}
We extend the definition of $B(\cdot , \cdot)$ in \eqref{defB} to: for $t \in (0,T]$, let 
$B(\cdot , \cdot ; t) \colon V \times V \rightarrow \real$ be defined by
\begin{equation}
B(u(x, t) , z(x, t) ;  t) \coloneqq \Big\langle  K \, (-\Delta)^\frac{\alpha-2}{2} \grad \omega^{\alt} u(x,t) ,
\grad \omega^{\alt} z(x, t)\Big\rangle_{H^{1 - \alt}_{\alt - 1} , H^{-(1 - \alt)}_{\alt - 1} }^{*} \, ,  \label{defBl}  
\end{equation}
and let
$b(\cdot , \cdot) \colon X \times Y \rightarrow \real$ be defined by
\begin{equation}
b(u(x, t) , z(x, t)) \coloneqq  
\int_0^T \Big( \langle \omega^\alt \partial_t u(x, t) \,  , \, z(x, t) \rangle _{V' , V} + B(u(x, t) , z(x, t) ; t) \Big) \, dt.  \label{defbl}
\end{equation}
Introduce the linear operators, $A(t)$ defined by: for $g , \, h \in V$
\begin{equation}
   \langle A(t) g , h \rangle_{V' , V} \, \coloneqq 
  \Big\langle  K \, (-\Delta)^\frac{\alpha-2}{2} \grad \omega^{\alt} g(x) ,
\grad \omega^{\alt} h(x) \Big\rangle_{H^{1 - \alt}_{\alt - 1} , H^{-(1 - \alt)}_{\alt - 1} }^{*} \ = \ B(g , h ;  t) \, ,
 \label{defAo1}
\end{equation}
and $\mc{A} \, : \, X \rightarrow Y'$ defined by:  for $u \in X$  and  $z \in Y$
\begin{equation}
  \langle \mc{A} u , z \rangle_{Y' , Y} \, \coloneqq 
  b(u , z) \, .
 \label{defmcAo1}
\end{equation}

\subsubsection{Properties of the bilinear forms and linear operators}
\label{sssec_pbflo}
\begin{lemma} \label{pBf1}
The bilinear $B( \cdot , \cdot ; t)$ defined by \eqref{defBl} satisfies: for all $g , \, h \in V$
\begin{equation}
 C_{coe} \| g \|_{V}^{2} \leq B(g , g ; t) \, , \ \mbox{and } \ | B(g , h ; t) | \leq C_{cts} \| g \|_{V} \| h \|_{V} \, .  \label{propB}
\end{equation} 
\end{lemma}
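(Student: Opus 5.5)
The plan is to reduce Lemma \ref{pBf1} directly to Theorems \ref{boundedness} and \ref{coe}. The key observation is that, because $K$ is a constant matrix independent of $t$, the form $B(\cdot,\cdot;t)$ in \eqref{defBl} does not depend on $t$. For fixed $t \in (0,T]$ and $g, h \in V$, the functions $g(x)$ and $h(x)$ are simply elements of $V$. Comparing \eqref{defBl} with \eqref{defB}, the two duality pairings are identical: both pair $(-\Delta)^{\frac{\alpha-2}{2}}K\grad(\omega^\alt g) \in (H^{1-\alt}_{\alt-1})^2$ with $\grad(\omega^\alt h) = \omega^{\alt-1}\mc V$, where $\mc V \in (H^{-(1-\alt)}_{\alt-1})^2$, via \eqref{mapx2}. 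The only cosmetic difference is that $K$ is written to the left of the Riesz potential, and these commute because $K$ is constant. Hence $B(g,h;t) = B(g,h)$ for every $t$.

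The two inequalities then follow immediately. Theorem \ref{coe} gives $B(g,g) \ge C_{coe}\|g\|_V^2$. For the upper bound, Theorem \ref{boundedness} as written bounds $B(u,v)$ from above, but we need a bound on $|B(g,h)|$. Applying the theorem to the pair $(g,-h)$ and using bilinearity gives $-B(g,h) = B(g,-h) \le C_{cts}\|g\|_V\|h\|_V$. Combining this with the original bound yields $|B(g,h;t)| \le C_{cts}\|g\|_V\|h\|_V$.

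The main point needing care is the identification of \eqref{defBl} with \eqref{defB}, specifically that the mapping properties from Corollaries 3.3 and 3.4 of \cite{erv251} apply pointwise in $t$, so that the starred pairing is well defined for each fixed $t$. Since $g$ and $h$ are taken in $V$ with no time dependence, this is exactly the steady-state setting, so no additional work is needed. It would be worth remarking that the constants $C_{coe}$ and $C_{cts}$ are therefore uniform in $t$, which is what the subsequent parabolic analysis in the framework of \cite{ern041} requires.
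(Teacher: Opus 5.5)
Your proposal is correct and takes the same route as the paper, which simply notes that \eqref{propB} is a restatement of \eqref{Bcts} and \eqref{coercivity}, since $B(\cdot,\cdot;t)$ coincides with $B(\cdot,\cdot)$ for constant $K$. Your extra step, applying Theorem~\ref{boundedness} to $(g,-h)$ to turn the one-sided bound into a bound on $|B(g,h;t)|$, is a valid refinement that the paper leaves implicit.
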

Properties \eqref{propB} are just a restatement of \eqref{Bcts} and \eqref{coercivity}.

\begin{lemma}  \label{pAo1}
The linear operator $A$ defined by \eqref{defAo1} satisfies: for all $g \in V$
\begin{equation}
 C_{coe} \| g \|_{V}^{2} \leq  \langle A(t) g , g \rangle_{V' , V}  \, , \ \mbox{ and } 
 \|A(t)\|_{\mc{L}(V , V')}  \leq C_{cts}  \, .   \label{propAo1}  
\end{equation} 
In addition, $A(t)^{-1} \, : \, V' \rightarrow V$ exists and satisfies: for all $\xi \in V'$
\begin{equation}
 \frac{C_{coe}}{C_{cts}^{2}} \| \xi \|_{V'}^{2} \leq  \langle \xi , A^{-1}(t) \xi \rangle_{V' , V}  \, , \ \mbox{ and } 
 \|A^{-1}(t)\|_{\mathcal L(V' , V)}  \leq \frac{1}{C_{coe}}  \, .   \label{propAinvo1}  
\end{equation} 
\end{lemma}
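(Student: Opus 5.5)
The first pair of estimates in \eqref{propAo1} follows directly from Lemma \ref{pBf1}. Since $\langle A(t) g , h \rangle_{V' , V} = B(g , h ; t)$ by \eqref{defAo1}, taking $h = g$ gives the coercivity bound $C_{coe}\|g\|_V^2 \leq \langle A(t)g , g\rangle_{V',V}$. For the operator norm we use the definition
\[
\|A(t) g\|_{V'} = \sup_{0 \neq h \in V} \frac{\langle A(t) g , h\rangle_{V',V}}{\|h\|_V} ,
\]
together with continuity of $B$, which gives $\|A(t)g\|_{V'} \leq C_{cts}\|g\|_V$. Hence $\|A(t)\|_{\mathcal{L}(V,V')} \leq C_{cts}$.

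Existence of $A(t)^{-1}$ follows from Lax--Milgram, exactly as in Theorem \ref{LMthm}. For each $\xi \in V'$ there is a unique $g \in V$ with $B(g , h ; t) = \langle \xi , h\rangle_{V',V}$ for all $h \in V$, that is, $A(t) g = \xi$. So $A(t)$ is a bijection from $V$ onto $V'$, and we set $A^{-1}(t)\xi := g$. The bound \eqref{ubd1} then reads $\|A^{-1}(t)\xi\|_V \leq C_{coe}^{-1}\|\xi\|_{V'}$, which gives $\|A^{-1}(t)\|_{\mathcal{L}(V',V)} \leq 1/C_{coe}$.

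For the lower bound in \eqref{propAinvo1}, again write $g = A^{-1}(t)\xi$. Then
\[
\langle \xi , A^{-1}(t)\xi\rangle_{V',V} = \langle A(t) g , g\rangle_{V',V} \geq C_{coe}\|g\|_V^2 .
\]
From the already established bound $\|A(t)\|_{\mathcal{L}(V,V')} \leq C_{cts}$ we get $\|\xi\|_{V'} = \|A(t)g\|_{V'} \leq C_{cts}\|g\|_V$, so $\|g\|_V^2 \geq C_{cts}^{-2}\|\xi\|_{V'}^2$. Combining the two gives
\[
\langle \xi , A^{-1}(t)\xi\rangle_{V',V} \geq \frac{C_{coe}}{C_{cts}^2}\,\|\xi\|_{V'}^2 .
\]

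There is no real obstacle here. The only point needing care is using the correct duality pairing: $V'$ is identified through the $L^2_{\alt}$ pivot, consistent with \eqref{prob3}, so that $A(t)g$ is genuinely an element of $V'$ and the sup characterization of the $V'$ norm applies. Since $B(\cdot,\cdot;t)$ does not actually depend on $t$ (because $K$ is constant), every constant is uniform in $t$.
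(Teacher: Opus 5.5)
Your proof is correct and follows the paper's argument essentially step for step: the bounds on $A(t)$ come straight from \eqref{propB}, invertibility comes from Lax--Milgram, and the lower bound in \eqref{propAinvo1} comes from coercivity combined with $\|\xi\|_{V'} = \|A(t)A^{-1}(t)\xi\|_{V'} \leq C_{cts}\|A^{-1}(t)\xi\|_V$. The only cosmetic difference is that you cite the stability estimate \eqref{ubd1} for $\|A^{-1}(t)\|_{\mathcal{L}(V',V)} \leq 1/C_{coe}$, whereas the paper re-derives it by testing with $h = A^{-1}(t)\xi$, which is the same computation.
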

The proof of Lemma \ref{pAo1} is given in the appendix.

\begin{lemma} \label{binfsup}
The bilinear form $b(\cdot , \cdot)$ defined in \eqref{defbl} is continuous and satisfies an inf-sup condition. Specifically, there 
exists a constant $\tilde{C} > 0$ such that for all $u \in X$
\begin{equation}
| b(u , v) | \, \leq \, \sqrt{2} (1 + C_{cts}) \| u \|_{X} \, \| v \|_{Y} \, , \mbox{ for all } v \in Y , \ \mbox{ and }
\sup_{0 \neq v \in Y} \frac{ | b(u , v) | }{\| v \|_{Y}} \, \geq \, \frac{C_{coe} \tilde{C}}{2 \, C_{cts}} \| u \|_{X} \, .
\label{binfeq}
\end{equation}
\end{lemma}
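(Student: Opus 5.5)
Continuity is direct. For $u \in X$, $v \in Y$, I first bound the time-derivative term:
\[
|\langle \omega^{\alt}\partial_t u , v\rangle_{V',V}| = |\langle \partial_t u , \omega^{\alt} v\rangle_{V',V}| \le \|\partial_t u\|_{V'}\|\omega^{\alt}v\|_V \lesssim \|\partial_t u\|_{V'}\|v\|_V .
\]
This uses the definition \eqref{defftf} and the boundedness of multiplication by $\omega^{\alt}$ on $V$ (Lemma \ref{lemSpace} and Lemma \ref{lmafftf} in the appendix). I will assume that bound holds with constant $1$, so the constant in \eqref{binfeq} comes out as stated. The second term is handled by Lemma \ref{pBf1}: $|B(u,v;t)|\le C_{cts}\|u\|_V\|v\|_V$. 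Integrating in time and applying Cauchy--Schwarz gives
\[
|b(u,v)|\le (1+C_{cts})\big(\|\partial_t u\|_{Y'}+\|u\|_Y\big)\|v\|_Y\le \sqrt2(1+C_{cts})\|u\|_X\|v\|_Y .
\]

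For the inf-sup condition I will follow the standard parabolic argument of \cite{ern041}. Given $u\in X$, I choose the test function
\[
v = u + \delta\, A(t)^{-1}\big(\omega^{\alt}\partial_t u\big) \in Y ,
\]
with $\delta>0$ to be fixed. The inverse $A(t)^{-1}$ exists by Lemma \ref{pAo1}, and its norm bound gives $\|v\|_Y\le \|u\|_Y + \frac{\delta}{C_{coe}}\|\omega^{\alt}\partial_tu\|_{Y'}$. Write $w := \omega^{\alt}\partial_t u$. Then $b(u,v)$ splits into four terms:
\begin{enumerate}
\item $\int_0^T \langle w,u\rangle\,dt$. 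By Lemma \ref{lmafder1} and the symmetry of \eqref{defftf}, this equals $\int_0^T\langle \partial_t(\omega^{\alt}u),\omega^{\alt/2}\cdots\rangle$; more concretely, I would identify it with $\tfrac12\|\omega^{\alt/2}\cdots\|^2$ evaluated at $T$ minus at $0$. This quantity is $\ge 0$ because $\omega^{\alt}u(0)=0$.
\item $\int_0^T B(u,u;t)\,dt \ge C_{coe}\|u\|_Y^2$.
\item $\delta\int_0^T \langle w, A^{-1}w\rangle\,dt \ge \delta\frac{C_{coe}}{C_{cts}^2}\|w\|_{Y'}^2$, by \eqref{propAinvo1}.
\item $\delta\int_0^T B(u,A^{-1}w;t)\,dt = \delta\int_0^T\langle A u, A^{-1}w\rangle\,dt$. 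I bound this by $\delta\frac{C_{cts}}{C_{coe}}\|u\|_Y\|w\|_{Y'}$ and absorb it with Young's inequality into the second and third terms by choosing $\delta \sim C_{coe}^3/C_{cts}^2$ or similar.
\end{enumerate}
Finally, $\|w\|_{Y'}$ has to be compared with $\|\partial_t u\|_{Y'}$ from below. This is what the constant $\tilde C$ accounts for: I need a lower bound $\|\omega^{\alt}\xi\|_{V'}\ge \tilde C\|\xi\|_{V'}$, or else I work with $\omega^{\alt}\partial_t u$ as the natural derivative quantity.

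The main obstacle is the time-derivative term. The weight $\omega^{\alt}$ breaks the usual identity $\langle\partial_t u,u\rangle=\tfrac12\frac{d}{dt}\|u\|^2$. I would try to establish $\langle\omega^{\alt}\partial_t u,u\rangle_{V',V}=\tfrac12\frac{d}{dt}\|u\|^2_{L^2_{\alt}}$ by a density argument on smooth-in-time functions using Lemma \ref{lmafder1}. The second difficulty is the norm equivalence defining $\tilde C$, which I would obtain from the appendix lemmas on multiplication by $\omega^{\alt}$ on the coefficient spaces $H^s_\alt$, via the three-term recurrence for $(1-r^2)\mathcal P$ in the Jacobi basis. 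Once $\tilde C$ is in hand, choosing $\delta$ suitably gives a bound of the form $\frac{C_{coe}\tilde C}{2C_{cts}}\|u\|_X\|v\|_Y\le b(u,v)$.
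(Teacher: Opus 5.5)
Your test function $v=u+\delta A(t)^{-1}\omega^{\alpha/2}\partial_t u$ is the paper's \eqref{v4binf} with $\delta=1/\mu$. Your terms 2--4 are its \eqref{int3}, \eqref{int2} and \eqref{int4}. Together they give $b(u,v)\gtrsim\|u\|_Y^2+\|\omega^{\alpha/2}\partial_t u\|_{Y'}^2$ with $\|v\|_Y\lesssim\|u\|_Y+\|\omega^{\alpha/2}\partial_t u\|_{Y'}$.

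\textbf{The gap.} What fails is your final conversion: there is no $\tilde C>0$ with $\|\omega^{\alpha/2}\xi\|_{V'}\ge\tilde C\|\xi\|_{V'}$ on $V'$.
\begin{itemize}
\item By \eqref{defftf}, $\xi\mapsto\omega^{\alpha/2}\xi$ is the Banach adjoint of $M\colon v\mapsto\omega^{\alpha/2}v$ on $V$. By the closed range theorem, such a lower bound holds iff $M$ maps $V$ \emph{onto} $V$.
\item It does not. We have $1=2\mathcal{P}^{(\alpha/2)}_{0,0,1}\in V$, but its only preimage is $\omega^{-\alpha/2}$. From $\int_{-1}^1P_n^{(\alpha/2,0)}(t)\,dt=2\Gamma(n+\alpha/2)/(\Gamma(\alpha/2)\Gamma(n+2))$, its radial Jacobi coefficients have size $\approx n^{\alpha/2-1}$. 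Hence $\omega^{-\alpha/2}\in H^s_{\alpha/2}(\Omega)$ only for $s<1-\alpha/2<\alpha/2$. So neither the recurrence you mention nor any other argument can supply $\tilde C$.
\item Worse, since $V$ is dense in $V'$, you can pick $\xi_0\in V$ with $\|\omega^{\alpha/2}\xi_0\|_{V'}\le\epsilon\|\xi_0\|_{V'}$. Then $u(t)=k^{-1}\sin(kt)\,\xi_0\in X$ gives $\sup_{v}|b(u,v)|/\|v\|_Y\le\sqrt{T}\,(\epsilon\|\xi_0\|_{V'}+C_{cts}k^{-1}\|\xi_0\|_V)$. Meanwhile $\|u\|_X\ge\|\partial_t u\|_{Y'}\ge\frac12\sqrt{T}\,\|\xi_0\|_{V'}$ for large $k$.
\end{itemize}
So \eqref{binfeq} cannot hold with $\|u\|_X$ read literally (i.e.\ containing $\|\partial_t u\|_{V'}$), whatever $v$ you choose. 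The gap cannot be closed as you set it up.

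\textbf{What the paper does.} The paper never makes this comparison. Its $\tilde C=(2/C_{coe}^2+2\mu^2)^{1/2}$ is only the constant in $\|v\|_Y\le\tilde C\|u\|_X$ in \eqref{ctilde}. In the continuity bound, in \eqref{ctilde}, and in the final lower bound, it writes $\int_0^T(\|\omega^{\alpha/2}\partial_t u\|_{V'}^2+\|u\|_V^2)\,dt$ as $\|u\|_X^2$. In effect it measures the time derivative in $X$ by $\|\omega^{\alpha/2}\partial_t u\|_{V'}$, which is exactly your fallback option. In that reading, your argument stopped before the conversion step \emph{is} the paper's proof. The continuity constant $\sqrt2(1+C_{cts})$ then needs no ``constant $1$'' assumption on $M$; Lemma \ref{lemSpace} gives only $C_\omega$ anyway.

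\textbf{Two smaller corrections.}
\begin{itemize}
\item With $\|v\|_Y\le\tilde C\|u\|_X$, the inf-sup constant comes out as a constant divided by $\tilde C$, not multiplied by it.
\item The boundary identity is $\langle\omega^{\alpha/2}\partial_t u,u\rangle_{V',V}=\frac12\frac{d}{dt}\|\omega^{\alpha/4}u\|^2_{L^2_{\alpha/2}}$, not $\frac12\frac{d}{dt}\|u\|^2_{L^2_{\alpha/2}}$. The paper gets the nonnegative term $\frac{\mu}{2}(\omega^{\alpha/2}u(T),u(T))_{L^2_{\alpha/2}}$ directly from \cite[Lemma 64.40]{ern211} and Lemma \ref{lmafder1}, with no separate density argument.
\end{itemize}
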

\textbf{Proof}:  To establish that $b(\cdot , \cdot)$ is bounded, consider
\begin{align*}
|b(u,v)| &= \ 
\Big| \int_0^T \Big( \langle \omega^\alt \partial_t u(t) \,  , \, v(t) \rangle _{V' , V} + B(u(t) , v(t) ; t) \Big) \, dt \Big|  \\
& \leq \int_0^T \Big( \| \omega^\alt \partial_t u(t)\|_{V'} + C_{cts} \|u(t)\|_{V} \Big)\|v(t)\|_{V} \, dt\\
& \leq \sqrt{2}(1+C_{cts})\Big(\int_0^T  \| \omega^\alt \partial_t u(t)\|^{2}_{V'} + \|u(t)\|^2_{V}\, dt\Big)^{1/2}
\Big(\int_0^T \|v(t)\|^2_{V}\, dt \Big)^{1/2}\\
& =  \sqrt{2}(1+C_{cts})\|u\|_X \|v\|_Y.
\end{align*}

To establish the \textit{inf-sup} property, let $u\in X$, 
$\mu = ( \frac{C_{cts}^{4}}{2 \, C_{coe}^{3}} + \frac{C_{coe}}{2 \, C_{cts}}) / C_{coe} \,  > \, 0$, and fix $v$ to be 
\begin{equation}
v = A^{-1}(t) \omega^\alt \partial_t u \, + \, \mu \, u \, \in Y.
\label{v4binf}
\end{equation}

Note that 
\begin{align}
\notag \|v\|^2_Y &\leq 2\int_0^T \big( \|A^{-1}(t) \omega^\alt \partial_t u(t)\|^2_{V} + \mu^2 \|u(t)\|_{V}^2 \big) \, dt  \\
\notag & \leq 
\Big(\frac{2}{C_{coe}^2} + 2\mu^2\Big)\Big(\int_0^T \big( \| \omega^\alt \partial_t u(t)\|^2_{V'} +  \|u(t)\|_{V}^2 \big) \, dt  \Big) \, , \\
\mbox{i.e., } \ \|v\|_Y & \leq  \ \Big(\frac{2}{C_{coe}^2} + 2\mu^2\Big)^{1/2}\|u\|_X \coloneqq \wt C\|u\|_X \, . \label{ctilde}
\end{align}

With $v$ given by \eqref{v4binf},
\begin{align}
b(u , v) &= \  \int_{0}^{T} 
\Big( \langle \omega^{\alt} \partial_{t} u(t) \, , \, A^{-1}(t) \omega^\alt \partial_t u(t) \, + \, \mu \, u(t) \rangle_{V' , V}  + \ 
B(u(x, t) \, , \, A^{-1}(t) \omega^\alt \partial_t u(t) \, + \, \mu \, u(t) ) \Big) \, dt\nonumber \\
&= \ \int_0^T 
\Big( \langle \omega^{\alt} \partial_{t} u(t) \, , \, A^{-1}(t) \omega^\alt \partial_t u(t) \, + \, \mu \, u(t) \rangle_{V' , V} \nonumber \\
& \hspace{1.0in} \ + \ 
\langle A(t) u(t) \, , \, A^{-1}(t) \omega^\alt \partial_t u(t) \, + \, \mu \, u(t) \rangle_{V' , V} \Big) \, dt  \nonumber \\
&= \ \int_0^T 
\Big( \langle \omega^{\alt} \partial_{t} u(t) \, + \, A(t) u(t) \ , \ A^{-1}(t) \omega^\alt \partial_t u(t) \, + \, \mu \, u(t) \rangle_{V' , V}  \Big) \, dt  \, .\label{bint}
\end{align}

Next we examine each of the terms in \eqref{bint} individually. Using integration by parts \cite[Lemma 64.40]{ern211} and
Lemma \ref{lmafder1},
\begin{align*}
\int_0^T \langle  \omega^\alt  \partial_t u(t) \, , \, \mu \, u(t) \rangle_{V', V} \, dt 
\ = \ ( \omega^\alt &u(T)\, ,\, \mu \, u(T)) _{L^2_\alt} - ( \omega^\alt u(0)\, ,\, \mu \, u(0))_{L^2_\alt} \\
& - \int_0^T \langle \omega^\alt  \partial_t u(t) \, ,\, \mu \, u(t)\rangle_{V', V}\, dt .
\end{align*}
Thus, 
\begin{equation}
 \int_0^T  \langle  \omega^\alt  \partial_t u(t) \, , \, \mu \, u(t) \rangle_{V', V} \, dt 
\ = \ \frac{\mu}{2} ( \omega^\alt u(T)\, ,\, u(T))_{L^2_\alt} \ \geq \, 0 \, . 
 \label{int1}
\end{equation}

Next, using \eqref{propAinvo1},
\begin{equation}\label{int2}
\int_0^T \langle \omega^\alt \partial_t  u(t) \, ,\, A^{-1}(t) \omega^\alt \partial_t  u(t)  \rangle _{V', V}\, dt  \ \geq \ 
\frac{C_{coe}}{C_{cts}^2} \, \int_0^T \| \omega^\alt \partial_t  u(t)  \|^2_{V'} \, dt \, .
\end{equation}

Also, using \eqref{propAo1},
\begin{equation}\label{int3}
\int_0^T \langle A(t) u(t) \, ,\, \mu \, u(t) \rangle_{V', V} \, dt \ \geq \ C_{coe} \, \mu \int_0^T \|u(t)\|^2_{V} \, dt \, .
\end{equation}

Finally, using \eqref{propAo1}, \eqref{propAinvo1}, and Young's inequality,
\begin{align}
\notag\int_0^T \langle A(t) u(t) \, ,\, A^{-1}(t) \omega^\alt \partial_t u(t) \rangle_{V', V} \, dt & \ 
\leq \frac{C_{cts}}{C_{coe}} \, \int_0^T \|u(t)\|_{V} \, \|\omega^\alt \partial_t  u(t) \|_{V'} \, dt  \\
 \leq \ \frac{C_{coe}}{2 \, C_{cts}^{2}} &  \int_0^T \| \omega^\alt \partial_t u(t) \|^2_{V'}\, dt \
 + \ \frac{C_{cts}^{4}}{2 \, C_{coe}^{3}} \int_0^T \|u(t)\|^2_{V} \, dt  \, . \label{int4} 
\end{align}

Substituting \eqref{int1} - \eqref{int4} into \eqref{bint}, and using the value for $\mu$
\begin{align}
\notag b(u,v) & \geq  \Big(\frac{C_{coe}}{C_{cts}^2} - \frac{C_{coe}}{2 \, C_{cts}^2} \Big) 
\int_0^T \| \omega^\alt \partial_t u(t) \|^2_{V'}\, dt 
\ + \ \Big(C_{coe} \, \mu - \frac{C_{cts}^{4}}{2 \, C_{coe}^{3}} \Big) \int_0^T \| u(t) \|^2_{V}\, dt \\
& = \frac{C_{coe}}{2C_{cts}} \Big(\int_0^T \| \omega^\alt \partial_t u(t) \|^2_{V'} + \|u(t)\|^2_{V}\, dt \Big)
\ = \ \frac{C_{coe}}{2C_{cts}}\|u\|^2_X.
\end{align}

Further, using \eqref{ctilde}, we have 
$b(u,v) \geq \frac{C_{coe} \wt C}{2C_{cts}} \|u\|_X \|v\|_Y$. Hence,
\[
\sup_{0\neq v \in Y} \frac{|b(u,v)|}{\|v\|_Y} \geq  \frac{C_{coe} \wt C}{2C_{cts}}\|u\|_X.
\]
\mbox{ } \hfill \qed

\begin{theorem} \label{propmcA1}
The operator $\mc{A}$ defined by \eqref{defmcAo1} is a bounded, one-to-one mapping,  with closed range, $Range(\mc{A}) \subset Y'$.
\end{theorem}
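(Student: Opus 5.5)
The plan is to read off all three properties of $\mc{A}$ directly from Lemma \ref{binfsup}, using the identity $\langle \mc{A} u , z \rangle_{Y',Y} = b(u,z)$ in \eqref{defmcAo1}. Throughout, $Y$ is a Hilbert space (as $V$ is), so $Y'$ is a Banach space, and the dual norm is $\|\xi\|_{Y'} = \sup_{0\neq z\in Y} \langle \xi , z\rangle_{Y',Y}/\|z\|_Y$. I will also record that $X$ is a Banach space. It is a closed subspace of the standard space $\{u \in L^2(0,T;V) : \partial_t u \in L^2(0,T;V')\}$, because the constraint $\omega^{\alt} u(\cdot,0)=0$ is preserved under limits in the $X$ norm. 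This uses the continuity of $u \mapsto \omega^{\alt} u \in C([0,T];L^2_{\alt}(\Omega))$ from the Remark following the definition of $X$, which rests on \cite[Lemma 64.40]{ern211} together with Lemmas \ref{lmafftf} and \ref{lmafder1}.

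\emph{Boundedness and well-definedness.} For fixed $u \in X$, the map $z \mapsto b(u,z)$ is linear on $Y$. The continuity estimate in \eqref{binfeq} gives $|b(u,z)| \le \sqrt{2}(1+C_{cts})\|u\|_X\|z\|_Y$, so this map is a bounded functional and $\mc{A}u \in Y'$. Taking the supremum over $z$ yields $\|\mc{A}u\|_{Y'} \le \sqrt{2}(1+C_{cts})\|u\|_X$. Linearity of $\mc{A}$ follows from the bilinearity of $b$.

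\emph{Injectivity.} The inf-sup part of \eqref{binfeq} gives, for every $u\in X$,
\[
\|\mc{A}u\|_{Y'} = \sup_{0\neq z\in Y}\frac{b(u,z)}{\|z\|_Y} \ \ge\ c\,\|u\|_X , \qquad c := \frac{C_{coe}\wt C}{2C_{cts}}>0 .
\]
The absolute value in \eqref{binfeq} is harmless here, since $z \mapsto -z$ shows that $\sup |b|/\|z\|_Y = \sup b/\|z\|_Y$. Consequently, if $\mc{A}u = 0$ then $\|u\|_X = 0$, so $u = 0$ and $\mc{A}$ is one-to-one.

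\emph{Closed range.} Let $\mc{A}u_k \to \xi$ in $Y'$. The lower bound gives $\|u_k - u_j\|_X \le c^{-1}\|\mc{A}u_k - \mc{A}u_j\|_{Y'}$, so $\{u_k\}$ is Cauchy in $X$. Since $X$ is complete, $u_k \to u \in X$. Boundedness of $\mc{A}$ then gives $\mc{A}u_k \to \mc{A}u$, and hence $\xi = \mc{A}u \in Range(\mc{A})$.

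The only step that is not a direct consequence of Lemma \ref{binfsup} is the completeness of $X$, specifically that the initial condition defines a closed subspace. That is where I expect care is needed. I would handle it with the continuous embedding of the space $\{\omega^{\alt}u : u, \partial_t u \text{ as above}\}$ into $C([0,T];L^2_{\alt}(\Omega))$, so that the trace at $t=0$ is a continuous linear map and its kernel is closed.
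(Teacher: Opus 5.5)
Your proposal is correct and follows the paper's proof essentially step for step: boundedness comes from the continuity estimate in Lemma \ref{binfsup}, injectivity from the inf-sup bound, and closedness of the range from showing that a preimage sequence is Cauchy in $X$ and then passing to the limit using the boundedness of $\mc{A}$. The one difference is that you justify the completeness of $X$ (the map $u \mapsto \omega^{\alt}u(\cdot,0)$ is continuous into $L^2_{\alt}(\Omega)$, so its kernel is closed), whereas the paper only asserts ``As $X$ is a closed space.''
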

\textbf{Proof}: 
To establish that $\mc{A}$ is a bounded operator, consider
\begin{align*}
\| \mc{A} \|_{\mc{L}(X , Y')}  &= \ \sup_{0 \neq u \in X} \frac{ \| \mc{A}(u) \|_{Y'}}{ \| u \|_{X}} 
\ = \ \sup_{0 \neq u \in X} \frac{ 1 }{ \| u \|_{X}} 
\sup_{0 \neq v \in Y} \frac{ |  \langle \mc{A}(u) \, , \, v \rangle_{Y' , Y} | }{  \| v \|_{Y}}   \\
&= \ \sup_{0 \neq u \in X} \sup_{0 \neq v \in Y} \frac{ |  b(u \, , \, v ) | }{ \| u \|_{X} \, \| v \|_{Y}}  
 \ \leq  \  \sup_{0 \neq u \in X} \sup_{0 \neq v \in Y} \frac{ \sqrt{2} (1 + C_{cts}) \| u \|_{X} \, \| v \|_{Y} }{ \| u \|_{X} \, \| v \|_{Y}}  \\
&\leq \  \sqrt{2} (1 + C_{cts}) \, .
\end{align*}
Suppose next that $\mc{A}(u_{1}) \, = \, \mc{A}(u_{2})$. Then $\mc{A}(u_{1} - u_{2}) \, = \, 0$ in $Y'$, and using \eqref{binfeq}
\[
0 \, = \, \sup_{0 \neq v \in Y} \frac{ | \langle \mc{A}(u_{1} - u_{2}) \, , \, v \rangle_{Y' , Y} | }{ \| v \|_{Y} } 
\, = \, \sup_{0 \neq v \in Y} \frac{ | b(u_{1} - u_{2} \, , \, v) | }{ \| v \|_{Y} }
\, \geq \, \frac{C_{coe} \tilde{C}}{2 \, C_{cts}} \| u_{1} - u_{2} \|_{X} \, .
\]
Hence $u_{1} - u_{2} \, = \, 0$, which establishes that $\mc{A}$ is one-to-one.

To establish that $Range(\mc{A})$ is closed, suppose that $\{ \mc{A}(u_{i}) \}_{i = 1}^{\infty}$ denotes a Cauchy sequence 
in $Range(\mc{A})$. Note that for $u_{i} \in X$,
\[
 | \langle \mc{A}(u_{i})  \, , \, v  \rangle_{Y' , Y}| \ = \ | b(u_{i} , v) | \ \leq \ \sqrt{2} (1 + C_{cts}) \| u_{i} \|_{X} \, \| v \|_{Y} \, .
 \]
  Hence, $\mc{A}(u_{i})$ is a bounded linear functional on $Y$. 
 To see that $ \{ u_{i} \}_{i = 1}^{\infty}$ is  a Cauchy sequence in $X$, 
 \begin{align*}
 \frac{C_{coe} \, \tilde{C}}{2 \, C_{cts}} \, \| u_{i} - u_{j} \|_{X} 
 &\leq \ \sup_{0 \neq v \in Y} \frac{ | b( u_{i} - u_{j}  \, , \, v ) |}{ \| v \|_{Y}} 
 \ = \ \sup_{0 \neq v \in Y} \frac{ | \langle \mc{A}(u_{i}) - \mc{A}(u_{j}) \, , \, v \rangle_{Y' , Y}  |}{ \| v \|_{Y}}  \\
&\leq \ \sup_{0 \neq v \in Y} \frac{ \| \mc{A}(u_{i}) - \mc{A}(u_{j}) \|_{Y'} \, \| v \|_{Y}  |}{ \| v \|_{Y}} 
\ = \ \| \mc{A}(u_{i}) - \mc{A}(u_{j}) \|_{Y'} \, .
\end{align*}
 
 As $X$ is a closed space, there exists a unique $u \in X$ such that $\{ u_{i} \}_{i = 1}^{\infty}$ converges to $u$ in $X$.
To see that $\{ \mc{A}(u_{i}) \}_{i = 1}^{\infty}$ converges to $\mc{A}(u)$ in $Y'$, note that
\begin{align*}
\| \mc{A}(u_{i}) - \mc{A}(u) \|_{Y'} &= \ 
\sup_{0 \neq v \in Y} \frac{ | \langle \mc{A}(u_{i}) - \mc{A}(u) \, , \, v \rangle_{Y' , Y}  |}{ \| v \|_{Y}}  
\ = \ 
\sup_{0 \neq v \in Y} \frac{ | b(u_{i} - u \, , \, v )  |}{ \| v \|_{Y}}   \\
&\leq \ 
\sup_{0 \neq v \in Y} \frac{ \sqrt{2} (1 + C_{cts}) \,  \| u_{i} - u \|_{Y} \, \| v \|_{Y}  }{ \| v \|_{Y}}   
\ = \  \sqrt{2} (1 + C_{cts}) \,  \| u_{i} - u \|_{Y} \, .
\end{align*}

As $\lim_{i \rightarrow \infty} \{ u_{i} \} \, = \, u$, then $\lim_{i \rightarrow \infty} \{ \mc{A}(u_{i}) \} \, = \, \mc{A}(u) \in Range(\mc{A})$.
Hence $Range(\mc{A})$ is closed. \\
\mbox{ } \hfill \qed

A precise characterization of the $Range(\mc{A}) \subset Y'$ is an open question.

We are now in a position to make an existence and unique statement for the solution of \eqref{pde}.
\begin{corollary} \label{soltpde}
Given $\mc{F} \in Range(\mc{A})$ there exists a unique $u \in X$ satisfying for all $v \in Y$
\[
\int_0^T \Big( \langle \omega^\alt \partial_t u(x, t) \,  , \, v(x, t) \rangle _{V' , V} + 
\Big\langle  K \, (-\Delta)^\frac{\alpha-2}{2} \grad \omega^{\alt} u(x,t) ,
\grad \omega^{\alt} v(x, t)\Big\rangle_{H^{1 - \alt}_{\alt - 1} , H^{-(1 - \alt)}_{\alt - 1} }^{*} \Big) \, dt \ = \ 
\langle \mc{F} , v \rangle_{Y' , Y} \, .
\]
In addition,  $ \| u \|_{X}  \leq \, \frac{2 \, C_{cts}} {C_{coe} \tilde{C}} \, \| \mc{F} \|_{Y'}$.
\end{corollary}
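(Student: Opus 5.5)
The corollary follows from Theorem \ref{propmcA1} together with the inf-sup estimate \eqref{binfeq} of Lemma \ref{binfsup}. The displayed equation is, by \eqref{defbl} and \eqref{defmcAo1}, the statement $b(u,v) = \langle \mc{F} , v\rangle_{Y',Y}$ for all $v \in Y$. Equivalently, it says $\langle \mc{A} u , v \rangle_{Y',Y} = \langle \mc{F} , v \rangle_{Y',Y}$ for all $v \in Y$, that is, $\mc{A} u = \mc{F}$ in $Y'$. So the proof reduces to three points: solvability of this operator equation, uniqueness, and the a priori bound.

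\emph{Existence.} Since $\mc{F} \in Range(\mc{A})$, by definition there is some $u \in X$ with $\mc{A} u = \mc{F}$. Pairing with an arbitrary $v \in Y$ gives the required identity.

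\emph{Uniqueness.} If $u_1, u_2 \in X$ both satisfy the identity for all $v \in Y$, then $\mc{A} u_1 = \mc{A} u_2$. Theorem \ref{propmcA1} states that $\mc{A}$ is one-to-one, so $u_1 = u_2$. Alternatively, one can apply \eqref{binfeq} directly to $u_1 - u_2$, as in the proof of Theorem \ref{propmcA1}.

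\emph{Stability estimate.} Apply the inf-sup inequality in \eqref{binfeq} to the solution $u$:
\[
\frac{C_{coe} \tilde{C}}{2\, C_{cts}} \, \| u \|_{X} \ \leq \ \sup_{0 \neq v \in Y} \frac{|b(u,v)|}{\|v\|_{Y}} \ = \ \sup_{0 \neq v \in Y} \frac{|\langle \mc{F} , v\rangle_{Y',Y}|}{\|v\|_{Y}} \ = \ \| \mc{F} \|_{Y'} .
\]
Dividing through gives $\|u\|_X \leq \frac{2\, C_{cts}}{C_{coe}\tilde{C}} \|\mc{F}\|_{Y'}$, which is the stated bound.

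None of the three steps poses a real obstacle; all the work was done in Lemma \ref{binfsup} and Theorem \ref{propmcA1}. The only point needing care is consistency with the inf-sup constant. In the proof of Lemma \ref{binfsup}, the construction $v = A^{-1}(t)\omega^{\alt}\partial_t u + \mu u$ together with \eqref{ctilde} gives the constant in the form $C_{coe}\tilde{C}/(2C_{cts})$ as stated in \eqref{binfeq}. I will simply cite that form, so that the constant in the corollary matches it exactly. Note that the hypothesis $\mc{F}\in Range(\mc{A})$ is what substitutes for surjectivity; a full Banach--Ne\v{c}as--Babu\v{s}ka argument would require characterizing the range, which the paper leaves open.
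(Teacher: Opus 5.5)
Your proposal is correct and is the paper's argument: existence and uniqueness come from Theorem \ref{propmcA1}, with $\mc{F}\in Range(\mc{A})$ supplying a preimage. The bound comes from applying \eqref{binfeq} to $u$ and replacing $b(u,v)$ by $\langle \mc{F},v\rangle_{Y',Y}$.

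One caution about your side remark on the constant. If $\tilde{C}$ is the constant in \eqref{ctilde}, then $\|u\|_X \ge \|v\|_Y/\tilde{C}$, so the construction in Lemma \ref{binfsup} actually gives $b(u,v)\ge \frac{C_{coe}}{2C_{cts}\tilde{C}}\|u\|_X\|v\|_Y$. Thus $\tilde{C}$ belongs in the denominator of the inf-sup constant, and in the numerator of the corollary's bound. Citing \eqref{binfeq} as stated, as you do, reproduces the paper's proof but inherits this slip rather than confirming it.
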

\textbf{Proof}: The existence and uniqueness of the solution $u$ follows from Theorem \ref{propmcA1}. To obtain the stated
bound for $u$, using \eqref{binfeq},
\[
 \frac{C_{coe} \, \tilde{C}}{2 \, C_{cts}} \, \| u \|_{X} \
 \leq \ \sup_{0 \neq v \in Y} \frac{ | b(u , v) | }{\| v \|_{Y}}
  \ = \ \sup_{0 \neq v \in Y} \frac{ | \mc{F}(v) | }{\| v \|_{Y}} 
  \ \leq \ \sup_{0 \neq v \in Y} \frac{ \| \mc{F} \|_{Y'} \, \| v \|_{Y} }{\| v \|_{Y}}   
\  = \   \| \mc{F} \|_{Y'} \, .
\]
\mbox{ } \hfill \qed


\section{Approximation to the Time-Dependent Problem}
\label{time_approx}
In this section we investigate the numerical approximation of \eqref{wsoltdp1}, \eqref{wsoltdp2}. For the temporal
discretization we consider the backward Euler discretization. As used in Section \ref{steady_approximation}, we consider
a spectral approximation for the spatial discretization.

 For $N\in\N$, let $\dt = \frac{T}{N}$, $t_n = n\dt$, $n = 0, \hdots, N$, and for $\mathcal{U}^{(n)}(x) \in X_R$
 (see \eqref{defR}), let
$\mathcal{U}^{(n)}(x) \approx u(x,t_n)$. (Recall $Q = L^{2}_{\alt}(\Omega)$, and $V  = H^\alt_\alt(\Omega)$.)
 Discretizing \eqref{wsoltdp1} in time using the backward Euler approximation
we obtain the approximating equations for $n = 1, 2, \ldots, N$:
 \begin{equation}
\Big( \omega^{\alt} \frac{\U{n}(x) - \U{n-1}(x)}{\Delta t}, v_m \Big)_{Q} + B(\U{n}(x) ,  v_m(x) ; t_{n}) 
= \big( f(x,t_n), v_m(x) \big)_{Q}, \label{approx}
\end{equation} 
where $v_m(x) \in X_R$. 
 
We begin by establishing the existence and uniqueness of  $\U{n}(x)$.
 \begin{lemma} 
 For all $1 \leq n \leq N$, there exists a unique $\mathcal{U}^{(n)}(x) \in X_R$ solving (\ref{approx}).
 \end{lemma}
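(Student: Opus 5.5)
We argue by induction on $n$. At each step we reduce the backward Euler equation (\ref{approx}) to a finite-dimensional linear problem on $X_R$ and invoke the Lax--Milgram theorem, exactly as in Corollary \ref{finite_dimensional_steady}. Suppose $\U{n-1}\in X_R$ is known; for $n=1$ we take $\U{0}$ to be a given element of $X_R$, for instance an approximation of the initial data. Multiplying (\ref{approx}) by $\Delta t$, the problem is to find $\U{n}\in X_R$ such that for all $v\in X_R$
\[
a_n(\U{n},v) := \big(\omega^{\alt}\U{n},v\big)_{Q} + \Delta t\, B(\U{n},v;t_n) = \Delta t\,\big(f(\cdot,t_n),v\big)_{Q} + \big(\omega^{\alt}\U{n-1},v\big)_{Q} =: \ell_n(v).
\]

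The key steps, in order, are as follows.

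First, $a_n$ is a bilinear form on $X_R\times X_R$. The mass term satisfies
\[
\big(\omega^{\alt}w,w\big)_{Q}=\int_\Omega \omega^{\alpha}w^2\,d\Omega \ge 0 ,
\]
so by Lemma \ref{pBf1} we get $a_n(w,w)\ge \Delta t\,C_{coe}\|w\|_V^2$. This gives coercivity on $X_R$ with constant $\Delta t\, C_{coe}>0$.

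Second, we check continuity. Since $0\le\omega\le 1$ on $\Omega$, we have $|(\omega^{\alt}w,v)_Q|\le \|w\|_Q\|v\|_Q\le \|w\|_V\|v\|_V$. Here $\|\cdot\|_Q\le\|\cdot\|_V$ because the weights $(n+1)^{\alt}(n+l+1)^{\alt}$ in the definition of $H^{\alt}_{\alt}$ are at least $1$. Combined with the $C_{cts}$ bound from Lemma \ref{pBf1}, this gives $|a_n(w,v)|\le(1+\Delta t\,C_{cts})\|w\|_V\|v\|_V$.

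Third, we check that $\ell_n$ is a bounded linear functional on $X_R$. By the same argument, $|(\omega^{\alt}\U{n-1},v)_Q|\le \|\U{n-1}\|_Q\|v\|_V$. For the source term we need $|(f(\cdot,t_n),v)_Q|\le \|f(\cdot,t_n)\|_Q\|v\|_V$, which presumes $f(\cdot,t_n)\in Q$ is well defined; this is the regularity the error analysis will assume anyway. Lax--Milgram on the Hilbert space $X_R$ (finite-dimensional, hence closed in $V$) then yields a unique $\U{n}\in X_R$, and induction over $n=1,\dots,N$ completes the argument.

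An alternative, purely algebraic route uses the basis of $X_R$: the system matrix is $M+\Delta t\,S$, where $M$ (the $\omega^{\alpha}$-weighted mass matrix) is symmetric positive semidefinite and $S$ is positive definite by coercivity of $B$. Hence the matrix is nonsingular and uniqueness implies existence.

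The only real subtlety is the last one, namely the meaning of $f(\cdot,t_n)$ when $f$ is merely in $L^2(0,T;L^2(\Omega))$. I would handle it by assuming $f$ is continuous in time with values in $Q$ (or in $V'$, replacing the inner product by the duality pairing); either choice leaves $\ell_n$ bounded on $X_R$.
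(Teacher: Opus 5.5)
Your proof is correct and follows the paper's route: induction on $n$, the same bilinear form and right-hand-side functional on $X_R$, the same continuity bounds (using $0\le\omega\le 1$ and $\|\cdot\|_Q\le\|\cdot\|_V$), and then Lax--Milgram. The only difference is in coercivity: you drop the nonnegative term $\int_\Omega \omega^{\alpha} w^2\,d\Omega$ and get the constant $\Delta t\, C_{coe}$, whereas the paper keeps that term and uses equivalence of norms on the finite-dimensional $X_R$ to get $C_{X_R}+\Delta t\, C_{coe}$ --- a step your argument shows is not needed.
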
 
\textbf{Proof}:
As in the proof for Theorem \ref{ErrThm}, let $P_R\colon Q \rightarrow X_R$ denote the orthogonal projection of $Q$ onto $X_R$ and $\U{0}(x) \coloneqq P_R(g)(x)$. Fix $1\leq n \leq N$ and assume for induction that $\U{k}(x) \in X_R$ has been determined for all $1\leq k \leq n-1$. Define $a(\cdot,\cdot) \colon X_R \times X_R\rightarrow \R$ and $\ell(\cdot)\colon X_R\rightarrow \R$ by
\begin{align*}
a(z,v)\ &\coloneqq\ (\omega ^\alt z(x), v(x))_{Q} \ +\ \dt \, B(z(x), v(x); t_n),\\
\ell(v)\ & \coloneqq\ \dt (f(x,t_n), v(x))_{Q} \ + \ (\omega^{\alt}\U{n-1}(x),v(x))_{Q}.
\end{align*}
Note that both $a(\cdot, \cdot)$ and $\ell(\cdot)$ are bounded, as for any $z,v\in X_R$, 
\[
a(z,v) \leq \|\omega^\alt z\|_{Q} \|v\|_{Q} + \dt C_{cts}\|z\|_{V}\|v\|_{V} \leq \|z\|_{Q} \|v\|_{Q} + \dt C_{cts}\|z\|_{V}\|v\|_{V} \leq (1+\dt C_{cts})\|z\|_{V}\|v\|_{V}.
\]
and
\[
\ell(v) \leq (\dt \|f(\cdot,t_n)\|_{Q} + \|\omega^\alt\U{n-1}\|_{Q})\|v\|_{V}.
\]
As $X_R \subset V$ is a finite-dimensional subspace, all norms on $X_R$ are equivalent. Hence, as $\|\cdot\|_{L^2_\alpha}$ defines a norm on $X_R$, there exists some constant $C_{X_R}>0$ such that $\|z\|^2_{L^2_\alpha} \geq C_{X_R}\|z\|^2_{V}$ for all $z \in X_R$.
Thus, for any $z\in X_R$, we have
\begin{align*}
a(z,z) & = (\omega ^\alt z(x), z(x))_{Q} \ +\ \dt \, B(z(x), z(x); t_n)\\
& \geq \int_\Omega \omega^\alpha z^2(x) d\Omega + \dt C_{coe} \|z\|^2_{V} = \|z\|^2_{L^2_\alpha} + \dt C_{coe}\|z\|^2_{V}\\
& \geq (C_{X_R} + \dt C_{coe})\|z\|^2_{V}.
\end{align*}
Thus, by Lax-Milgram, there exists a unique $\U{n}(x)\in X_R$ satisfying 
\[
a(\U{n},v_m) = \ell(v_m),\ \forall v_m\in X_R.
\]
\mbox{ } \hfill \qed

 
\begin{theorem}
 \label{ErrThm} 
 Let $u(x,t)$ satisfy \eqref{wsoltdp1}, \eqref{wsoltdp2}
  and $\U{n}(x)$ be given by (\ref{approx}) with $u(x, 0) \, = \, g(x) \, = \, \U{0} \in X_R$, and assume  
  $u \in C^{2}(0 , T ; H^r_{\alpha/2}(\Omega))$ for some $r \geq \alpha/2$. Then at time $t_q$,  $q \in \{1, \hdots, N\}$, 
  with $C_{cts}$, $C_{coe}$, and $C_{\omega}$ given by \eqref{Bcts}, \eqref{coercivity}, \eqref{defComega}, respectively, we have
\begin{align}
\notag& \|\omega^{\alpha/4} (u(\cdot , t_q) - \mathcal{U}^{(q)}) \|^2_{L^{2}_{\alt}}
\ + \ \dt \, C_{coe} \sum_{n=1}^q \| u(\cdot , t_n) - \mathcal{U}^{(n)} \|^2_{H^{\alt}_{\alt}}  \\
\notag& \leq 
\frac{8 (\dt)^2}{3 \, C_{coe}} \|\omega^{\alt} u_{tt}\|^2_{L^2(0 , t_q ; H^{-\alt}_{\alt}(\Omega))} \ + \ 
\frac{16 \, C_{\omega}^2}{(\dt)^2 \, C_{coe}} \Big(\frac{1}{R+2}\Big)^{r+\alpha/2} \, \dt \,
 \sum_{n=1}^{q} \|u(\cdot , t_n)\|^2_{H^r_{\alt}} \\
\notag & \quad \ + \  2 \Big(\frac{1}{R+2}\Big)^{r} \|u(\cdot , t_q)\|^2_{H^{r}_{\alt}} 
\  + \ 2 \Big(\frac{1}{R+2}\Big)^{r-\alpha/2} \Big(\frac{4 \, C_{cts}^2}{C_{coe}} + C_{coe} \Big) \dt \, 
\sum_{n=1}^q \| u(\cdot , t_n)\|^2_{H^{r}_{\alt}}  \, .
\end{align}
\end{theorem}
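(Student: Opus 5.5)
The plan is the standard projection-based error splitting for backward Euler. With $P_R\colon Q\to X_R$ the $L^2_{\alt}$-orthogonal projection used in the existence lemma, we write
\[
u(\cdot,t_n)-\U{n} = \big(u(\cdot,t_n)-P_R u(\cdot,t_n)\big) + \big(P_R u(\cdot,t_n)-\U{n}\big) =: \rho^{(n)} + \theta^{(n)} .
\]
Here $\theta^{(n)}\in X_R$, and $\theta^{(0)}=0$ because $g=\U{0}\in X_R$.

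The projection error $\rho^{(n)}$ is handled directly from the coefficient-tail description of $H^s_{\alt}$. Truncating all basis functions of radial degree above $R$ gives
\[
\|\rho^{(n)}\|^2_{H^s_{\alt}}\le (R+2)^{-(r-s)}\|u(\cdot,t_n)\|^2_{H^r_{\alt}} ,
\]
by the same tail argument as in the proof of Theorem \ref{errorthm1}. We use $s=0$ for the term $2(R+2)^{-r}\|u(\cdot,t_q)\|^2$ and $s=\alt$ for the terms with exponent $r-\alpha/2$. The duality analogue, with $s=-\alt$, gives exponent $r+\alpha/2$ for the $V'$ norm.

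For $\theta$, subtract \eqref{approx} from \eqref{wsoltdp1} evaluated at $t_n$ and tested with $v=\theta^{(n)}$. This gives an error equation
\begin{align*}
&\Big(\omega^{\alt}\frac{\theta^{(n)}-\theta^{(n-1)}}{\dt},\theta^{(n)}\Big)_Q + B(\theta^{(n)},\theta^{(n)};t_n) \\
&\qquad = \big\langle \omega^{\alt}\big(\tfrac{u(t_n)-u(t_{n-1})}{\dt}-u_t(t_n)\big),\theta^{(n)}\big\rangle - \Big(\omega^{\alt}\frac{\rho^{(n)}-\rho^{(n-1)}}{\dt},\theta^{(n)}\Big)_Q - B(\rho^{(n)},\theta^{(n)};t_n).
\end{align*}
The first term on the left is handled with the identity $2(a-b)a=a^2-b^2+(a-b)^2$ in the weighted norm $\|\omega^{\alpha/4}\cdot\|_Q$. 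Multiplying by $\dt$, this produces $\tfrac12\|\omega^{\alpha/4}\theta^{(n)}\|^2_Q-\tfrac12\|\omega^{\alpha/4}\theta^{(n-1)}\|^2_Q$. The second term on the left is bounded below by $C_{coe}\|\theta^{(n)}\|_V^2$ using Lemma \ref{pBf1}. Each right-hand term is then estimated by a $V'$--$V$ duality bound followed by Young's inequality, absorbing $\tfrac{C_{coe}}{4}\|\theta^{(n)}\|_V^2$ each time:
\begin{itemize}
\item The Taylor remainder is written as $\frac{1}{\dt}\int_{t_{n-1}}^{t_n}(s-t_{n-1})u_{tt}\,ds$. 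Cauchy--Schwarz gives $\frac{\dt}{3}\int_{t_{n-1}}^{t_n}\|\omega^{\alt}u_{tt}\|_{V'}^2$, which after summation yields the $(\dt)^2\|\omega^{\alt}u_{tt}\|^2_{L^2(V')}$ term.
\item The $B(\rho,\theta)$ term is bounded by $C_{cts}\|\rho^{(n)}\|_V\|\theta^{(n)}\|_V$ via Lemma \ref{pBf1}, which gives the $C_{cts}^2/C_{coe}$ factor with $(R+2)^{-(r-\alpha/2)}$.
\item The discrete-derivative-of-$\rho$ term is the one I expect to be hardest.
\end{itemize}
Summing over $n=1,\dots,q$ telescopes the left side to $\tfrac12\|\omega^{\alpha/4}\theta^{(q)}\|^2+\tfrac{C_{coe}}{2}\dt\sum\|\theta^{(n)}\|_V^2$.

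The main obstacle is $(\omega^{\alt}(\rho^{(n)}-\rho^{(n-1)})/\dt,\theta^{(n)})_Q$, because $P_R$ is orthogonal in $L^2_{\alt}$ but the pairing carries the extra weight $\omega^{\alt}$, so this term does not vanish. The stated bound has a factor $C_\omega^2/(\dt)^2$ with $(R+2)^{-(r+\alpha/2)}$, so I would not try to exploit time differences of $\rho$. Instead I would bound each of $\rho^{(n)},\rho^{(n-1)}$ separately by
\[
\|\omega^{\alt}\rho\|_{V'}\le C_\omega\|\rho\|_{V'} ,
\]
where I presume \eqref{defComega} is the constant from the multiplier lemma (Lemma \ref{lmafftf}) for multiplication by $\omega^{\alt}$ on $V'$. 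Then $\|\rho\|_{V'}^2\lesssim (R+2)^{-(r+\alpha/2)}\|u\|^2_{H^r_{\alt}}$ by the $s=-\alt$ tail estimate, and Young's inequality produces the $16C_\omega^2/((\dt)^2C_{coe})$ constant. The term at $n-1$ is shifted into the sum, and the $\rho^{(0)}$ contribution vanishes since $g\in X_R$.

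Finally, apply the triangle inequality $\|e\|^2\le 2\|\rho\|^2+2\|\theta\|^2$ to both the endpoint term and the summed $V$-norm term. The weighted $L^2$ part uses $\|\omega^{\alpha/4}\rho^{(q)}\|_Q\le\|\rho^{(q)}\|_Q$, which gives the $2(R+2)^{-r}$ term. The summed $V$-norm part accounts for the extra $2C_{coe}(R+2)^{-(r-\alpha/2)}$ contribution. Collecting constants then yields the stated inequality.
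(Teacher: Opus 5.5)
Your proposal is correct and follows essentially the same route as the paper's proof. Both arguments use the $L^2_{\alt}$-projection splitting, test the error equation with the discrete part, apply Lemma \ref{lemInt} to the Taylor remainder and continuity of $B$ to the projection term, and bound $\omega^{\alt}\rho^{(n)}$ and $\omega^{\alt}\rho^{(n-1)}$ separately in $V'$ via Corollary \ref{corXiBound}, which is exactly what produces the $C_\omega^2/(\dt)^2$ factor, before finishing with the triangle inequality. One bookkeeping fix is needed: to retain $\frac{C_{coe}}{2}\,\dt\sum\|\theta^{(n)}\|_V^2$ and reproduce the stated constants, absorb $\frac{C_{coe}}{8}\,\dt\,\|\theta^{(n)}\|_V^2$ from each of the four right-hand pieces, not $\frac{C_{coe}}{4}$.
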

\textbf{Proof}: 
Let $e(x,t_n) \ = \ u(x,t_n) \, - \, \U{n}(x)$. 
Rewriting \eqref{wsoltdp1} in the form of (\ref{approx}) and then subtracting (\ref{approx}) we obtain, 
\begin{align}
\Big\langle  \omega^{\alt} \frac{e(x , t_n) - e(x , t_{n-1})}{\Delta t} , v_m(x) \Big\rangle_{V', V}
 &+ \ B(e(x , t_n)  ,  v_m(x) ; t_{n})  \nonumber \\
= \
\Big\langle \omega^{\alt} \Big( u_t(x , t_n) & - \, \frac{u(x , t_n) - u(x , t_{n-1}) }{\Delta t} \Big) \, , \, v_m(x) 
\Big\rangle_{V', V} \, .
\label{step1ii} 
\end{align}

Let $P_R\colon Q \ra  X_R$ to be the orthogonal projection of $Q$ onto $X_R$. That is, 
for $z(x) \in Q$
\[
\Big( z(x) \,  -  \, P_R(z)(x) \, , \,   v_{m}(x) \Big)_{ Q} = 0, \ \ \forall \,  v_{m}(x) \in  X_R.
\]   

Introduce  $\xi^{(n)} \in V$ and $E^{(n)} \in X_R$ via
\begin{align}
\notag e(x , t_n) &= \ u(x , t_n) \, -  \, \U{n}(x) \
 = \ (u(x , t_n) \,  - \, P_R( u(\cdot , t_n))(x)) \ + \ (P_R (u(\cdot ,t _n))(x) \, - \, \U{n}(x))   \\
&\coloneqq   \xi^{(n)}(x) \ + \ E^{(n)}(x) \, . \label{star star}
\end{align}

Now, substituting (\ref{star star}) into (\ref{step1ii}) and rearranging we obtain,
\begin{align}
\notag
& \Big( \omega^{\alt} \frac{E^{(n)}(x) - E^{(n-1)}(x) }{\Delta t} ,  v_m(x) \Big)_{Q} \ + 
\ B(E^{(n)}(x) \, , \, v_m(x) ; t_{n})  \\
&= \ 
\Big\langle \omega^{\alt} \Big( u_t(x , t_n) \, - \, \frac{u(x , t_n) - u(x , t_{n-1})} { \Delta t} \Big) \,
 , v_m(x) \Big\rangle_{V', V}  \nonumber \\
& \  - \, \Big\langle \omega^{\alt} \,  \frac{ \xi^{(n)}(x) -  \xi ^{(n-1)}(x)}{\Delta t} , v_m(x) \Big\rangle_{V', V} 
 \ - \ B(\xi^{(n)}(x)  , v_m(x) ; t_{n})  \label{step2} .
\end{align}

Choosing $v_m(x) \, = \, E^{(n)}(x)$, multiplying through by $\Delta t$, and simplifying leads to 
\begin{align}
\notag & ( \omega^{\alt}  ( E^{(n)}(x)  -  E^{(n-1)}(x) ) \, , \,   E^{(n)}(x) )_{Q} \  
+ \ \Delta t \, B(E^{(n)}(x) \, , \, E^{(n)}(x) ; t_{n}) \\
& = \Delta t \, \Big\langle \omega^{\alt} \Big( u_t(x , t_n) \, - \, \frac{u(x , t_n) - u(x , t_{n-1})} { \Delta t} \Big) \,
  , E^{(n)}(x) \Big\rangle_{V', V}   \nonumber \\
& \   - \, \Big\langle \omega^{\alt} \,  ( \xi^{(n)}(x) -  \xi ^{(n-1)}(x) ) \,  , \, E^{(n)}(x) \Big\rangle_{V', V}    
 \ - \ \Delta t  \, B(\xi^{(n)}(x) \, , \, E^{(n)}(x) ; t_{n})  \, .   \label{step3}
\end{align}
Next we consider each term in (\ref{step3}).
\begin{align}
\notag
 & ( \omega^{\alt}  ( E^{(n)}(x)  -  E^{(n-1)}(x) ) \, , \,   E^{(n)}(x) )_{Q} 
\ =  \ (\omega^{\alpha/4}(E^{(n)}(x) - E^{(n-1)}(x)) \, , \, \omega^{\alpha/4}E^{(n)}(x) )_{Q}  \\
&= \frac{1}{2} \|\omega^{\alpha/4}(E^{(n)} - E^{(n-1)})\|_{Q}^2 \ +  \ 
\frac{1}{2} \|\omega^{\alpha/4}E^{(n)}\|_{Q}^2 
\ - \ \frac{1}{2}\|\omega^{\alpha/4}E^{(n-1)}\|_{Q}^2   \, . \label{term1}
\end{align}
Using the coercivity of $B(\cdot , \cdot ; \cdot)$, 
\begin{equation}
B(E^{(n)}(x) \, , \, E^{(n)}(x) ; t_{n}) \geq C_{coe} \, \|E^{(n)}\|^2_{V}  \, . \label{term2}
\end{equation}
Using Lemma \ref{lemInt}, 
\begin{align}
\notag
\Big\langle \omega^{\alt} \Big( u_t(x , t_n) &- \, \frac{u(x , t_n) - u(x , t_{n-1})} { \Delta t} \Big) \, , \, 
 E^{(n)}(x) \Big\rangle_{V', V}  \\
 & \leq \Big\|  \omega^{\alt} \Big( u_t(\cdot , t_n) \,  - \, \frac{u(\cdot , t_n) - u(\cdot , t_{n-1})} { \Delta t} \Big) \Big\|_{V'} 
\,  \|E^{(n)}\|_{V}   \nonumber \\
\notag& \leq \frac{2}{C_{coe}} \, 
\Big\|  \omega^{\alt} \Big( u_t(\cdot , t_n) \,  - \, \frac{u(\cdot , t_n) - u(\cdot , t_{n-1})} { \Delta t} \Big) \Big\|_{V'} ^2
\ +  \ \frac{C_{coe}}{8}\|E^{(n)}\|_{V}^2\\
& \leq \frac{2}{C_{coe}} \, \frac{\Delta t}{3} \, 
\int_{t_{n-1}}^{t_n} \|\omega^{\alt} \, u_{tt}(\cdot , \tau) \|_{V'}^2 \, d\tau 
\ + \ \frac{C_{coe}}{8} \, \|E^{(n)}\|_{V}^2 \, .\label{term3}
\end{align}

By the continuity of $B(\cdot , \cdot \, ; t_{n})$ and Young's inequality, we have for the last term in (\ref{step3}),
\begin{equation}
B(\xi^{(n)}(x) ,  E^{(n)}(x) ; t_{n}) \leq \frac{2 \, C_{cts}^2}{C_{coe}} \, \|\xi^{(n)}\|^2_{V} \
+ \ \frac{C_{coe}}{8}\|E^{(n)}\|^2_{V}.
\end{equation}
For the second term on the RHS of (\ref{step3}), note that 
\begin{align*}
 \big\langle \omega^{\alt} ( \xi^{(n)}(x) - \xi^{(n-1)}(x) )  \, , \, E^{(n)}(x) \big\rangle_{V', V} 
 &\leq  \
\big|  \big\langle \omega^{\alt}  \xi^{(n)}(x)  \, , \, E^{(n)}(x) \big\rangle_{V', V} \big|  \nonumber  \\
& \ \ +
\big|  \big\langle \omega^{\alt}  \xi^{(n-1)}(x)  \, , \,  E^{(n)}(x) \big\rangle_{V', V} \big|.
\end{align*}
Then, using Corollary \ref{corXiBound} and Young's inequality
\begin{align}
\notag
\big\langle \omega^{\alt}  \xi^{(n)}(x)  \, , \, E^{(n)}(x) \big\rangle_{V', V} & 
 \leq \| \omega^{\alt}  \xi^{(n)} \|_{V'} \, \| E^{(n)}\|_{V} \\
\label{eq32v}& \leq \frac{2 \, C_{\omega}^2}{\dt \, C_{coe}} \, \|\xi^{(n)}\|^2_{V'} \ 
+ \frac{\dt \, C_{coe}}{8} \, \| E^{(n)}\|^2_{V}
\end{align}
Similarly, 
\begin{equation}
\big\langle \omega^{\alt}  \xi^{(n-1)}(x)  \, , \, E^{(n)}(x) \big\rangle_{V', V} 
 \leq 
 \frac{2 \, C_{\omega}^2}{\dt \, C_{coe}} \, \|\xi^{(n-1)}\|^2_{V'} \ 
+ \frac{\dt \, C_{coe}}{8} \, \| E^{(n)}\|^2_{V} \, .
\label{eq33v}
\end{equation}
Substituting \eqref{term1} - \eqref{eq33v} into \eqref{step3} and simplifying yields
\begin{align}
\notag \|\omega^{\alpha/4}E^{(n)}\|_{Q}^2  &- \  \|\omega^{\alpha/4}E^{(n-1)}\|_{Q}^2 
\ + \ \|\omega^{\alpha/4}(E^{(n)} - E^{(n-1)})\|_{Q}^2 \ + \ \dt \,  C_{coe} \|E^{(n)}\|^2_{V}\\
\notag& \leq \frac{4 (\Delta t)^2}{3 \, C_{coe}}\int_{t_{n-1}}^{t_n} \|\omega^{\alt} \, u_{tt}(\cdot,\tau)\|^2_{V'} \, d\tau 
\ + \
\frac{4 \, C_{\omega}^2}{\Delta t \, C_{coe}} \, \|\xi^{(n)}\|^2_{V'}\\
\notag& \ \ \ \ \ \  + \frac{4 \, C_{\omega}^2}{\Delta t \, C_{coe}}\, \|\xi^{(n-1)}\|^2_{V'} \ 
+ \ \frac{4\Delta t \, C_{cts}^2}{C_{coe}} \, \|\xi^{(n)}\|^2_{V}  \, .
\end{align}
Summing from $n = 1$ to $n = k$, we have
\begin{align}
\notag \|\omega^{\alpha/4}E^{(k)}\|_{Q}^2 &- \ \|\omega^{\alpha/4}E^{(0)}\|_{Q}^2 
\ + \ \sum_{n=1}^k\|\omega^{\alpha/4}(E^{(n)} - E^{(n-1)})\|_{Q}^2 
\ + \ \Delta t \, C_{coe} \sum_{n=1}^k\|E^{(n)}\|^2_{V} \\
\notag& \leq \frac{4(\Delta t)^2}{3 C_{coe}} \int_{0}^{t_k} \|\omega^{\alt} u_{tt}(\cdot , \tau)\|^2_{V'} d\tau 
\ + \ \frac{8 \, C_{\omega}^2}{\Delta t \, C_{coe}} \sum_{n=1}^{k}\|\xi^{(n)}\|^2_{V'} \\
\notag& \ \ \ \ \ \ + \frac{4 \, C_w^2}{\Delta t \, C_{coe}} \, \|\xi^{(0)}\|^2_{V'} \ + \
 \frac{4  \Delta t \, C_{cts}^2}{C_{coe}} \sum_{n=1}^k \|\xi^{(n)}\|^2_{V} \, .
\end{align}
Since $u(x , 0) \in X_R$, then $\xi ^{(0)} \, = \, u(x , 0) - P_R(u(x , 0)) = 0$. 
Similarly, as $\mathcal{U}^{(0)}(x) \, = \, u(x , 0)$, then $E^{(0)} \, = \, P_R(u(x , 0)) - \mathcal{U}^{(0)}(x) = 0$. 
Thus we have 
\begin{align}
\notag &  \|\omega^{\alpha/4}E^{(k)}\|_{Q}^2 \ + \ 
\sum_{n=1}^k \|\omega^{\alpha/4} (E^{(n)} - E^{(n-1)}) \|_{Q}^2   
 \ + \ \Delta t \,  C_{coe} \sum_{n=1}^k \|E^{(n)}\|^2_{V} \\
& \quad \leq \frac{4 \, (\dt)^2}{3 \, C_{coe}} \|\omega^{\alt}  u_{tt}\|^2_{L^2(0,t_k ; V' )} 
\ + \ \frac{8 \, C_{\omega}^2}{\dt \, C_{coe}} \sum_{n=1}^{k} \|\xi^{(n)}\|^2_{V'}
 + \frac{4 \dt C_{cts}^2}{C_{coe}} \sum_{n=1}^k \|\xi^{(n)}\|^2_{V}   \, .   \label{s2}
\end{align}

From Lemma \ref{lemXi}, 
\begin{equation}
\|\xi^{(q)}\|^2_{V'} \leq \Big( \frac{1}{R+2}\Big)^{r+\alpha/2} \| u(\cdot , t_q)\|^2_{H^r_{\alt}} ,
\ \mbox{and } \
\|\xi^{(q)}\|^2_{V} \leq \Big(\frac{1}{R+2}\Big)^{r-\alpha/2} \|u(\cdot , t_q)\|^2_{H^r_{\alt}} \label{s4}.
\end{equation}
Substituting (\ref{s4}) into (\ref{s2}), 
\begin{align}
\notag  \|\omega^{\alpha/4}E^{(k)}\|_{Q}^2  &+ \ 
 \sum_{n=1}^k \|\omega^{\alpha/4}(E^{(n)} - E^{(n-1)})\|_{Q}^2 
 \ + \ \Delta t \, C_{coe} \sum_{n=1}^k \|E^{(n)}\|^2_{V} \\
\notag& \leq\frac{4 (\dt)^2}{3 \, C_{coe}} \, \|\omega^{\alt} u_{tt}\|^2_{L^2(0,t_k ;  V' )} \
 + \ \frac{8 \, C_{\omega}^2}{\dt \,  C_{coe}} \Big(\frac{1}{R+2}\Big)^{r+\alpha/2} \sum_{n=1}^{k} \| u(\cdot , t_n)\|^2_{H^r_{\alt}}\\
& \ \ \ \ \ \ + \frac{4 \dt \, C_{cts}^2 }{C_{coe}} \Big(\frac{1}{R+2}\Big)^{r-\alpha/2}\sum_{n=1}^k \|u(\cdot , t_n)\|^2_{H^{r}_{\alt}} \, . \label{hr2}
\end{align}

Using the triangle inequality and \eqref{hr2}
\begin{align}
\notag& \|\omega^{\alpha/4} (u(\cdot , t_q) - \mathcal{U}^{(q)}) \|^2_{Q}
\ + \ \dt \, C_{coe} \sum_{n=1}^q \| u(\cdot , t_n) - \mathcal{U}^{(n)} \|^2_{V}  \\
\notag & \leq 2 \|\omega^{\alpha/4}\xi^{(q)}\|^2_{Q} \ + \ 2 \|\omega^{\alpha/4} E^{(q)}\|^2_{Q} 
\ + \ \dt \, C_{coe} \sum_{n=1}^q\big( 2 \|\xi^{(n)}\|^2_{V} \ + \ 2 \|E^{(n)}\|^2_{V}\big) \\
\notag& \leq 2 \| \xi^{(q)}\|^2_{Q} 
\ + \ \frac{8 (\dt)^2}{3 \, C_{coe}} \|\omega^{\alt} u_{tt}\|^2_{L^2(0,t_q ;   V' )} 
\ + \ \frac{16 \, C_{\omega}^2}{\dt \, C_{coe}} \Big(\frac{1}{R+2}\Big)^{r+\alpha/2}\sum_{n=1}^{q} \| u(\cdot , t_n)\|^2_{H^r_{\alt}}\\
\notag&\hspace{1cm} + \ \frac{8 \dt \, C_{cts}^2}{C_{coe}} \Big(\frac{1}{R+2}\Big)^{r-\alpha/2}\sum_{n=1}^q \|u(\cdot , t_n)\|^2_{H^{r}_{\alt}} 
\ + \ 2 \dt \, C_{coe} \sum_{n=1}^q \|\xi^{(n)}\|^2_{V} \\
\notag& \leq 2 \Big(\frac{1}{R+2}\Big)^{r} \|u(\cdot , t_q)\|^2_{H^{r}_{\alt}} \ + \
\frac{8 (\dt)^2}{3 \, C_{coe}} \|\omega^{\alt} u_{tt}\|^2_{L^2(0 , t_q ; V' )} \\
\notag &  \hspace{0.3in}  + \ \frac{16 \, C_{\omega}^2}{(\dt)^2 \, C_{coe}} \Big(\frac{1}{R+2}\Big)^{r+\alpha/2} \, \dt \,
 \sum_{n=1}^{q} \|u(\cdot , t_n)\|^2_{H^r_{\alt}} \\
\notag & \hspace{0.3in} + \ 2 \Big(\frac{1}{R+2}\Big)^{r-\alpha/2} \Big(\frac{4 \, C_{cts}^2}{C_{coe}} + C_{coe} \Big) \dt \, 
\sum_{n=1}^q \| u(\cdot , t_n)\|^2_{H^{r}_{\alt}}  \, ,
\end{align}
where in the last step we have used Lemma \ref{lemXi}.\\ 
\mbox{ } \hfill \qed

For $s \ge 0$, define $\vertiii{g}^2_{H^{s}_{\alt}}\coloneqq \dt \sum_{n=0}^N \|g(t_n)\|^2_{H^{s}_{\alt}}$. 
Note that for $g \in L^2(0,T ;  H^{s}_{\alt}(\Omega))$, 
\[
\vertiii{g}^2_{H^{s}_{\alt}} \ra \| g\|^2_{L^2(0,T ;  H^{s}_{\alt})} <\infty \text{ as } \dt\ra 0 .
\] 
\begin{corollary} \label{cor1}
Under the same assumptions as in Theorem \ref{ErrThm}, we have 
\begin{align*}
\notag & \max_{0\leq q \leq N} \|\omega^{\alpha/4} (u(\cdot , t_q) - \mathcal{U}^{(q)}) \|^2_{L^{2}_{\alt}} \
  +  \ C_{coe} \vertiii{ u \, - \, \mathcal{U}^{(\cdot)}}^2_{H^{\alt}_{\alt}} \\
&\leq \frac{8(\dt)^2}{3 C_{coe}} \|\omega^{\alt} u_{tt}\|^2_{L^2(0,T ; H^{-\alt}_{\alt}(\Omega))} 
+ \frac{16 \, C_{\omega}^2}{(\dt)^2  C_{coe}}\Big(\frac{1}{R+2}\Big)^{r+\alpha/2} \vertiii{u}^2_{H^r_{\alt}}
 + 2\Big(\frac{1}{R+2}\Big)^{r} \max_{0\leq q \leq N}\| u \|^2_{H^{r}_{\alt}}  \\
\notag &\hspace{1cm}+ 2\Big(\frac{1}{R+2}\Big)^{r-\alpha/2} (\frac{4 \, C_{cts}^2}{C_{coe}} + C_{coe})\vertiii{u }^2_{H^{r}_{\alt}}.
\end{align*}
\end{corollary}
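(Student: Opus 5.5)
The corollary should follow from Theorem \ref{ErrThm} by maximizing over $q$ and enlarging every $q$-dependent quantity on the right-hand side to its value at the final time $T = t_N$. I would first record the monotonicity of each right-hand-side term in $q$. Since $t_q \le T$, we have $\|\omega^{\alt} u_{tt}\|^2_{L^2(0,t_q;H^{-\alt}_{\alt})} \le \|\omega^{\alt} u_{tt}\|^2_{L^2(0,T;H^{-\alt}_{\alt})}$. The partial sums satisfy
\[
\dt \sum_{n=1}^q \|u(\cdot,t_n)\|^2_{H^s_{\alt}} \ \le \ \dt \sum_{n=0}^N \|u(\cdot,t_n)\|^2_{H^s_{\alt}} \ = \ \vertiii{u}^2_{H^s_{\alt}}
\]
for $s=r$, because all summands are nonnegative. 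Finally, $\|u(\cdot,t_q)\|^2_{H^r_{\alt}} \le \max_{0\le q\le N}\|u(\cdot,t_q)\|^2_{H^r_{\alt}}$. Denote by $\mathcal{R}$ the right-hand side of the corollary. Then the right-hand side of Theorem \ref{ErrThm} is bounded by $\mathcal{R}$, uniformly in $q \in \{1,\dots,N\}$.

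Next I would handle the left-hand side. For each fixed $q\ge 1$, Theorem \ref{ErrThm} bounds each of its two nonnegative left-hand terms separately by $\mathcal{R}$. First, taking $q$ to be the index achieving the maximum of $\|\omega^{\alpha/4}(u(\cdot,t_q)-\U{q})\|^2_{L^2_{\alt}}$ gives that maximum $\le \mathcal{R}$. The case $q=0$ is trivial, since $\U{0} = u(\cdot,0)$ makes the error vanish. Second, taking $q=N$ gives $C_{coe}\,\dt\sum_{n=1}^N \|u(\cdot,t_n)-\U{n}\|^2_{H^{\alt}_{\alt}} \le \mathcal{R}$. The $n=0$ term in $\vertiii{u-\U{\cdot}}^2_{H^{\alt}_{\alt}}$ vanishes for the same reason, so this sum equals $C_{coe}\vertiii{u-\U{\cdot}}^2_{H^{\alt}_{\alt}}$.

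The main obstacle is that the maximum and the full discrete sum are attained at possibly different indices, so adding the two bounds naively yields $2\mathcal{R}$ rather than $\mathcal{R}$. To recover the stated constant, I would return to the proof of Theorem \ref{ErrThm}, specifically the summed energy inequality \eqref{hr2}. Its right-hand side is nondecreasing in $k$, so for any $q$ it is bounded by the $k=N$ version. This gives
\[
\|\omega^{\alpha/4}E^{(q)}\|^2_Q + \dt\, C_{coe}\sum_{n=1}^N\|E^{(n)}\|^2_V \ \le \ \text{RHS of \eqref{hr2} at } k=N.
\]
This holds because the left-hand side of \eqref{hr2} at $k=N$ contains $\|\omega^{\alpha/4}E^{(N)}\|^2_Q$, not $\|\omega^{\alpha/4}E^{(q)}\|^2_Q$. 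The fix is to write $\sum_{n=1}^N = \sum_{n=1}^q + \sum_{n=q+1}^N$. The first piece comes from \eqref{hr2} at $k=q$. The second piece is handled by repeating the telescoping from $q$ to $N$, which leaves a nonnegative $\|\omega^{\alpha/4}E^{(N)}\|^2$ on the left and the increments of the right-hand side on the right.

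With that combined bound for $E$ in hand, I would apply the triangle-inequality step exactly as at the end of the proof of Theorem \ref{ErrThm}, using Lemma \ref{lemXi} to control the $\xi$ terms. Their contributions are likewise monotone and bounded by the $\max$ and $\vertiii{\cdot}$ quantities. If this bookkeeping turns out to be too delicate, I would accept a factor-$2$ loss, which is harmless for the convergence rates.
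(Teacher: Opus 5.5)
Your first two paragraphs are exactly the paper's proof. The paper's entire argument is ``follows directly from Theorem~\ref{ErrThm} and the definition of $\vertiii{\cdot}$'': enlarge $t_q$ to $T$, the partial sums to $\vertiii{u}^2_{H^r_{\alt}}$, and $\|u(\cdot,t_q)\|^2_{H^r_{\alt}}$ to its maximum. You are also right that this bounds $\max_q\|\omega^{\alpha/4}(u(\cdot,t_q)-\U{q})\|^2_{L^2_{\alt}}$ and $C_{coe}\vertiii{u-\U{\cdot}}^2_{H^{\alt}_{\alt}}$ only \emph{separately} by the right-hand side $\mathcal{R}$. Their sum is therefore bounded only by $2\mathcal{R}$, a point the paper does not address.

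Your repair, however, fails. Telescoping the one-step inequality from $n=q+1$ to $N$ gives
\[
\|\omega^{\alpha/4}E^{(N)}\|^2_Q-\|\omega^{\alpha/4}E^{(q)}\|^2_Q+\dt\,C_{coe}\sum_{n=q+1}^{N}\|E^{(n)}\|^2_V\ \le\ (\text{increment of the right-hand side}),
\]
so the term $-\|\omega^{\alpha/4}E^{(q)}\|^2_Q$ does not disappear. Adding this to \eqref{hr2} at $k=q$ cancels exactly the $\|\omega^{\alpha/4}E^{(q)}\|^2_Q$ you wanted to keep, and merely reproduces \eqref{hr2} at $k=N$. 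Bounding that term a second time by \eqref{hr2} at $k=q$ brings the factor $2$ back.

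The obstruction is structural. Nonnegative sequences with $a_0=0$ and $a_n-a_{n-1}+b_n\le g_n$ need not satisfy $\max_q a_q+\sum_{n\le N}b_n\le\sum_{n\le N}g_n$; take $g_1=a_1=1$, $b_1=0$, $g_2=a_2=0$, $b_2=1$. The proof of Theorem~\ref{ErrThm} also has no slack to absorb the factor: the four Young terms $\frac{\dt\,C_{coe}}{8}\|E^{(n)}\|_V^2$ in \eqref{step3} already use up half of the coercivity term.

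Hence neither your argument nor the paper's one-line proof yields the corollary with the constants as stated. Both actually give the estimate with the $E$-contributions doubled: the $u_{tt}$ term, the $C_\omega$ term, and the $C_{cts}^2/C_{coe}$ part of the last term. More crudely, the whole right-hand side becomes $2\mathcal{R}$. Your stated fallback is therefore the correct conclusion, and the extra factor is harmless for the convergence rates and for the scaling \eqref{scal7}.
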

\textbf{Proof}: The proof follows directly from Theorem \ref{ErrThm} and the definition of $\vertiii{\cdot}$. \\
\mbox{ } \hfill \qed

\section{Numerical Results}
\label{numerical_results}
In this section, we present four numerical examples. We begin with the steady-state problem \eqref{problem}. In Example \ref{ex1}, we choose a source function $f(x)\in C^\infty(\Omega)$ and vary the value of $\alpha$ to illustrate the influence of $\alpha$ on the solution. 
In Example \ref{ex2}, we choose a source function $f(x)\in H^{7/2- \eps}_\alt(\Omega)$ to compute convergence rates and compare them to the theoretical results from Theorem \ref{errorthm1}. 
The parabolic problem \eqref{pde} is considered in Example \ref{ex3}, where we choose two values of $\alpha$ and present four snapshots of the evolution of the solution. In particular, we choose a function that evolves to the steady-state solution from Example \ref{ex1}. Finally, in Example \ref{ex4}, we compute convergence rates for a solution 
$u(x,t)\in L^2(0,T; H^{7/2-\eps}_\alt(\Omega))$ and compare results to the theoretical results in Theorem~\ref{ErrThm}. 
Throughout these examples, we fixed the matrix $K = \begin{pmatrix} 3 & 0 \\ 0 & 9 \end{pmatrix}$.

\begin{example}\label{ex1}
For the steady state problem \eqref{problem} we take as the source function,
$f\in C^\infty(\Omega)$, given by 
\[
f(r,\phi) = \begin{cases}
	4 \exp\Big\{\frac{-0.2^2}{0.2^2 - r^2}\Big\}, & r < 0.2\\
	0, & r \geq 0.2
\end{cases}.
\]
The solution $\wt{u} \, = \, \omega^{\alt} u$ is approximated by solving \eqref{ss_apx} for $u_{R}$, the 
approximation of $u$.
We demonstrate the effect of $\alpha$ on the solution by computing approximations for $\alpha = 1.1,\, 1.4,\, 1.7,\, 2.0.$ Figure \ref{alphaFig} shows each of these computations, computed with $R$ (the maximum radial degree) equal to 40. 

\begin{figure}[H]
\centering

\begin{subfigure}{0.4\textwidth}
  \centering
  \includegraphics[width=\linewidth]{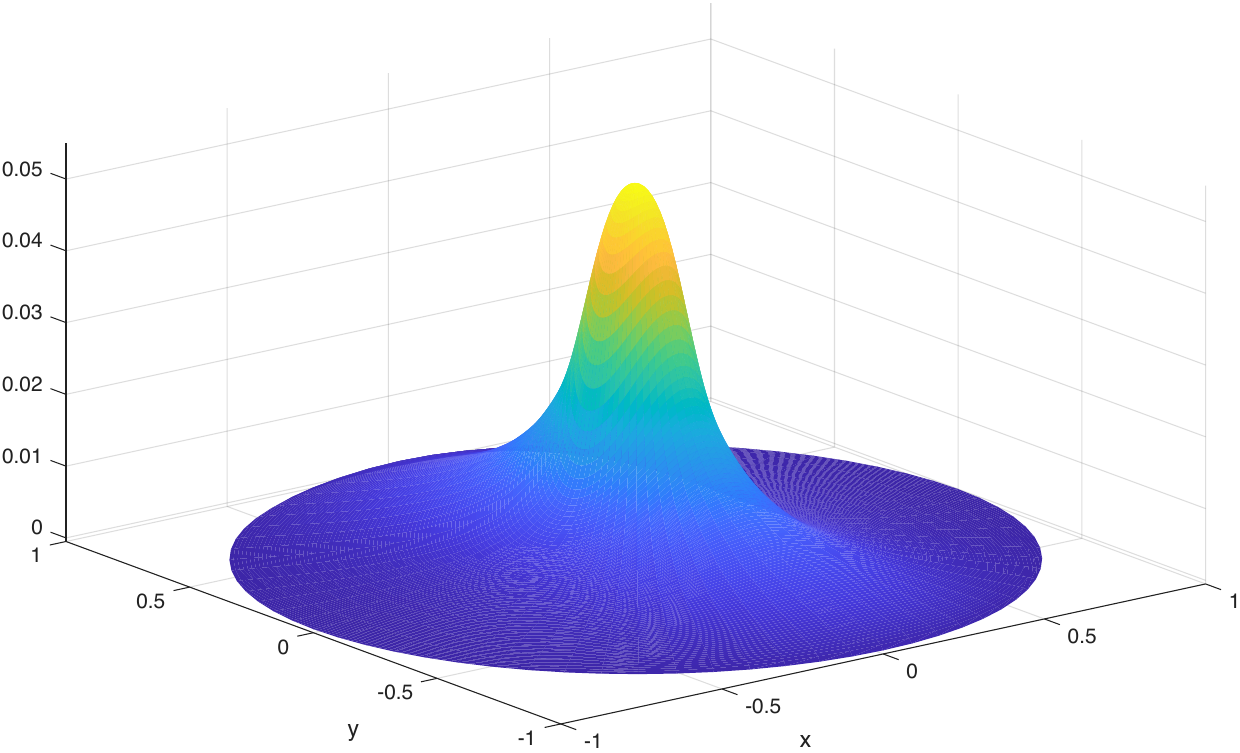}
  \caption{Fractional diffusion with $\alpha = 1.10$}\label{ex1a}
\end{subfigure}
\hspace{0.05\textwidth}
\begin{subfigure}{0.4\textwidth}
  \centering
  \includegraphics[width=\linewidth]{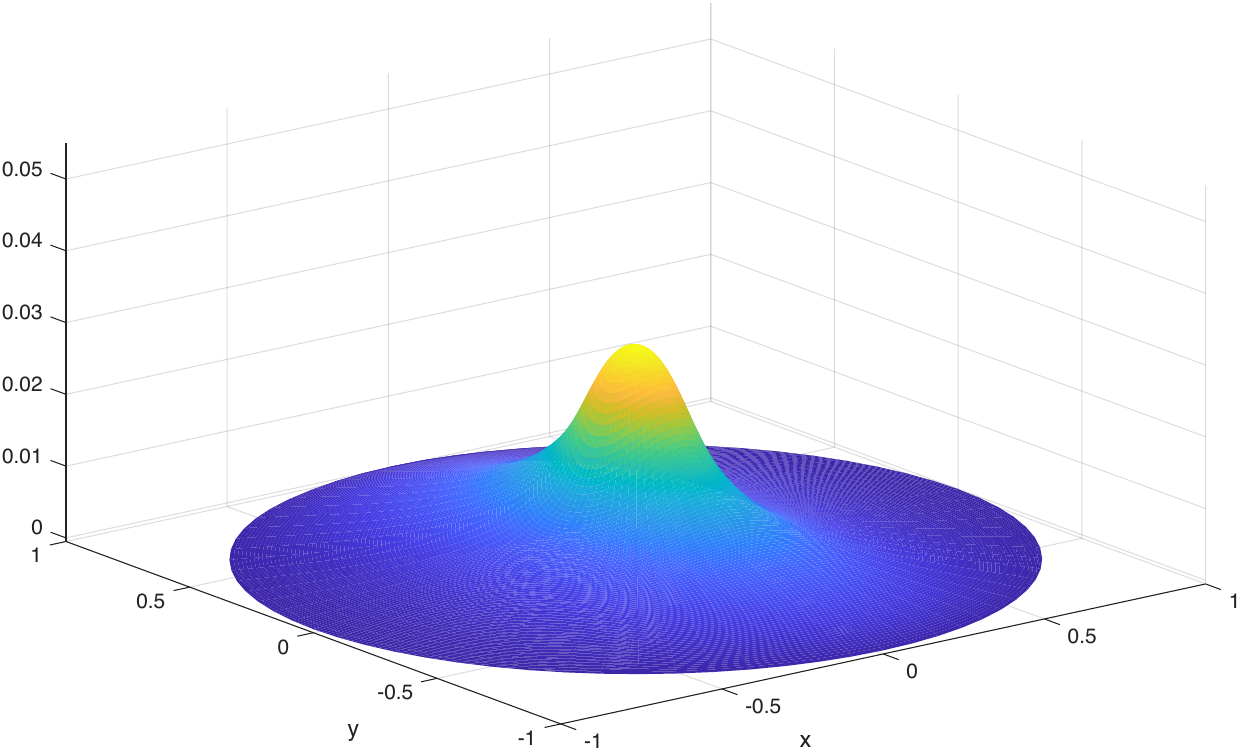}
  \caption{Fractional diffusion with $\alpha = 1.40$}\label{ex1b}
\end{subfigure}

\medskip

\begin{subfigure}{0.4\textwidth}
  \centering
  \includegraphics[width=\linewidth]{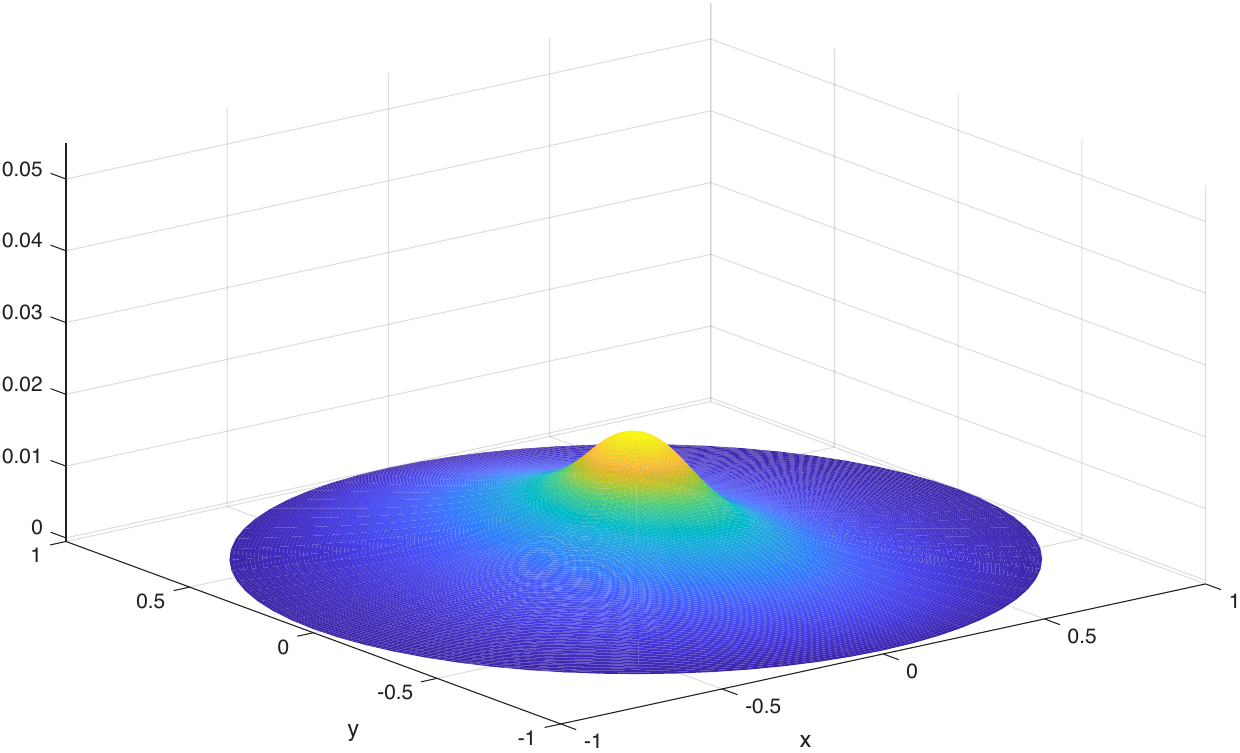}
  \caption{Fractional diffusion with $\alpha = 1.70$}\label{ex1c}
\end{subfigure}
\hspace{0.05\textwidth}
\begin{subfigure}{0.4\textwidth}
  \centering
  \includegraphics[width=\linewidth]{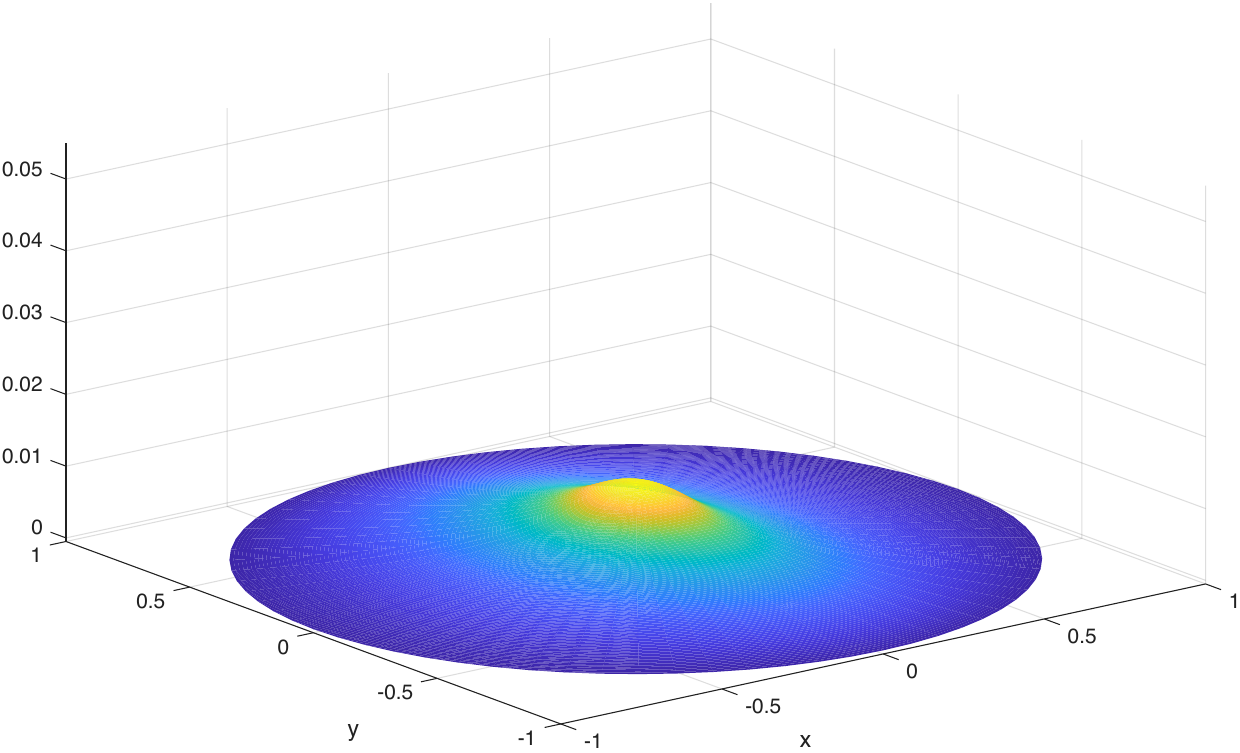}
  \caption{Typical diffusion with $\alpha = 2.00$}\label{ex1d}
\end{subfigure}

\caption{Radial bubble source function with different values of $\alpha$.}
\label{alphaFig}
\end{figure}

To illustrate the influence of $k_{1} \neq k_{2}$ in $K =  \begin{pmatrix} 3 & 0 \\ 0 & 9 \end{pmatrix}$, we also computed cross-sectional views for each of the four figures above. Specifically, in Figure \ref{fig:cross sections}, we computed cross sections along both the $x$-axis and along the $y$-axis.
\begin{figure}[H]
\centering

\begin{subfigure}{0.46\textwidth}
  \centering
  \includegraphics[width=\linewidth]{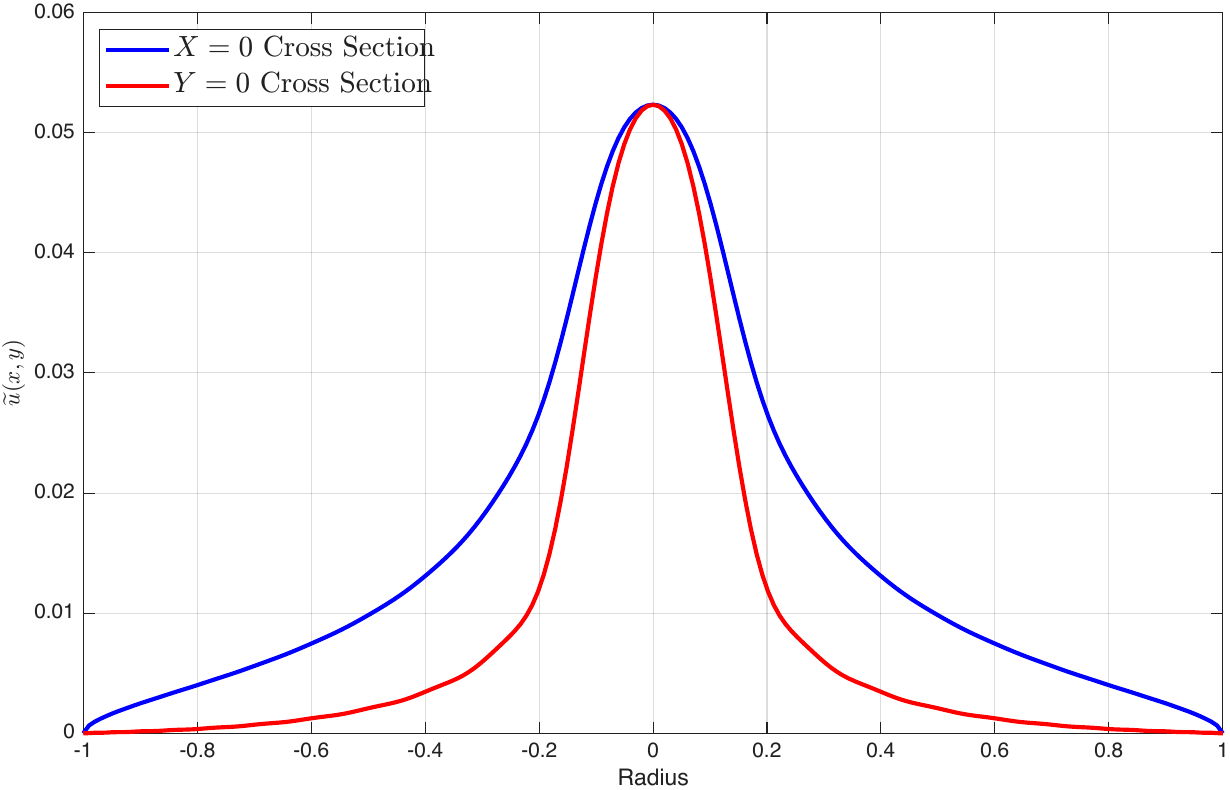}
  \caption{$x$ and $y$ axis cross sections with $\alpha = 1.10$}\label{ex1_cross_a}
\end{subfigure}
\hspace{0.05\textwidth}
\begin{subfigure}{0.46\textwidth}
  \centering
  \includegraphics[width=\linewidth]{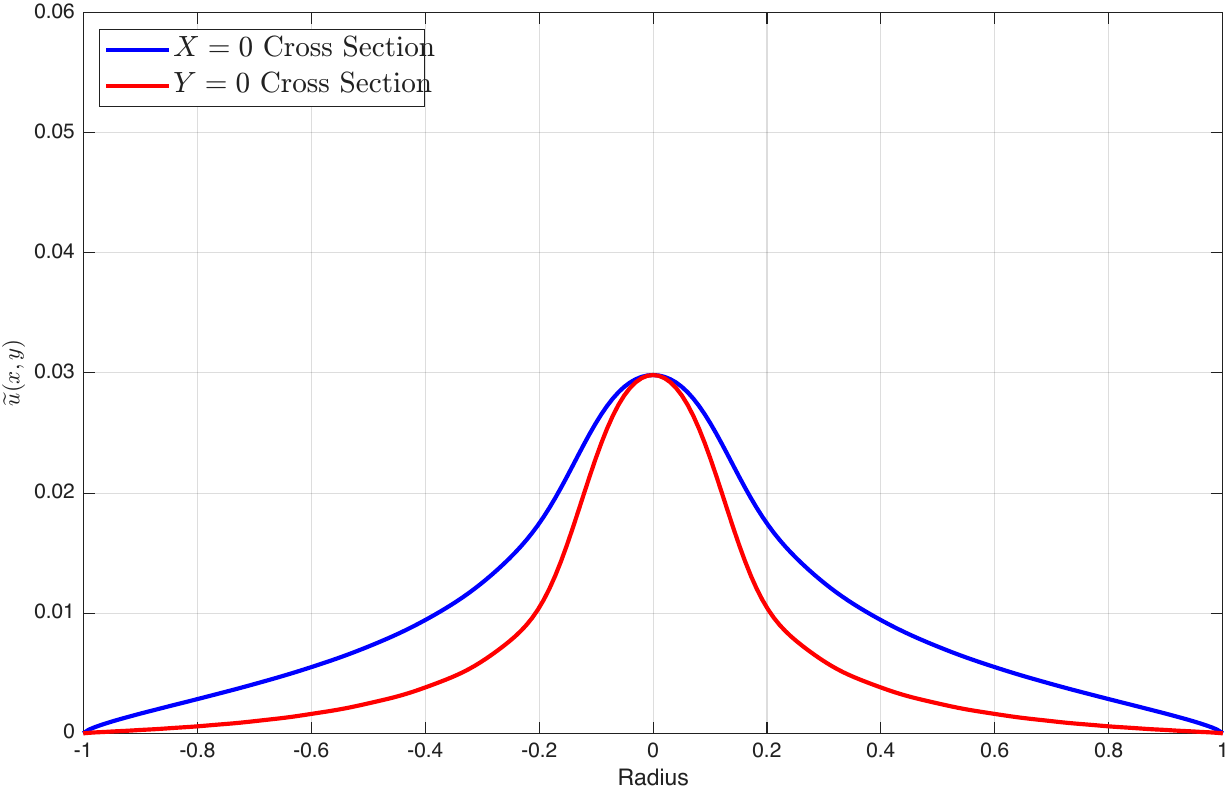}
  \caption{$x$ and $y$ axis cross sections with $\alpha = 1.40$}\label{ex1_cross_b}
\end{subfigure}

\medskip

\begin{subfigure}{0.46\textwidth}
  \centering
  \includegraphics[width=\linewidth]{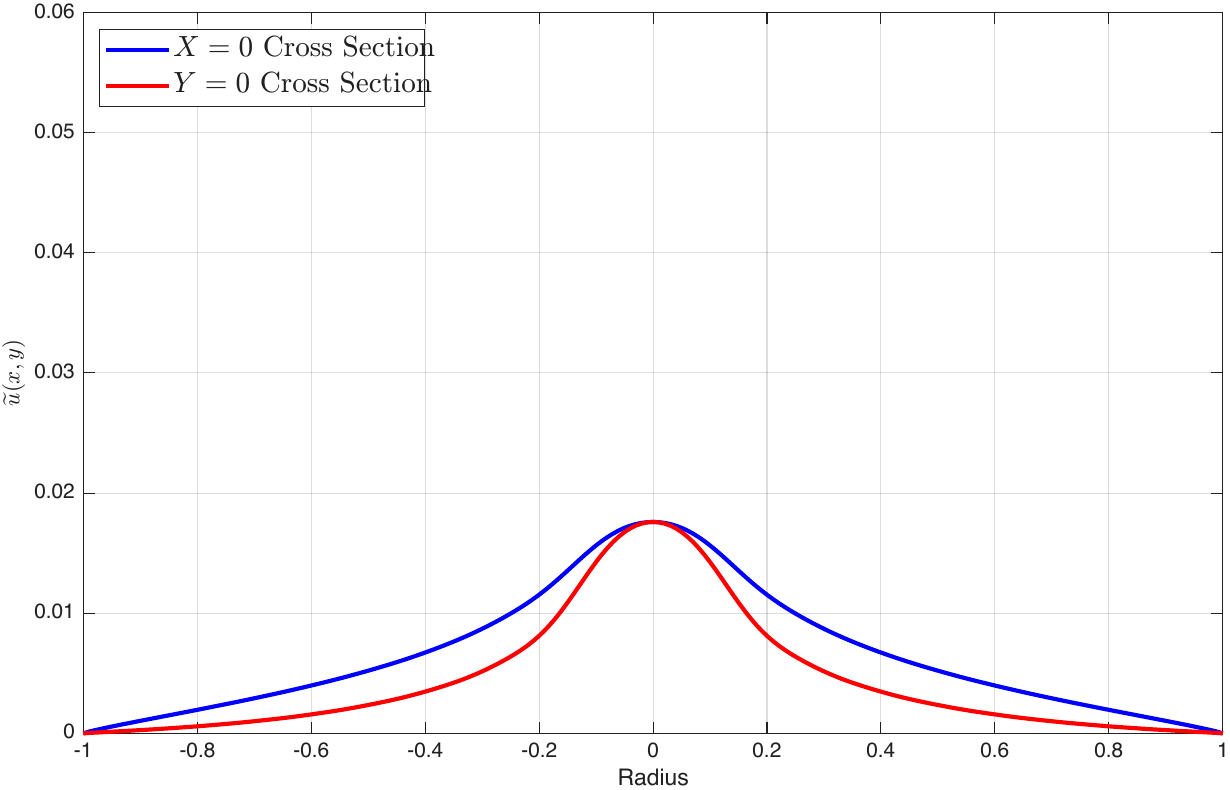}
  \caption{$x$ and $y$ axis cross sections with $\alpha = 1.70$}\label{ex1_cross_c}
\end{subfigure}
\hspace{0.05\textwidth}
\begin{subfigure}{0.46\textwidth}
  \centering
  \includegraphics[width=\linewidth]{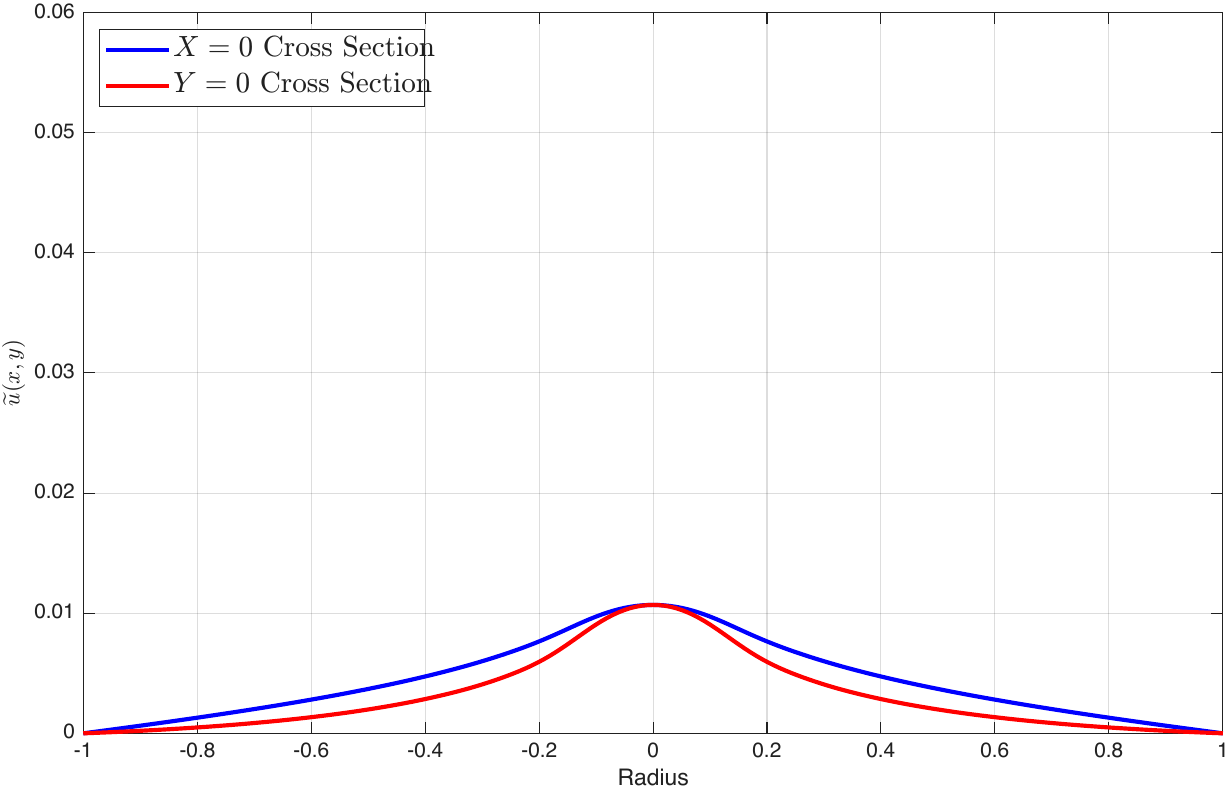}
  \caption{$x$ and $y$ axis cross sections with $\alpha = 2.00$}\label{ex1_cross_d}
\end{subfigure}

\caption{Cross-sectional views of solutions with radial bubble source function for various $\alpha\in [1,2]$.}
\label{fig:cross sections}
\end{figure}
%

\begin{figure}[H]
\centering

\begin{subfigure}{0.46\textwidth}
  \centering
  \includegraphics[width=\linewidth]{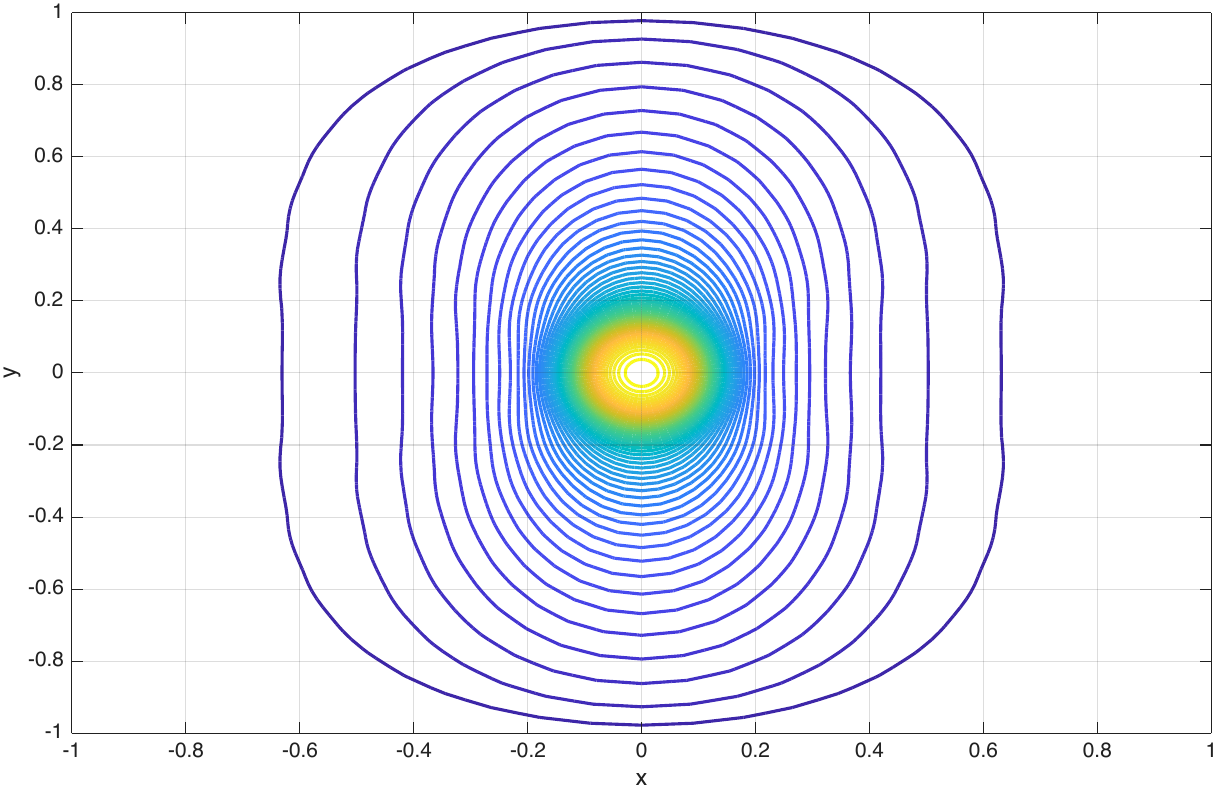}
  \caption{Contour plots with $\alpha = 1.10$}\label{ex1_contour_a}
\end{subfigure}
\hspace{0.05\textwidth}
\begin{subfigure}{0.46\textwidth}
  \centering
  \includegraphics[width=\linewidth]{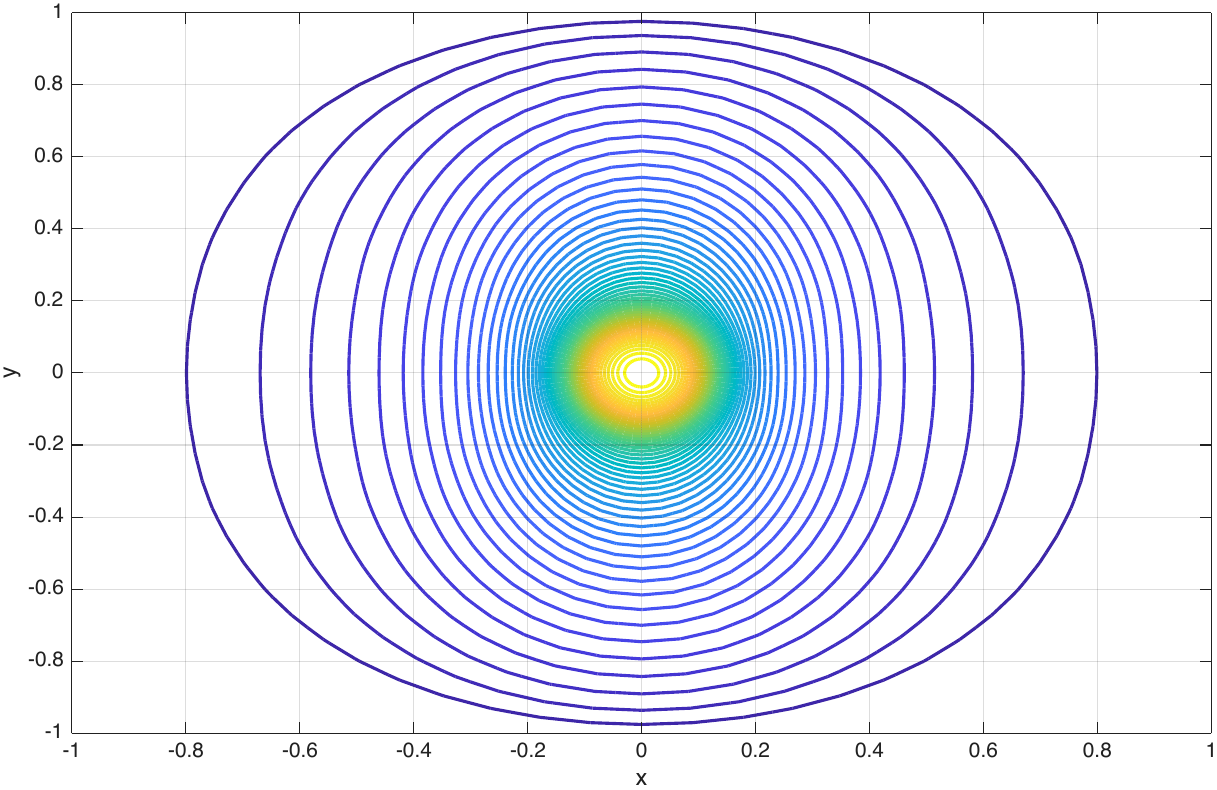}
  \caption{Contour plots with $\alpha = 1.40$}\label{ex1_contour_b}
\end{subfigure}

\medskip

\begin{subfigure}{0.46\textwidth}
  \centering
  \includegraphics[width=\linewidth]{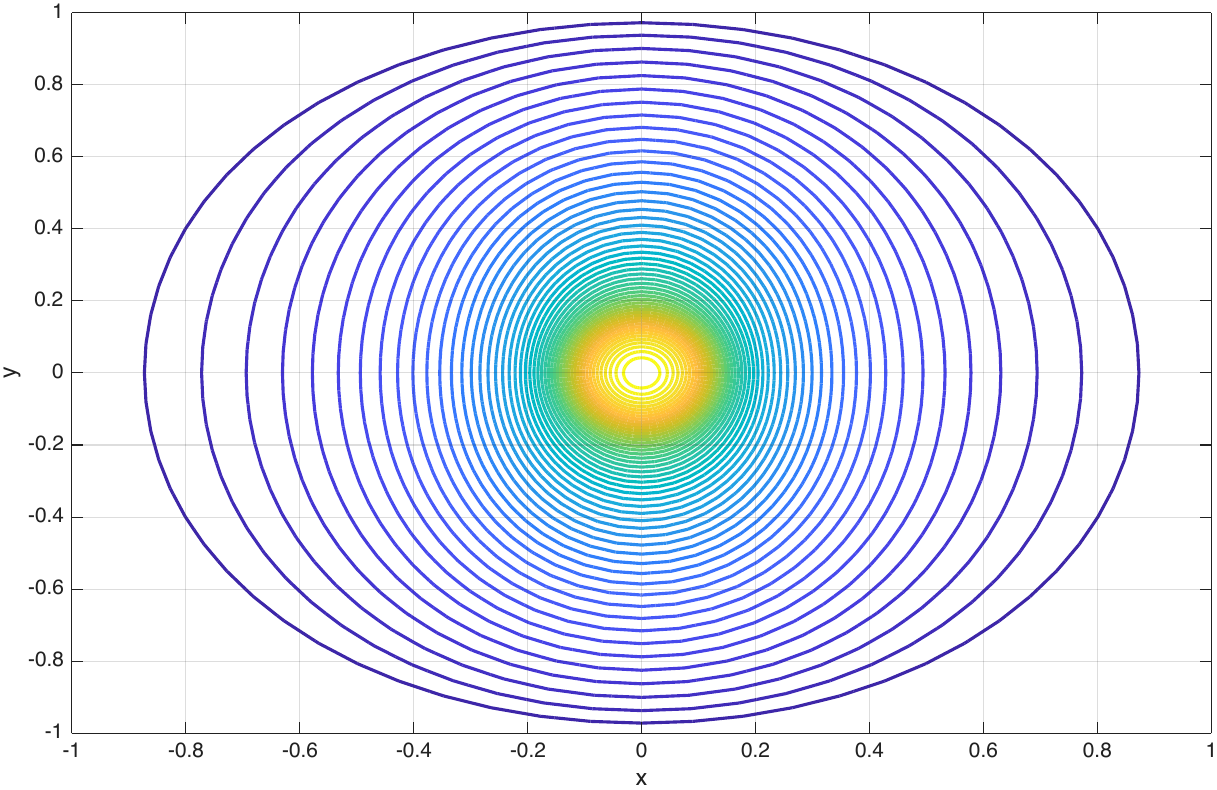}
  \caption{Contour plots  with $\alpha = 1.70$}\label{ex1_contour_c}
\end{subfigure}
\hspace{0.05\textwidth}
\begin{subfigure}{0.46\textwidth}
  \centering
  \includegraphics[width=\linewidth]{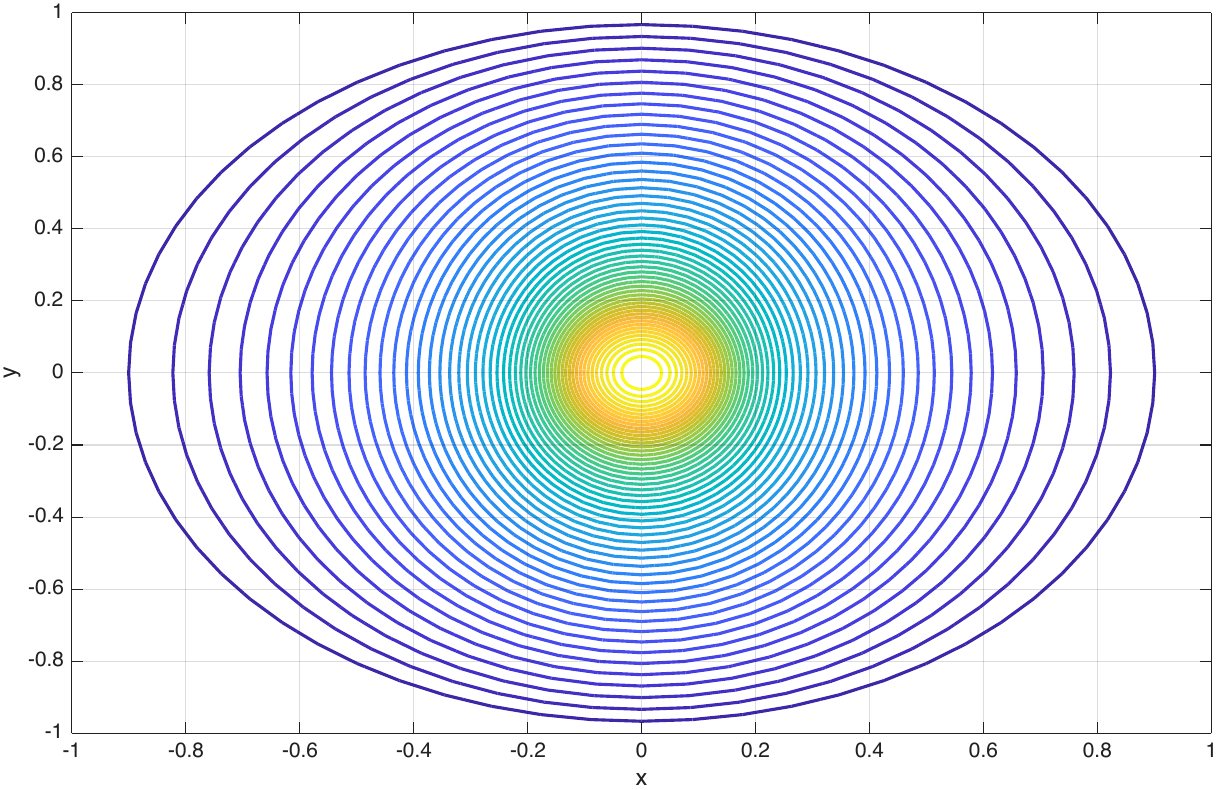}
  \caption{Contour plots with $\alpha = 2.00$}\label{ex1_contour_d}
\end{subfigure}

\caption{Contour plots of solutions with radial bubble source function for various $\alpha\in (1,2]$.}
\label{fig:contours}
\end{figure}

\end{example}

\begin{example}\label{ex2}
Next, we present an example of the steady-state problem \eqref{problem} and compute errors and convergence rates. Consider the source function $f(x) = |x_1|^3 + x_2 \in H^{7/2-\eps}_\alt(\Omega)$ (see Lemma \ref{Regexp} in the appendix) with $\alpha = 1.7$. From Theorem \ref{errorthm1}, we expect convergence rates of $4.35$ for this source. We use a reference solution with $R$ equal to 88 to compute the $L^2_\alt(\Omega)$ error. Table \ref{ex2table} presents the error and experimental convergence rates, $CR$, (computed using $CR_i = \frac{\log(E_i/E_{i-1})}{\log((R_{i-1}+2)/(R_i+2))}$), and Figures 
\ref{fig:ex7.2_contour}, and  \ref{fig:ex7.2_heatmap} display the reference solution.
\begin{table}[H]
\centering
\begin{tabular}{|c|c|c|}
\hline
Radial Degree & Error ($L^2_{\alpot}(\Omega)$) & $CR$ \\ 
\hline 
4 & 7.77e-05 & – \\ 
12 & 1.04e-06 & 5.09 \\ 
20 & 1.10e-07 & 4.97 \\ 
28 & 2.34e-08 & 4.99 \\ 
34 & 9.39e-09 & 5.01 \\ 
42 & 3.44e-09 & 5.00 \\ 
50 & 1.51e-09 & 4.94 \\ 
58 & 7.67e-10 & 4.71 \\ 
\hline
\multicolumn{2}{|c|}{Theoretical CR:} & 4.35 \\
\hline
\end{tabular}
\caption{Error and convergence rates for Example \ref{ex2} with $\alpha = 1.7$.}\label{ex2table}
\end{table}

\begin{figure}[H]
\centering

\begin{minipage}[t]{0.49\textwidth}
    \centering
    \includegraphics[width=.9\linewidth]{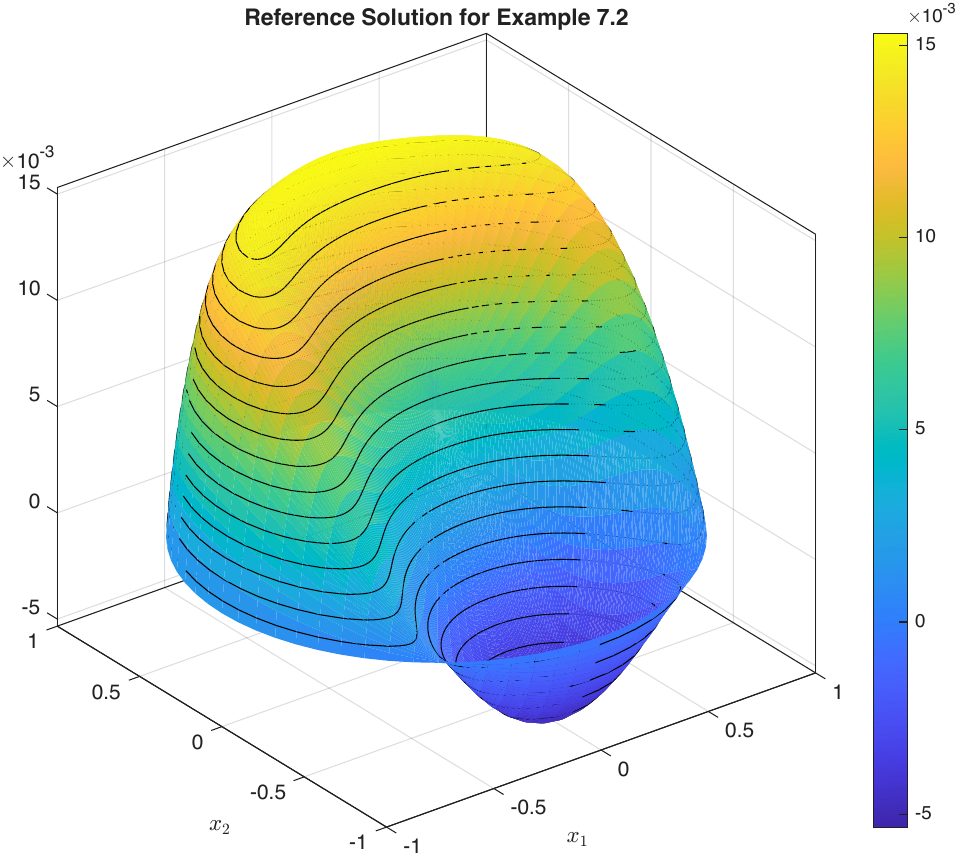}
    \caption{Plot with contour lines}
    \label{fig:ex7.2_contour}
\end{minipage}
\hfill
\begin{minipage}[t]{0.49\textwidth}
    \centering
    \includegraphics[width=.9\linewidth]{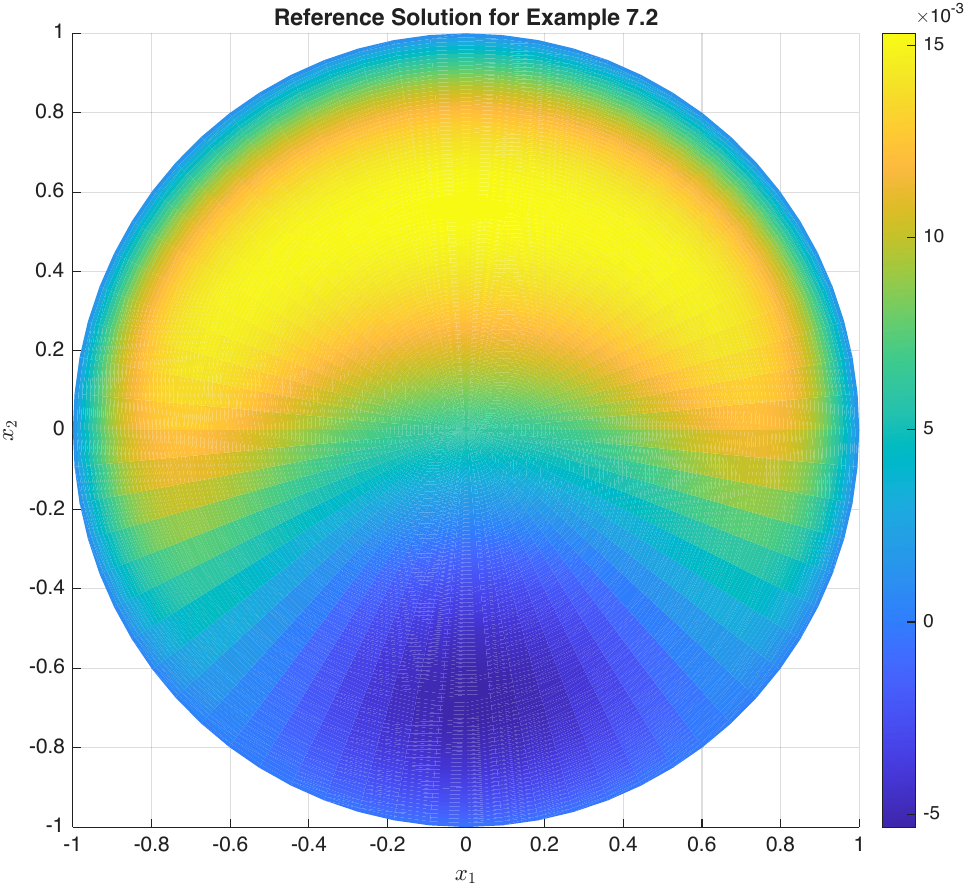}
    \caption{Heat map}
    \label{fig:ex7.2_heatmap}
\end{minipage}
\end{figure}

\end{example}

\begin{example}\label{ex3}
Next, we consider the time-dependent problem \eqref{pde0}. 
The solution $\wt{u} \, = \, \omega^{\alt} u$ is approximated by solving \eqref{approx} for 
$\mc{U}^{n}(x)$, the approximation of $u(x, t_{n})$, $n = 1, 2, \ldots, N$.
We compare the solutions for $\alpha = 1.1$ and $\alpha = 1.7$. We choose the same source function as in Example \ref{ex1} and show four distinct times to see how the solution evolves to the steady state solutions in Figures \ref{ex1a}, \ref{ex1c}. These computations were done with a maximum radial degree of $R = 40$. 

\begin{figure}[H]
\centering

\begin{subfigure}{0.48\textwidth}
\centering
\includegraphics[width=.75\linewidth]{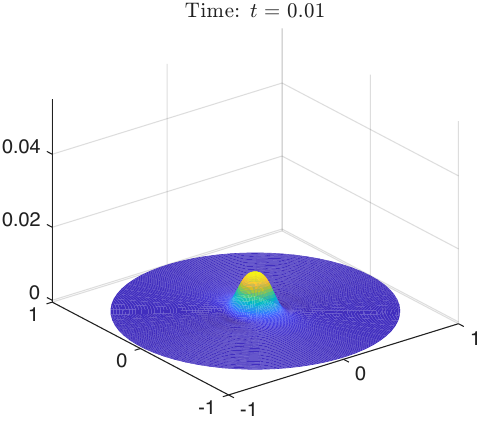}\par\vspace{0.5em}
\includegraphics[width=.75\linewidth]{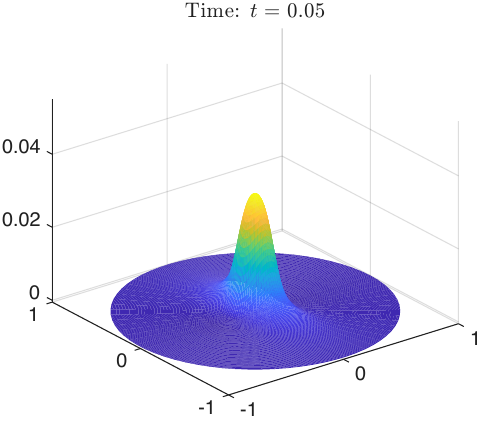}\par\vspace{0.5em}
\includegraphics[width=.75\linewidth]{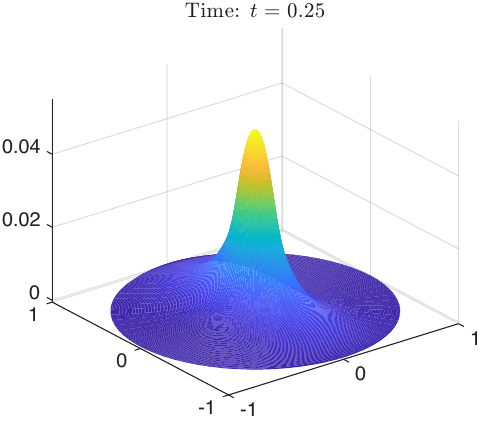}\par\vspace{0.5em}
\includegraphics[width=.75\linewidth]{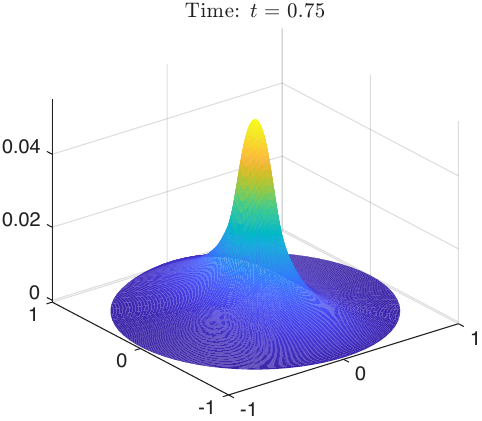}
\caption{$\alpha = 1.1$}
\end{subfigure}
\hfill
\begin{subfigure}{0.48\textwidth}
\centering
\includegraphics[width=.75\linewidth]{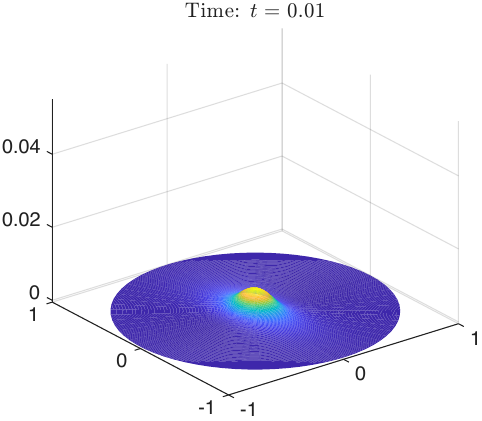}\par\vspace{0.5em}
\includegraphics[width=.75\linewidth]{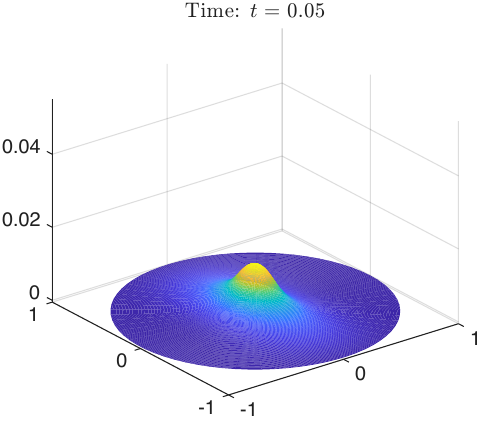}\par\vspace{0.5em}
\includegraphics[width=.75\linewidth]{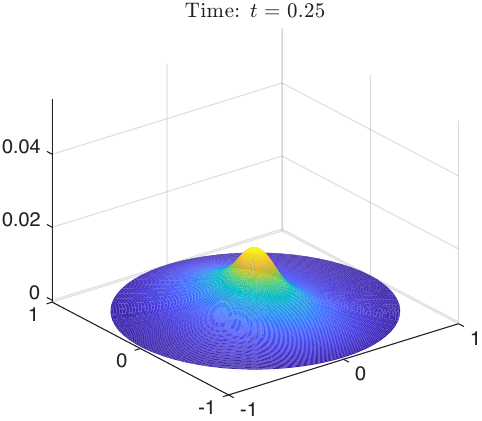}\par\vspace{0.5em}
\includegraphics[width=.75\linewidth]{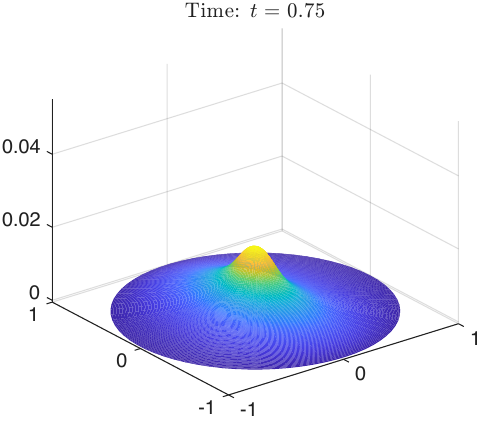}
\caption{$\alpha = 1.7$}
\end{subfigure}

\caption{Solutions at various times from Example \ref{ex3}.}
\label{fig:ex3}
\end{figure}

\end{example}

\begin{example}\label{ex4}
To compare experimental results to the theoretical results in Theorem \ref{ErrThm}, we use a true solution with known regularity, 
\[
u(x,t) = \begin{cases}
t(\frac{3}{2} - t)^3(|x_1|^3 + x_2), & x\in\Omega,\, 0\leq t\leq \frac{3}{2},\\
0, & \text{otherwise}
\end{cases}.
\]
The solution $u, u_t \in L^2(0,T; H^{7/2 - \eps}_\alt(\Omega))$ (see Lemma \ref{Regexp}). 
The value of $\alpha = 1.7$ is used for this example.
For $u \in L^2(0,T; H^r_\alt(\Omega)$ for $r\geq 0$, if we half the value of $\dt$ at each refinement, then using Corollary \ref{cor1}, we find that the optimal scaling of the radial degree is given by $R_{i+1} = \lceil R_i \cdot 2^{1/p}\rceil$, 
where $p = \frac{r + \alt}{4}$ (see \eqref{scal7}). In this case, we have $ p = 1.0875$. The error is computed in the $L^2_\alt(\Omega)$ norm at $t = t_{n}$, $n = 1, 2, \ldots, N$, 
and then the maximum value used to compute convergence rates, which is again done by $CR_i = \frac{\log(E_i/E_{i-1})}{\log((R_{i-1}+2)/(R_i+2))}$. 

\begin{table}[H]
\centering
\begin{tabular}{|c|c|c|c|}
\hline
Radial Degree & Time Step & Error (Max $L^2_\alt(\Omega)$) & CR  \\ 
\hline 
2 & 0.5 		& 6.51e-04 & – 		\\ 
4 & 0.25 		& 1.29e-04 & 1.996 	\\ 
8 & 0.125 		& 3.63e-05 & 1.241  	\\ 
16 & 0.0625 	& 1.04e-05 & 1.063  	\\ 
31 & 0.03125 	& 2.76e-06 & 1.093 	\\ 
59 & 0.015625 	& 7.30e-07 & 1.084 	\\ 
\hline
\multicolumn{3}{|c|}{Theoretical CR:} & 1.0875\\
\hline 
\end{tabular}
\caption{Errors and convergence rates for Example \ref{ex4}.}
\end{table}
We note that the experimental convergence rates are consistent with the theoretical (asymptotic) rate. 
\end{example}

\section{Concluding Remarks}
\label{conc}
In this paper we have investigated the fractional in space diffusion operator with a constant diffusivity matrix $K$, on the unit disk in
$\real^{2}$. Both the time dependent and steady state problems were analyzed. A spectral approximation scheme was used for the
spatial approximation and the backward Euler discretization used for the temporal discretization. An error analysis was presented 
for both the steady state and time dependent problems. Numerical experiments illustrating the influence of the diffusivity matrix $K$,
and comparing the experimental convergence rates with the predicted convergence rates were also given.

\clearpage
\appendix

\section{Function space and mapping properties}
\label{apdx_apx1}
 \begin{lemma}
  \label{lemSpace} For $s\in [0,1]$, $\alpha \geq 1$, $C_{\omega} \, = \, (1 \, + \, 2 \alpha^{2})^{s}$,
 if $f\in H^s_{\alt}(\Omega)$, then $(\omega^{\alt}f) \in H^s_{\alt}(\Omega)$
 and 
 \begin{equation}
 \| \omega^{\alt} f \|_{H^{s}_{\alt}} \, \leq \, (1 \, + \, 2 \alpha^{2})^{s} \| f \|_{H^{s}_{\alt}} \, = \, C_{\omega} \, \| f \|_{H^{s}_{\alt}}  .
 \label{defComega}
 \end{equation}
\end{lemma}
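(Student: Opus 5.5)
The plan is to prove the bound at the two endpoints $s=0$ and $s=1$ and then interpolate. The $H^s_{\alt}$ spaces form an interpolation family, equivalent to the Babu\v{s}ka--Guo spaces $\mathcal{H}^s_{\alt}$ by \cite{erv241}. The map $f \mapsto \omega^{\alt} f$ is linear, so if it is bounded by $M_0$ on $H^0_{\alt}=L^2_{\alt}$ and by $M_1$ on $H^1_{\alt}$, the K-method gives the bound $M_0^{1-s}M_1^{s}$ on $H^s_{\alt}$. It therefore suffices to show $M_0\le 1$ and $M_1\le 1+2\alpha^2$.

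For $s=0$ there is nothing to do. Since $0\le \omega\le 1$ on $\Omega$, we have $\|\omega^{\alt}f\|_{L^2_{\alt}}\le \|f\|_{L^2_{\alt}}$.

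For $s=1$ I would work in the derivative-based norm $\mathcal{H}^1_{\alt}$ from \eqref{defHsp}, rather than with the coefficient decay in \eqref{defXs}. Its seminorm is $\|\partial_{x_1}\cdot\|^2_{L^2_{\alt+1}}+\|\partial_{x_2}\cdot\|^2_{L^2_{\alt+1}}$. By the product rule,
\[
\partial_{x_i}(\omega^{\alt}f)=\omega^{\alt}\partial_{x_i}f-\alpha\,x_i\,\omega^{\alt-1}f .
\]
The first term is bounded by $\|\partial_{x_i}f\|_{L^2_{\alt+1}}$, again because $\omega\le1$. For the second term, $\|x_i\omega^{\alt-1}f\|^2_{L^2_{\alt+1}}=\int_\Omega \omega^{\alpha-2+\alt+1}x_i^2f^2\,d\Omega$. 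The exponent is $\alpha-1+\alt\ge\alt$ because $\alpha\ge1$, so with $|x|\le1$ this integral is at most $\|f\|^2_{L^2_{\alt}}$. This is exactly where the hypothesis $\alpha\ge1$ enters: it keeps the singular factor $\omega^{\alt-1}$ harmless. Summing over $i$ and using $(a+b)^2\le 2a^2+2b^2$ gives
\[
\|\omega^{\alt}f\|^2_{\mathcal{H}^1_{\alt}}\le \|f\|^2_{L^2_{\alt}}+2|f|^2_{\mathcal{H}^1_{\alt}}+2\alpha^2\|f\|^2_{L^2_{\alt}}\le(1+2\alpha^2)\|f\|^2_{\mathcal{H}^1_{\alt}} .
\]
This yields a constant $(1+2\alpha^2)^{1/2}\le 1+2\alpha^2$.

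The main obstacle is a bookkeeping one. The stated constant is expressed in the $H^s_{\alt}$ norm, while the computation above is in $\mathcal{H}^s_{\alt}$, and \cite{erv241} only asserts that the two norms are equivalent. I would handle this in one of two ways. One option is to read the estimate as holding in the $\mathcal{H}$-norm, or up to the equivalence constants, absorbing the slack between $(1+2\alpha^2)^{1/2}$ and $1+2\alpha^2$. The other is to argue the endpoint $s=1$ directly on coefficients, using the gradient expansions of Lemma \ref{Basis Partials} and the norm-shift Lemmas \ref{NormWeightShift}--\ref{NormShiftSub} in the same way as the proof of Theorem \ref{boundedness}. I would try the first route first.

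A secondary point is justifying the product rule for $f\in H^1_{\alt}$. I would do this by density of polynomials: the basis \eqref{basisL2w} consists of polynomials, so it suffices to prove the estimate for finite expansions and pass to the limit.
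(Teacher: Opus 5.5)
Your proposal is correct and follows the paper's proof essentially step for step. The paper bounds $f\mapsto\omega^{\alpha/2}f$ by $1$ on $L^2_{\alpha/2}(\Omega)$ using $\omega\le 1$. It bounds the map by $(1+2\alpha^2)^{1/2}\le 1+2\alpha^2$ on $H^1_{\alpha/2}(\Omega)$ via the same product-rule computation, with $\alpha\ge 1$ giving $\omega^{3\alpha/2-1}\le\omega^{\alpha/2}$. It then interpolates to obtain $1^{1-s}(1+2\alpha^2)^s$.

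Like your first route, the paper simply takes the $H^1_{\alpha/2}$ norm to be $\bigl(\|f\|^2_{L^2_{\alpha/2}}+\|\nabla f\|^2_{L^2_{\alpha/2+1}}\bigr)^{1/2}$, and it never addresses the equivalence with the coefficient norm that you flag. Note that unknown equivalence constants cannot be absorbed into the slack $(1+2\alpha^2)^{1/2}\le 1+2\alpha^2$. So the stated constant is rigorously obtained only in the $\mathcal{H}^s_{\alpha/2}$ scale, and holds in the $H^s_{\alpha/2}$ norm only up to those equivalence constants.
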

To establish this result we use the property that, for $s \ge 0$,  $H^s_{\alt}(\Omega)$ is a family of interpolation
spaces \cite{erv241}.
 
\textbf{Proof}: 
Define $A_1 = L^2_{\alt}(\Omega) = H^0_{\alt}(\Omega) = B_1$, and $T\colon A_1 \ra B_1$ by $Tf = \omega^{\alt}f$.  Note that
\[
\|\omega^{\alt}f\|^2_{L^2_{\alt}} = \int_\Omega \omega^{\alt}(\omega^{\alt}f)^2d\Omega \leq \int_\Omega\omega^{\alt}(f)^2d\Omega = \|f\|^2_{L^2_{\alt}}<\infty.
\]
Hence,
\begin{align*}
\|T\|_{A_1 \ra B_1} & = \sup_{ 0 \neq f\in L^2_{\alt}(\Omega)} \frac{\|Tf\|_{L^2_{\alt}(\Omega)}}{\|f\|_{L^2_{\alt}(\Omega)}} 
 \leq  \sup_{0 \neq f\in L^2_{\alt}(\Omega)} \frac{\|f\|_{L^2_{\alt}(\Omega)}}{\|f\|_{L^2_{\alt}(\Omega)}} = 1.
\end{align*}

Next, define $A_2 = H^1_{\alt}(\Omega) = B_2$. Consider the operator 
$T\colon A_2 \ra B_2$ given by $Tf = \omega^{\alt}f$.
Note that $f\in H^1_{\alt}(\Omega)$ implies 
 \[
 \|f\|^2_{H^1_{\alt}} = \|f\|^2_{L^2_{\alt}} + \|\grad f\|^2_{L^2_{\alt+1}} < \infty.
 \]
As $\alpha \geq 1$,
\begin{align*}
\|Tf\|^2_{H^1_{\alt}(\Omega)} & =  \|Tf\|^2_{L^2_{\alt}(\Omega)} + \|\grad (Tf)\|^2_{L^2_{\alt+1}(\Omega)}
 = \int_\Omega \omega^{\alt}(\omega^{\alt}f)^2d\Omega + \int_\Omega \omega^{\alt + 1}|\grad(\omega^{\alt}f)|^2d\Omega \\
& \leq \|f\|^2_{L^2_{\alt}(\Omega)}+ \int_\Omega \omega^{\alt + 1}\Big|\frac{\alpha}{2}\omega^{\alt-1}(-2)\begin{pmatrix} x \\ y \end{pmatrix}f + \omega^{\alt}\grad f\Big|^2d\Omega \\
& \leq \|f\|^2_{L^2_{\alt}(\Omega)}+2\alpha^2  \int_\Omega \omega^{3\alt - 1}\Big|\begin{pmatrix} x \\ y \end{pmatrix}f\Big|^2 + \omega^{3\alt+1}|\grad f|^2 d\Omega\\
& \leq \|f\|^2_{L^2_{\alt}(\Omega)}+2\alpha^2  \int_\Omega \omega^{\alt}|f|^2 + \omega^{\alt + 1}|\grad f|^2 d\Omega \\
& \leq (1+2\alpha^2)\|f\|^2_{H^1_{\alt}(\Omega)} < \infty. 
\end{align*}
Thus, 
\begin{align*}
\|T\|_{A_2 \ra B_2} & = \sup_{0 \neq f\in H^1_{\alt}(\Omega)} \frac{\|Tf\|_{H^1_{\alt}(\Omega)}}{\|f\|_{H^1_{\alt}(\Omega)}}
 \leq  \sup_{0 \neq f\in H^1_{\alt}(\Omega)} \frac{(1+2\alpha^2)\|f\|_{H^1_{\alt}(\Omega)}}{\|f\|_{H^1_{\alt}(\Omega)}}
  = 1+ 2\alpha^2. 
\end{align*}
Therefore, by the property of interpolation spaces (see \cite[Proposition 12.1.5]{bre081})
, $T\colon H^s_{\alt}(\Omega) \ra H^s_{\alt}(\Omega)$ is a bounded operator for any $s\in [0,1]$, with
$\| T \|_{H^s_{\alt}  \ra H^s_{\alt} }  \le 1^{1 - s} \, (1+ 2\alpha^2)^{s} = (1+ 2\alpha^2)^{s} $.   \\
\mbox{ } \hfill \qed

\centerline{}

\begin{corollary} \label{corXiBound}
For $s\in [0,1]$, $\alpha \geq 1$, $C_{\omega} \, = \, (1 \, + \, 2 \alpha^{2})^{s}$, 
if $f\in H^{-s}_{\alt}(\Omega)$, then
 \[
 \|\omega^{\alt}f\|_{H^{-s}_{\alt}} \ \leq \ (1 \, + \, 2 \alpha^{2})^{s} 
 \|f\|_{H^{-s}_{\alt}} \, = \, C_{\omega} \,  \|f\|_{H^{-s}_{\alt}}  \, .
 \] 
\end{corollary}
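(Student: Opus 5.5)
The plan is to obtain the estimate by duality from Lemma \ref{lemSpace}. By definition, $H^{-s}_{\alt}(\Omega)$ is the dual of $H^{s}_{\alt}(\Omega)$ with respect to the pivot space $L^{2}_{\alt}(\Omega)$. Multiplication by $\omega^{\alt}$ is extended to functionals exactly as in \eqref{defftf}: for $f \in H^{-s}_{\alt}(\Omega)$ and $v \in H^{s}_{\alt}(\Omega)$ we set
\[
\langle \omega^{\alt} f \, , \, v \rangle_{H^{-s}_{\alt}, H^{s}_{\alt}} \, := \, \langle f \, , \, \omega^{\alt} v \rangle_{H^{-s}_{\alt}, H^{s}_{\alt}} \, .
\]
This is consistent with the $L^{2}_{\alt}$ pairing when $f \in L^{2}_{\alt}(\Omega)$, because $(\omega^{\alt} f , v)_{\alt} = (f , \omega^{\alt} v)_{\alt}$ (both equal $\int_\Omega \omega^{\alpha} f v \, d\Omega$). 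The right-hand side is well defined because Lemma \ref{lemSpace} gives $\omega^{\alt} v \in H^{s}_{\alt}(\Omega)$ whenever $v \in H^{s}_{\alt}(\Omega)$ and $s \in [0,1]$. In other words, $\omega^{\alt}$ acting on $H^{-s}_{\alt}$ is the Banach-space adjoint of $T v = \omega^{\alt} v$ on $H^{s}_{\alt}$.

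The estimate then follows in one line from the dual norm:
\[
\| \omega^{\alt} f \|_{H^{-s}_{\alt}} = \sup_{0 \neq v \in H^{s}_{\alt}} \frac{ | \langle f , \omega^{\alt} v \rangle | }{ \| v \|_{H^{s}_{\alt}} } \leq \| f \|_{H^{-s}_{\alt}} \sup_{0 \neq v} \frac{ \| \omega^{\alt} v \|_{H^{s}_{\alt}} }{ \| v \|_{H^{s}_{\alt}} } \leq C_{\omega} \, \| f \|_{H^{-s}_{\alt}} \, ,
\]
where the last step is \eqref{defComega}. Linearity of $f \mapsto \omega^{\alt} f$ is immediate from the definition, so $\omega^{\alt} f \in H^{-s}_{\alt}(\Omega)$ is a bounded functional with the stated constant.

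The only point needing care is the identification of the norm on $H^{-s}_{\alt}$. It is defined through the coefficient-weighted series \eqref{defXs} with negative exponent, so I must confirm that this coincides with the dual norm of $H^{s}_{\alt}$ under the $L^{2}_{\alt}$ pairing. For the basis expansion this is a direct Cauchy--Schwarz computation with weights $(n+1)^{\pm s}(n+l+1)^{\pm s}$, with equality attained by the choice $v_{l,n,\mu} = (n+1)^{-s}(n+l+1)^{-s} f_{l,n,\mu}$. The paper already records this characterization from \cite{erv241}, so I will simply invoke it. With that in hand, the argument above completes the proof.
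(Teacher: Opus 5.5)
Your proposal is correct and follows essentially the same route as the paper's proof. Both define $\omega^{\alt} f$ on $H^{-s}_{\alt}(\Omega)$ by transposition, $\langle \omega^{\alt} f, v\rangle := \langle f, \omega^{\alt} v\rangle$, justify this by Lemma \ref{lemSpace}, and bound the dual-norm supremum using \eqref{defComega}; your added remarks on consistency with the $L^{2}_{\alt}$ pairing and on identifying the series norm with the dual norm are correct refinements that the paper leaves implicit.
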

\textbf{Proof}: For $f \in H^{-s}_{\alt}(\Omega)$, $\omega^{\alt} f$ is defined for $v \in H^{s}_{\alt}(\Omega)$ as
\begin{equation}
\langle \omega^{\alt} f \, , \, v \rangle_{H^{-s}_{\alt} , H^{s}_{\alt}} \ 
:= \ \langle f \, , \,  \omega^{\alt}  v \rangle_{H^{-s}_{\alt} , H^{s}_{\alt}} \, .
\label{hguy1}
\end{equation}
That \eqref{hguy1} is well defined follows from Lemma \ref{lemSpace}. Additionally, using Lemma \ref{lemSpace}
\begin{align*}
\| \omega^{\alt} f \|_{H^{-s}_{\alt}} &= \ 
\sup_{0 \neq v \in H^{s}_{\alt}(\Omega)} \, 
\frac{ \langle \omega^{\alt} f \, , \, v \rangle_{H^{-s}_{\alt} , H^{s}_{\alt}}}{ \| v \|_{H^{s}_{\alt}}} 
\ = \  \sup_{0 \neq v \in H^{s}_{\alt}(\Omega)} \, 
\frac{ \langle f \, , \, \omega^{\alt}  v \rangle_{H^{-s}_{\alt} , H^{s}_{\alt}}}{ \| v \|_{H^{s}_{\alt}}}   \\
&\leq \ \sup_{0 \neq v \in H^{s}_{\alt}(\Omega)} \, 
\frac{ \| f \|_{H^{-s}_{\alt}}  \, \| \omega^{\alt} v \|_{H^{s}_{\alt}} }{ \| v \|_{H^{s}_{\alt}}}   
\ \leq \  \sup_{0 \neq v \in H^{s}_{\alt}(\Omega)} \, 
\frac{ \| f \|_{H^{-s}_{\alt}}  \, (1 \, + \, 2 \alpha^{2})^{s} \, \| v \|_{H^{s}_{\alt}} }{ \| v \|_{H^{s}_{\alt}}}   \\
&\leq \ (1 \, + \, 2 \alpha^{2})^{s} \, \| f \|_{H^{-s}_{\alt}}  \, .
\end{align*}
\mbox{ } \hfill \qed

\begin{lemma}  \label{lmafftf} 
For $\xi \in V' = H^{-\alt}_{\alt}(\Omega)$ the functional $\omega^{\alt} \xi$ defined by \eqref{defftf} is well defined with 
$\omega^{\alt} \xi \in V'$.
\end{lemma}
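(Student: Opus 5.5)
The plan is to verify directly from the definition \eqref{defftf} that $\omega^{\alt}\xi$ is a bounded linear functional on $V$. The whole argument rests on the multiplier estimate of Lemma \ref{lemSpace} with $s = \alt$, which is legitimate because $1<\alpha<2$ gives $\alt \in (1/2,1) \subset [0,1]$ and $\alpha \geq 1$.

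\textbf{Step 1 (the map is defined on all of $V$).} Lemma \ref{lemSpace} shows that $v \mapsto \omega^{\alt} v$ sends $V = H^{\alt}_{\alt}(\Omega)$ into itself, with
\[
\|\omega^{\alt} v\|_{V} \leq C_{\omega}\|v\|_{V}, \qquad C_{\omega} = (1+2\alpha^2)^{\alt}.
\]
Hence the right-hand side $\langle \xi , \omega^{\alt} v\rangle_{V',V}$ of \eqref{defftf} makes sense for every $v \in V$. Because $\omega^{\alt}$ does not depend on $v$, this right-hand side is also unambiguous for each $v$, and so the functional is well defined.

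\textbf{Step 2 (linearity).} Multiplication by $\omega^{\alt}$ is linear, and the duality pairing is bilinear. Therefore $v \mapsto \langle \xi, \omega^{\alt} v\rangle_{V',V}$ is linear.

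\textbf{Step 3 (boundedness).} For $v \in V$,
\[
|\langle \omega^{\alt}\xi , v\rangle_{V',V}| = |\langle \xi , \omega^{\alt} v\rangle_{V',V}| \leq \|\xi\|_{V'}\,\|\omega^{\alt}v\|_{V} \leq C_{\omega}\|\xi\|_{V'}\|v\|_{V}.
\]
Taking the supremum over $v \neq 0$ gives $\omega^{\alt}\xi \in V'$ with $\|\omega^{\alt}\xi\|_{V'} \leq C_\omega \|\xi\|_{V'}$. This is the same computation as in Corollary \ref{corXiBound}, specialized to $s=\alt$.

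\textbf{Consistency check.} Optionally, I would confirm that the definition agrees with ordinary multiplication when $\xi$ is regular, for instance $\xi \in Q$ acting through the $L^2_{\alt}$ pivot. In that case
\[
\langle \xi, \omega^{\alt} v\rangle = \int_\Omega \omega^{\alt}\,\xi\,\omega^{\alt} v \, d\Omega = (\omega^{\alt}\xi, v)_{\alt},
\]
so the new object is a genuine extension of pointwise multiplication.

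The only substantive point is Step 1, the multiplier property on $V$. It is already supplied by Lemma \ref{lemSpace} through interpolation between $L^2_{\alt}$ and $H^1_{\alt}$, so I do not expect any real obstacle.
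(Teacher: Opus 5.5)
Your proposal is correct and follows the paper's proof essentially verbatim: both apply Lemma \ref{lemSpace} with $s=\alt$ to get $\|\omega^{\alt}v\|_{V}\le C_{\omega}\|v\|_{V}$, and then bound $\sup_{v\neq 0}|\langle \xi,\omega^{\alt}v\rangle_{V',V}|/\|v\|_{V}\le C_{\omega}\|\xi\|_{V'}$. Your explicit linearity step and the consistency check for regular $\xi$ are harmless additions that the paper leaves implicit.
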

\textbf{Proof} Using Lemma \ref{lemSpace}, for  $v \in V = H^{\alt}_{\alt}(\Omega)$, $ \| \omega^{\alt} v \|_{V} \, \leq \, C_{\omega} \| v \|_{V} $.
Hence,
\begin{align*}
 \sup_{0 \neq v \in V} \frac{ | \langle \omega^{\alt} \xi \, , \, v \rangle_{V' , V} |}{ \| v \|_{V}} 
 &= \  \sup_{0 \neq v \in V} \frac{ | \langle \xi \, , \,  \omega^{\alt} v \rangle_{V' , V} |}{ \| v \|_{V}}   
 \ \leq \ 
 \sup_{0 \neq v \in V} \frac{ \| \xi \|_{V'}  \, \| \omega^{\alt} v \|_{V} }{ \| v \|_{V}}   \\
&\leq \ 
 \sup_{0 \neq v \in V} \frac{ \| \xi \|_{V'}  \, C_{\omega} \| v \|_{V} }{ \| v \|_{V}}   
 \ = \ C_{\omega} \, \| \xi \|_{V'} \, .
\end{align*}
Thus, $\omega^{\alt} \xi$ defines a bounded linear operator on $V$. \\
\mbox{ } \hfill \qed

The following characterization of the weak derivative is used in the proof of Lemma \ref{lmafder1}.
\begin{lemma} \label{Ernlm2} \cite[Proposition 64.33]{ern211}
Let $Z$ be a Banach space. Let $f, g \in L^{1}_{loc}(J ; Z)$. Then $f$ is weakly differentiable with $\partial_{t} f \, = \, g$
if and only if the map $J \ni t \rightarrow \langle z' , f(t) \rangle_{Z' , Z} \in \real$ is weakly differentiable for all
$z' \in Z'$, and $ \partial_{t} \langle z' , f \rangle_{Z' , Z} \, = \, \langle z' , g\rangle_{Z' , Z}$ a.e. in $J$.
\end{lemma}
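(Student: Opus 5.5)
The plan is to work directly from the definition of the weak derivative of a Bochner function and pass between the vector-valued and scalar-valued identities with duality. Recall that $f\in L^{1}_{loc}(J;Z)$ is weakly differentiable with $\partial_t f = g$ if and only if
\[
\int_J f(t)\,\phi'(t)\,dt \ = \ -\int_J g(t)\,\phi(t)\,dt \quad \mbox{in } Z, \qquad \forall\, \phi\in C^\infty_c(J),
\]
where both integrals are Bochner integrals. The scalar statement for $t\mapsto \langle z',f(t)\rangle_{Z',Z}$ is the same identity with $Z$ replaced by $\real$.

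First I will check that everything is well defined. Since $\phi$ has compact support $K\subset J$ and $f,g\in L^1(K;Z)$, the products $f\phi'$ and $g\phi$ are Bochner integrable on $J$. For fixed $z'\in Z'$, the map $t\mapsto\langle z',f(t)\rangle$ is measurable, and $|\langle z',f(t)\rangle|\le \|z'\|_{Z'}\|f(t)\|_Z$, so it lies in $L^1_{loc}(J)$; the same holds for $g$. The key tool is that bounded linear functionals commute with Bochner integrals: $\langle z',\int_J h\,dt\rangle = \int_J\langle z',h\rangle\,dt$. I will verify this on simple functions and pass to the limit using the defining approximation of the Bochner integral.

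($\Rightarrow$) Apply $z'$ to the vector identity and commute it with the integrals. This gives $\int_J\langle z',f\rangle\phi'\,dt = -\int_J\langle z',g\rangle\phi\,dt$ for every $\phi\in C^\infty_c(J)$. Hence the scalar function $\langle z',f\rangle$ is weakly differentiable with derivative $\langle z',g\rangle$, where the derivative is understood as an element of $L^1_{loc}(J)$, i.e.\ a.e.\ in $J$.

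($\Leftarrow$) Fix $\phi\in C^\infty_c(J)$ and set $w:=\int_J f\phi'\,dt+\int_J g\phi\,dt\in Z$. By hypothesis and the commutation property, $\langle z',w\rangle = 0$ for every $z'\in Z'$. By the Hahn--Banach theorem, $Z'$ separates points of $Z$, so $w=0$. This is exactly the vector identity, and it holds for all $\phi$.

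The step needing the most care is the reverse direction. The hypothesis gives, for each $z'$, an a.e.\ identity whose exceptional null set depends on $z'$, and there may be uncountably many $z'$. I avoid any pointwise-in-$t$ argument by working only with the integrated (distributional) identities, which contain no null sets. Duality is applied only to the single vector $w$ for each fixed test function, so no separability of $Z$ or measurability of a family of null sets is needed.
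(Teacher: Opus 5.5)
Your argument is correct: the paper gives no proof of this lemma and simply cites \cite[Proposition 64.33]{ern211}, and yours is the standard proof of that result. For $(\Rightarrow)$ you commute each $z'\in Z'$ with the Bochner integrals, and for $(\Leftarrow)$ you apply Hahn--Banach to the single vector $\int_J f\phi'\,dt+\int_J g\phi\,dt$, so that working with the integrated identities correctly avoids the $z'$-dependent null sets.
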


\textbf{Proof of Lemma \ref{lmafder1}}: In the setting of Lemma \ref{Ernlm2}, $Z \, = \, H^{-\alt}_{\alt}(\Omega) = V'$,
$Z' \, = \, H^{\alt}_{\alt}(\Omega) = V$, and $g = \partial_{t} f \in L^{2}(0 , T ; V')$. For $z' \in V$, from 
Lemma \ref{lemSpace}, $\omega^{\alt} z' \in Z' = V$. Then for all $z' \in V$, from \eqref{defftf},
\begin{equation}
  \langle \omega^{\alt} z' \,  , \, g \rangle_{V , V'} \, = \,  \langle z' \, , \, \omega^{\alt} g \rangle_{V , V'} \ \mbox{ and } \ 
 \partial_{t} \langle \omega^{\alt} z' \, , \, f \rangle_{V , V'} \, = \, \partial_{t} \langle z' \, , \, \omega^{\alt} f \rangle_{V , V'} \, .
\label{ehfgr1}
\end{equation}

Thus, from the weak differentiability of $f$ (Lemma \ref{Ernlm2}), from \eqref{ehfgr1} it follows that
\[
         \partial_{t} \langle z' \, , \, \omega^{\alt} f \rangle_{V , V'} \, = \,  \langle z' \, , \, \omega^{\alt} g \rangle_{V , V'} , \, 
         \mbox{a.e. in } J \, ,
\]
which establishes that $\partial_{t} \, \omega^{\alt} f \, = \, \omega^{\alt} g \, = \,  \omega^{\alt} \partial_{t} f$. \\
\mbox{ } \hfill \qed

\textbf{Proof of Lemma \ref{pAo1}}: \\
The estimates for $A(t)$ in \eqref{propAo1} follow directly from the definition of $A(t)$ and \eqref{propB}. The 
invertibility of $A(t)$, i.e., the existence of $A(t)^{-1}$, follows from the Lax-Milgram theorem. 

To obtain the bounds for $A(t)^{-1}$, note that for $\xi \in V'$, $A(t)^{-1} \xi \, = \, g \in V$ implies
\[
    B(g , h ; t) \ = \ \langle \xi , h \rangle_{V' , V} \, , \ \mbox{ for all } h \in V \, .
\] 
Hence, for $h = g =  A(t)^{-1} \xi $, using  \eqref{propB},
\begin{align}
C_{coe} \| A(t)^{-1} \xi  \|^{2}_{V} 
&\leq \, B(A(t)^{-1} \xi \,  , A(t)^{-1} \xi \,  ; t) \ = \ \langle \xi \, , \, A(t)^{-1} \xi  \rangle_{V' , V} 
\, \leq \, \| \xi \|_{V'} \, \| A(t)^{-1} \xi  \|_{V} \, ,   \label{Aive1}  \\
 \ \Longrightarrow \ \  \| A(t)^{-1} \xi  \|_{V}  &\leq \    \frac{1}{C_{coe}} \| \xi \|_{V'} \, .  \nonumber
\end{align} 
\[
\mbox{Now } \ \| A^{-1}(t) \|_{\mc{L}(V' , V)} \ = \ \sup_{0 \neq \xi \in V'} \frac{ \| A(t)^{-1} \xi  \|_{V} }{ \| \xi \|_{V'}}
\ \leq \ \sup_{0 \neq \xi \in V'} \frac{ \frac{1}{C_{coe}} \| \xi \|_{V'} }{ \| \xi \|_{V'}} \, = \, \frac{1}{C_{coe}} \, .
\]
\[
\mbox{As } \ \| \xi \|_{V'} \, = \, \| A(t) A(t)^{-1} \xi \|_{V'} \, \leq \, \| A(t) \|_{\mc{L}(V , V')} \| A(t)^{-1} \xi \|_{V'} 
\, \leq \, C_{cts} \, \| A(t)^{-1} \xi \|_{V'} \, ,
\]
from \eqref{Aive1},
\[
  \langle \xi \, , \, A(t)^{-1} \xi  \rangle_{V' , V}  \ = \ B(A(t)^{-1} \xi \,  , A(t)^{-1} \xi \,  ; t) \ \geq \ 
  C_{coe} \, \| A(t)^{-1} \xi  \|_{V}^{2} \ \geq \ C_{coe} \, \frac{1}{C_{cts}^{2}} \, \| \xi \|_{V'}^{2} \, .
\]
\mbox{ } \hfill \qed

\section{Approximation properties}
\label{apdx_apx2}
Let $X_{R}$ be given by \eqref{defR} and
$ P_R\colon  L^2_{\alpot}(\Omega) \ra  X_R$ denote the orthogonal projection of $ L^2_{\alpot}(\Omega)$ onto $ X_R$. 
That is,
  \[
  \Big( z(x) -  P_R(z)(x) \, , \,   v(x)\Big)_{L^{2}_{\alt}} = 0, \ \ \forall  \ v(x) \in  X_R.
  \] 
Associated with $P_{R}$ is the following approximation property.
\begin{lemma} \label{lemXi}
Let $z \in H^r_{\alpot}(\Omega)$ for some $r\geq \alpot$.
Then, for any $s \in [-\alpot,\alpot]$,
  \[
  \|z \, - \, P_R(z) \|^2_{H^s_{\alpot}(\Omega)} \ \leq \  \Big(\frac{1}{R + 2}\Big)^{r - s}\| z \|^2_{H^r_{\alpot}(\Omega)}.
  \]
  \end{lemma}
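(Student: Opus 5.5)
Since the basis \eqref{basisL2w} is orthogonal in $L^2_{\alpot}(\Omega)$ and $X_R$ is spanned by a subset of these basis functions, $P_R$ simply truncates the expansion. Writing $z = \sum_{l,n,\mu} z_{l,n,\mu}\P^{(\alpot)}_{l,n,\mu}$, we have $P_R z = \sum_{(l,n,\mu)\in I_R} z_{l,n,\mu}\P^{(\alpot)}_{l,n,\mu}$, where $I_R$ is the index set appearing in \eqref{defR}. Consequently
\[
z - P_R z \;=\; \sum_{(l,n,\mu)\notin I_R} z_{l,n,\mu}\,\P^{(\alpot)}_{l,n,\mu}.
\]
I will first check this identity directly from the orthogonality relation defining $P_R$.

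Next, for $s\in[-\alpot,\alpot]$ I will use the coefficient characterization \eqref{defXs} of $\|\cdot\|_{H^s_{\alpot}}$. For $s\ge 0$ this is the definition. For $s<0$ it is the stated identification of the dual space, with respect to the $L^2_{\alpot}$ pivot, as the space with the same weighted coefficient norm, from \cite{erv241}. With $h_{l,n}^2 = \|\P^{(\alpot)}_{l,n,\mu}\|^2_{L^2_{\alpot}}$ this gives
\[
\|z-P_Rz\|^2_{H^s_{\alpot}} = \sum_{(l,n,\mu)\notin I_R} (n+1)^{s-r}(n+l+1)^{s-r}\,\Big[(n+1)^r(n+l+1)^r z_{l,n,\mu}^2 h_{l,n}^2\Big].
\]
Because $s\le\alpot\le r$, the exponent $s-r$ is nonpositive. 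It therefore suffices to show that
\[
(n+1)(n+l+1)\ \ge\ R+2 \quad \text{for every excluded index.}
\]
Once this holds, $(n+1)^{s-r}(n+l+1)^{s-r}\le (R+2)^{s-r}$, and the remaining sum is bounded by $\|z\|^2_{H^r_{\alpot}}$.

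The key combinatorial step, and the only real obstacle, is this lower bound. An excluded index $(l,n,\mu)$ has total radial degree $d=l+2n\ge R+1$. This holds for both $\mu=\pm1$ and for $R$ odd; for $R$ even, $X_R$ contains at least all degrees up to $R$, so excluded indices again have $d\ge R+1$. Write $(n+1)(n+l+1)=(n+1)(d-n+1)$ for $0\le n\le \lfloor d/2\rfloor$. As a function of $n$ this is a concave quadratic, so its minimum on the interval is attained at an endpoint. At $n=0$ it equals $d+1$. At $n=d/2$ it equals $(d/2+1)^2\ge d+1$, and the case $n=\lfloor d/2\rfloor$ for odd $d$ is checked the same way. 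Hence $(n+1)(n+l+1)\ge d+1\ge R+2$, which completes the argument with constant $1$.
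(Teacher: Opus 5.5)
Your proposal is correct and follows essentially the same route as the paper: $P_R$ acts by truncating the orthogonal expansion, the error is written in the coefficient form of the $H^{s}_{\alpha/2}$ norm, and the weights on the discarded indices are bounded by $(n+1)^{s-r}(n+l+1)^{s-r}\le (R+2)^{s-r}$. The paper only asserts that last inequality, so your concavity argument giving $(n+1)(d-n+1)\ge d+1\ge R+2$ for every excluded index, including the $\mu=-1$ and $R$ even cases, fills in the one step the paper leaves unjustified.
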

\textbf{Proof}: For $z \in H^r_\alpot(\Omega),$ write 
\[
z(x) \ = \ \sum_{l,n,\mu} d_{l,n,\mu} \, \mathcal{P}_{l,n,\mu}^{(\alpot)}(x) \, , \ \
\mbox{where } \ d_{l,n,\mu}  \, = \, \frac{(z \,  , \, \mathcal{P}_{l,n,\mu}^{(\alpot)} )_{L^{2}_{\alt}}}{h^{2}_{l, n}} \, , 
\ \ \mbox{and } \ h^2_{l,n}  \coloneqq \|\P_{l,n,\mu}^{(\alt,l)}\|^2_{L^2_{\alt}(\Omega)} \, .
\] 
Without loss of generality, assume $R$ is odd.
Similar to the expression for $\| u \, - \, U_{R} \|^{2}_{L^{2}_{\alt}}$ in the proof of  Theorem \ref{errorthm1}, 
 we have \\
\begin{align*}
\|z -  P_R(z)\|^2_{H^s_{\alpot}}
&= \  \sum_{l = R+1}^\infty \sum_{n = 0}^{\lfloor \frac{l}{2} \rfloor}
  (n + 1)^s \, (n + (l - 2n) +1)^s \, (d _{l - 2n , \, n, 1})^2 \, h^2_{l - 2n , \, n}   \nonumber  \\
& \quad  + \
\sum_{l = R+2}^\infty \sum_{n = 0}^{\lfloor \frac{l - 1}{2} \rfloor}
  (n + 1)^s \, (n + (l - 2n) +1)^s \, (d _{l - 2n , \, n, -1})^2 \, h^2_{l - 2n , \, n} 
 \nonumber \\
&= \ 
\sum_{l = R+1}^\infty \sum_{n = 0}^{\lfloor \frac{l}{2} \rfloor}
  (n + 1)^{s - r} \, (l - n +1)^{s - r} \,  (n + 1)^r \, (l - n +1)^r \, (d _{l - 2n , \, n, 1})^2 \, h^2_{l - 2n , \, n}   \nonumber \\
& \quad  + \
\sum_{l = R+2}^\infty \sum_{n = 0}^{\lfloor \frac{l - 1}{2} \rfloor}
  (n + 1)^{s - r} \, (l - n +1)^{s - r} \,  (n + 1)^r \, (l - n +1)^r \, (d _{l - 2n , \, n, -1})^2 \, h^2_{l - 2n , \, n} 
\end{align*}
Now, for $l \geq (R + 1)$ and $0 \leq n \leq \lfloor \frac{l}{2} \rfloor$, we have that
$(n + 1)^{s - r} \, (l - n + 1)^{s - r} \leq (R + 2)^{s - r}$. Hence,
\begin{align*}
\|z -  P_R(z)\|^2_{H^s_{\alpot}}
&\leq \  \big( \frac{1}{R + 2} \big)^{r - s}
\sum_{l = 0}^\infty \sum_{n = 0}^{\infty}
  (n + 1)^r \, (n + l +1)^r \, (d _{l, \, n, 1})^2 \, h^2_{l - 2n , \, n}   \nonumber \\
& \quad  + \  \big( \frac{1}{R + 2} \big)^{r - s}
\sum_{l = 1}^\infty \sum_{n = 0}^{\infty}
 (n + 1)^r \, (n + l +1)^r \, (d _{l, \, n, -1})^2 \, h^2_{l - 2n , \, n}   \nonumber \\
& = \ \| z \|^2_{H^r_{\alpot}} \, .
\end{align*}
\mbox{ } \hfill  \qed

 \begin{lemma} 
 \label{lemInt}  
For $u(x,t) \in C^{2}(t_{n-1} , t_{n} ; H^{\alt}_{\alt}(\Omega))$, and $\Delta t \, = \, t_{n} - t_{n-1}$,
we have 
 \[
 \Big\| \omega^{\alt} \Big(u_t(\cdot , t_n) - \frac{u(\cdot , t_n) - u(\cdot , t_{n-1})} { \Delta t} \Big)\Big\|_{H^{-\alt}_{\alt}}^2 
 \leq \frac{\Delta t}{3} \int_{t_{n-1}}^{t_n} \| \omega^{\alt} u_{tt}(\cdot , \tau)\|_{H^{-\alt}_{\alt}}^2 d\tau.
 \] 
 \end{lemma}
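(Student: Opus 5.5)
The plan is to prove this as a Bochner-space version of the standard Taylor remainder estimate for the backward difference quotient. Set $w(t) := \omega^{\alt} u(\cdot,t)$.

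\textbf{Step 1: regularity and commuting with $\partial_t$.} Since $u \in C^{2}(t_{n-1},t_n; H^{\alt}_{\alt}(\Omega))$, Lemma \ref{lemSpace} shows that multiplication by $\omega^{\alt}$ is a bounded linear map on $V = H^{\alt}_{\alt}(\Omega)$. Hence $w \in C^{2}(t_{n-1},t_n; V)$. The embedding $V \hookrightarrow V'$ is continuous through the $L^2_{\alt}$ pivot, so $w \in C^2(t_{n-1},t_n;V')$ as well. Linearity and boundedness let $\omega^{\alt}$ commute with time derivatives and difference quotients (compare Lemma \ref{lmafder1}), giving $w_t = \omega^{\alt}u_t$ and $w_{tt} = \omega^{\alt}u_{tt}$. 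The quantity to bound is therefore $\|w_t(t_n) - (w(t_n)-w(t_{n-1}))/\Delta t\|_{V'}$.

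\textbf{Step 2: integral form of the remainder.} I will use the Taylor expansion with integral remainder about $t_n$, valid for $C^2$ Banach-valued functions (Bochner integral, or apply each functional in $V$ and use the scalar formula):
\[
w(t_{n-1}) = w(t_n) - \Delta t\, w_t(t_n) + \int_{t_{n-1}}^{t_n} (\tau - t_{n-1})\, w_{tt}(\tau)\, d\tau .
\]
This yields
\[
w_t(t_n) - \frac{w(t_n)-w(t_{n-1})}{\Delta t} = \frac{1}{\Delta t}\int_{t_{n-1}}^{t_n} (\tau - t_{n-1})\, w_{tt}(\tau)\, d\tau .
\]

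\textbf{Step 3: the $V'$ estimate.} Take the $V'$ norm and move it inside the Bochner integral. Cauchy--Schwarz then gives the bound
\[
\frac{1}{\Delta t^2}\Big(\int_{t_{n-1}}^{t_n} (\tau-t_{n-1})^2\, d\tau\Big)\Big(\int_{t_{n-1}}^{t_n}\|w_{tt}(\tau)\|_{V'}^2\, d\tau\Big) = \frac{\Delta t}{3}\int_{t_{n-1}}^{t_n}\|\omega^{\alt}u_{tt}(\tau)\|_{V'}^2\, d\tau ,
\]
since $\int_0^{\Delta t} s^2\,ds = \Delta t^3/3$. This is exactly the claimed constant $\Delta t/3$.

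\textbf{Main obstacle.} The computation itself is routine. The only delicate point is Step 1, namely justifying that $\omega^{\alt}$ commutes with the time derivative and that the Taylor formula holds in $V'$. Both follow from the boundedness of multiplication by $\omega^{\alt}$ (Lemma \ref{lemSpace}, Lemma \ref{lmafftf}). If needed, the scalar route settles it directly: pair with an arbitrary $v \in V$, apply the scalar Taylor formula to $t \mapsto \langle w(t), v\rangle$, bound using $\|v\|_V$, and take the supremum over $v$.
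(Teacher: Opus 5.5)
Your proposal is correct and is essentially the paper's proof. The paper also writes the truncation error as $\frac{1}{\Delta t}\int_{t_{n-1}}^{t_n}(\tau-t_{n-1})\,u_{tt}(\cdot,\tau)\,d\tau$ and applies Cauchy--Schwarz in time with $\int_{t_{n-1}}^{t_n}(\tau-t_{n-1})^2\,d\tau = (\Delta t)^3/3$; the only cosmetic difference is that it uses your fallback scalar route, pairing with $v \in H^{\alt}_{\alt}(\Omega)$ in the $L^2_{\alt}$ inner product and taking the supremum over $v$, instead of moving the $V'$ norm inside a Bochner integral.
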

\textbf{Proof}: 
Let $D^{(n)}(x) = u_t (x,t_n) - \frac{u(x,t_n) - u(x,t_{n-1})}{\Delta t}.$ By the Fundamental Theorem of Calculus, we have 
\[
 u(x,t_n) -  u(x,t_{n-1}) = \int _{t_{n-1}}^{t_n}  u_t(x,s)ds.
 \] 
 Thus, 
\begin{align*}
D^{(n)}(x) & =  u_t(x,t_n) - \frac{1}{\Delta t}\int_{t_{n-1}}^{t_n} u_t(x,s) \, ds
 = \frac{1}{\Delta t}\int_{t_{n-1}}^{t_n} ( u_t(x,t_n) -  u_t(x,s)) ds\\
& = \frac{1}{\Delta t}\int_{t_{n-1}}^{t_n} \int_s^{t_n}  u_{tt}(x,\tau) d\tau \, ds
 = \frac{1}{\Delta t} \int_{t_{n-1}}^{t_n} \int_{t_{n-1}}^\tau u_{tt} (x,\tau) ds \, d\tau  \\  
 & = \frac{1}{\Delta t}\int_{t_{n-1}}^{t_n} (\tau - t_{n-1})  u_{tt}(x,\tau) \, d\tau.
\end{align*}
As $u \in C^{2}(t_{n-1} , t_{n} ; H^{\alt}_{\alt}(\Omega))$ then, for $v \in H^{\alt}_{\alt}(\Omega)$,
Then, 
\begin{align*}
\langle \omega^{\alt} D^{(n)} \, , \, v \rangle_{H^{-\alt}_{\alt} , H^{\alt}_{\alt} }
&=  \ \big( \omega^{\alt} D^{(n)}(x)  \, , \, v(x) \big)_{L^{2}_{\alt}}  \\
&= \ \int_{\Omega} \omega^{\alt} \, 
\Big( \omega^{\alt} \frac{1}{\Delta t}\int_{t_{n-1}}^{t_n} (\tau - t_{n-1})  u_{tt}(x,\tau) \, d\tau \Big) v(x) \, dx  \\
&= \ \frac{1}{\Delta t} \int_{t_{n-1}}^{t_n} (\tau - t_{n-1}) \, 
 \int_{\Omega} \omega^{\alt} \, \omega^{\alt} \,  u_{tt}(x,\tau) \, v(x) \, dx \, d\tau \\
&\leq  \ \frac{1}{\Delta t} \int_{t_{n-1}}^{t_n} (\tau - t_{n-1}) \, 
 \| \omega^{\alt} \,  u_{tt}(\cdot , \tau) \|_{H^{-\alt}_{\alt}} \, \| v  \|_{H^{\alt}_{\alt}} \, d\tau  \\
&\leq \  \frac{1}{\Delta t} 
\Big( \int_{t_{n-1}}^{t_n} (\tau - t_{n-1})^{2} \, d\tau \Big)^{1/2} \, 
\Big( \int_{t_{n-1}}^{t_n}  \| \omega^{\alt} \,  u_{tt}(\cdot , \tau) \|_{H^{-\alt}_{\alt}}^{2} \, d\tau \Big)^{1/2} \, 
\| v  \|_{H^{\alt}_{\alt}}  \\
&= \  \Big( \frac{\Delta t}{3} \Big)^{1/2} \, 
\Big( \int_{t_{n-1}}^{t_n}  \| \omega^{\alt} \,  u_{tt}(\cdot , \tau) \|_{H^{-\alt}_{\alt}}^{2} \, d\tau \Big)^{1/2} \, 
\| v  \|_{H^{\alt}_{\alt}} \, .
\end{align*}
Hence,
\[
\| \omega^{\alt} D^{(n)} \|_{H^{-\alt}_{\alt}} \ = \ 
\sup_{0 \neq v \in H^{\alt}_{\alt}(\Omega)} 
\frac{\langle \omega^{\alt} D^{(n)} \, , \, v \rangle_{H^{-\alt}_{\alt} , H^{\alt}_{\alt} } }{ \| v \|_{H^{\alt}_{\alt} }}
\ \leq \
\Big( \frac{\Delta t}{3} \Big)^{1/2} \, 
\Big( \int_{t_{n-1}}^{t_n}  \| \omega^{\alt} \,  u_{tt}(\cdot , \tau) \|_{H^{-\alt}_{\alt}}^{2} \, d\tau \Big)^{1/2} \, .
\]
\mbox{ } \hfill \qed


\begin{lemma} \label{Regexp}
Let $u(x) = |x_1|^3+x_2$. Then, for  any $\epsilon > 0$, $u \in H^{7/2 - \epsilon}_{\alpot} (\Omega)$.
\end{lemma}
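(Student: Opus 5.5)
Since $x_2 = r\sin\phi = \mc V_{1,-1}(x) = \P^{(\alt)}_{1,0,-1}(x)$ is itself a single basis function, it lies in $H^s_{\alt}(\Omega)$ for every $s$. The claim therefore reduces to showing $|x_1|^3 \in H^{7/2-\epsilon}_{\alt}(\Omega)$.

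Because $\mathcal{H}^s_\gamma(\Omega)$ and $H^s_\gamma(\Omega)$ are equivalent \cite{erv241}, and the former is built by interpolation from the integer-order weighted spaces \eqref{defHsp}, I plan to work with the Babu\v{s}ka--Guo norms. The function $g(x) = |x_1|^3$ is $C^2$ on $\overline{\Omega}$ and smooth away from the line $x_1 = 0$. Its third derivative $\partial^3_{x_1} g = 6\,\mathrm{sgn}(x_1)$ is bounded, so $g \in \mathcal{H}^3_{\alt}(\Omega)$. Its fourth derivative is a multiple of $\delta(x_1)$, which is not in $L^2$, so the extra $1/2-\epsilon$ must come from fractional smoothness.

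For the fractional part I would use the one-dimensional fact that $\mathrm{sgn}(x_1)$, restricted to a bounded interval, lies in $H^{1/2-\epsilon}$ but not in $H^{1/2}$. This follows from the Fourier decay $|\hat{\cdot}(\xi)| \sim |\xi|^{-1}$ after smooth cutoff, or from a direct Gagliardo seminorm computation. Tensoring with smoothness in $x_2$ gives $\mathrm{sgn}(x_1) \in H^{1/2-\epsilon}(\Omega)$ in the unweighted sense. Since the weight $\omega^{\alt+s}$ is bounded on $\Omega$, unweighted regularity should control the weighted norm. Concretely, I would show $H^{k+\theta}(\Omega) \hookrightarrow \mathcal{H}^{k+\theta}_{\alt}(\Omega)$ by interpolating the trivial integer-order embeddings $H^k(\Omega) \hookrightarrow \mathcal{H}^k_{\alt}(\Omega)$, which hold because $\omega \le 1$. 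It then remains to prove $g \in H^{7/2-\epsilon}(\Omega)$. I would do this by extending $g$ to $\chi(x)|x_1|^3$ with a smooth cutoff $\chi$ and computing its Fourier transform. In the $x_1$ direction this transform decays like $|\xi_1|^{-4}$, so $\int (1+|\xi|^2)^{s}|\hat{\cdot}|^2\,d\xi < \infty$ exactly when $s < 7/2$.

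The main obstacle is the Fourier computation for the cutoff function. Because $\hat{|x_1|^3}$ is a homogeneous distribution of degree $-4$, the cutoff product has to be handled as a convolution estimate rather than a direct evaluation. As a fallback, I would verify $7/2-\epsilon$ directly through coefficient decay. This means expanding $|x_1|^3 = r^3|\cos\phi|^3$, using that the Fourier coefficients of $|\cos\phi|^3$ in $\phi$ decay like $l^{-4}$, and then bounding the Jacobi coefficients so as to check that $\sum (n+1)^s(n+l+1)^s a_{l,n}^2 h_{l,n}^2$ converges for $s < 7/2$.
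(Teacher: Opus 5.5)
Your proposal is correct, but your main route is genuinely different from the paper's.

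\textbf{The paper's route.} The paper works entirely inside the coefficient definition \eqref{defXs}. It writes the Fourier--Jacobi coefficients of $|x_1|^3$ as an angular integral $\mathcal{J}$ times a radial Jacobi integral $\mathcal{I}_{l,n,3}$. The angular factor is evaluated by a reduction formula: $|\mathcal{J}|\sim 24\,l^{-4}$ for even $l$ and $\mu=1$, and it vanishes otherwise. The radial factor is taken in closed form from \cite{hao211}. Stirling asymptotics then give $|g_{l,n,1}|^2h^2_{l,n}\approx(n+1)^{\alpha}(2n+l-3)^{-9-\alpha}$, and a two-dimensional integral test finishes the proof. This is essentially your fallback.

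\textbf{Your route.} You delegate the weighted theory to the equivalence $\mathcal{H}^s_{\alpha/2}\simeq H^s_{\alpha/2}$ from \cite{erv241}. Since $\omega\le1$, the identity maps $H^k(\Omega)\to\mathcal{H}^k_{\alpha/2}(\Omega)$ boundedly for $k=3,4$. The spaces $H^s_{\alpha/2}$ are weighted $\ell^2$ spaces, so they interpolate to the expected intermediate spaces, and $[H^3(\Omega),H^4(\Omega)]_{\theta,2}=H^{3+\theta}(\Omega)$ on the disk. Hence the problem reduces to the unweighted fact $|x_1|^3\in H^{7/2-\epsilon}(\Omega)$.

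\textbf{The step you call the main obstacle is routine.} Use a tensor cutoff $\chi_1(x_1)\chi_2(x_2)$ with $\chi_i\equiv1$ on $[-1,1]$. The function $h=\chi_1|x_1|^3$ satisfies $h^{(4)}=12\,\delta_0+\phi$ with $\phi\in C_c^\infty(\mathbb{R})$, so $|\hat h(\xi_1)|\lesssim(1+|\xi_1|)^{-4}$. Since $1+|\xi|^2\le(1+\xi_1^2)(1+\xi_2^2)$, the $H^s(\mathbb{R}^2)$ integral is finite for $s<7/2$. No convolution estimate is needed. Your Slobodeckij argument for $\mathrm{sgn}(x_1)$ is an interchangeable alternative, so you only need one of the two.

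\textbf{What each approach buys.} Your route is shorter and more general: any function of known unweighted Sobolev regularity is covered, with no special-function work. However, it rests on the nontrivial equivalence theorem of \cite{erv241}, and it can never detect extra regularity contributed by the weight near $\partial\Omega$. That is harmless here, because the singular line $x_1=0$ runs through the interior. The paper's route is self-contained relative to \eqref{defXs} and needs no interpolation theory. It also yields explicit coefficient information (which coefficients vanish and their exact decay) that could be used to probe sharpness.
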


\textbf{Proof}:  Writing $u(x)$ in the basis \eqref{basisL2w}, 
$u(x) \, = \, \sum_{l,n,\mu} u_{l,n,\mu} \, \mathcal{V}_{l,\mu}(x)P_n^{(\alt, l)}(2r^2-1)$ where
$u_{l,n,\mu}$ and $h^2_{l,n}$ are given by
\begin{align}
u_{l,n,\mu} &= \frac{\Big(u(x)\, ,\, \mathcal{V}_{l,\mu}(x)P_n^{(\alt, l)}(2r^2-1)\Big)_{L^2_\alt}}{h^2_{l,n}} , \ 
\ \mbox{and }  \nonumber  \\
h^2_{l,n} &= C_{l,\mu} 2^{-1}\frac{1}{2n+\alt + l + 1} \frac{\Gamma(n+\alt + 1)}{\Gamma(n+1)}\frac{\Gamma(n+l+1)}{\Gamma(n+\alt + l + 1)} ,   \label{h2def1}
\end{align}
with $C_{l,\mu}$ defined in Lemma \ref{NormShiftAdd}.

First, consider the second piece of $u$,  $f(x) := x_2$. As $f \in C^\infty (\Omega)$, $f \in H^s_\alt (\Omega)$ for all $s>0$. 
Consider next $g(x) \, := \, |x_1|^3 = r^3|\cos(\phi)|^3$. Of interest is determining 
\begin{align}
g_{l,n,\mu} &= \frac{\Big(g(x)\, ,\, \mathcal{V}_{l,\mu}(x)P_n^{(\alt, l)}(2r^2-1)\Big)_{L^2_\alt}}{h^2_{l,n}}  \nonumber \\
&= \ 
\int_{\phi = 0}^{2\pi} \int_{r = 0}^1 (1-r^2)^\alt r^3|\cos(\phi)|^3\mathcal{V}_{l,\mu}(x)P_n^{(\alt,l)}(2r^2-1) r \, dr \, d\phi 
\, / h^2_{l,n}   \nonumber  \\
\label{integrals} & = \underbrace{\int_{\phi = 0}^{2\pi}\text{trg}(l\phi)|\cos(\phi)|^3 d\phi}_{\coloneqq\mathcal{J}} \ \underbrace{\int_{r=0}^1r^{3+l}P_n^{(\alt,l)}(2r^2-1)(1-r^2)^\alt r dr}_{\mathcal{I}_{l,n,3}}   /  h^2_{l,n} \, ,  \\
& \quad  \quad \mbox{where} \ \text{trg}(l\phi) \, = \, \left\{ \begin{array}{rl}
                                                                            \cos(l\phi) & \mbox{ if } \ \mu = 1 \, , \\
                                                                            \sin(l\phi) & \mbox{ if } \ \mu = -1 \, 
                                                                            \end{array} \right. \, .   \nonumber 
\end{align}
                                                                            
When $\mu = -1$, trg$(l\phi) = \sin(l\phi)$, and so 
\begin{align*}
\mathcal{J} &= \ \int_{\phi = 0}^{2\pi} \sin(l\phi)|\cos(\phi)|^3 d\phi 
\ = \ \int_{\phi= -\pi}^{\pi}\sin(l(\phi+\pi))|\cos(\phi+\pi)|^3 d\phi   \\
& = \int_{\phi = -\pi}^\pi \sin(l\phi)\cos(l\pi)|\cos(\phi)\cos(\pi)|^3 d\phi,\ \ (\mbox{as } \sin(l\pi) = 0) \\
& = \cos(l\pi)\int_{\phi = -\pi}^\pi \sin(l\phi)|cos(\phi)|^3 d\phi 
\ =  \ 0,
\end{align*}
as the integrand in the last integral is the product of an odd and an even function that is
integrated over a symmetric interval about $0$. 

When $\mu = 1$ and trg$(l\phi) = \cos(l\phi)$, 
\begin{align}
\notag\mathcal{J} &= \ \int_{\phi = 0}^{2\pi}\cos(l\phi)|\cos(\phi)|^3 d\phi
\ = \ \int_{\phi = -\pi}^{\pi}\cos(l(\phi+\pi))|\cos(\phi + \pi)|^3 d\phi   \\
\notag& = \int_{\phi = -\pi}^\pi \cos(l\phi)\cos(l\pi)|\cos(\phi)\cos(\pi)|^3 d\phi, \ \ (\mbox{as } \sin(l\pi) = 0)\\
\notag& = \cos(l\pi)\int_{\phi = -\pi}^\pi \cos(l\phi)|\cos(\phi)|^3 d\phi.
\ = \ 2\cos(l\pi)\int_{\phi = 0}^\pi \cos(l\phi)|\cos(\phi)|^3 \, d\phi \\
\label{intestep}&= 2\cos(l\phi)\Bigg(\int_{\phi = 0}^\frac{\pi}{2} \cos(l\phi)\cos^3(\phi)d\phi - \int_{\phi = \frac{\pi}{2}}^\pi \cos(l\phi)\cos^3(\phi) \, d\phi\Bigg).
\end{align}

Manipulating the last integral in (\ref{intestep}),
\begin{align}
\notag & -\int_{\phi = \frac{\pi}{2}}^\pi\cos(l\phi)\cos^3(\phi) \, d\phi \ 
= \ -\int_{\phi = -\frac{\pi}{2}}^0 \cos(l(\phi + \pi))\cos^3(\phi+\pi) \, d\phi  \\
\notag& = -\int_{\phi = -\frac{\pi}{2}}^0 \cos(l\phi)\cos(l\pi)\cos^3(\phi)\cos^3(\pi) \, d\phi 
\ = \ \cos(l\pi)\int_{\phi = -\frac{\pi}{2}}^0\cos(l\phi)\cos^3(\phi) \, d\phi\\
\label{substint}& = \cos(l\pi)\int_{\phi = 0}^\frac{\pi}{2}\cos(l\phi)\cos^3(\phi) \, d\phi, 
\end{align}
again as $\cos(l\phi)$ and $\cos^3(\phi)$ are even functions. Substituting (\ref{substint}) into (\ref{intestep}) gives 
for $\mu = 1$
\begin{equation} \label{exp3} \mathcal{J} = 2\cos(l\pi)(1+\cos(l\pi))\int_{\phi= 0}^\frac{\pi}{2}\cos(l\phi)\cos^3(\phi)d\phi.\end{equation}
In \cite{hao211} pg. 2127, the authors showed that 
\begin{equation}\label{exp4} \int\cos^k(\theta)\cos(m\theta)d\theta = \frac{1}{k+m}\Big(\cos^k(\theta)\sin(m\theta) + k \int\cos^{k-1}(\theta)\cos((m-1)\theta)d\theta\Big) \, .
\end{equation}
Using \eqref{exp4}, we obtain
\begin{equation}
\mathcal{J} = \left\{ \begin{array}{rl}
    \frac{24}{(l+3)(l+1)(l-1)(l-3)}\sin((l-3)\frac{\pi}{2}), & \mbox{if $l$ is even},\\
       0  , & \mbox{if $l$ is odd} 
       \end{array} \right. \, .
\label{v4J}
\end{equation}

For the evaluation of $\mathcal{I}_{l,n,3}$, using Lemma C.1 on pg. 2125 of \cite{hao211}, there are three cases to consider.
\begin{description}
	\item[Case 1:] If $\frac{3-l}{2}\geq n$, then $2n + l \leq 3$, or rather $\begin{cases} n = 0, & l = 0,1,2,3\\n = 1, & l = 0,1 \end{cases}.$ From here, 
\[\mathcal{I}_{l,n,3} = (-1)^n \frac{\Gamma(n+\alt + 1)}{2 \, \Gamma(n+1)}\frac{\Gamma(\frac{l+3}{2} + 1)}{\Gamma(\frac{3-l}{2} - n +1)}\frac{\Gamma(\frac{3-l}{2}+1)}{\Gamma(n+\alt + \frac{l+3}{2} + 2)}.\]
	\item[Case 2:] If $\frac{3-l}{2} < n$ and $\frac{3-l}{2}$ is not a non-negative integer, meaning if $3 < 2n + 1$ and $l \neq 1, 3$, then 
\[\mathcal{I}_{l,n,3} = (-1)^n\frac{\Gamma(n+\alt + 1)}{2 \, \Gamma(n+1)}\frac{\Gamma(\frac{l+3}{2}+1)}{\Gamma(\frac{l-3}{2})}\frac{\Gamma(n+\frac{l-3}{2})}{\Gamma(n+\alt + \frac{l+3}{2} + 2)}.\]
	\item[Case 3:] If $\frac{3-l}{2} < n$ and $\frac{3-l}{2}$ is a non-negative integer, then $3<2n + l$ and $l = 1,3$. In this case, 
\[\mathcal{I}_{l,n,3} = 0.\]
\end{description}

Combining the above we have, $\mathcal{J} \, \mathcal{I}_{l,n,3} = 0$ for $l$ odd or $\mu = -1$, 
and for $l$ even and $\mu = 1$, 
\begin{equation*} 
|\mathcal{J} \, \mathcal{I}_{l,n,3}| = \frac{24}{(l+3) \, (l+1)\, | l-1| \,  | l-3|}\frac{\Gamma(n+\alt+1)}{2 \, \Gamma(n+1)}\frac{\Gamma(\frac{l+3}{2}+1)}{| \Gamma(\frac{l-3}{2})|}\frac{| \Gamma(n+\frac{l-3}{2}) |}{\Gamma(n+\alt + \frac{l+3}{2} + 2)}.
\end{equation*}

Substituting into \eqref{integrals} we obtain that, for $l$ even and $\mu = 1$,
\begin{equation} 
	|g_{l,n,1} | = 
	\frac{24}{(l+3) \, (l+1)\, | l-1| \,  | l-3|}\frac{\Gamma(n+\alt+1)}{2\Gamma(n+1)}\frac{\Gamma(\frac{l+3}{2}+1)}{| \Gamma(\frac{l-3}{2})|}\frac{| \Gamma(n+\frac{l-3}{2}) |}{\Gamma(n+\alt + \frac{l+3}{2} + 2)} \, / h^{2}_{l, n} \, 
	. \label{ggh1}
\end{equation}

Of interest is determining $s\geq 0$ such that 
\[
\| g \|^2_{H^s_\alt}  \ = \ \||x_1|^3\|^2_{H^s_\alt} \ = \
\sum_{l,n,\mu} (n+1)^s (n+l+1)^s |g_{l,n,\mu}|^2 h^2_{l,n} < \infty.
\]

To that end, as the question is the values of $s$ that guarantee the convergence of the above sum, we are only interested in the behavior of the coefficients for large values of $l$ and $n$. 

Note that Stirling's formula (\ref{stirling}) gives that for $l$ or $n$ is large, 
\[
\frac{\Gamma(n+\alt+1)}{\Gamma(n+1)} \ \approx \ (n+1)^{\alt} \, , \ \ 
 \frac{\Gamma(\frac{l + 3}{2}+1)}{\Gamma(\frac{l-3}{2})} = \frac{\Gamma(\frac{l-3}{2} + 4)}{\Gamma(\frac{l-3}{2})} \approx \Big(\frac{l-3}{2}\Big)^4,
\]
\[
\frac{\Gamma(n+\frac{l-3}{2})}{\Gamma(n+\alt + \frac{l+3}{2} + 2)} = \frac{\Gamma(n+\frac{l-3}{2})}{\Gamma(n+\frac{l-3}{2}+\alt + 5)}\approx (n+\frac{l-3}{2})^{-(5+\alt)} = (2n+l-3)^{-(5+\alt)} 2^{5+\alt} \, .
\] 
and
\[
\frac{\Gamma(n+\alt + l + 1)}{\Gamma(n+l+1)} \approx (n+l+1)^{\alt}.
\]
Substituting each of these into the expression for $| g_{l,n,1} |$ in \eqref{ggh1}, 
and $h^{2}_{l, n}$ in \eqref{h2def1}, 
for $l$ even and 
$l$ or $n$ large,
\begin{align*}
| g_{l,n,1} |^{2} \, h^{2}_{l, n} &\approx (n+1)^{\alt} \, (n+1)^{\alt} \, (2n+l-3)^{-9 - \alpha}  \, .
\end{align*}

Of interest is determining $s\geq 0$ such that 
\[
\| g \|^2_{H^s_\alt}  \ = \ \||x_1|^3\|^2_{H^s_\alt} \ = \
\sum_{l, n} (n+1)^s (n+l+1)^s |g_{l,n,1}|^2 h^2_{l,n} < \infty.
\]

Note that, 
\begin{align}
	\notag(2n+l-3) & \approx (n+1) + (n + l + 1), \text{ and}\\
	\notag2(n+1)(n+l+1) &\leq (n+1)^2 + (n+l+1)^2 \\
	& \notag \leq ((n+1) + (n+l+1))^2. 
\end{align}
Hence, 
\begin{equation*} 
	\Big(2(n+1)(n+l+1)\Big)^{\frac{9 + \alpha}{2}}  \leq \Big((n+1) + (n + l + 1)\Big)^{9 + \alpha} 
\end{equation*}
and so 
\begin{equation}\label{sub2}
	 \Big(2(n+1)(n+l+1)\Big)^{-(\frac{9+\alpha}{2})} \geq \Big((n+1) + (n+l+1)\Big)^{-(9 + \alpha)}
	 \approx (2n \, + \, l \, - \, 3)^{-9 - \alpha} .
 \end{equation}
 
Using (\ref{sub2}), 
\begin{align}
	\notag 
	\| g \|^2_{H^s_\alt}  \ = \ \||x_1|^3 \|^2_{H^s_\alt}& \lesssim  \sum_{l, n} (n+1)^{s+\alt}(n+l+1)^{s + \alt}(n+1)^{-(\frac{9+\alpha}{2})}(n+l+1)^{-(\frac{9+\alpha}{2})}\\
	\label{finsum} & = \sum_{l, n} (n+1)^{s-\frac{9}{2}} (n+l+1)^{s-\frac{9}{2}}.
\end{align} 

Finally, to test the convergence of the sum in (\ref{finsum}), we apply the two-dimensional version of the integral test. 
For $n_0, l_0$ sufficiently large, 
\begin{align*}
\||x_1|^3\|^2_{H^s_{\alt}} &\lesssim  \sum_{l, n} (n+1)^{s-\frac{9}{2}} (n+l+1)^{s-\frac{9}{2}}\\
& \lesssim \int_{n = n_0 }^\infty \int_{l = l_0}^\infty (n+1)^{s-\frac{9}{2}}(n+l+1)^{s-\frac{9}{2}} \, dl \, dn \\
& = \int_{n = n_0}^\infty (n+1)^{s-\frac{9}{2}}\begin{cases} \frac{1}{s-\frac{7}{2}}(n+l+1)^{s-\frac{7}{2}}\Big|_{l = l_0}^\infty, & \text{if } s\neq \frac{7}{2},\\ \log(n+l+1), & \text{if } s = \frac{7}{2},\end{cases}\ dn\\
(\text{assuming } s< \frac{7}{2})\ & \approx \int_{n = n_0 }^\infty (n+1)^{s-\frac{9}{2}}(n+l_0+1)^{s-\frac{7}{2}} \, dn\\
& \approx \int_{n = n_0}^\infty (n+1)^{2s - 8} \, dn \\
& = \frac{1}{2s-7}(n+1)^{2s-7}\Big|_{n=n_0}^\infty < \infty, \ \mbox{ as } s < \frac{7}{2} \, .
\end{align*}

\par So, $u(x) = |x_1|^3 + x_2 \in H^s_\alt(\Omega)$ for any $s < \frac{7}{2}$. 
\mbox{ } \hfill \qed

\subsection{Optimal scaling for $\Delta t$ and $R$}
\label{ssec_optS}
Of interest is to determine $p$ such that the scaling 
\begin{equation}
    \Delta t \, \sim \,  \left( \frac{1}{R} \right)^{p} 
\label{scal1}
\end{equation}
results in the fastest convergence rate for the error in the approximation of the time dependent problem.

Using \eqref{scal1}, the terms on the RHS of Corollary \ref{cor1} have the orders of convergence
\begin{equation}
 \sim \   \left( \frac{1}{R} \right)^{2 p} , \,  \left( \frac{1}{R} \right)^{r} , \,  \left( \frac{1}{R} \right)^{r + \alt - 2p} , \,  \left( \frac{1}{R} \right)^{r - \alt} , \
 \mbox{ respectively}.
\label{scal2}
\end{equation}

To maximize the convergence with respect to $R$ we consider the two cases: (i) $2p \, = \, r + \alt - 2p$, and (ii) $2p \, = \, r - \alt$.

\underline{Case(i)}: For $2p \, = \, r + \alt - 2p \ \Rightarrow \ p \, = \, (r + \alt)/4 $, \eqref{scal2} becomes
\[
 \sim \   \left( \frac{1}{R} \right)^{(r + \alt)/2} , \,  \left( \frac{1}{R} \right)^{r} , \,  \left( \frac{1}{R} \right)^{(r + \alt)/2} , \,  
 \left( \frac{1}{R} \right)^{r - \alt} \, .
\]
Note that $(r + \alt)/2 \, = \, r - \alt \ \Rightarrow \ r = \frac{3 \alpha}{2} $.
\begin{align}
\mbox{Hence, if }  \ \  & r \geq \frac{3 \alpha}{2} \ \min \Big\{ (r + \alt)/2 \, , \, r - \alt \Big\} \ = \  (r + \alt)/2  \, ,   \label{scal3}  \\
\mbox{and if }    \ \  & r < \frac{3 \alpha}{2} \ \min \Big\{ (r + \alt)/2 \, , \, r - \alt \Big\} \ = \  r - \alt  \, .   \label{scal4} 
\end{align}

\underline{Case(ii)}: For $2p \, = \, r - \alt \ \Rightarrow \ p \, = \, (r - \alt)/2 $, \eqref{scal2} becomes
\[
 \sim \   \left( \frac{1}{R} \right)^{r - \alt} , \,  \left( \frac{1}{R} \right)^{r} , \,  \left( \frac{1}{R} \right)^{\alpha} , \,  
 \left( \frac{1}{R} \right)^{r - \alt} \, .
\]
Note that $r - \alt \, = \, \alpha \ \Rightarrow \ r = \frac{3 \alpha}{2} $.
\begin{align}
\mbox{Hence, if }  \ \  & r \geq \frac{3 \alpha}{2} \ \min \Big\{ r - \alt \, , \, \alpha \Big\} \ = \  \alpha  \, ,   \label{scal5}  \\
\mbox{and if }    \ \  & r < \frac{3 \alpha}{2} \ \min \Big\{ r - \alt \, , \, \alpha \Big\} \ = \  r - \alt  \, .   \label{scal6} 
\end{align}

Note that for $ r \, > \, \frac{3 \alpha}{2}$ then $(r + \alt)/2 \, > \, \alpha$.

Comparing \eqref{scal3} - \eqref{scal6} we see that the optimal value for $p$ is given by Case (i), 
\begin{equation}
p \, = \, (r + \alt)/4 \, .
\label{scal7}
\end{equation}


\begin{thebibliography}{10}

\bibitem{aco171}
G.~Acosta and J.P. Borthagaray.
\newblock A fractional {L}aplace equation: {R}egularity of solutions and finite
  element approximations.
\newblock {\em SIAM J. Numer. Anal.}, 55(2):472--495, 2017.

\bibitem{ain171}
M.~Ainsworth and C.~Glusa.
\newblock Aspects of an adaptive finite element method for the fractional
  {L}aplacian: a priori and a posteriori error estimates, efficient
  implementation and multigrid solver.
\newblock {\em Comput. Methods Appl. Mech. Engrg.}, 327:4--35, 2017.

\bibitem{ant223}
H.~Antil, T.~Brown, R.~Khatri, A.~Onwunta, D.~Verma, and M.~Warma.
\newblock Optimal control, numerics, and applications of fractional {PDE}s.
\newblock In {\em Numerical control. {P}art {A}}, volume~23 of {\em Handb.
  Numer. Anal.}, pages 87--114. North-Holland, Amsterdam, 2022.

\bibitem{ant202}
H.~Antil, Z.W. Di, and R.~Khatri.
\newblock Bilevel optimization, deep learning and fractional {L}aplacian
  regularization with applications in tomography.
\newblock {\em Inverse Problems}, 36(6):064001, 22, 2020.

\bibitem{bab011}
I.~Babu\v{s}ka and B.~Guo.
\newblock Direct and inverse approximation theorems for the {$p$}-version of
  the finite element method in the framework of weighted {B}esov spaces. {I}.
  {A}pproximability of functions in the weighted {B}esov spaces.
\newblock {\em SIAM J. Numer. Anal.}, 39(5):1512--1538, 2001/02.

\bibitem{bae102}
B.~Baeumer and M.M. Meerschaert.
\newblock Tempered stable {L}évy motion and transient super-diffusion.
\newblock {\em J. Comput. Appl. Math.}, 233(10):2438--2448, 2010.

\bibitem{bog031}
K.~Bogdan, K.~Burdzy, and Z.-Q. Chen.
\newblock Censored stable processes.
\newblock {\em Probab. Theory Related Fields}, 127(1):89--152, 2003.

\bibitem{bon191}
A.~Bonito, W.~Lei, and J.E. Pasciak.
\newblock Numerical approximation of the integral fractional {L}aplacian.
\newblock {\em Numer. Math.}, 142(2):235--278, 2019.

\bibitem{bre081}
S.C. Brenner and L.R. Scott.
\newblock {\em The mathematical theory of finite element methods}, volume~15 of
  {\em Texts in Applied Mathematics}.
\newblock Springer, New York, third edition, 2008.

\bibitem{bua101}
A.~Buades, B.~Coll, and J.M. Morel.
\newblock Image denoising methods. {A} new nonlocal principle.
\newblock {\em SIAM Rev.}, 52(1):113--147, 2010.
\newblock Reprint of ``A review of image denoising algorithms, with a new one''
  [MR2162865].

\bibitem{bue141}
A.~Bueno-Orovio, D.~Kay, V.~Grau, B.~Rodriguez, and K.~Burrage.
\newblock Fractional diffusion models of cardiac electrical propagation reveal
  structural heterogeneity effects on dispersion of repolarization.
\newblock {\em J. Roy. Soc. Interface}, 11(97):20140352, 2014.

\bibitem{bur211}
J.~Burkardt, Y.~Wu, and Y.~Zhang.
\newblock A unified meshfree pseudospectral method for solving both classical
  and fractional {PDE}s.
\newblock {\em SIAM J. Sci. Comput.}, 43(2):A1389--A1411, 2021.

\bibitem{del041}
D.~del Castillo-Negrete, B.A. Carreras, and V.E. Lynch.
\newblock Fractional diffusion in plasma turbulence.
\newblock {\em Phys. Plasmas}, 11(8):3854--3864, 2004.

\bibitem{del131}
M.~D'Elia and M.~Gunzburger.
\newblock The fractional {L}aplacian operator on bounded domains as a special
  case of the nonlocal diffusion operator.
\newblock {\em Comput. Math. Appl.}, 66(7):1245--1260, 2013.

\bibitem{duo182}
S.~Duo, H.W. van Wyk, and Y.~Zhang.
\newblock A novel and accurate finite difference method for the fractional
  {L}aplacian and the fractional {P}oisson problem.
\newblock {\em J. Comput. Phys.}, 355:233--252, 2018.

\bibitem{duo181}
S.~Duo and Y.~Zhang.
\newblock Computing the ground and first excited states of the fractional
  {S}chr\"odinger equation in an infinite potential well.
\newblock {\em Commun. Comput. Phys.}, 18(2):321--350, 2015.

\bibitem{duo191}
S.~Duo and Y.~Zhang.
\newblock Accurate numerical methods for two and three dimensional integral
  fractional {L}aplacian with applications.
\newblock {\em Comput. Methods Appl. Mech. Engrg.}, 355:639--662, 2019.

\bibitem{dyd171}
B.~Dyda, A.~Kuznetsov, and M.~Kwa\'{s}nicki.
\newblock Fractional {L}aplace operator and {M}eijer {G}-function.
\newblock {\em Constr. Approx.}, 45(3):427--448, 2017.

\bibitem{epp181}
B.P. Epps and B.~Cushman-Roisin.
\newblock Turbulence {M}odeling via the {F}ractional {L}aplacian.
\newblock arxiv preprint arxiv: 1803.05286, 2018.

\bibitem{ern041}
A.~Ern and J.-L. Guermond.
\newblock {\em Theory and practice of finite elements}, volume 159 of {\em
  Applied Mathematical Sciences}.
\newblock Springer-Verlag, New York, 2004.

\bibitem{ern211}
A.~Ern and J.-L. Guermond.
\newblock {\em Finite elements {III} -- {F}irst-order and time-dependent
  {PDE}s}, volume~74 of {\em Texts in Applied Mathematics}.
\newblock Springer, Cham, 2021.

\bibitem{erv241}
V.J. Ervin.
\newblock Equivalence of the weighted fractional {S}obolev space on a disk with
  characterization by the decay rate of {F}ourier-{J}acobi coefficients and
  {$K$}-interpolation.
\newblock {\em J. Fourier Anal. Appl.}, 30(6):Paper No. 71, 27, 2024.

\bibitem{erv251}
V.J. Ervin.
\newblock A variable diffusivity fractional {L}aplacian.
\newblock {\em J. Math. Anal. Appl.}, 547(1):Paper No. 129283, 23, 2025.

\bibitem{fai201}
G.~Failla and M.~Zingales.
\newblock Advanced materials modelling via fractional calculus: challenges and
  perspectives.
\newblock {\em Philosophical Transactions of the Royal Society A: Mathematical,
  Physical and Engineering Sciences}, 378(2172):20200050, 05 2020.

\bibitem{gil081}
G.~Gilboa and S.~Osher.
\newblock Nonlocal operators with applications to image processing.
\newblock {\em Multiscale Model. Simul.}, 7(3):1005--1028, 2008.

\bibitem{hao251}
Z.~Hao, Z.~Cai, and Z.~Zhang.
\newblock Fractional-order dependent radial basis functions meshless methods
  for the integral fractional {L}aplacian.
\newblock {\em Comput. Math. Appl.}, 178:197--213, 2025.

\bibitem{hao211}
Z.~Hao, H.~Li, Z.~Zhang, and Z.~Zhang.
\newblock Sharp error estimates of a spectral {G}alerkin method for a
  diffusion-reaction equation with integral fractional {L}aplacian on a disk.
\newblock {\em Math. Comp.}, 90(331):2107--2135, 2021.

\bibitem{hao212}
Z.~Hao, Z.~Zhang, and R.~Du.
\newblock Fractional centered difference scheme for high-dimensional integral
  fractional {L}aplacian.
\newblock {\em J. Comput. Phys.}, 424:Paper No. 109851, 17, 2021.

\bibitem{hua141}
Y.~Huang and A.~Oberman.
\newblock Numerical methods for the fractional {L}aplacian: a finite
  difference--quadrature approach.
\newblock {\em SIAM J. Numer. Anal.}, 52(6):3056--3084, 2014.

\bibitem{jav131}
M.~Javanainen, H.~Hammaren, L.~Monticelli, J.-H. Jeon, M.S. Miettinen,
  H.~Martinez-Seara, R.~Metzler, and I.~Vattulainen.
\newblock Anomalous and normal diffusion of proteins and lipids in crowded
  lipid membranes.
\newblock {\em Faraday Discuss.}, 161:397--417, 2013.

\bibitem{kla051}
J.~Klafter and I.M Sokolov.
\newblock Anomalous diffusion spreads its wings.
\newblock {\em Phys. World}, 18(8):29--32, 2005.

\bibitem{kwa171}
M.~Kwa\'{s}nicki.
\newblock Ten equivalent definitions of the fractional {L}aplace operator.
\newblock {\em Fract. Calc. Appl. Anal.}, 20(1):7--51, 2017.

\bibitem{las001}
N.~Laskin.
\newblock {Fractional quantum mechanics and Levy paths integrals}.
\newblock {\em Phys. Lett. A}, 268:298--305, 2000.

\bibitem{leh161}
R.B. Lehoucq and S.T. Rowe.
\newblock A radial basis function {G}alerkin method for inhomogeneous nonlocal
  diffusion.
\newblock {\em Comput. Methods Appl. Mech. Engrg.}, 299:366--380, 2016.

\bibitem{li141}
H.~Li and Y.~Xu.
\newblock Spectral approximation on the unit ball.
\newblock {\em SIAM J. Numer. Anal.}, 52(6):2647--2675, 2014.

\bibitem{lin021}
N.~Lindemulder and E.~Lorist.
\newblock Stein interpolation for the real interpolation method.
\newblock {\em Banach J. Math. Anal.}, 16(1):Paper No. 7, 18, 2022.

\bibitem{min201}
V.~Minden and L.~Ying.
\newblock A simple solver for the fractional {L}aplacian in multiple
  dimensions.
\newblock {\em SIAM J. Sci. Comput.}, 42(2):A878--A900, 2020.

\bibitem{qi121}
F.~Qi and Q.-M. Luo.
\newblock Bounds for the ratio of two gamma functions---from {W}endel's and
  related inequalities to logarithmically completely monotonic functions.
\newblock {\em Banach J. Math. Anal.}, 6(2):132--158, 2012.

\bibitem{shl871}
M.F. Shlesinger, B.J. West, and J.~Klafter.
\newblock L\'evy dynamics of enhanced diffusion: {A}pplication to turbulence.
\newblock {\em Phys. Rev. Lett.}, 58(11):1100--1103, 1987.

\bibitem{tia161}
X.~Tian, Q.~Du, and M.~Gunzburger.
\newblock Asymptotically compatible schemes for the approximation of fractional
  {L}aplacian and related nonlocal diffusion problems on bounded domains.
\newblock {\em Adv. Comput. Math.}, 42(6):1363--1380, 2016.

\bibitem{wan111}
H.~Wang and K.~Wang.
\newblock An {$O(N\log^2N)$} alternating-direction finite difference method for
  two-dimensional fractional diffusion equations.
\newblock {\em J. Comput. Phys.}, 230(21):7830--7839, 2011.

\bibitem{xu181}
K.~Xu and E.~Darve.
\newblock Spectral method for the fractional {L}aplacian in 2d and 3d.
\newblock Preprint: \verb+http://arxiv.org/abs/1812.08325+, 2018.

\bibitem{zen141}
F.~Zeng, F.~Liu, C.~Li, K.~Burrage, I.~Turner, and V.~Anh.
\newblock A {C}rank-{N}icolson {ADI} spectral method for a two-dimensional
  {R}iesz space fractional nonlinear reaction-diffusion equation.
\newblock {\em SIAM J. Numer. Anal.}, 52(6):2599--2622, 2014.

\bibitem{zha121}
Y.~Zhang, M.M. Meerschaert, and A.~Packman.
\newblock Linking fluvial bed sediment transport across scales.
\newblock {\em Geophys. Res. Lett.}, 39(20), 2012.

\bibitem{zhe251}
X.~Zheng, V.J. Ervin, and H.~Wang.
\newblock An anomalous fractional diffusion operator.
\newblock {\em Fract. Calc. Appl. Anal.}, 28(3):1198--1228, 2025.

\bibitem{zho241}
S.~Zhou and Y.~Zhang.
\newblock A novel and simple spectral method for nonlocal {PDE}s with the
  fractional {L}aplacian.
\newblock {\em Comput. Math. Appl.}, 168:133--147, 2024.

\end{thebibliography}

\end{document}